\documentclass[11pt]{amsart}

\usepackage[T1]{fontenc}
\usepackage{times}
\usepackage{amssymb,epsfig,verbatim}
\usepackage{calligra,mathrsfs}
\theoremstyle{plain}
\usepackage[T1]{fontenc}
\usepackage[utf8]{inputenc}
\usepackage[english]{babel}
\usepackage{amsthm}
\usepackage{graphics}
\usepackage{amsmath}
\usepackage{amstext}
\usepackage{multirow}
\usepackage{enumerate}
\usepackage{enumitem}
\usepackage{hyperref}
\usepackage{graphicx}
\usepackage{amsmath}
\usepackage{amssymb}
\usepackage{amsthm}
\usepackage{amstext}
\usepackage{epstopdf}
\usepackage{mathrsfs}
\usepackage{amscd}
\usepackage{tikz}
\usepackage{tikz-cd}
\AtBeginDocument{%
   \def\MR#1{}
}

\usepackage{float}
\usetikzlibrary{matrix}

\usepackage{todonotes}

\newtheorem{thm}{Theorem}[section]
\newtheorem{assumption}{Assumption}[section]

\newtheorem{dfn}[thm]{Definition}
\newtheorem{lemma}[thm]{Lemma}
\newtheorem{prop}[thm]{Proposition}

\newtheorem{cor}[thm]{Corollary}

\newtheorem{conj}[thm]{Conjecture}

\newtheorem{ex}[thm]{Example}

\theoremstyle{remark}
\newtheorem{remark}[thm]{Remark}

\newcommand{\mb}{\mathbb}

\newcommand{\mc}{\mathcal}

\newcommand{\C}{\mb C}

\newcommand{\F}{\mc F}
\newcommand{\G}{\mc G}

\newcommand\restr[2]{{
  \left.\kern-\nulldelimiterspace 
  #1 
  \vphantom{\big|} 
  \right|_{#2} 
  }}
\tikzset{
    table/.style={
       matrix of nodes,
        row sep=-\pgflinewidth,
        column sep=-\pgflinewidth,
        nodes={
            rectangle,
            draw=black,
            align=center
        },
        minimum height=1.5em,
        text depth=0.5ex,
        text height=2ex,
        nodes in empty cells,
        every even row/.style={
            nodes={fill=gray!20}
        },
        column 1/.style={
            nodes={text width=3.0cm,align=center,font=\small}
        },
        column 2/.style={
            nodes={text width=3.4cm,align=center},font=\small},
        column 3/.style={
            nodes={text width=5.0cm,align=center},font=\small},
        row 1/.style={
            nodes={ font=\bfseries
            }
        }
   }
}
\newcommand{\Fol}[3]{\textsf{Fol}_{#1}(\mathbb P_{#3}^{#2})}

\newcommand{\Lin}[3]{\textsf{Lin}_{#1}(\mathbb P_{#3}^{#2})}
\newcommand{\Log}[3]{\textsf{Log}_{(#1)}(\mathbb P_{#3}^{#2})}
\newcommand{\SLog}[3]{\textsf{SLog}_{(#1)}(\mathbb P_{#3}^{#2})}

\DeclareMathOperator{\coker}{coker}

\DeclareMathOperator{\codim}{codim}
\DeclareMathOperator{\sing}{sing}

\DeclareMathOperator{\Pic}{Pic}

\DeclareMathOperator{\Div}{Div}

\DeclareMathOperator{\Proj}{Proj}

\DeclareMathOperator{\Spec}{Spec}
\DeclareMathOperator{\Spm}{Spm}

\DeclareMathOperator{\ord}{ord}
\DeclareMathOperator{\Hom}{Hom}
\DeclareMathOperator{\Frob}{Frob}

\DeclareMathOperator{\Ram}{Ram}
\DeclareMathOperator{\Branch}{Branch}

\DeclareMathOperator{\HomSheaf}{\mathscr{H}\text{\kern -3pt {\calligra\large om}}\,}
\DeclareMathOperator{\CohSheaf}{\mathscr{H}}

\newcommand{\NF}{{N_\F}}

\newcommand{\BC}{{\nabla^\F}}
\newcommand{\CC}{{\nabla^{\mathrm{can}}}}

\newcommand{\ie}{{\it{i.e., }}}

\newcommand{\Vertical}{\mathcal V(\varphi)}

\newcommand{\distmin}[2]{{\delta_{#1}(#2)}}
\newcommand{\correction}[2]{{\mathrm{defect}({#1},{#2})}}

\newcommand{\BBconnection}{{\nabla^{\mathrm{B}}}}
\newcommand{\pdivisor}[1]{{\Delta_{#1}}}
\newcommand{\pcurvature}[1]{{\psi_{#1}}}
\newcommand{\pdegeneracy}[1]{{\Delta_{#1}}}
\newcommand{\pkernel}[1]{{\mathscr V({#1})}}
\newcommand{\pclosure}[1]{{\overline{#1}^{p}}}
\newcommand{\CartierTransform}[1]{{\mathscr{C}(#1)}}
\newcommand{\Cartier}{\mathbf{C}}
\newcommand{\field}{{\mathsf k}}

\newcommand{\different}[2]{\mathrm{diff}_{#2}({#1})}
\newcommand{\Witt}[2]{W_{#1}{(#2)}}

\DeclareMathOperator{\End}{End}
\DeclareMathOperator{\identity}{id}
\DeclareMathOperator{\Quot}{Quot}

\numberwithin{equation}{section}
%
\addtocounter{section}{0}             
\numberwithin{equation}{section}       

\sloppy

\title[Foliations in positive characteristic]{Codimension one foliations in positive characteristic}
\author{Wodson Mendson}
\author{Jorge Vit\'orio Pereira}

\date{\today}

\begin{document}

\begin{abstract}
    We investigate the geometry of codimension one foliations on smooth projective varieties defined over fields
    of positive characteristic with an eye toward applications to the structure of codimension one holomorphic foliations on projective
    manifolds.
\end{abstract}

\maketitle
\setcounter{tocdepth}{1}
\tableofcontents

\section{Introduction}
The study of foliations (subsheaves of the tangent sheaf closed under Lie brackets) on algebraic varieties defined over fields of characteristic $p>0$ traditionally focuses on foliations that are closed under $p$-th powers. For instance, Miyaoka studied these objects in order to establish the existence of rational curves on complex projective varieties in the presence of subsheaves of the tangent sheaf with positivity properties, see \cite{MR927960}, \cite[Chapter 9]{MR1225842}, and \cite[Part I, Lecture III]{miyaoka1997geometry}.

In the first part of this work, we focus instead on codimension one foliations which are not closed under $p$-th powers. We exploit the 20-years-old observation \cite{MR1904485}, \cite[Theorem 6.2]{MR2324555} that these foliations admit non-trivial infinitesimal transverse symmetries and, as such, can be defined by closed rational $1$-forms, uniquely defined up to multiplication by constants of derivations (\ie $p$-th powers of rational functions). The action of the Cartier operator on these $1$-forms defines a companion distribution (not necessarily involutive subsheaf of the tangent sheaf),  the Cartier transform of the foliation. The intersection of this distribution with the original foliation is a canonically defined subdistribution that we begin to study in this work.

Although we believe that the study of the  geometry of foliations in positive characteristics is worth pursuing per se, our primary motivation is the potential for applications toward the study of holomorphic foliations in the spirit of earlier joint work of Loray, Touzet and the second author, \cite[Section 7]{MR3842065}.  In the second part of this work, we use the results established in the first part to give new information about the space of codimension one foliations on projective spaces; see, for instance, Theorem \ref{T:P1P1}, which exhibits previously unknown irreducible components,  and Theorem \ref{T:characterize log components}, which characterizes the so-called logarithmic components.

\subsection{Acknowledgements}
Mendson acknowledges support from CNPq and Faperj. Pereira acknowledges support from CNPq (Grant number 301683/2019-0), and FAPERJ (Grant number E-26/202.530/2019). We express our sincere appreciation to Frédéric Touzet for generously permitting us to incorporate Proposition \ref{P:closed}, Example \ref{Ex:Touzet}, and Proposition \ref{P:relativelyclosed} in this paper, which are all attributed to him. Additionally, we extend our thanks to João Pedro dos Santos for reviewing an initial version of this work and offering valuable suggestions to enhance the clarity of the exposition. We would also like to express our gratitude to an anonymous referee for their meticulous review, which helped identify and rectify several inaccuracies.

\section{Distributions and foliations}
In this section, we present the definitions of distributions and foliations on arbitrary smooth varieties defined over an algebraically closed field $\field$.

\subsection{Distributions}
A distribution $\mathcal D$ on a smooth algebraic variety consists of a pair $(T_{\mathcal D}, N^*_{\mathcal D})$  of coherent subsheaves
of $T_X$ and $\Omega^1_X$ such that $T_\mathcal D$ is the annihilator of $N^*_{\mathcal D}$ and $N^*_{\mathcal D}$ is the annihilator of $T_{\mathcal D}$. Explicitly,
\begin{align*}
    T_{\mathcal D} &= \{ v \in T_X \, | \, \omega(v) = 0 \text{ for every } \omega \in N^*_{\mathcal D} \} \, \text{ and}\\
    N^*_{\mathcal D} &= \{ \omega \in \Omega^1_X \, | \, \omega(v)=0 \text{ for every } v \in T_{\mathcal D} \} \, .
\end{align*}
The sheaf $T_{\mathcal D}$ is the tangent sheaf of the distribution, and the sheaf $N^*_{\mathcal D}$ is the conormal sheaf of the distribution. It follows from the definition that both sheaves $T_{\mathcal D}$ and $N^*_{\mathcal D}$ are reflexive, \ie canonically isomorphic to their bi-duals. The dimension of $\mathcal D$ is the generic rank of $T_{\mathcal D}$ and the codimension of $\mathcal D$ is the generic rank of $N^*_{\mathcal D}$.

The dual of $T_{\mathcal D}$ is the cotangent sheaf of $\mathcal D$ and will be denoted by $\Omega^1_{\mathcal D}$. Similarly, the dual of $N^*_{\mathcal D}$ is the normal sheaf of $\mathcal D$ and will be denoted by $N_{\mathcal D}$.  These sheaves fit into the following exact sequences.
\begin{center}
    \begin{tikzcd}[row sep=tiny]
        0  \arrow{r} & T_{\mathcal D} \arrow{r} & T_{X} \arrow{r} & N_{\mathcal D} \\
        0  \arrow{r} & N^*_{\mathcal D} \arrow{r} & \Omega^1_X \arrow{r} & \Omega^1_{\mathcal D}
    \end{tikzcd}
\end{center}

For every $i$ between $1$  and $\dim \mathcal D$, we will define $\Omega^i_{\mathcal D}$ as the dual of $\wedge^i T_{\mathcal D}$, i.e.,
\[
    \Omega^i_{\mathcal D}  = \Hom\left( \bigwedge^i T_{\mathcal D}, \mathcal O_X \right) \, .
\]
Since the sheaves $\Omega^i_{\mathcal D}$ are defined as duals, they are reflexive sheaves. It is convenient to set $\Omega^0_{\mathcal D} = \mathcal O_{\mathcal D}$  and $\Omega^j_{\mathcal D} = 0$ if $j \notin \{0, \ldots \dim \mathcal D\}$.

We will denote the determinant of $\Omega^1_{\mathcal D}$ by $\omega_{\mathcal D}$, i.e.,
\[
    \omega_{\mathcal D} = \left(\bigwedge^{\dim {\mathcal D}} \Omega^1_{\mathcal D}\right)^{**} \, .
\]

Since $T_{\mathcal D}$ is reflexive, \cite[Proposition 1.10]{hartshorne1980stable} implies that
\[
    \Omega^1_{\mathcal D}   \simeq    \left(\bigwedge^{\dim \mathcal D - 1} T_{\mathcal D}\right)^{**}\otimes \omega_{\mathcal D}  \, .
\]

The singular set of $\mathcal D$ is the locus where $T_X/T_{\mathcal D}$ is not locally free and is denoted by $\sing(\mathcal D)$. Since $T_X/T_{\mathcal D}$ is torsion-free, the codimension of $\sing(\mathcal D)$ is at least two. Outside $\sing(\mathcal D)$, the rightmost arrows of both exact sequences above are surjective, see for instance \cite[Section 2]{GiraldoPan10}. Taking determinants one gets an isomorphism of line-bundles
\begin{equation}\label{E:adjunction for distributions}
   \omega_{X} \simeq \det(\Omega^1_{\mathcal D}) \otimes \det(N^*_{\mathcal D}) \, ,
\end{equation}
where $\omega_X$ denotes the canonical sheaf of $X$.

\begin{remark}\label{R:doubledual}
    Any coherent torsion-free sheaf injects into its double dual, see for instance  \cite[\href{https://stacks.math.columbia.edu/tag/0AVT}{Tag 0AVT}, Lemma 31.12.4]{stacks-project}.  Therefore  we can see $T_X/T_{\mathcal D}$ as a subsheaf of $N_{\mathcal D}$.
\end{remark}

If $\mathcal D$ is a distribution of codimension $q$ then there exists
$
    \omega \in H^0(X, \Omega^q_X \otimes \det(N_{\mathcal D})) \, ,
$
without codimension one zeros, such that $T_{\mathcal D}$ is the kernel of the morphism
\[
    T_X \to \Omega^{q-1}_X \otimes \det(N_{\mathcal D})
\]
defined by contraction with $\omega$.

\subsection{Foliations}
A foliation $\F$ on a smooth algebraic variety $X$ is a distribution with tangent sheaf $T_{\F}$ closed under Lie brackets. Explicitly, the $\mathcal O_X$-morphism  (also called the O'Neill tensor)
\begin{align*}
    \bigwedge^2 T_{\mathcal F} & \longrightarrow N_{\F} \\
    v\wedge w &\mapsto [v,w] \mod T_{\mathcal F}
\end{align*}
is identically zero. Here we used Remark \ref{R:doubledual} in order to replace $T_X/T_{\F}$ by $N_{\F}$. Coherent subsheaves of $T_X$ which are closed under Lie brackets are called involutive subsheaves.

\begin{lemma}\label{L:Frobenius Integrability}
    Let $\F$ be a foliation of dimension $r$ on a smooth algebraic variety $X$. If $x \notin \sing(\F)$ then there exist $v_1, \ldots, v_r$ generators of $T_{\F} \otimes \mathcal O_{X,x}$ such that $[v_i, v_j]=0$ for every $i,j \in \{ 1, \ldots, r\}$.
\end{lemma}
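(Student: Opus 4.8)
The plan is to avoid the classical route through local flows, which is unavailable in characteristic $p$ since a nonvanishing vector field cannot in general be straightened to $\partial/\partial x_1$, and instead to produce commuting generators directly from an echelon normal form together with involutivity. First I would record that, since $x \notin \sing(\F)$, the sheaf $T_X/T_{\F}$ is locally free near $x$; the defining sequence $0 \to T_{\F} \to T_X \to T_X/T_{\F} \to 0$ is then locally split, so $T_{\F}$ is a subbundle and $T_{\F} \otimes \mathcal{O}_{X,x}$ is free of rank $r$. Using smoothness of $X$, I would choose functions $x_1, \dots, x_n$ whose differentials form a basis of $\Omega^1_{X,x}$; on a suitable Zariski neighborhood of $x$ they define an \'etale map to $\mathbb{A}^n$, and hence commuting coordinate derivations $\partial_1, \dots, \partial_n$ forming an $\mathcal{O}$-basis of $T_X$ near $x$, with $[\partial_i,\partial_j]=0$.

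Next I would put a basis of $T_{\F} \otimes \mathcal{O}_{X,x}$ into echelon form. Writing the chosen generators in the basis $\partial_1, \dots, \partial_n$ yields an $r \times n$ coefficient matrix whose rank at $x$ equals $r$; after permuting the coordinates so that the leading $r \times r$ minor is a unit in $\mathcal{O}_{X,x}$, I multiply the generating tuple by the inverse of that minor. This produces a new basis $v_1, \dots, v_r$ of $T_{\F} \otimes \mathcal{O}_{X,x}$ of the shape
\[
    v_i = \partial_i + \sum_{j=r+1}^{n} a_{ij}\,\partial_j, \qquad a_{ij} \in \mathcal{O}_{X,x}.
\]

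The crux is a bracket computation. Since $[\partial_i, \partial_k]=0$, expanding $[v_i, v_k]$ shows that every resulting term is an $\mathcal{O}_{X,x}$-linear combination of $\partial_{r+1}, \dots, \partial_n$ only; that is, $[v_i, v_k]$ has vanishing $\partial_1, \dots, \partial_r$ components. On the other hand, involutivity of $T_{\F}$ gives $[v_i, v_k] \in T_{\F} \otimes \mathcal{O}_{X,x}$, so $[v_i, v_k] = \sum_{m=1}^{r} c_m v_m$ for some $c_m \in \mathcal{O}_{X,x}$; comparing the $\partial_m$ component for each $m \le r$, to which only $v_m$ contributes and with coefficient $1$, forces $c_m = 0$, whence $[v_i, v_k]=0$. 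The only point requiring care is the normal-form step: one must ensure the coordinate derivations are genuine commuting derivations defined over $\mathcal{O}_{X,x}$, and not merely over the completion, which is exactly what the \'etale coordinate system provides. Granting this, the argument is characteristic-free and makes no appeal to integration of vector fields.
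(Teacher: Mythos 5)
Your proof is correct. The paper does not argue this lemma itself but defers to \cite[Lemma 6.1]{MR2324555}, and the argument there is exactly the one you give — pass to \'etale coordinates, normalize a local basis of $T_{\F}$ to the echelon form $v_i = \partial_i + \sum_{j>r} a_{ij}\partial_j$, observe that $[v_i,v_k]$ then lies in the span of $\partial_{r+1},\ldots,\partial_n$, and use involutivity to force it to vanish — so you have simply supplied in full the standard, characteristic-free proof that the paper outsources to the citation.
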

\begin{proof}
    See \cite[Lemma 6.1]{MR2324555}.
\end{proof}

If $X$ is a complex algebraic variety then the vector fields given by Lemma \ref{L:Frobenius Integrability} can be integrated, in an analytic or formal neighborhood of $x$, to a germ of action of an abelian Lie group of dimension $r$. The situation in positive characteristic is utterly different as will be discussed at length in Section \ref{S:charp}.

A coherent subsheaf $\mathcal L$ of $\Omega^1_X$ is called integrable if
$d \mathcal L$ is contained in the saturation of $\mathcal L \wedge \Omega^1_X$ in $\Omega^2_X$.
Cartan's formula for the exterior derivative (see \cite[Lemma 3]{MR0106223} for its validity in positive characteristic)  implies that the involutiveness of $T_{\mathcal F}$ is equivalent to the integrability of $N^*_{\mathcal F}$.

\begin{ex}
    The foliation $\F$ on $\mathbb A^2$ with conormal sheaf generated by $\omega = xdy - \lambda y dx$, $\lambda$ different from $0$ and $-1$,
    is such that $d N^*_{\F}$ is not contained in $N^*_{\mathcal F} \wedge \Omega^1_X$.
\end{ex}
\begin{proof}
    Notice that $d \omega = (1+ \lambda) dx \wedge dy$. If $\lambda \neq 0, 1$ then $\omega$ vanishes only at $0$ ($\lambda \neq 0$) and
    $d\omega$ vanishes nowhere ($\lambda \neq -1$). Therefore it does not exist a regular $1$-form $\eta$ such that $d\omega = \eta \wedge \omega$.
\end{proof}

The integrability of $N^*_{\mathcal F}$ and the reflexiveness of $\Omega^{\bullet}_{\F}$ implies that the exterior derivative of differential forms on $X$ descends to a $\field$-linear morphism
\[
    d_{\F} : \Omega^{\bullet}_{\mathcal F} \to \Omega^{\bullet+1}_{\mathcal F}
\]
which, like the exterior derivative, satisfies Leibniz' rule. Indeed, if $U \subset X$ is a sufficiently small  open subset
contained in $X- \sing(\F)$  and $\omega$ belongs to $\Omega^i_{\F}(U)$ then we consider any lift $\hat \omega$ of $\omega$ to $\Omega^i_{X}(U)$  and define $d_{\F} \omega$ as the restriction of $d \hat \omega$ to $\wedge^{i+1} T_{\F}$. If $\hat \omega'$ is any other lift, then $\hat \omega - \hat \omega'$ belongs to $\Omega^{i-1}_X(U) \wedge N^*_{\F}(U)$ and the integrability condition implies that $d ( \hat \omega - \hat \omega' )$ vanishes on $\wedge^{i+1} T_{\F}$. If $j : X-\sing(\F) \to X$ denotes the inclusion, then we have just shown how to  define a $\field$-linear morphism $d_{\F}$ on $j^*\Omega^{\bullet}_{\F}$. The reflexiveness of $\Omega^{\bullet}_{\F}$ implies that $d_{\F}$ extends to a $\field$-linear morphism defined on the whole $X$ which inherits the Leibniz' property from $d$.

Every smooth variety $X$ has two trivial foliations. One foliation of dimension zero given by the pair $(0,\Omega^1_X) \subset (T_X,\Omega^1_X)$, called the foliation by points, and one foliation of dimension $\dim X$ given by the pair $(T_X,0)$, called the foliation with just one leaf.

\part{Geometry of foliations in positive characteristic}
\section{Differential geometry in positive characteristic}\label{S:diff geo charp}

In this section, for later use, we briefly review the most fundamental properties of vector fields in positive characteristic (Subsection \ref{SS:vector fields}), and the Cartier correspondence (Subsection \ref{SS:Cartier Correspondence}) for coherent sheaves endowed with flat connections with vanishing $p$-curvature.

\subsection{Absolute Frobenius}
Let $X$ be a scheme over an algebraically closed field $\field$ of characteristic $p>0$. The absolute Frobenius morphism of $X$ is the endomorphism  $\Frob = \Frob_X : X \to X$ of the scheme $X$, which acts as the identity on the topological space $X$ and which acts on the structural sheaf of $X$ by raising elements to their $p$-th powers.

\begin{dfn}
    Let $\mathcal E_1$ and $\mathcal E_2$ be coherent sheaves of $\mathcal O_X$-modules. A morphism
    $\varphi : \mathcal E_1 \to \mathcal E_2$ of sheaves of abelian groups is called
    \begin{enumerate}
        \item $p$-linear if $\varphi(f e) = f^p e$; and
        \item $p^{-1}$-linear if $\varphi(f^p e) = f e$
    \end{enumerate}
    for every $f \in \mathcal O_X(U)$, $e \in \mathcal E_1(U)$ and every open subset $U \subset X$.
\end{dfn}

A $p^{-1}$-linear morphism $\varphi: \mathcal E_1 \to \mathcal E_2$ is equivalent to an $\mathcal O_X$-linear morphism $\varphi: \Frob_* \mathcal E_1 \to \mathcal E_2$, see \cite[Sections 2 and 3]{BlickleSchwede13}. If $\mathcal E$ is a coherent $\mathcal O_X$-module then  $\mathcal E$ and $\Frob_* \mathcal E$  are isomorphic, but the $\mathcal O_X$-module structure on $\Frob_* \mathcal E$ is different. If we denote the action of $\mathcal O_X$ on $\Frob_* \mathcal E$ by $\star$ then
\[
    f \star \sigma = f^p \sigma
\]
for any $f \in \mathcal O_X$ and any $\sigma \in \Frob_* \mathcal E$.

If $\mathcal E$ still denotes a coherent sheaf of $\mathcal O_X$-modules then
\begin{align*}
    \Frob^* \mathcal E & = \mathcal O_X \otimes_{\Frob^{-1} \mathcal O_X} \Frob^{-1} \mathcal E && \text{(Definition of pullback)} \\
    & = \mathcal O_X  \otimes_{\Frob^{-1} \mathcal O_X} \mathcal E && \text{($\Frob$ is the identity on $|X|$).}
\end{align*}
Consequently, in $\Frob^* \mathcal E$ we have that
\begin{equation}\label{E:Frob^*}
    1 \otimes f \sigma = f^p \otimes \sigma
\end{equation}
for any $f \in \mathcal O_X$ and any $\sigma \in \mathcal E$. It follows that a $p$-linear morphism $\varphi: \mathcal E_1 \to \mathcal E_2$ is equivalent to an $\mathcal O_X$-linear morphism $\varphi : \Frob^* \mathcal E_1 \to \mathcal E_2$.

The $\mathcal O_X$-module $\Frob^* \mathcal E$ comes endowed with a canonical connection
\begin{align*}
    \CC : \Frob^* \mathcal E & \to \Omega^1_X \otimes \Frob^* \mathcal E \\
    f \otimes \sigma & \mapsto df \otimes (1 \otimes \sigma) \, .
\end{align*}
It follows from Equation (\ref{E:Frob^*}) that the kernel of $\CC$ can be identified, as sheaves of abelian groups, with $1 \otimes \mathcal E \subset \Frob^* \mathcal E$.

\subsection{Vector fields and the $p$-curvature of connections}\label{SS:vector fields}
Given a vector field/derivation $v \in T_X(U)$ on an open subset $U \subset X$ of a variety defined over a field $\field$ of characteristic $p>0$, then the $p$-th iteration of $v$ is also a derivation since Leibniz rule implies
\[
    v^p(f \cdot g) = \sum_{i=0}^p \binom{p}{i} v^{i}(f) \cdot v^{p-i}(g) = f v^{p}(g) + v^p(f) g
\]
for every $f,g \in \mathcal O_X(U)$.

Thus, one can define a morphism of sheaves of sets $\times^{p} : T_X \to T_X$ which takes a vector field and raises it to its $p$-th power.
This is not a morphism of sheaves of abelian groups, but a result of Jacobson (\cite[Equation (5.2.4)]{Katz1970}) says that \begin{equation}\label{E:Jacobson}
    (v + w)^p = v^p + w^p + Q_p(v,w)
\end{equation}
where $Q_p$ is a Lie polynomial, i.e. a polynomial on (iterated Lie) brackets of $v$ and $w$.  Moreover, if $g \in \mathcal O_X$ and
$v \in T_X$ then
\begin{equation}\label{E:Deligne}
    (f v)^p = f^p v^p - f v^{p-1}(f^{p-1}) v \, ,
\end{equation}
according to \cite[Equation (5.4.0)]{Katz1970} modulo a sign misprint.

Recall from \cite[Section 5]{Katz1970} that the $p$-curvature of a connection $\nabla : \mathcal E \to \Omega^1_X \otimes \mathcal E$ is the
morphism of $\mathcal O_X$-modules
\begin{align*}
    \psi : \Frob^* T_X & \longrightarrow \End_{\mathcal O_X}(\mathcal E) \\
    v & \mapsto (\nabla_v)^p - \nabla_{v^p} \, ,
\end{align*}
where $\nabla_\bullet$ is the  $\field$-endomorphisms of $\mathcal E$ that send defined by
\begin{align*}
    \mathcal E & \longrightarrow \mathcal E \\
    \sigma & \mapsto (i_\bullet \otimes \identity) \nabla\sigma \, .
\end{align*}
where $i_{\bullet}$ denotes the contraction of $1$-forms with the vector field $\bullet$.

\begin{ex}
    The canonical connection $\CC$ defined on $\Frob^* \mathcal E$ is a flat connection with  zero $p$-curvature.
\end{ex}

\subsection{Relative Frobenius, $p$-closed flat connections, and the Cartier correspondence}\label{SS:Cartier Correspondence}
The absolute Frobenius is not a morphism of $\field$-schemes. Nevertheless, if we set $S= \Spec \field$ and
consider the fiber product  $X^{(p)} = X \times_{S} S$ of the structure morphism of $X$ with the absolute Frobenius of $S$ then the absolute
Frobenius $\Frob$ factors as in the following commutative diagram
\begin{center}
    \begin{tikzcd}
        X \arrow[rr,"\Frob_{X/S}"]  \arrow[rrrr,bend left,"\Frob=\Frob_X"] \arrow{drr} && X^{(p)} \arrow{d} \arrow[rr,"W_{X/S}"] && X \arrow{d} \\
        && S \arrow[rr,"\Frob_S"] && S \, . \\
    \end{tikzcd}
\end{center}
The morphism $\Frob_{X/S} : X \to X^{(p)}$ is a morphism of $\field$-schemes and is called the relative Frobenius morphism of $X$ over $S$, see  \cite[Éxpose XV]{SGA5}. Notice that $W_{X/S} : X^{(p)} \to X$ is an isomorphism of schemes (but not of $\field$-schemes) defined through
the twisting of the coefficients by means of the field automorphism $x \mapsto x^p$.

Given any coherent sheaf $\mathcal E$ of $\mathcal O_X$-modules on $X$, we set $\mathcal E^{(p)} = W_{X/S}^* \mathcal E$.
In other words, $\mathcal E^{(p)}$ is the
base change of $\mathcal E$ to $X^{(p)}$. As such, $\mathcal E^{(p)}$ is an $\mathcal O_{X^{(p)}}$-module
such that $\Frob^* \mathcal E = \Frob_{X/S}^* \mathcal E^{(p)}$. If we consider $\mathcal O_{X^{(p)}}$ as a subsheaf of $\mathcal O_X$
then the kernel of the canonical connection $\CC$ on $\Frob_{X/S}^* \mathcal E^{(p)}$ is an $\mathcal O_{X^{(p)}}$-module
that can be identified with $\mathcal E^{(p)}$. Reciprocally, if $\mathcal E$ is a coherent sheaf on $X$ endowed with a flat connection
$\nabla$ of $p$-curvature zero then the kernel of $\nabla$ is, in a natural way, an $\mathcal O_{X^{(p)}}$-module such that
\[
    \Frob_{X/S}^* (\ker \nabla) \simeq \mathcal E \, .
\]
This is essentially the content of the Cartier correspondence: there exists an equivalence of categories between the category of
quasi-coherent sheaves on $X^{(p)}$ and the category of quasi-coherent $\mathcal O_X$-modules with flat connections of $p$-curvature zero
given by
\[
    \mathcal E \mapsto (\Frob_{X/S}^* \mathcal E , \CC) \quad \text{ and } \quad (\mathcal E, \nabla) \mapsto \ker \nabla,
\]
see, for instance, \cite[Theorem 5.1]{Katz1970}.

\begin{lemma}\label{L:lift}
    Let $\mathcal E$ be a coherent sheaf of $\mathcal O_X$-modules. Let $\mathcal K \subset \Frob^*\mathcal E$
    be a coherent subsheaf.
    If the morphism of $\mathcal O_X$-modules
    \begin{align*}
        \varphi: \mathcal K & \longrightarrow \Omega^1_X \otimes \frac{\Frob^* \mathcal E}{\mathcal K}
    \end{align*}
    induced by $\CC$ is identically zero then there exists a coherent subsheaf $\mathcal L \subset \mathcal E$ such that $\mathcal K = \Frob^* \mathcal L$
\end{lemma}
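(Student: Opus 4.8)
The plan is to reinterpret the vanishing of $\varphi$ as the statement that $\mathcal K$ is stable under the canonical connection, and then to feed the pair $(\mathcal K, \CC|_{\mathcal K})$ into the Cartier correspondence recalled above. Concretely, $\varphi$ is the composite of $\CC|_{\mathcal K} : \mathcal K \to \Omega^1_X \otimes \Frob^*\mathcal E$ with the projection $\Omega^1_X \otimes \Frob^*\mathcal E \to \Omega^1_X \otimes (\Frob^*\mathcal E / \mathcal K)$; this composite is $\mathcal O_X$-linear precisely because the Leibniz term $df \otimes \sigma$ dies in the quotient when $\sigma \in \mathcal K$ (the usual second fundamental form). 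Hence $\varphi \equiv 0$ is equivalent to $\CC(\mathcal K) \subseteq \Omega^1_X \otimes \mathcal K$, so $\CC$ restricts to a connection $\CC|_{\mathcal K} : \mathcal K \to \Omega^1_X \otimes \mathcal K$.

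Next I would check that $(\mathcal K, \CC|_{\mathcal K})$ is an object of the source category of the Cartier correspondence, i.e.\ that the restricted connection is flat with vanishing $p$-curvature. Both properties are inherited from $\CC$: the curvature $R(v,w) = [\CC_v,\CC_w] - \CC_{[v,w]}$ and the $p$-curvature $\psi(v) = (\CC_v)^p - \CC_{v^p}$ of $\CC$ are identically zero on $\Frob^*\mathcal E$, and since each of $\CC_v$, $(\CC_v)^p$ and $\CC_{v^p}$ now preserves $\mathcal K$, their restrictions to $\mathcal K$ compute the curvature and $p$-curvature of $\CC|_{\mathcal K}$, which therefore also vanish. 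Thus the Cartier correspondence applies to $(\mathcal K, \CC|_{\mathcal K})$.

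Applying the inverse functor $(\mathcal E,\nabla)\mapsto \ker\nabla$, set $\mathcal L_0 = \ker(\CC|_{\mathcal K})$, an $\mathcal O_{X^{(p)}}$-module with $\Frob_{X/S}^*\mathcal L_0 \simeq \mathcal K$. Since horizontal sections of $\CC|_{\mathcal K}$ are in particular horizontal for $\CC$, we have $\mathcal L_0 = \mathcal K \cap \ker\CC$, and because $\ker\CC$ is identified with $\mathcal E^{(p)} = W_{X/S}^*\mathcal E$, this exhibits $\mathcal L_0$ as an $\mathcal O_{X^{(p)}}$-submodule of the coherent sheaf $\mathcal E^{(p)}$; as $X^{(p)}$ is Noetherian, $\mathcal L_0$ is automatically coherent. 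Finally I would transport $\mathcal L_0$ through the scheme isomorphism $W_{X/S} : X^{(p)} \to X$, obtaining a coherent subsheaf $\mathcal L \subseteq \mathcal E$ with $\mathcal L^{(p)} = \mathcal L_0$; using $\Frob = W_{X/S}\circ \Frob_{X/S}$, so that $\Frob^* = \Frob_{X/S}^* \circ W_{X/S}^*$, this gives $\Frob^*\mathcal L = \Frob_{X/S}^*\mathcal L_0 \simeq \mathcal K$, as desired.

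The conceptual heart of the argument is the first step: recognizing that the hypothesis $\varphi \equiv 0$ is exactly the condition that $\mathcal K$ be a subobject of $(\Frob^*\mathcal E, \CC)$ in the category of flat connections with vanishing $p$-curvature. Once this is in place the conclusion is essentially formal, being a direct invocation of the Cartier equivalence. The only genuinely delicate points to get right are the bookkeeping between $X$ and $X^{(p)}$ (ensuring $\mathcal L$ really lands inside $\mathcal E$ via $W_{X/S}$ and that $\Frob^*\mathcal L$ recovers $\mathcal K$ rather than a twist) and the coherence of the descended sheaf, which I resolve cleanly by realizing $\mathcal L_0$ as a submodule of the coherent sheaf $\mathcal E^{(p)}$ on the Noetherian scheme $X^{(p)}$ rather than by invoking descent of finite presentation along $\Frob_{X/S}$.
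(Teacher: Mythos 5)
Your proposal is correct and follows the same route as the paper: translate $\varphi\equiv 0$ into the statement that $\mathcal K$ is preserved by the canonical connection, apply the Cartier correspondence to obtain a subsheaf of $\mathcal E^{(p)}$ on $X^{(p)}$, and transport it back along the isomorphism $W_{X/S}$ to land inside $\mathcal E$. The paper states this more tersely; your extra verifications (flatness and vanishing $p$-curvature of the restricted connection, coherence of $\ker(\CC|_{\mathcal K})$ as a submodule of $\mathcal E^{(p)}$) are exactly the details it leaves implicit.
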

\begin{proof}
    If $\varphi$ vanishes then the sheaf $\mathcal K$ is preserved by the canonical connection of $\Frob^* \mathcal E$. By Cartier correspondence
    there exists a subsheaf $\mathcal L^{(p)}\subset \mathcal E^{(p)}$ on $X^{(p)}$ such that $\mathcal K = \Frob_{X/S}^* \mathcal L^{(p)}$. Since $W_{X/S}$ is an isomorphism, and $\mathcal E^{(p)}= W_{X/S}^* \mathcal E$,  there exists a subsheaf $\mathcal L \subset \mathcal E$ on $X$ such that $\mathcal L^{(p)} = W_{X/S}^* \mathcal L$. The sheaf $\mathcal L$ is such that $\mathcal K = \Frob^* \mathcal L$.
\end{proof}

\section{Particular features of foliations in positive characteristic}\label{S:charp}
In this section, we introduce the key concepts for the first part of the paper: the $p$-curvature of the foliation (Subsection \ref{SS:p-curvature}), $p$-dense foliations (Subsection \ref{SS:pdense}), the degeneracy divisor of the $p$-curvature, the kernel of the $p$-curvature of a foliation (Subsection \ref{SS:pkernel}), and  the Cartier transform of a foliation (Subsection \ref{SS:Cartier transform}).

\subsection{The $p$-curvature of foliations}\label{SS:p-curvature}
Let $\F$ be a foliation on $X$. It follows from Equations (\ref{E:Jacobson}) and (\ref{E:Deligne}) that the map which sends $v \in T_{\F}$ to the class of $v^p$  in ${T_X}/{T_{\F}} \subset N_{\F}$ is $p$-linear and additive. We can thus define the morphism of $\mathcal O_X$-modules
\begin{align*}
    \pcurvature{\F} : \Frob^* T_{\F} & \longrightarrow N_{\F} \\
    \sum f_i \otimes v_i & \mapsto \sum f_i v_i^p \mod T_{\F} \, .
\end{align*}
The morphism $\pcurvature{\F}$ is called the $p$-curvature of $\F$. If $\pcurvature{\F} =0$ then we say that the
foliation is $p$-closed.

The foliation $\F$ determines a subsheaf $\mathcal O_{X/\F} \subset \mathcal O_X$
consisting of functions that are annihilated by any section of $T_{\F}$, that is
\[
    \mathcal O_{X/\F} = \{ a \in \mathcal O_X \, ; \, v(a) =0 \text{ for any } v \in T_\F \} \, .\
\]

\begin{thm}\label{T:pintegra}
    If $\F$ is a $p$-closed foliation of dimension $r$ on a smooth algebraic variety $X$
    then $Y = \Spec_{\mathcal O_{X^{(p)}}} \mathcal O_{X/\F}$ is a normal algebraic variety
    and the inclusion $\mathcal O_{X/\F} \subset \mathcal O_X$ determines a morphism $f:X \to Y$
    which factors the  relative Frobenius $\Frob_{X/\Spec \field} : X \to X^{(p)}$ and satisfies
    $[\field(X): f^* \field(Y)]=p^r$.
    Reciprocally, if $\mathcal O\subset \mathcal O_X$ is a sheaf of $\mathcal O_{X^{(p)}}$-algebras   integrally closed in
    $\mathcal O_X$ then there exists a $p$-closed foliation $\F$ such that $\mathcal O= \mathcal O_{X/\F}$.
\end{thm}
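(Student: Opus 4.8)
The plan is to treat the two implications separately, reducing the essential content in both cases to Jacobson's Galois correspondence for purely inseparable field extensions of exponent one at the generic point of $X$, and then to globalize using the normality of $X$ together with finiteness over $\mathcal O_{X^{(p)}}$.

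For the direct implication, I would first record that $\mathcal O_{X^{(p)}} \subseteq \mathcal O_{X/\F} \subseteq \mathcal O_X$ as sheaves of $\mathcal O_{X^{(p)}}$-algebras: any derivation kills $p$-th powers, so $\mathcal O_X^p \subseteq \mathcal O_{X/\F}$, and $\mathcal O_{X^{(p)}}$ is identified with $\mathcal O_X^p$ because $\field$ is perfect. Since $\mathcal O_X$ is a finite $\mathcal O_{X^{(p)}}$-module over the Noetherian ring $\mathcal O_{X^{(p)}}$, the intermediate subsheaf $\mathcal O_{X/\F}$ is a coherent sheaf of $\mathcal O_{X^{(p)}}$-algebras over which $\mathcal O_X$ is finite; hence $Y = \Spec_{\mathcal O_{X^{(p)}}} \mathcal O_{X/\F}$ is finite over $X^{(p)}$, the algebra inclusions produce the finite morphism $f : X \to Y$ together with $Y \to X^{(p)}$, and their composite is $\Frob_{X/S}$. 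To compute the degree I pass to the generic point $\eta$: writing $L = \field(X)$ and $K = L^p = \field(X^{(p)})$, the extension $L/K$ is purely inseparable of exponent one with $\dim_L \mathrm{Der}_K(L) = n = \dim X$. The fibre $(T_\F)_\eta \otimes L$ is an $L$-subspace $\mathfrak g \subset \mathrm{Der}_K(L)$ of dimension $r$, closed under bracket (foliation) and under $p$-th powers ($p$-closedness), so by Jacobson's correspondence its field of invariants $M := L^{\mathfrak g}$ satisfies $[L:M] = p^r$. As $\mathcal O_{X/\F}$ is the invariant sheaf of $T_\F$, its stalk at $\eta$ is $L^{\mathfrak g} = M$, whence $\field(Y) = M$ and $[\field(X) : f^* \field(Y)]=p^r$. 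Normality of $Y$ then follows from the identity $\mathcal O_{X/\F} = \mathcal O_X \cap M$ inside $L$ (a section of $\mathcal O_X$ is killed by all local sections of $T_\F$ iff it is killed by $\mathfrak g$ generically, using that $\mathcal O_X$ embeds in $L$): any $b \in M$ integral over $\mathcal O_{X/\F}$ is integral over $\mathcal O_X$ and lies in $L$, hence lies in $\mathcal O_X$ by normality of $X$, and therefore in $\mathcal O_X \cap M = \mathcal O_{X/\F}$.

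For the converse, given $\mathcal O$ I would let $T_\F$ be the subsheaf of $T_X$ of derivations annihilating $\mathcal O$ (passing to its saturation if needed so that $\F$ is a distribution in the sense of the paper). This subsheaf is involutive, since $[v,w](a) = v(w(a)) - w(v(a))$ vanishes on $\mathcal O$ whenever $v,w$ do, and $p$-closed, since $v^p(a) = v^{p-1}(v(a)) = 0$; thus $\F$ is a $p$-closed foliation. The inclusion $\mathcal O \subseteq \mathcal O_{X/\F}$ is immediate. For the reverse inclusion, set $M = \mathrm{Frac}(\mathcal O)$; at $\eta$ the condition defining $\mathcal O_{X/\F}$ becomes $a \in L^{\mathrm{Der}_M(L)}$, which equals $M$ by Jacobson's theorem, so $\mathcal O_{X/\F} \subseteq \mathcal O_X \cap M$. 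Since $\mathcal O \supseteq \mathcal O_X^p$ forces $\mathcal O_X$ to be finite, hence integral, over $\mathcal O$, every element of $\mathcal O_X \cap M$ is integral over $\mathcal O$ and lies in $\mathcal O_X$; the hypothesis that $\mathcal O$ is integrally closed in $\mathcal O_X$ then gives $\mathcal O_X \cap M = \mathcal O$, completing the identification $\mathcal O_{X/\F} = \mathcal O$.

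The main obstacle I anticipate is not the commutative algebra but the field-theoretic core: verifying that the generic restricted Lie algebra $\mathfrak g$ matches $\mathrm{Der}_M(L)$ and invoking Jacobson's correspondence in the precise form $L^{\mathrm{Der}_M(L)} = M$ and $[L:L^{\mathfrak g}] = p^{\dim_L \mathfrak g}$. A secondary technical point, in the converse, is confirming that the sheaf of $\mathcal O$-linear derivations is saturated (equivalently, that $\F$ is a genuine distribution and not merely an involutive subsheaf), which I expect to follow once more from the integral closedness of $\mathcal O$ together with the smoothness of $X$.
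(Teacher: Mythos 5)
The paper does not actually prove this theorem; it simply cites Miyaoka--Peternell and Seshadri, and your argument is precisely the standard proof found there: Jacobson's Galois correspondence for exponent-one purely inseparable extensions at the generic point (giving $[\field(X):\field(Y)]=p^r$ and $L^{\mathrm{Der}_M(L)}=M$), globalized via the identity $\mathcal O_{X/\F}=\mathcal O_X\cap M$ together with finiteness over $\mathcal O_{X^{(p)}}$ and normality of $X$. The steps all check out, including the saturation and $p$-closure of the annihilator sheaf in the converse. The only caveat is not yours but the paper's: read literally, ``integrally closed in $\mathcal O_X$'' is vacuous here (every section of $\mathcal O_X$ is purely inseparable, hence integral, over any $\mathcal O\supseteq\mathcal O_{X^{(p)}}$, which would force $\mathcal O=\mathcal O_X$); the intended hypothesis is that $\mathcal O$ equals $\mathcal O_X\cap\mathrm{Frac}(\mathcal O)$, i.e.\ is integrally closed in its fraction field, and that is exactly the way you use it.
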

\begin{proof}
    See  \cite[Proposition 1.9, Lecture II, Part I]{miyaoka1997geometry} or \cite[Theorem 2]{Seshadri1960}.
\end{proof}

\begin{cor}
    Let $\F$ be a foliation of codimension $q$ on a smooth algebraic variety $X$. The foliation $\F$ is $p$-closed
    if, and only if, there exists $q$ rational first integrals for $\F$, say $f_1, \ldots, f_q \in \field(X)$, such that
    $df_1 \wedge \cdots \wedge df_q \neq 0$.
\end{cor}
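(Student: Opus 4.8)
The plan is to pass everything to the generic point $\eta$ of $X$ and its function field $K = \field(X)$, where rational first integrals form a subfield and the non-vanishing of $df_1 \wedge \cdots \wedge df_q$ becomes a statement about $p$-independence. Write $n = \dim X$, so that $r = \dim \F = n - q$. A rational function $f$ is a first integral of $\F$ exactly when $v(f) = 0$ for every $v \in T_{\F}$, that is, when $f$ lies in the subfield
\[
    L := \{ f \in K \; | \; v(f) = 0 \text{ for all } v \in (T_{\F})_\eta \} \, ,
\]
which is precisely the function field $f^*\field(Y)$ attached to $Y$ in Theorem \ref{T:pintegra} (the generic stalk of $\mathcal O_{X/\F}$ is already a field, as $\mathcal O_{X,\eta} = K$). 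Since $\field$ is perfect we have $K^p \subseteq L \subseteq K$ and $[K : K^p] = p^n$. Moreover, for a first integral $f$ the form $df$ annihilates $T_{\F}$, hence $df \in (N^*_{\F})_\eta$; since $N^*_{\F}$ has generic rank $q$, the condition $df_1 \wedge \cdots \wedge df_q \neq 0$ is equivalent to $df_1, \ldots, df_q$ being a $K$-basis of $(N^*_{\F})_\eta$, equivalently to $f_1, \ldots, f_q$ being $p$-independent over $K^p$.

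For the converse implication, assume first integrals $f_1, \ldots, f_q$ with $df_1 \wedge \cdots \wedge df_q \neq 0$ are given. For any local section $v$ of $T_{\F}$ one has $v(f_i) = 0$, and therefore $v^p(f_i) = v^{p-1}(v(f_i)) = 0$ for every $i$; thus the derivation $v^p$ annihilates each $f_i$, i.e. $df_i(v^p) = 0$. By the first paragraph the forms $df_1, \ldots, df_q$ span $(N^*_{\F})_\eta$, so $v^p$ lies in the annihilator of $N^*_{\F}$, which is $T_{\F}$, at the generic point. Consequently the $p$-curvature $\pcurvature{\F} : \Frob^* T_{\F} \to N_{\F}$ vanishes at $\eta$; since $N_{\F}$ is reflexive, hence torsion-free, this forces $\pcurvature{\F} = 0$, so $\F$ is $p$-closed.

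For the direct implication, suppose $\F$ is $p$-closed. Theorem \ref{T:pintegra} gives $[K : L] = p^r$, and combined with $[K : K^p] = p^n$ this yields $[L : K^p] = p^{n-r} = p^q$. The extension $L/K^p$ is purely inseparable of exponent at most one, because $f \in L$ implies $f^p \in K^p \subseteq L$. A purely inseparable extension of exponent one and degree $p^q$ admits a $p$-basis $f_1, \ldots, f_q$ over $K^p$, i.e. $L = K^p(f_1, \ldots, f_q)$ with the $f_i$ $p$-independent; by the standard correspondence between $p$-independence and linear independence of differentials, this is exactly $df_1 \wedge \cdots \wedge df_q \neq 0$. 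As each $f_i \in L$ is a first integral, these are the desired functions.

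I expect the main obstacle to lie in the bookkeeping of the direct implication: correctly identifying the field $L$ of rational first integrals with the function field $f^*\field(Y)$ of Theorem \ref{T:pintegra} in order to read off the degree $[L : K^p] = p^q$, and then invoking the equivalence between the existence of a size-$q$ $p$-basis and the non-vanishing of the $q$-fold wedge $df_1 \wedge \cdots \wedge df_q$. The converse, by contrast, is essentially formal, resting only on the identity $v^p(f_i) = 0$ and the torsion-freeness of $N_{\F}$.
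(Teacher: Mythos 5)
Your proof is correct, and it follows what is essentially the standard route: the paper itself gives no argument for this corollary beyond a citation to \cite[Section 7]{MR3842065}, and your derivation from Theorem \ref{T:pintegra} — via the tower $K^p \subseteq L \subseteq K$ with $[K:K^p]=p^n$, $[K:L]=p^r$, the extraction of a $p$-basis of $L/K^p$ of size $q$, and the equivalence between $p$-independence and $df_1\wedge\cdots\wedge df_q\neq 0$ — is the same field-theoretic mechanism that underlies the cited proof. The converse direction (vanishing of $\pcurvature{\F}$ at the generic point plus torsion-freeness of $N_{\F}$) is likewise sound.
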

\begin{proof}
    See \cite[Section 7]{MR3842065}.
\end{proof}

\subsection{$p$-dense foliations}\label{SS:pdense}
Some authors use the term foliation only to refer to $p$-closed foliations. In this work
we will also consider foliations which are not $p$-closed.

\begin{dfn}
    Let $\F$ be a foliation on a smooth variety $X$. The $p$-closure of $\F$  is the (unique) $p$-closed
    foliation $\pclosure{\F}$ such that $\mathcal O_{X/\F} = \mathcal O_{X/\pclosure{\F}}$. When the $p$-closure
    of $\F$ coincides with the foliation with only one leaf, \ie, $\mathcal O_{X/\F} = \mathcal O_{X^{(p)}}$,
    we will say that $\F$ is $p$-dense.
\end{dfn}

Notice that a foliation is $p$-dense if, and only if, any rational first integral $f \in \field(X)$ of $\F$ is
such that $df =0$.

\begin{prop}\label{P:old}
    If $\F$ is a $p$-dense codimension one foliation on a smooth algebraic variety $X$ defined by $\omega \in H^0(X,\Omega^1_X \otimes N_{\F})$
    and $v$ is a rational vector field tangent to $\F$ such that $\omega(v^p) \neq 0$ then the rational $1$-form
    \[
        \frac{\omega}{\omega(v^p)}
    \]
    is closed.
\end{prop}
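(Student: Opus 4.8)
The plan is to reduce the statement to a computation with the genuine rational $1$-form $\alpha := \omega/\omega(v^p)$, which defines the same foliation as $\omega$ and satisfies $\alpha(v)=0$ (because $v$ is tangent to $\F$) and $\alpha(v^p)=1$. Passing to the generic point of $X$ I may trivialize $\NF$ and treat $\alpha$ as an honest rational $1$-form, the twisting factor cancelling in the quotient; thus closedness of $\omega/\omega(v^p)$ is exactly closedness of $\alpha$. Since $\F$ is a foliation, $\CNF$ is integrable, which in codimension one means $\alpha\wedge d\alpha=0$; as $\alpha\neq 0$, exactness of the Koszul complex at the generic point produces a rational $1$-form $\theta$ with $d\alpha=\theta\wedge\alpha$.

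First I would record that $v$ is an infinitesimal symmetry of $\alpha$. Cartan's formula $\mathcal L_v=i_v d+d\,i_v$ together with $\alpha(v)=0$ gives
\[
\mathcal L_v\alpha=i_v(\theta\wedge\alpha)=\theta(v)\,\alpha=:c\,\alpha,
\]
with $c=\theta(v)\in\field(X)$. Iterating, an immediate induction using $\mathcal L_v(f\alpha)=v(f)\,\alpha+f\,\mathcal L_v\alpha$ shows $(\mathcal L_v)^k\alpha=a_k\,\alpha$ for rational functions with $a_0=1$ and $a_{k+1}=v(a_k)+c\,a_k$; in particular $(\mathcal L_v)^p\alpha=a_p\,\alpha$.

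The key input is that $v^p$ is again a symmetry of $\alpha$, and this comes from the characteristic $p$ identity $(\mathcal L_v)^p=\mathcal L_{v^p}$. Indeed $(\mathcal L_v)^p$ is a degree-zero derivation of the algebra of rational forms that commutes with $d$ and restricts to $v^p$ on functions, and the Lie derivative is the unique derivation with these two properties, so $(\mathcal L_v)^p=\mathcal L_{v^p}$; hence $\mathcal L_{v^p}\alpha=a_p\,\alpha$. On the other hand, computing $\mathcal L_{v^p}\alpha$ directly from Cartan's formula and $\alpha(v^p)=1$ yields
\[
\mathcal L_{v^p}\alpha=i_{v^p}(\theta\wedge\alpha)+d(1)=\theta(v^p)\,\alpha-\theta.
\]
Comparing the two expressions gives $\theta=(\theta(v^p)-a_p)\,\alpha$, so $\theta$ is proportional to $\alpha$, whence $d\alpha=\theta\wedge\alpha=0$.

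I expect the only genuine subtlety to be the justification of $(\mathcal L_v)^p=\mathcal L_{v^p}$ — equivalently, the fact that tangency of $v$ forces $v^p$ to be a transverse symmetry of $\F$ even though $v^p\notin\TF$ in general; everything else is formal Cartan calculus valid over $\field(X)$. It is worth noting that the argument uses only $\omega(v^p)\neq 0$, the tangency of $v$, and integrability; the $p$-density hypothesis is what guarantees, in codimension one, the existence of such a $v$ (a $p$-closed codimension one foliation would force $\omega(v^p)=0$).
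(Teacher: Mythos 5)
Your proof is correct, but it takes a genuinely different route from the one the paper has in mind. The paper does not prove Proposition \ref{P:old} directly: it cites Cerveau--Lins Neto and presents, as Proposition \ref{P:closed}, the argument it regards as the model proof. That argument is frame-theoretic: one takes commuting local generators $v_1,\ldots,v_r$ of $T_{\F}$ from Lemma \ref{L:Frobenius Integrability}, observes that their iterated $p$-th powers commute with everything in sight, completes to a rational frame of commuting vector fields using $p$-density, and checks that the dual coframe consists of closed forms via $d\alpha_i(v_j,v_k)=v_j(\alpha_i(v_k))-v_k(\alpha_i(v_j))-\alpha_i([v_j,v_k])=0$. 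You instead run the transverse-symmetry argument: writing $d\alpha=\theta\wedge\alpha$ by integrability, showing $\mathcal L_v\alpha$ is proportional to $\alpha$, and playing the identity $(\mathcal L_v)^p=\mathcal L_{v^p}$ against the direct Cartan computation of $\mathcal L_{v^p}\alpha$ to force $\theta$ to be proportional to $\alpha$. Your key identity is sound: $(\mathcal L_v)^p$ is a degree-zero derivation of $\Omega^\bullet_{\field(X)}$ (the binomial coefficients $\binom{p}{i}$ vanish for $0<i<p$ and there are no Koszul signs in degree zero), it commutes with $d$, and it agrees with $v^p$ on functions, which pins it down as $\mathcal L_{v^p}$. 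What your approach buys is that it works for an arbitrary rational $v$ tangent to $\F$ with $\omega(v^p)\neq 0$, with no need to relate $v$ to a commuting frame, and it isolates the conceptual point (closedness comes from $v^p$ being a transverse infinitesimal symmetry); what the paper's approach buys is that it generalizes immediately to arbitrary codimension, which is exactly what Proposition \ref{P:closed} needs. Your closing remark is also accurate: in codimension one the $p$-closure of $\F$ is either $\F$ or the one-leaf foliation, so $p$-density is equivalent to the existence of a tangent $v$ with $\omega(v^p)\neq 0$, and your argument uses only that hypothesis.
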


The result above first appeared in  \cite[Theorem 6.2]{MR1394970}. Its proof can be generalized to $p$-dense foliations of arbitrary codimension as shown by the next proposition.

\begin{prop}\label{P:closed}
    If $\F$ is a $p$-dense foliation of codimension $q$ on a smooth algebraic variety $X$ then there exists
    closed rational $1$-forms $\omega_1, \ldots, \omega_q$ which vanish on $T_{\F}$ and satisfy $\omega_1 \wedge \cdots \wedge \omega_q \neq 0$.
\end{prop}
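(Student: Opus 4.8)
The plan is to work at the generic point of $X$, produce a rational frame of $T_X$ that is pairwise commuting and contains a frame of $T_{\F}$, and then take the dual coframe: the forms dual to the directions transverse to $\F$ will be the desired closed $1$-forms. The case $q=1$ of this recovers $\omega/\omega(v^p)$ from Proposition \ref{P:old}, since that form is exactly the coframe element normalized to equal $1$ on the transverse direction $v^p$ and to vanish on $T_{\F}$.

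First I would set up the frame. Restricting to $X \setminus \sing(\F)$ (whose complement has codimension at least two), Lemma \ref{L:Frobenius Integrability} furnishes, over a Zariski-dense open set, pairwise-commuting rational vector fields $v_1, \ldots, v_r$ generating $T_{\F}$ and linearly independent over $K = \field(X)$, where $r = \dim \F$. The key structural input is the operator identity $\mathrm{ad}(v^p) = (\mathrm{ad}\, v)^p$, valid in characteristic $p$ because $\mathrm{ad}\, v = L_v - R_v$ with $L_v$ and $R_v$ commuting in the associative algebra of $\field$-linear operators on $K$. It follows that if $v$ and $w$ commute, then so do $v^p$ and $w$; iterating, all iterated $p$-th powers $v_i^{p^k}$ pairwise commute and commute with every $v_j$.

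Next I would extract the transverse directions from $p$-density. Since the $v_i$ commute, the Lie polynomial $Q_p$ in Equation (\ref{E:Jacobson}) vanishes on them, and by Equation (\ref{E:Deligne}) the correction term of $(fv)^p$ already lies in the span generated; consequently the restricted Lie algebra generated by $v_1, \ldots, v_r$ coincides with the $K$-span of $\{v_i^{p^k}\}$, which is $T_{\pclosure{\F}} \otimes K$. As $\F$ is $p$-dense, this span is all of $T_K$, so I can select $u_1, \ldots, u_q$ among the $v_i^{p^k}$ for which $v_1, \ldots, v_r, u_1, \ldots, u_q$ is a $K$-basis of $T_K$, i.e. a rational frame of $T_X$ that, by the previous step, is pairwise commuting. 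Let $\omega_1, \ldots, \omega_q$ be the rational $1$-forms dual to $u_1, \ldots, u_q$ in this frame, so that $\omega_j(u_k) = \delta_{jk}$ and $\omega_j(v_i) = 0$. These vanish on $T_{\F}$ and satisfy $\omega_1 \wedge \cdots \wedge \omega_q \neq 0$. Closedness is immediate from Cartan's formula: for frame elements $a, b$ one has $d\omega_j(a,b) = a(\omega_j(b)) - b(\omega_j(a)) - \omega_j([a,b])$, where the first two terms vanish because $\omega_j$ takes values in $\field$ on frame elements and the last vanishes because the frame commutes; hence $d\omega_j = 0$ generically, and therefore as a rational form on $X$.

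The step I expect to be the main obstacle is the identification in the second paragraph: rigorously showing that the $K$-span of the iterated $p$-th powers of a commuting frame equals $T_{\pclosure{\F}} \otimes K$, which requires controlling the $p$-semilinearity and the correction term in Equation (\ref{E:Deligne}) carefully so that $p$-density genuinely supplies enough \emph{commuting} transverse directions. By contrast, the passage from the generic frame to honest rational forms on $X$ and the verification that $\omega_1 \wedge \cdots \wedge \omega_q \neq 0$ are routine.
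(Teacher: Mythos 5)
Your proposal is correct and follows essentially the same route as the paper: commuting local generators of $T_{\F}$ from Lemma \ref{L:Frobenius Integrability}, completion to a commuting rational frame by iterated $p$-th powers (which exist in sufficient number precisely by $p$-density), and closedness of the dual coframe via Cartan's formula. The step you flag as the main obstacle is exactly the point the paper passes over with the single sentence ``Since $\F$ is $p$-dense, we can choose among the iterated $p$-powers\dots'', and your justification of it via the restricted Lie algebra generated by the $v_i$ is the intended one.
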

\begin{proof}
    Let $r = \dim \F$. If $x \notin \sing(\F)$ then there exists generators $v_1, \ldots, v_r$ of $T_{\F} \otimes \mathcal O_{X,x}$ such
    that $[v_i , v_j] =0$ for every $i, j \in \{ 1, \ldots, r\}$, see Lemma \ref{L:Frobenius Integrability}. Notice that the iterated $p$-th powers of the vector fields $v_i$ commute
    with $v_1, \ldots, v_r$ and also among themselves. Since $\F$ is $p$-dense, we can choose among the iterated $p$-powers of $v_1, \ldots, v_r$, vector fields  $v_{r+1}, \ldots, v_{r+q}$, $r+q = \dim X$, such that
    \[
        v_1 \wedge \cdots \wedge v_r \wedge v_{r+1} \wedge \cdots \wedge v_{r+q}
    \]
    does not vanish identically. Therefore, we can interpret $v_1, \ldots, v_{r+q}$ as $\field(X)$-linearly independent commuting rational vector fields on $X$.

    Let $\alpha_1, \ldots, \alpha_{r+q}$ be rational $1$-forms on $X$ dual to $v_1, \ldots, v_{r+q}$. In other words, the $1$-forms $\alpha_i$ are characterized by $\alpha_i(v_j)= \delta_{ij}$. We claim that the  $1$-forms $\alpha_i$ are closed. As $v_1, \ldots, v_{r+q}$ are $\field(X)$-linearly independent rational vector fields, it suffices to
    check that $d\alpha_i(v_j,v_k) =0 $ for $i,j,k \in \{ 1, \ldots, r+q\}$. Since $v_1, \ldots, v_{r+q}$ commute, Cartan's formula for the exterior derivative implies that
    \[
        d \alpha_i(v_j,v_k) = v_j(\alpha_i(v_k)) - v_k(\alpha_i(v_j)) - \alpha_i([v_j,v_k]) = 0
    \]
    establishing our claim.  By construction, $\omega_1 = \alpha_{r+1}, \ldots, \omega_q = \alpha_{r+q}$ vanish on $T_{\F}$ and satisfy $\omega_1 \wedge \cdots \wedge \omega_q  \neq0$ as wanted.
\end{proof}

The following example shows that without  $p$-denseness of $\F$, Proposition \ref{P:closed}  does not hold.

\begin{ex}\label{Ex:Touzet}
    If the $\field$ has characteristic two, $X = \mathbb A^3_k$ and $\F$ is the foliation on $X$ generated by $v =  x \frac{\partial}{\partial x} + zy \frac{\partial}{\partial y}$
    then every closed rational $1$-form containing $T_{\F}$ in its kernel is a rational multiple of $dz$.
\end{ex}

A simple adaptation of the arguments used in the proof of Proposition \ref{P:closed} gives the following generalization.

\begin{prop}\label{P:relativelyclosed}
    If $\F$ is a not necessarily $p$-dense foliation on a smooth algebraic variety $X$, $\G= \pclosure{\F}$, and $q = \codim (\F) - \codim (\G)$  then there exists rational sections $\omega_1, \ldots, \omega_q$ of $\Omega^1_{\G}$ which vanish on $T_{\F}$ such that
    $\omega_1 \wedge \cdots \wedge \omega_q \neq 0$ and $d_{\G} \omega_1 = \cdots = d_{\G} \omega_q = 0$.
\end{prop}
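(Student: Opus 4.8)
The plan is to mimic the proof of Proposition \ref{P:closed}, replacing the ambient tangent sheaf $T_X$ by $T_{\G}$ and the exterior derivative $d$ by $d_{\G}$ throughout. Set $r = \dim \F$ and observe that $q = \codim(\F) - \codim(\G) = \dim \G - \dim \F$, so that $\dim \G = r + q$. Working at a point $x \notin \sing(\F)$, Lemma \ref{L:Frobenius Integrability} supplies commuting generators $v_1, \ldots, v_r$ of $T_{\F} \otimes \mathcal O_{X,x}$. Exactly as in the $p$-dense case, Equations (\ref{E:Jacobson}) and (\ref{E:Deligne}), combined with the vanishing of all brackets $[v_i,v_j]$, show that the iterated $p$-th powers $v_i^{p^k}$ commute with one another and with $v_1, \ldots, v_r$.

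The step that replaces the appeal to $p$-denseness is the identification of $T_{\G}$, generically, with the $\field(X)$-span of the $v_i^{p^k}$. I would argue as follows. Let $W$ be the saturation in $T_X$ of the $\mathcal O_X$-module generated by $\{ v_i^{p^k} \}$. Then $W$ is involutive, since all the generators commute, and closed under $p$-th powers of arbitrary sections: for $v = \sum g_{i,k}\, v_i^{p^k}$, Equation (\ref{E:Jacobson}) kills all cross terms and Equation (\ref{E:Deligne}) expresses each $(g_{i,k} v_i^{p^k})^p$ as an $\mathcal O_X$-combination of $v_i^{p^{k+1}}$ and $v_i^{p^k}$, both lying in $W$. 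Hence $W$ defines a $p$-closed foliation $\G'$. A function $a$ is annihilated by every section of $W$ precisely when $v_i(a) = 0$ for all $i$, so $\mathcal O_{X/\G'} = \mathcal O_{X/\F} = \mathcal O_{X/\G}$; by uniqueness of the $p$-closure, $\G' = \G$. Consequently $T_{\G}$ has generic rank $r+q$ and is generically spanned by the $v_i^{p^k}$, so I may select among these iterated $p$-th powers vector fields $v_{r+1}, \ldots, v_{r+q}$ for which $v_1 \wedge \cdots \wedge v_{r+q}$ does not vanish identically, giving $\field(X)$-linearly independent commuting rational vector fields tangent to $\G$.

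Finally, let $\alpha_1, \ldots, \alpha_{r+q}$ be the rational sections of $\Omega^1_{\G}$ dual to $v_1, \ldots, v_{r+q}$, characterized by $\alpha_i(v_j) = \delta_{ij}$; these are well defined because the $v_j$ form a generic basis of $T_{\G}$ and $\Omega^1_{\G}$ is the reflexive dual of $T_{\G}$. Cartan's formula for $d_{\G}$, valid because $d_{\G}$ is the restriction of $d$ to $\wedge^{\bullet} T_{\G}$ and $\G$ is involutive (so $[v_j,v_k] \in T_{\G}$), yields $d_{\G}\alpha_i(v_j,v_k) = v_j(\alpha_i(v_k)) - v_k(\alpha_i(v_j)) - \alpha_i([v_j,v_k]) = 0$, whence each $\alpha_i$ is $d_{\G}$-closed. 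Setting $\omega_1 = \alpha_{r+1}, \ldots, \omega_q = \alpha_{r+q}$ produces rational sections of $\Omega^1_{\G}$ that vanish on $T_{\F} = \langle v_1, \ldots, v_r\rangle$, satisfy $\omega_1 \wedge \cdots \wedge \omega_q \neq 0$, and have $d_{\G}\omega_i = 0$, as desired.

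I expect the main obstacle to be the identification $\G' = \G$: one must verify carefully both that the $\mathcal O_X$-module generated by the iterated $p$-th powers is closed under $p$-th powers of arbitrary (not merely generating) sections, and that its saturation is exactly the generic fiber of $T_{\G}$, so that the uniqueness of the $p$-closure can be invoked. The remaining verifications — duality of the $\alpha_i$ and the Cartan computation for $d_{\G}$ — are routine once this identification is in place.
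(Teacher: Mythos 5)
Your proof is correct and is exactly the adaptation of Proposition \ref{P:closed} that the paper has in mind (the paper omits the proof, describing it only as a ``simple adaptation''); the key extra step you supply --- identifying the $p$-closure $\G$ with the saturation of the involutive, $p$-closed subsheaf generated by the iterated $p$-th powers $v_i^{p^k}$, via the uniqueness of the $p$-closure --- is the right one. One small imprecision: Jacobson's formula does not ``kill'' the cross terms $Q_p$ for a section $\sum g_{i,k}v_i^{p^k}$; rather, $Q_p$ is a Lie polynomial whose value lands in $W$ because $W$ is involutive (the brackets $[g_{i,k}v_i^{p^k},g_{j,l}v_j^{p^l}]$ are nonzero in general but lie in the module), which is all your argument needs.
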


A $\F$-partial connection on a coherent sheaf $\mathcal E$ is a $\field$-linear morphism $\nabla: \mathcal E \to \Omega^1_{\F} \otimes \mathcal E$ which, like usual connections,
satisfy Leibniz's rule $\nabla(f s) = f \nabla(s) + d_{\F} f \otimes s$. Bott's partial connection is the $\F$-partial connection on the normal sheaf of $\F$ which sends
the class of $v \mod T_{\F}$ to the $1$-form (defined only along $T_{\F}$) that maps a local section $w$ of $T_{\F}$ to $[v,w] \mod T_{\F}$.

Our next result provides an interpretation of Propositions \ref{P:closed} and \ref{P:relativelyclosed} in terms of Bott's partial connection and
the canonical connection on $\Frob^* T_{\F}$.

\begin{prop}\label{P:pcurvature is a F-morphism}
    The morphism $\pcurvature{\F}$ is such that
    \[
        (r \otimes \pcurvature{\F}) \circ \CC = \BC \circ \pcurvature{\F}
    \]
    where $r : \Omega^1_{X} \to \Omega^1_{\F}$ is the natural restriction morphism, $\CC$ is the canonical connection
    on $\Frob^*T_{\F}$, and  $\BC$ is Bott's connection on $N_{\F}$.
    In other words,  $\pcurvature{\F}$ is a morphism of $\F$-partial connections
    between $(\Frob^* T_{\F}, \restr{\CC}{T_{\F}})$ and $(N_{\F}, \BC)$.
\end{prop}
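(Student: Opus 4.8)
The plan is to prove the identity by showing that the two composites differ by an $\mathcal O_X$-linear morphism and then checking that this difference annihilates a set of local generators of $\Frob^* T_{\F}$. Set
\[
    \Delta \;=\; (r\otimes\pcurvature{\F})\circ\CC \;-\; \BC\circ\pcurvature{\F}\colon \Frob^* T_{\F} \longrightarrow \Omega^1_{\F}\otimes N_{\F}.
\]
I would first observe that $\CC$ and $\BC$ both obey Leibniz's rule, whereas $\pcurvature{\F}$ and $r\otimes\pcurvature{\F}$ are $\mathcal O_X$-linear. Hence, for a local function $g$ and a local section $s$ of $\Frob^* T_{\F}$, each composite acquires the \emph{same} extra term $\dF g\otimes\pcurvature{\F}(s)$, using that $r(dg)=\dF g$ by the very definition of $\dF$. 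These terms cancel in $\Delta$, so $\Delta(gs)=g\,\Delta(s)$ and $\Delta$ is $\mathcal O_X$-linear.

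Since $\Frob^* T_{\F}$ is generated as an $\mathcal O_X$-module by the sections $1\otimes v$ with $v$ a local section of $T_{\F}$, it then suffices to prove $\Delta(1\otimes v)=0$. By the definition of the canonical connection, $\CC(1\otimes v)=d(1)\otimes(1\otimes v)=0$, so the first composite vanishes on $1\otimes v$ and
\[
    \Delta(1\otimes v) \;=\; -\,\BC\bigl(\pcurvature{\F}(1\otimes v)\bigr)\;=\;-\,\BC\bigl(v^p \bmod T_{\F}\bigr).
\]
By the description of Bott's connection, $\BC(v^p\bmod T_{\F})$ is the section of $\Omega^1_{\F}\otimes N_{\F}$ sending a local section $w$ of $T_{\F}$ to $[v^p,w]\bmod T_{\F}$. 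Thus the proof reduces to the purely Lie-theoretic claim that $[v^p,w]\in T_{\F}$ whenever $v,w\in T_{\F}$.

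I expect this last claim to be the main point. The derivations of $\mathcal O_X$ form a restricted Lie algebra for the $p$-power operation $v\mapsto v^p$, and the defining identity $\operatorname{ad}(v^p)=(\operatorname{ad} v)^p$ holds: it follows, in the same spirit as Jacobson's formula \eqref{E:Jacobson}, from the fact that left and right composition by $v$ commute in $\End_{\field}(\mathcal O_X)$ and that Frobenius is additive on commuting operators in characteristic $p$. Consequently $[v^p,w]=(\operatorname{ad} v)^p(w)$, an iterated bracket $[v,[v,\dots,[v,w]\cdots]]$ with $p$ copies of $v$. Since $T_{\F}$ is involutive and $v,w\in T_{\F}$, each application of $\operatorname{ad} v=[v,-]$ preserves $T_{\F}$, so by induction $(\operatorname{ad} v)^p(w)\in T_{\F}$, i.e.\ $[v^p,w]\equiv 0 \bmod T_{\F}$. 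Hence $\BC(v^p\bmod T_{\F})=0$, giving $\Delta(1\otimes v)=0$ and, by $\mathcal O_X$-linearity, $\Delta=0$. One may carry out the computation over $X\setminus\sing(\F)$, where $T_{\F}$ is locally free and the identification $N_{\F}=T_X/T_{\F}$ is transparent, and then invoke torsion-freeness to extend; but since $v^p\bmod T_{\F}$ is already the class of an honest vector field in $T_X/T_{\F}\subset N_{\F}$, the argument applies verbatim on all of $X$.
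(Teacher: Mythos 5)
Your proof is correct, and its overall skeleton matches the paper's: both arguments reduce, via the $\mathcal O_X$-linearity of $\pcurvature{\F}$ and the fact that $\CC$ kills the sections $1\otimes v$, to showing that $\pcurvature{\F}(1\otimes v)=v^p \bmod T_{\F}$ is a flat section of Bott's partial connection, i.e.\ that $[v^p,w]\in T_{\F}$ for all local sections $v,w$ of $T_{\F}$. Where you diverge is in the key final step. The paper invokes Lemma \ref{L:Frobenius Integrability} to produce \emph{commuting} generators $v_1,\dots,v_r$ of $T_{\F}$ at a general point, so that each $v_i^p$ literally commutes with the generators and the brackets vanish outright; this requires working at a sufficiently general point and then extending by torsion-freeness. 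You instead use the restricted-Lie-algebra identity $\operatorname{ad}(v^p)=(\operatorname{ad} v)^p$ in $\End_{\field}(\mathcal O_X)$ (correctly justified via $\operatorname{ad} v = L_v - R_v$ with $L_v,R_v$ commuting), which exhibits $[v^p,w]$ as a $p$-fold iterated bracket that stays in $T_{\F}$ by involutivity alone. Your route buys uniformity — it works for an arbitrary local section $v$, at every point, with no appeal to the local Frobenius normal form — at the cost of importing the $\operatorname{ad}(v^p)=(\operatorname{ad} v)^p$ identity, which the paper does not state but which is standard and in the same family as Jacobson's formula (\ref{E:Jacobson}) that the paper already quotes. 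Your explicit verification that the Leibniz defect terms $\dF g\otimes\pcurvature{\F}(s)$ cancel, making the difference $\Delta$ an $\mathcal O_X$-linear morphism, is a cleaner formalization of the reduction that the paper performs implicitly.
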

\begin{proof}
    It suffices to show that for every local section $v \in T_{\F}$, the image of $1 \otimes v \in \Frob^*T_{\F}$ under $\pcurvature{\F}$ is a flat
    section of  Bott's partial connection on $N_{\F}$. To do this we proceed as in the proof of Proposition \ref{P:closed}. We fix $x \in X$
    sufficiently general and use Lemma \ref{L:Frobenius Integrability} to guarantee the existence of commuting generators $v_1, \ldots, v_r$ of
    $T_{\F} \otimes \mathcal O_{X,x}$. Since $\pcurvature{\F}$ is $\mathcal O_X$-linear, it suffices to show that
    $v_1^p, \ldots, v_r^p$ are flat sections of Bott's partial connection. But this is clear since each of $v_i^p$ commutes with generators
    of $T_{\F}\otimes \mathcal O_{X,x}$.
\end{proof}

\subsection{The degeneracy divisor and the kernel of the $p$-curvature}\label{SS:pkernel}
Proposition \ref{P:pcurvature is a F-morphism} implies that the kernel of $\pcurvature{\F}$ is invariant
by the partial connection $\restr{\CC}{T_{\F}}$. Therefore the morphism
\[
      \ker \pcurvature{\F}  \longrightarrow \Omega^1_\F \otimes \frac{\Frob^* T_{\F}}{\ker \pcurvature{\F}}
\]
induced by  $\restr{\CC}{T_{\F}}$ is zero. This is not sufficient to apply Lemma \ref{L:lift} as it may happen that $\ker \pcurvature{\F}$
is not invariant by  the  connection $\CC$, that is the morphism of $\mathcal O_X$-modules
\begin{equation}\label{E:obstruction}
      \ker \pcurvature{\F}  \longrightarrow \Omega^1_X \otimes \frac{\Frob^* T_{\F}}{\ker \pcurvature{\F}}
\end{equation}
induced by $\CC$ is not necessarily zero.

\begin{lemma}\label{L:kernel pcurvature}
    If $\F$ is $p$-dense then the morphism (\ref{E:obstruction}) is zero. In particular, there exists a distribution $\pkernel{\F}$ contained
    in $\F$ such that
    \[
        \ker \pcurvature{\F} = \Frob^* T_{\pkernel{\F}} \, .
    \]
    In this case, the distribution $\pkernel{\F}$ will be called the kernel of the $p$-curvature of $\mathcal F$.
\end{lemma}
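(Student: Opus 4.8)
The plan is to show that the obstruction morphism (\ref{E:obstruction}) induced by $\CC$ vanishes identically, after which Lemma \ref{L:lift}, applied with $\mathcal E = T_\F$ and $\mathcal K = \ker \pcurvature{\F}$, immediately yields a subsheaf $\mathcal L \subset T_\F$ with $\ker \pcurvature{\F} = \Frob^* \mathcal L$, and hence the distribution $\pkernel{\F}$. First I would record two reductions. Proposition \ref{P:pcurvature is a F-morphism} already shows that the composite of (\ref{E:obstruction}) with the restriction $\Omega^1_X \to \Omega^1_\F$ is zero, so only the conormal component needs control. Moreover, the target of (\ref{E:obstruction}) is torsion-free, because $\frac{\Frob^* T_\F}{\ker \pcurvature{\F}}$ embeds via $\pcurvature{\F}$ into the reflexive, hence torsion-free, sheaf $N_\F$, and $\Omega^1_X$ is locally free; consequently it suffices to prove vanishing on a dense open subset of $X - \sing(\F)$.

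On such an open set I would work with an explicit commuting frame. By Lemma \ref{L:Frobenius Integrability} choose commuting generators $v_1, \ldots, v_r$ of $T_\F$, and, exactly as in the proof of Proposition \ref{P:closed}, use $p$-denseness to enlarge them by iterated $p$-th powers into $\field(X)$-linearly independent commuting rational vector fields $v_1, \ldots, v_{r+q}$ with $r+q = \dim X$. Writing $v_i^p = \sum_{k} a_{ik} v_k$ for $1 \le i \le r$, the key computation is that $d a_{ik} = 0$: since $v_i^p$ is an iterated $p$-th power it commutes with every frame field, so $0 = [v_\ell, v_i^p] = \sum_k v_\ell(a_{ik}) v_k$ forces $v_\ell(a_{ik}) = 0$ for all $\ell$, and as the $v_\ell$ span $T_X$ generically this gives $d a_{ik} = 0$. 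I expect this to be the heart of the argument, and it is precisely where $p$-denseness is indispensable: without it the iterated $p$-powers span only $T_{\pclosure{\F}}$, the dual frame of $1$-forms does not trivialize $\Omega^1_X$, and one cannot pass from $v_\ell(a_{ik}) = 0$ to $d a_{ik} = 0$ (cf. Example \ref{Ex:Touzet}).

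Granting $d a_{ik} = 0$, the vanishing of (\ref{E:obstruction}) becomes a short calculation. A local section reads $s = \sum_{i=1}^r f_i \otimes v_i$, and $\pcurvature{\F}(s) = \sum_{j>r}(\sum_i f_i a_{ij})\, v_j \bmod T_\F$, so $s \in \ker \pcurvature{\F}$ exactly when $\sum_i f_i a_{ij} = 0$ for every $j > r$. Differentiating this identity and using $d a_{ij} = 0$ yields $\sum_i a_{ij}\, df_i = 0$. Since $\CC(s) = \sum_i df_i \otimes (1 \otimes v_i)$, applying $\identity \otimes \pcurvature{\F}$ produces $\sum_{j>r}(\sum_i a_{ij}\, df_i) \otimes (v_j \bmod T_\F) = 0$; because $\pcurvature{\F}$ factors as a surjection onto $\frac{\Frob^* T_\F}{\ker \pcurvature{\F}}$ followed by an injection into $N_\F$, and $\Omega^1_X$ is locally free, this forces the image of $\CC(s)$ in $\Omega^1_X \otimes \frac{\Frob^* T_\F}{\ker \pcurvature{\F}}$ to vanish. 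Thus (\ref{E:obstruction}) is zero on a dense open set, hence identically by torsion-freeness of its target.

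Finally, Lemma \ref{L:lift} gives $\mathcal L \subset T_\F$ with $\ker \pcurvature{\F} = \Frob^* \mathcal L$. A small bookkeeping step remains: since $\ker \pcurvature{\F}$ is saturated in $\Frob^* T_\F$ (its quotient injects into the torsion-free $N_\F$) and $\Frob$ is faithfully flat on the smooth variety $X$, one checks that $\mathcal L$ is saturated in $T_\F$, so it is the tangent sheaf of a distribution $\pkernel{\F}$ contained in $\F$ with $\ker \pcurvature{\F} = \Frob^* T_{\pkernel{\F}}$, as required.
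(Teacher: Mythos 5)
Your proof is correct, but it takes a genuinely different route from the paper's. The paper disposes of the key point --- the vanishing of the morphism (\ref{E:obstruction}) --- by citing \cite[Proposition 6.1]{MR3167130}, and then invokes Lemma \ref{L:lift}; it offers no internal computation. You instead give a self-contained argument: reduce to a dense open set using torsion-freeness of $\Omega^1_X \otimes \Frob^*T_{\F}/\ker\pcurvature{\F}$ (which holds because the quotient injects into $N_{\F}$), build the commuting rational frame $v_1,\ldots,v_{r+q}$ from Proposition \ref{P:closed}, and observe that the structure constants $a_{ik}$ in $v_i^p=\sum_k a_{ik}v_k$ satisfy $da_{ik}=0$ because $v_i^p$ commutes with a frame that generically spans $T_X$ --- exactly the point where $p$-denseness is used. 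The remaining differentiation of the kernel equations $\sum_i f_i a_{ij}=0$ is routine and correct, as is your closing remark that $\ker\pcurvature{\F}$ is saturated (its quotient injects into $N_{\F}$) and that saturation descends along the flat Frobenius, so that $\mathcal L$ really is the tangent sheaf of a distribution --- a bookkeeping point the paper passes over silently. What your approach buys is independence from the external reference and a transparent identification of where $p$-denseness enters (cf.\ Example \ref{Ex:Touzet}); what the paper's citation buys is brevity and consistency with the source where this second-fundamental-form computation was first carried out.
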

\begin{proof}
    The vanishing of the morphism (\ref{E:obstruction}) is assured by \cite[Proposition 6.1]{MR3167130}. The existence of the distribution $\pkernel{\F}$ follows from Lemma \ref{L:lift}.
\end{proof}

\begin{dfn}
    If $\F$ is a $p$-dense codimension one foliation then the image of the $p$-curvature morphism $\pcurvature{\F}$
    is of the form $N_{\F} \otimes \mathcal I_Z$ for some ideal sheaf $\mathcal I_Z$. The divisorial part of $\mathcal I_Z$ will be called
    the degeneracy divisor of the $p$-curvature and will be denoted by $\pdegeneracy{\F}$.
\end{dfn}

\begin{ex}\label{E:loggeneric}
    Let $\F$ be a codimension one foliation on $X= \mathbb A^n_{\field}$ with $N^*_{\F}$ generated  by  $1$-form
    \[
        \omega = \left( \prod_{i=1}^r x_i \right) \left( \sum_{i=1}^r \lambda_i \frac{dx_i}{x_i} \right) \, ,
    \]
    where $1<r\le n$ and $\lambda_1, \ldots, \lambda_r \in \field^*$. The tangent sheaf of $\F$ is locally free and  generated
    by the vector fields $v_{j} = \lambda_j x_1 \frac{\partial }{\partial x_1}  - \lambda_i x_j \frac{\partial }{\partial x_j}$ for $1<  j\le r$ and
    $\frac{\partial }{\partial x_{r+1}}, \ldots, \frac{\partial }{\partial x_n}$. Observe that $\frac{\partial^p }{\partial x_{r+1}}=\ldots= \frac{\partial^p }{\partial x_n}=0$ and that
    \[
        \omega(v_{j}^p) = \left( \prod_{i=1}^r x_i \right) ( \lambda_j^p \lambda_1 - \lambda_1^p \lambda_j ) \, ,
    \]
    for any $1 < j \le r$.
    Hence,  $\F$ is $p$-closed if, and only if, $(\lambda_1: \ldots : \lambda_r) \in \mathbb P^{r-1}_{\field}(\mathbb F_p)$. Furthermore,  if $\F$ is not $p$-closed then $\Delta_{\F}$ is the simple normal crossing divisor given
    by $\{ x_1 \cdots x_r =0\}$,
\end{ex}

\begin{prop}\label{P:degree degeneracy}
    Let $\F$ be a $p$-dense codimension one foliation on a smooth algebraic variety $X$ of characteristic $p>0$.
    If $\pkernel{\F}$ denotes the kernel of the $p$-curvature of $\F$ then the identity
    \[
        \mathcal O_X( \pdegeneracy{\F}) = (\omega_{\F}  \otimes \omega_{\pkernel{\F}}^{*})^{\otimes p} \otimes \det N_{\F} \,
    \]
holds in the Picard group of $X$.
\end{prop}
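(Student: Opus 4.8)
The plan is to realize the degeneracy divisor through a short exact sequence coming from the $p$-curvature and to read off the claimed identity by taking determinants. Since $X$ is smooth, restriction to a big open set $U \subset X$ whose complement has codimension at least two induces an isomorphism on Picard groups, so I may compute $\mathcal O_X(\pdegeneracy{\F})$ after deleting $\sing(\F)$, $\sing(\pkernel{\F})$, and the codimension $\ge 2$ part of the scheme $Z$ cut out by the ideal $\mathcal I_Z$. On the resulting $U$ the sheaves $T_{\F}$ and $T_{\pkernel{\F}}$ are subbundles of $T_X$, and the image of $\pcurvature{\F}$ inside the line bundle $N_{\F}$ is exactly $N_{\F} \otimes \mathcal O_X(-\pdegeneracy{\F})$, by the definition of $\pdegeneracy{\F}$ as the divisorial part of $\mathcal I_Z$ (note that $\pcurvature{\F} \neq 0$, since a codimension one $p$-dense foliation is never $p$-closed, as Theorem~\ref{T:pintegra} applied to its $p$-closure shows). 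Lemma~\ref{L:kernel pcurvature} identifies the kernel with $\Frob^* T_{\pkernel{\F}}$, so on $U$ I obtain a short exact sequence of vector bundles
\[
    0 \longrightarrow \Frob^* T_{\pkernel{\F}} \longrightarrow \Frob^* T_{\F} \xrightarrow{\ \pcurvature{\F}\ } N_{\F} \otimes \mathcal O_X(-\pdegeneracy{\F}) \longrightarrow 0 .
\]

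Taking determinants turns this into the relation
\[
    \det(\Frob^* T_{\F}) = \det(\Frob^* T_{\pkernel{\F}}) \otimes N_{\F} \otimes \mathcal O_X(-\pdegeneracy{\F})
\]
in $\Pic(U) = \Pic(X)$. To evaluate the two Frobenius terms I use that for a vector bundle $E$ one has $\det(\Frob^* E) = \Frob^*(\det E) = (\det E)^{\otimes p}$, because the absolute Frobenius raises transition functions to their $p$-th powers and hence acts as multiplication by $p$ on the Picard group. Combining this with the identifications $\det T_{\F} = (\det \Omega^1_{\F})^{*} = \omega_{\F}^{*}$ and $\det T_{\pkernel{\F}} = \omega_{\pkernel{\F}}^{*}$, coming from the definition of the $\omega$'s as determinants of cotangent sheaves, I get
\[
    (\omega_{\F}^{*})^{\otimes p} = (\omega_{\pkernel{\F}}^{*})^{\otimes p} \otimes N_{\F} \otimes \mathcal O_X(-\pdegeneracy{\F}) .
\]

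Solving this equation for $\mathcal O_X(\pdegeneracy{\F})$ and using $N_{\F} = \det N_{\F}$ yields
\[
    \mathcal O_X(\pdegeneracy{\F}) = (\omega_{\F} \otimes \omega_{\pkernel{\F}}^{*})^{\otimes p} \otimes \det N_{\F} ,
\]
which is precisely the asserted identity. The steps are all standard once the exact sequence is in place, so I expect no serious obstacle; the only points requiring care are confirming that, after passing to $U$, the image of $\pcurvature{\F}$ is the genuine line bundle $N_{\F} \otimes \mathcal O_X(-\pdegeneracy{\F})$ rather than a sheaf with non-divisorial components — which is exactly what the definition of $\pdegeneracy{\F}$ guarantees once the codimension $\ge 2$ part of $Z$ is removed — and checking the behaviour of determinants under $\Frob^*$. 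The crux of the argument is therefore conceptual rather than computational: it is the identification in Lemma~\ref{L:kernel pcurvature} of $\ker \pcurvature{\F}$ with the Frobenius pullback of $T_{\pkernel{\F}}$.
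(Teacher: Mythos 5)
Your proof is correct and follows essentially the same route as the paper: the paper's own argument consists exactly of exhibiting the short exact sequence $0 \to \Frob^{*}T_{\pkernel{\F}} \to \Frob^{*}T_{\F} \to N_{\F}(-\pdegeneracy{\F})\to 0$ on the complement of a codimension two subset and taking determinants. The extra details you supply (passing to a big open set, $\det(\Frob^* E) = (\det E)^{\otimes p}$, unwinding the definitions of $\omega_{\F}$ and $\omega_{\pkernel{\F}}$) are precisely the computations the paper leaves implicit.
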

\begin{proof}
    The definition of $\pdegeneracy{\F}$ implies that the sequence
    \begin{equation}\label{E:basic sequence in char p}
 	  0 \longrightarrow \Frob^{*}T_{\pkernel{\F}} \longrightarrow \Frob^{*}T_{\mathcal{F}}
        \longrightarrow N_{\mathcal{F}}(-\Delta_{\mathcal{F}})\longrightarrow 0	
    \end{equation}
    is exact on the complement of a codimension two subset of $X$. The result follows by taking
    determinants.
\end{proof}

\begin{prop} \label{P:hinvariant}
    Let $X$ be a smooth algebraic variety  of characteristic $p>0$ and
    let $\mathcal{F}$ be a $p$-dense codimension one foliation on $X$. Then every $\F$-invariant hypersurface is contained in the
    support of $\pdegeneracy{\F}$. Reciprocally, if $p$ does not divide the coefficient of an irreducible component $H$ in
    $\Delta_{\mathcal F}$  then $H$ is $\mathcal{F}$-invariant.
\end{prop}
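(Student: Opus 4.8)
The plan is to work at the generic point of each prime divisor and to reduce both halves of the statement to the pole order of a suitable closed rational $1$-form defining $\F$. First I would fix a prime divisor $H_0$ and a general point $q\in H_0$; then $q\notin\sing(\F)$, the reflexive sheaf $T_{\F}$ is free near $q$, and I may trivialize the line bundle $N_{\F}$ so that $\omega\in H^0(\Omega^1_X\otimes N_{\F})$ becomes an ordinary $1$-form, nonvanishing at $q$, with $T_{\F}=\ker\omega$. Choosing local coordinates with $H_0=\{x_1=0\}$ and a frame $v_1,\dots,v_{n-1}$ of $T_{\F}$, the $\mathcal O_X$-linearity of $\pcurvature{\F}$ on $\Frob^*T_{\F}$ shows that, under this trivialization, the image of $\pcurvature{\F}$ is the ideal generated by $\omega(v_1^p),\dots,\omega(v_{n-1}^p)$. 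Hence the coefficient $m$ of $H_0$ in $\pdegeneracy{\F}$ equals $\min_i\ord_{H_0}\omega(v_i^p)$, a minimum which is finite because $p$-denseness forces $\pcurvature{\F}\neq0$.

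For the first assertion I would assume $H_0$ is $\F$-invariant. Then every $v\in T_{\F}$ is tangent to $H_0$, i.e. $v(x_1)\in(x_1)$, and an immediate induction on Leibniz' rule gives $v^p(x_1)\in(x_1)$. Writing $\omega=a_1\,dx_1+\sum_{i\ge2}a_i\,dx_i$, invariance forces $a_i\in(x_1)$ for $i\ge2$, while $a_1$ is a unit at $q$ (otherwise $\omega$ would vanish along $H_0$). Consequently $\omega(v^p)=a_1v^p(x_1)+\sum_{i\ge2}a_i v^p(x_i)\in(x_1)$ for every frame vector, so $m\ge1$ and $H_0$ lies in the support of $\pdegeneracy{\F}$. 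Since each component of an $\F$-invariant hypersurface is itself $\F$-invariant (invariance being tangency along the smooth locus), the first assertion follows.

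For the converse I would assume $p\nmid m$, so in particular $m\ge1$, and pick a frame vector $v_0$ realizing the minimum, so that $g:=\omega(v_0^p)$ is nonzero with $\ord_{H_0}(g)=m$. By Proposition \ref{P:old} the rational $1$-form $\eta=\omega/g$ is closed, and since $\omega$ is nonvanishing along $H_0$ its pole order along $H_0$ is exactly $m$. I would then set $\beta=x_1^m\eta$, a regular $1$-form which is a unit multiple of $\omega$ near $q$ and hence does not vanish on $H_0$. Differentiating and using $d\eta=0$ gives the identity
\[
    x_1\,d\beta = m\,dx_1\wedge\beta .
\]
Reducing modulo $(x_1)$ annihilates the left-hand side, so $m\,dx_1\wedge\beta$ lies in $x_1\,\Omega^2_X$; because $p\nmid m$ this forces $dx_1\wedge\beta\in x_1\,\Omega^2_X$, i.e. the coefficients of $\beta$ along $dx_2,\dots,dx_n$ belong to $(x_1)$. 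Multiplying by the unit relating $\beta$ and $\omega$, this is exactly the condition $dx_1\wedge\omega\in x_1\,\Omega^2_X$ that $H_0$ be $\F$-invariant.

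The step I expect to be most delicate is purely a characteristic $p$ phenomenon. It is tempting to compute $\eta(w^p)$ for $w\in T_{\F}$ directly and observe (via $\mathcal L_{w^p}=\mathcal L_w^p$) that it is $d$-closed; but in characteristic $p$ a $d$-closed function is only a $p$-th power, not a constant, so the individual orders $\ord_{H_0}\omega(v_i^p)$ may exceed $\ord_{H_0}(g)$ by multiples of $p$. Selecting $v_0$ to realize the minimal order circumvents this: it guarantees that the pole order of the closed form $\eta$ equals the coefficient $m$ of $H_0$ in $\pdegeneracy{\F}$ on the nose, and the whole argument then rests on the elementary identity $x_1\,d\beta=m\,dx_1\wedge\beta$, whose reduction modulo $x_1$ detects $m$ modulo $p$.
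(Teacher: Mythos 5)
Your proof is correct. The first half is essentially the paper's argument written out in coordinates: invariance gives $v^p(h)\in (h)$ by iterating Leibniz, and since at a general point of an invariant hypersurface the kernel of $\omega$ coincides with the tangent space of the hypersurface, $\omega(v^p)$ vanishes along it. The converse, however, follows a genuinely different route. The paper works entirely on the vector-field side: it takes commuting local generators $v_1,\ldots,v_r$ of $T_{\F}$ (Lemma \ref{L:Frobenius Integrability}), writes $v_1\wedge\cdots\wedge v_r\wedge v_1^p=h^{\alpha}\theta$ with $\theta$ not vanishing along $H$, and extracts $h\mid v_1(h)$ from the vanishing of the Lie derivative of this multivector field, using $\alpha\not\equiv 0\bmod p$ to cancel. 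You instead pass to the dual side: you invoke Proposition \ref{P:old} to produce the closed rational $1$-form $\eta=\omega/\omega(v_0^p)$, observe that choosing $v_0$ to realize the minimal order makes the pole order of $\eta$ along $H_0$ exactly the coefficient $m$, and then read off invariance from the identity $x_1\,d\beta=m\,dx_1\wedge\beta$ reduced modulo $x_1$. This is close in spirit to Proposition \ref{P:closed + divisor} and to the classical fact that polar components of a closed rational $1$-form with multiplicity prime to $p$ are invariant. Your key technical points all check out: the coefficient of $H_0$ in $\pdegeneracy{\F}$ is indeed the minimum of $\ord_{H_0}\omega(v_i^p)$ over a local frame (the ideal is frame-independent because the $p$-linear change of frame has invertible matrix $(c_{ij}^p)$), and your remark that one must select $v_0$ realizing the minimum — rather than using an arbitrary tangent vector field, whose $\omega(v^p)$ could have order exceeding $m$ by a multiple of $p$ — is exactly the point where a naive version of this argument would fail. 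The trade-off: the paper's proof avoids introducing the closed form and works for any single generator, while yours avoids the Lie-derivative computation on multivector fields at the cost of appealing to Proposition \ref{P:old}.
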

\begin{proof}
    Suppose first that $H$ is $\mathcal{F}$-invariant. Let $U \subset X$ be a sufficiently small open subset intersecting $H$
    and let $h \in \mathcal O_X(U)$ be a reduced equation for $H\cap U$. Since $H$ is $\F$-invariant, for any local section
    $v \in T_{\F}(U)$ we have that $v(h)$ is a multiple of $h$. Therefore, the same holds true for $v^p(h)$. In other words,
    for any $v \in T_{\F}(U)$, the restriction of $v$ and $v^p$ to $H\cap U$ is contained in the kernel of $\restr{dh}{H\cap U}$.
    Hence, the support of $\pdegeneracy{\F}$ contains $H$.

    Suppose now that $\ord_{H}(\Delta_{\mathcal{F}}) = \alpha \not\equiv 0 \mod p$. In the notation above, after restricting $U$ if
    necessary, we can chose commuting generators $v_1, \ldots, v_r$ of $T_{\F}(U)$ such that
    \[
        v_1\wedge \cdots \wedge v_r \wedge v_1^p  = h^{\alpha} \theta
    \]
    where $r = \dim \F$, and  $\theta \in \bigwedge^{r+1} T_X(U)$ does not vanish along $H$, see Lemma \ref{L:Frobenius Integrability}. Observe that
    \[
        [v_1, h^{\alpha} \theta] = 0 \implies \alpha v_1(h) h^{\alpha -1} \theta = - h^{\alpha} [v_1, \theta] \, .
    \]
    Since $\alpha \not\equiv 0 \mod p$, it follows that $h$ divides $v_1(h)$. This is sufficient to show that
    $H = \{ h=0\}$ is $\F$-invariant.
\end{proof}

\begin{prop} \label{P:closed + divisor}
    Let $X$ be a smooth algebraic variety  of characteristic $p>0$ and
    let $\mathcal{F}$ be a $p$-dense codimension one foliation on $X$.
    If $\eta$ is a closed rational $1$-form defining $\F$ then the coefficients of
    the divisors $\pdegeneracy{\F}$ and  $((\eta)_{\infty} - (\eta)_{0})$  coincide modulo $p$.
\end{prop}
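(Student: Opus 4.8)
The plan is to prove the congruence one prime divisor at a time. Fix a prime divisor $H \subset X$, write $\nu = \ord_H$, and let $\omega \in H^0(X,\Omega^1_X \otimes N_\F)$ be the twisted $1$-form defining $\F$ without codimension one zeros. Under the inclusion $T_X/T_\F \hookrightarrow N_\F$ of Remark \ref{R:doubledual}, realized by contraction with $\omega$, the $p$-curvature satisfies $\pcurvature{\F}(1 \otimes v) = \omega(v^p)$ for every local section $v$ of $T_\F$. Trivializing $N_\F$ near a general point of $H$ and picking regular generators $v_1,\dots,v_r$ of $T_\F$, the image of $\pcurvature{\F}$ becomes the ideal generated by the regular functions $\omega(v_i^p)$; hence, by definition of the degeneracy divisor, $\nu(\pdegeneracy{\F}) = \min_i \nu(\omega(v_i^p))$, the minimum ranging over those $i$ with $\omega(v_i^p) \neq 0$. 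On the other hand, for each such $i$ Proposition \ref{P:old} guarantees that $\eta_i := \omega/\omega(v_i^p)$ is a closed rational $1$-form defining $\F$, and since $\omega$ has neither poles nor codimension one zeros one reads off $\ord_H\big((\eta_i)_\infty - (\eta_i)_0\big) = \nu(\omega(v_i^p))$.

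The crux is the claim that the ratio of any two closed rational $1$-forms defining $\F$ is a $p$-th power of a rational function. If $\eta', \eta''$ are two such forms, then $\eta' = \psi\, \eta''$ for a rational function $\psi$ (both are proportional to the generator of the rational conormal $N^*_\F$), and differentiating this relation together with $d\eta'' = 0$ yields $d\psi \wedge \eta'' = 0$. As $\eta''$ defines the codimension one foliation $\F$, this means $\psi$ is a rational first integral of $\F$; because $\F$ is $p$-dense, every such first integral satisfies $d\psi = 0$, so $\psi \in \field(X)^p$ and $\nu(\psi) \in p\Z$. In particular $\ord_H\big((\eta')_\infty - (\eta')_0\big) \equiv \ord_H\big((\eta'')_\infty - (\eta'')_0\big) \pmod p$: the class modulo $p$ of the divisor $(\eta)_\infty - (\eta)_0$ does not depend on the chosen closed defining form $\eta$.

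Applying this to the given $\eta$ and to each $\eta_i$ shows that all the integers $\nu(\omega(v_i^p))$ are mutually congruent modulo $p$ and congruent to $\ord_H\big((\eta)_\infty - (\eta)_0\big)$. Therefore
\[
    \nu(\pdegeneracy{\F}) = \min_i \nu(\omega(v_i^p)) \equiv \nu(\omega(v_{i}^p)) \equiv \ord_H\big((\eta)_\infty - (\eta)_0\big) \pmod p,
\]
which is the asserted equality of coefficients modulo $p$.

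The step I expect to require the most care is the identification $\nu(\pdegeneracy{\F}) = \min_i \nu(\omega(v_i^p))$: one must check that the image ideal is genuinely generated by the $\omega(v_i^p)$ and that the minimum is independent of the chosen generators of $T_\F$. This follows from the identity $\omega\big((\sum_i f_i v_i)^p\big) = \sum_i f_i^p\, \omega(v_i^p)$, itself a consequence of Jacobson's and Deligne's formulas (\ref{E:Jacobson}) and (\ref{E:Deligne}) combined with the involutiveness of $T_\F$, which sends all the bracket and lower-order correction terms into $T_\F$ where $\omega$ vanishes. The only other subtle point, the passage from $d\psi \wedge \eta'' = 0$ to $d\psi = 0$, is exactly where $p$-denseness enters and without which the statement is false.
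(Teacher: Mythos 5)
Your proof is correct and follows essentially the same route as the paper's: both rest on Proposition \ref{P:old} together with the observation that any two closed rational $1$-forms defining a $p$-dense foliation differ by a $p$-th power of a rational function, which your second paragraph reproduces almost verbatim. The only difference is local bookkeeping: where the paper pins down the coefficient of $\pdegeneracy{\F}$ along $H$ by choosing a single vector field transverse to $T_{\pkernel{\F}}$, you take the minimum of $\nu(\omega(v_i^p))$ over a full set of generators of $T_{\F}$ and then observe that all these valuations lie in a single class modulo $p$ --- a variant that has the mild advantage of not invoking $\pkernel{\F}$ or Lemma \ref{L:kernel pcurvature} at all.
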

\begin{proof}
    Let $\eta'$ be another closed rational $1$-form defining $\F$. As both $\eta$ and $\eta'$ define the same codimension one foliation,
    there exists $h \in \field(X)$ such that $\eta = h \eta'$. The closedness of $\eta$ and $\eta'$ implies that $dh \wedge \eta' =0$.
    Since $\F$ is $p$-dense, $dh =0$ and we can write
    $\eta = g^p \eta'$ for some $g \in \field(X)$.  In particular, the coefficients of the divisors $(\eta)_{\infty} - (\eta)_0$
    and $(\eta')_{\infty} - (\eta')_0$ agree modulo $p$.

    Let $U$ be a sufficiently small open subset of $X - ( \sing(\F) \cup \sing(\pkernel{\F}) )$.  Choose a nowhere vanishing section $\omega$ of
    $N^*_{\F}(U)$ and a vector field $v \in T_{\F}(U) \subset T_X(U)$ everywhere transverse to $\restr{T_{\pkernel F}}{U}$.
    Therefore, $f = \omega(v^p)$ generates $\mathcal O_{U}(-\pdegeneracy{\F})$ and the rational $1$-form $\omega/f$ is closed according
    to Proposition \ref{P:old}. Thus  the coefficients of $(\eta)_{\infty} - (\eta)_0$  and
    $\pdegeneracy{\F}$ along hypersurfaces intersecting $U$ agree modulo $p$. Since we can choose $U$ intersecting any given hypersurface of $X$, the result follows.
\end{proof}

\subsection{The Cartier operator}
Let $X$ be a smooth algebraic variety defined over an algebraically closed field of characteristic $p>0$.

The de Rham complex $(\Omega^{\bullet}_X,d)$ of $X$ is a complex of sheaves of abelian groups but is a complex of $\mathcal O_X$-modules. Nevertheless, for any $f \in \mathcal O_X$ and any $\omega\in \Omega^i_X$ we have that
\[
    d (f^p \cdot \omega) = f^p \cdot \omega \, .
\]
It follows that $(\Frob_* \Omega^{\bullet}_X, d)$ is a complex of $\mathcal O_X$-modules. If we denote by $Z\Omega^{\bullet}_X$ the sheaf of closed differential, and by $B \Omega^{\bullet}_X$ the sheaf of locally exact differentials then both $\Frob_* Z \Omega^{\bullet}_X$ and $\Frob_* B \Omega^{\bullet}_X$ are
coherent $\mathcal O_X$-modules. The sheaf cohomology groups of the complex $(\Frob_* \Omega^{\bullet}_X, d)$
are the $\mathcal O_X$-modules
\[
    \CohSheaf^i \Frob_* \Omega^{\bullet}_X =  \frac{\Frob_*Z \Omega^i_X }{\Frob_*B \Omega^i_X } \, .
\]

Cartier proved in \cite{MR84497} the existence of a unique morphism  $\Cartier:\Frob_*Z \Omega^{\bullet}_X  \to \Omega^{\bullet}_X$
of sheaves of graded-commutative $\mathcal O_X$-algebras such that
\begin{enumerate}
    \item $\ker \Cartier = \Frob_* B \Omega^{\bullet}_X$; and
    \item $\Cartier(f^{p-1} df) = df$ for every $f \in \mathcal O_X$; and
    \item $\Cartier$ is surjective and, consequently, induces an isomorphism of graded-commutative $\mathcal O_X$-algebras
    between $\CohSheaf^{\bullet} \Frob_* \Omega^{\bullet}_X$ and $\Omega^{\bullet}_X$.
\end{enumerate}
The morphism $\Cartier$ is the (absolute) Cartier operator on $X$. If $\omega$ is a closed $q$-form, we will
say that $\Cartier(\omega)$ is the Cartier transform of $\omega$

\begin{remark}
    In the literature, one  also finds the relative Cartier operator $\Frob_*Z \Omega^{\bullet}_X  \to \Omega^{\bullet}_{X^{(p)}}$. As we are working over $S=\Spec(\field)$ where $\field$ is an  algebraically closed field, the two
    operators are essentially equivalent, cf. \cite[pages 122-123]{Illusie96}. For details on the construction of the Cartier operator, see for instance  \cite[Section 7]{Katz1970}, \cite[Chapter 1, Section 1.3]{BrionKumar05}, or \cite[Section 3]{Illusie96}.
\end{remark}

\begin{lemma}\label{L:Cartier formula}
    Let $X$ be a smooth variety defined over a field $\field$ of characteristic $p>0$.
    If $\omega \in \Omega^1_X$ and $v \in T_X$ then
    \[
        ( i_v \Cartier(\omega) ) ^p = i_{v^p} \omega - v^{p-1}(i_v \omega) \, .
    \]
    More generally, if $\omega \in \Omega^j_X$ and $v \in T_X$ then
    \[
        i_v \Cartier(\omega) = \Cartier\left( i_{v^p} \omega - (L_v)^{p-1}(i_v \omega) \right) \, .
    \]
\end{lemma}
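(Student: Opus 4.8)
The plan is to prove the general degree-$j$ identity $i_v\circ\Cartier=\Cartier\circ\Theta_v$ on $\Frob_*Z\Omega^{\bullet}_X$, where $\Theta_v:=i_{v^p}-(L_v)^{p-1}\circ i_v$, and then to read off the first formula as its special case in degree one: since $\Cartier$ restricted to closed $0$-forms is the inverse of Frobenius, $\Cartier(g^p)=g$, the equality $i_v\Cartier(\omega)=\Cartier(\Theta_v\omega)$ with $\Theta_v\omega\in\mathcal O_X^p$ is equivalent to $(i_v\Cartier(\omega))^p=\Theta_v\omega=i_{v^p}\omega-v^{p-1}(i_v\omega)$. So the whole statement reduces to the one general identity.

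First I would check that $\Theta_v$ preserves closed forms, so that the right-hand side is even defined. For closed $\omega$, Cartan's formula gives $L_v\omega=d\,i_v\omega$ and $L_{v^p}\omega=d\,i_{v^p}\omega$; combining these with $[d,L_v]=0$ and the restricted–Lie–algebra identity $L_{v^p}=(L_v)^p$ (verified by observing that both sides are degree-$0$ derivations of $\Omega^{\bullet}_X$ commuting with $d$ and agreeing on functions, hence everywhere) yields $d\,\Theta_v\omega=(L_v)^p\omega-(L_v)^{p-1}d\,i_v\omega=(L_v)^p\omega-(L_v)^p\omega=0$. Next, both maps $\omega\mapsto i_v\Cartier(\omega)$ and $\omega\mapsto\Cartier(\Theta_v\omega)$ are additive, local, and semilinear for $\Frob_*$: under $\omega\mapsto g^p\omega$ each is multiplied by $g^p$, on the left because $\Cartier(g^p\omega)=g\Cartier(\omega)$, on the right because $v$ annihilates $g^p$ so that $\Theta_v(g^p\omega)=g^p\Theta_v\omega$. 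Hence it suffices to verify the identity on a local system of algebra generators of $Z\Omega^{\bullet}_X$ over $Z\Omega^0_X=\mathcal O_X^p$. By the Cartier isomorphism such generators are supplied, in étale coordinates $x_1,\dots,x_n$, by the exact forms $B\Omega^{\bullet}_X$ together with the closed $1$-forms $x_i^{p-1}dx_i$, and $\Cartier$ is multiplicative. On exact forms both sides vanish ($\Cartier(dh)=0$ while $\Theta_v(dh)=v^p(h)-v^{p-1}(v(h))=0$); on a generator $f^{p-1}df$ the degree-one identity reduces to the scalar relation $(v(f))^p=f^{p-1}v^p(f)-v^{p-1}\!\bigl(f^{p-1}v(f)\bigr)$. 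Passage from degree one to higher degrees is then by induction, factoring a generator as $\alpha\wedge\beta$ with $\alpha$ a closed $1$-form and using both the multiplicativity of $\Cartier$ and the fact that $\Theta_v$ is an odd derivation for $\wedge$ modulo $B\Omega^{\bullet}_X$.

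I expect two steps to carry the real weight. The first is the scalar identity displayed above: it is a universal differential-polynomial identity in $f,v(f),\dots,v^p(f)$, which I would derive from Jacobson's and Deligne's formulas (\ref{E:Jacobson}) and (\ref{E:Deligne}) for $p$-th powers of derivations, or verify directly after reducing to the one-variable case $v=\partial/\partial x$. The second, and the main obstacle, is the derivation property of $\Theta_v$ modulo exact forms: expanding $(L_v)^{p-1}$ of a wedge by the binomial Leibniz rule produces cross terms, and the point is that the diagonal terms $k=0$ and $k=p-1$ reproduce exactly $\Theta_v\alpha\wedge\beta-\alpha\wedge\Theta_v\beta$, while every remaining cross term, carrying a factor $(L_v)^m$ ($m\ge 1$) of a closed factor, is of the shape (function)$\cdot$(exact) or (exact)$\wedge$(form) and therefore lies in $B\Omega^{\bullet}_X=\ker\Cartier$. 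Checking that this residual combination indeed collapses into the kernel is the delicate bookkeeping on which the general case hinges.
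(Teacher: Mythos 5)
The paper does not actually prove this lemma; it cites Seshadri for the first formula and Kaledin for the second, so your self-contained argument is necessarily a different route. Your architecture --- reduce, via additivity, $\Frob_*$-semilinearity (note the image is multiplied by $g$, not $g^p$, when $\omega\mapsto g^p\omega$), and the Cartier isomorphism, to the local generators $f^p$, $x_i^{p-1}dx_i$ and the exact forms, then propagate to products using multiplicativity of $\Cartier$ together with a derivation property of $\Theta_v=i_{v^p}-(L_v)^{p-1}i_v$ modulo $B\Omega^{\bullet}_X$ --- is sound, and your preliminary steps (closedness of $\Theta_v\omega$ via $L_{v^p}=(L_v)^p$, vanishing on exact forms, and the degree-one scalar identity $(v(f))^p=f^{p-1}v^p(f)-v^{p-1}\bigl(f^{p-1}v(f)\bigr)$, which is indeed a correct classical identity) are all fine.

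The genuine gap is at the step you yourself single out as the crux, and the mechanism you offer for it is wrong. After expanding $(L_v)^{p-1}$ on $i_v(\alpha\wedge\beta)$ with $\binom{p-1}{k}\equiv(-1)^k \bmod p$, a typical residual cross term is $(L_v)^{k}i_v\alpha\wedge(L_v)^{p-1-k}\beta$ with $1\le p-1-k$, i.e.\ of the shape $\gamma\wedge d\epsilon$ with $\gamma$ \emph{not} closed. Since $\gamma\wedge d\epsilon=\pm\bigl(d(\gamma\wedge\epsilon)-d\gamma\wedge\epsilon\bigr)$, such a term lies in $B\Omega^{\bullet}_X$ only when the complementary factor is closed; already for $j=2$ the cross term is of the form $g\,dh$, which is locally exact only if $dg\wedge dh=0$. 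So the individual cross terms do \emph{not} lie in $\ker\Cartier$, contrary to what you assert. What is true, and what must be proved to close the argument, is that they cancel \emph{in pairs} modulo $B\Omega^{\bullet}_X$: using $(L_v)^{p-1-k}\beta=d\bigl((L_v)^{p-2-k}i_v\beta\bigr)$ and $d(L_v)^{k}i_v\alpha=(L_v)^{k+1}\alpha$ (both from closedness of $\alpha,\beta$), the $k$-th cross term of $(L_v)^{p-1}(i_v\alpha\wedge\beta)$ is congruent mod $B\Omega^{\bullet}_X$ to minus the $(k+1)$-st cross term of $(-1)^{|\alpha|}(L_v)^{p-1}(\alpha\wedge i_v\beta)$, and the two sums telescope away. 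With that substitution your proof goes through; without it, the case $j\ge 2$ is not established.
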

\begin{proof}
    The first formula is due to Cartier and appears (without proof) in \cite{MR84497}. A proof can be found in \cite[Proposition 3]{Seshadri1960}. The second formula is the content of \cite[Proposition 2.6]{Kaledin08}.
\end{proof}

\subsection{The Cartier transform of a $p$-dense foliation}\label{SS:Cartier transform}
Let $\F$ be a $p$-dense foliation of codimension $q$ on a smooth algebraic variety $X$. Let $\omega_1, \ldots, \omega_q$ be closed rational $1$-forms defining $\F$. The existence of such $1$-forms is guaranteed by Proposition \ref{P:closed}. If $\omega$ is any closed $1$-form
vanishing on $T_{\F}$, we can write
\[
    \omega = \sum_{i=1}^q f_i \omega_i
\]
for some rational functions $f_i \in \field(X)$. If we differentiate the expression above and take the wedge  product of the result with $\omega_1 \wedge \cdots \wedge \widehat{\omega_i} \wedge \cdots \wedge \omega_q$, we see that $df_i \wedge \omega_1 \wedge \cdots \wedge \omega_q=0$ for any $i$. Since $\F$ is $p$-dense, it follows that $df_i =0$ for every $i$, i.e., there exists $g_i \in \field(X)$ such that $f_i = g_i^p$. Therefore the identity
\[
    \Cartier(\omega ) = \sum_{i=1}^q g_i \Cartier(\omega_i) \,
\]
holds. It follows that the distribution defined by the Cartier transform of all closed rational $1$-forms vanishing on $T_{\F}$ can be defined by the Cartier transform of only $q = \codim \F$ closed rational $1$-forms.

\begin{dfn}
    Let $\F$ be a $p$-dense foliation of codimension $q$ on a smooth algebraic variety $X$. The Cartier transform of $\F$ is the distribution $\CartierTransform{\F}$ on $X$ defined by the intersection of the kernels of the Cartier transform of all closed rational $1$-forms     vanishing on $T_{\F}$.
\end{dfn}

\begin{ex}
    The Cartier transform of a $p$-dense foliation is not necessarily a foliation. For instance, the codimension one foliation on $\mathbb A^3_{\field}$ defined by the closed $1$-form $\omega = y^{p-1}dy + z^p x^{p-1} dx$ has Cartier transform defined by the non-integrable $1$-form
    $\Cartier(\omega) = dy + z dx$.
\end{ex}

\subsection{The Cartier transform and the kernel of the $p$-curvature}
The proposition below gives an alternative  description of the kernel of the $p$-curvature of a $p$-dense foliation $\F$.

\begin{prop}
    Let $\F$ be a $p$-dense foliation on a smooth algebraic variety $X$. The kernel of the $p$-curvature of $\F$ coincides with the saturation
    of $T_{\F} \cap T_{\CartierTransform{\F}}$ inside $T_X$.
\end{prop}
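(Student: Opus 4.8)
The plan is to compare the three relevant subsheaves of $T_X$ at the generic point $\eta$ of $X$, where their fibres become $\field(X)$-linear subspaces, and then to promote the resulting equality of generic fibres to the stated identity using that $T_{\pkernel{\F}}$ is saturated. The link between the Cartier transform and the $p$-curvature is the first formula of Lemma \ref{L:Cartier formula}. Fix closed rational $1$-forms $\omega_1, \ldots, \omega_q$, $q = \codim \F$, defining $\F$, whose existence is guaranteed by Proposition \ref{P:closed}; by the reduction carried out just before the definition of $\CartierTransform{\F}$ these forms already cut out the Cartier transform. If $v$ is a rational vector field tangent to $\F$ then $i_v \omega_j = 0$ for every $j$, so Lemma \ref{L:Cartier formula} collapses to
\[
    \left( i_v \Cartier(\omega_j) \right)^p = i_{v^p}\omega_j \, .
\]
Because $x \mapsto x^p$ is injective on the field $\field(X)$, the vanishing of $i_v \Cartier(\omega_j)$ is equivalent to the vanishing of $\omega_j(v^p)$.

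Next I would read off the three membership conditions at $\eta$. On the one hand, $v$ belongs to $T_{\CartierTransform{\F}}$ exactly when $i_v \Cartier(\omega_j) = 0$ for all $j$, hence — by the displayed identity — exactly when $\omega_j(v^p) = 0$ for all $j$. Since $\omega_1 \wedge \cdots \wedge \omega_q \neq 0$, the forms $\omega_1, \ldots, \omega_q$ constitute a $\field(X)$-basis of the generic fibre of $\CNF$, so the latter condition is equivalent to $v^p \in T_{\F}$. Thus the generic fibre of $T_{\F} \cap T_{\CartierTransform{\F}}$ is $\{ v \in T_{\F} : v^p \in T_{\F} \}$. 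On the other hand, $\pcurvature{\F}(1 \otimes v) = v^p \bmod T_{\F}$, so $v^p \in T_{\F}$ says precisely that $1 \otimes v$ lies in $\ker \pcurvature{\F} = \Frob^* T_{\pkernel{\F}}$ (Lemma \ref{L:kernel pcurvature}); choosing a $\field(X)$-basis of $(T_{\pkernel{\F}})_\eta$, completing it to a basis of $(T_{\F})_\eta$, and using $1 \otimes f v = f^p(1 \otimes v)$ from Equation (\ref{E:Frob^*}), one checks that at $\eta$ this membership holds if and only if $v \in T_{\pkernel{\F}}$. Combining the two descriptions gives the equality of generic fibres $\left( T_{\F} \cap T_{\CartierTransform{\F}} \right)_\eta = \left( T_{\pkernel{\F}} \right)_\eta$.

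Finally, I would promote this to the assertion about saturations. The saturation of a subsheaf $S \subseteq T_X$ in the locally free sheaf $T_X$ is the preimage of the generic fibre $S_\eta$ under the localization map $T_X \to (T_X)_\eta$, and therefore depends only on $S_\eta$; the equality of generic fibres above thus yields $\mathrm{sat}(T_{\F} \cap T_{\CartierTransform{\F}}) = \mathrm{sat}(T_{\pkernel{\F}})$. As $T_{\pkernel{\F}}$ is the tangent sheaf of a distribution, it is the annihilator of its conormal sheaf and hence already saturated in $T_X$, so $\mathrm{sat}(T_{\pkernel{\F}}) = T_{\pkernel{\F}}$, which is the claim. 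The one delicate step is the generic identification of $T_{\pkernel{\F}}$ with $\{v \in T_{\F} : v^p \in T_{\F}\}$: it relies on Lemma \ref{L:kernel pcurvature} (hence on $p$-denseness) to know that $\ker \pcurvature{\F}$ is genuinely a Frobenius pullback, and on the $p$-linearity of $v \mapsto 1 \otimes v$ to detect membership; phrasing the whole comparison at $\eta$ is precisely what renders harmless the fact that the $\omega_j$, and hence the identity from Lemma \ref{L:Cartier formula}, are only rational.
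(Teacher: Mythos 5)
Your argument is correct and follows essentially the same route as the paper's proof: both rest on the identity $(i_v \Cartier(\omega))^p = i_{v^p}\omega$ from Lemma \ref{L:Cartier formula}, on Lemma \ref{L:kernel pcurvature} to realize $\ker\pcurvature{\F}$ as $\Frob^* T_{\pkernel{\F}}$, and on Proposition \ref{P:closed} to know that closed rational $1$-forms generically generate $N^*_{\F}$. The only difference is presentational: you phrase everything as an equality of generic fibres and then invoke saturation explicitly, which makes the final bookkeeping (that $T_{\pkernel{\F}}$ is already saturated) more visible than in the paper's two-inclusion formulation.
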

\begin{proof}
    Lemma \ref{L:kernel pcurvature} implies the existence of a distribution $\pkernel{\F}$ on $X$ such that $\ker \pcurvature{\F} = \Frob^* T_{\pkernel{\F}}$.

    Let $v \in T_{\F}$. For any closed rational $1$-form vanishing on $T_{\F}$, Lemma \ref{L:Cartier formula} implies that
    \[
        ( i_v \Cartier(\omega) ) ^p = i_{v^p} \omega - v^{p-1}(i_v \omega) = i_{v^p} \omega \, .
    \]
    If $v \in T_{\pkernel{\F}}$ then, by the definition of $\pkernel{\F}$, $i_{v^p} \omega =0$ for any $\omega \in N^*_{\F}$. Therefore, when $\omega$ is closed, $i_v \Cartier(\omega) =0$. This shows that $T_{\pkernel{\F}}$ is contained in the saturation of $T_{\F} \cap T_{\CartierTransform{\F}}$.

    Reciprocally, if $v \in T_{\F} \cap T_{\CartierTransform{\F}}$ then, by definition,  $i_{v} \Cartier(\omega)=0$ for any closed rational rational $1$-form $\omega$ vanishing on $T_{\F}$. Consequently, $i_{v^p} \omega =0$ for any closed rational section of $N^*_{\F}$. Since $N^*_{\F}$ is generically generated by closed rational $1$-forms, as proved in Proposition \ref{P:closed}, this is sufficient to show that $T_{\F} \cap T_{\CartierTransform{\F}}$ is contained in $T_{\pkernel{\F}}$.
\end{proof}

\subsection{Global expression for the kernel of the $p$-curvature}\label{SS:global expression}
Assume that $\F$ is a $p$-dense foliation of codimension one on a smooth algebraic variety $X$. For later use, we will present a twisted section of $\Omega^1_{\F}$ defining the distribution $\pkernel{\F}$.

Let $U $ be the open set $X - ( \sing(\F) \cup \sing(\pkernel{\F}) )$ and consider a sufficiently fine open covering $U_i$ of $U$. Choose for each $U_i$  a nowhere vanishing section $\omega_i$ of $N^*_{\F}(U_i)$ and a vector field $v_i \in T_{\F}(U_i) \subset T_X(U_i)$ everywhere transverse to $\restr{T_{\pkernel F}}{U_i}$. Therefore, $f_i = \omega_i(v_i^p)$ generates $\mathcal O_{U_i}(-\pdegeneracy{\F})$ and the rational $1$-form $\omega_i/f_i$ is closed.

Let $\eta_i$ be the restriction of the rational $1$-form $\Cartier(\omega_i/f_i)$ to $\restr{T_{\F}}{U_i}$. We claim that $\eta_i$ is a regular section of $\restr{\Omega^1_{\F}}{U_i}$ without codimension one zeros. Indeed, if $v \in T_{\F}(U_i)$ then, using Lemma \ref{L:Cartier formula}, we can write
\[
    \omega_i(v^p) = f_i \left( \frac{\omega_i}{f_i} \right) (v^p)    = f_i \left(\Cartier\left(\frac{\omega_i}{f_i} \right) (v) \right)^p  \, .
\]
Since $\omega_i(v^p) \in \mathcal O_X(-\pdegeneracy{\F})(U_i) = f_i \mathcal O_X(U_i)$, we deduce that $\eta_i (v) = \Cartier\left(\frac{\omega_i}{f_i} \right)(v) \in \mathcal O_X(U_i)$ for no matter which $v \in T_{\F}(U_i)$. Moreover, $\eta_i$ has no zeros since
$\eta_i(v_i) =1$.

Taking into account Proposition \ref{P:degree degeneracy}, we deduce that the collection $\{ \eta_i \}$ defines a regular section $\eta$ of the sheaf $\restr{(\Omega^1_{\F} \otimes {\omega_{\pkernel \F}} \otimes \omega^*_{\F})}{U} = \restr{(\Omega^1_{\F} \otimes \det{N_{\pkernel \F}} \otimes N^*_{\F})}{U}$. Since $\Omega^1_{\F} \otimes {\omega_{\pkernel \F}} \otimes \omega^*_{\F}$ is reflexive, $\eta$ extends to an element of
\[
    H^0(X, \Omega^1_{\F} \otimes {\omega_{\pkernel \F}} \otimes \omega^*_{\F}) = H^0(X, \Omega^1_{\F} \otimes \det{N_{\pkernel \F}} \otimes N^*_{\F}) \, ,
\]
without codimension one zeros, defining the distribution $\pkernel{\F} \subset \F$.

\section{Degeneracy divisor of the $p$-curvature}
This section investigates the degeneracy divisor of the $p$-curvature of codimension one foliations. It starts by describing how the degeneracy divisor behaves under two natural operations: pull-back by dominant rational maps (Subsection \ref{SS:pull-back of degeneracy divisor}) and restriction to non-invariant subvarieties (Subsection \ref{SS:restriction}). It then moves, in Subsection \ref{SS:p-divisor}, to review work by the first author on the reducibility of the degeneracy divisor of the $p$-curvature of sufficiently general foliations on the projective plane and Hirzebruch surfaces.  The results on the reducibility and on the behavior under pull-backs will be essential for the applications developed in the second part of the paper.

\subsection{Equidimensional and dominant rational maps}\label{SS:pb00}
Let $X$ and $Y$ be smooth varieties of dimensions $n$ and $m$, respectively, defined over an algebraically closed field $\field$  and let $\varphi :X \dashrightarrow Y$ be a dominant rational map. Throughout this section we will make use of the following assumption on $\varphi$.

\begin{assumption}\label{A:rational}
    There exist  open subsets $X^{\circ} \subset X$ and $Y^{\circ} \subset Y$ such that $\varphi$ restricts to a morphism
    $\varphi^{\circ} : X^{\circ} \to Y^{\circ}$
    satisfying
    \begin{enumerate}
        \item\label{IA:cod} the set $X- X^{\circ}$ has codimension at least two;
        \item\label{IA:equi} $\varphi^{\circ}$ is equidimensional and, consequently,  the Zariski closure in $Y^{\circ}$ of the image of any irreducible hypersurface of $X^{\circ}$ is either a hypersurface of $Y^{\circ}$ or the whole $Y^{\circ}$;
        \item\label{IA:diff} the differential of $\varphi^{\circ}$ has generically rank equal to $m=\dim Y$ and the differential of the restriction of $\varphi^{\circ}$ to any irreducible hypersurface $H \subset X^{\circ}$ has generic rank equal to $m=\dim Y^{\circ}$ when $H$ dominates $Y^{\circ}$ or equal to $m-1$ when the image of $H$ is contained in a hypersurface.
    \end{enumerate}
\end{assumption}

When $\field$ has characterisitic zero, Item (\ref{IA:diff}) follows from
Item (\ref{IA:equi}), but this is no longer true when $\field$ has positive characteristic.

Define the ramification  divisor $\Ram(\varphi^{\circ}) \in \Div(X^{\circ})$  of $\varphi^{\circ}$ as
\begin{equation}\label{E: rel canonical for varphi}
    \Ram(\varphi^{\circ}) = \sum_{H} (\varphi^{\circ})^* H - ((\varphi^{\circ})^* H)_{\mathrm{red} } \, ^,
\end{equation}
where the sum runs over all irreducible hypersurfaces of $Y^{\circ}$. Assumption \ref{A:rational}, Item (\ref{IA:diff}), guarantees that
this is a finite sum.  Moreover, each irreducible component of the support of $\Ram(\varphi^{\circ})$ is mapped (dominantly) to an
irreducible hypersurface of $Y^{\circ}$. Define the branch divisor of $\varphi^{\circ}$, $\Branch({\varphi^{\circ}})$, as the reduced
divisor with support equal to the image of $\Ram(\varphi^{\circ})$. Set $\Ram(\varphi)$ as the divisor on $X$ given by the closure of  $\Ram(\varphi^{\circ})$.

If $\mathcal L$ is a line-bundle on $Y$ then we will abuse the notation and write $\varphi^* \mathcal L$ the unique line-bundle on
$X$ such that $\restr{\varphi^* \mathcal L}{X^{\circ}} = (\varphi^{\circ})^*\left(\restr{\mathcal L}{Y^\circ} \right)$. Since $X$ is smooth and $\codim X- X^{\circ} \ge 2$, such line-bundle always exists.

The next proposition, as well as Proposition \ref{P:pb dominant} below, are well-known for equidimensional morphisms between normal complex varieties, see for instance \cite[Subsection 2.9]{MR3695844}.

\begin{prop}\label{P:deg vs rel can}
    Let $\varphi :X \dashrightarrow Y$ be a rational map satisfying Assumption \ref{A:rational}. If $\Vertical$ is the foliation
    on $X$ defined by $\varphi$ then
    \[
        \omega_{\Vertical} = \omega_X \otimes \varphi^* \omega_Y^* \otimes \mathcal O_X(- \Ram(\varphi)) \, .
    \]
\end{prop}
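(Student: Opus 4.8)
The plan is to combine the adjunction formula for distributions with a codimension-one analysis of the conormal sheaf of $\Vertical$, reducing everything to a local computation along the ramification divisor.

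\emph{Reduction via adjunction.} Applying \eqref{E:adjunction for distributions} to the distribution $\Vertical$ gives $\omega_X \simeq \omega_{\Vertical}\otimes \det N^*_{\Vertical}$, hence
\[
    \omega_{\Vertical}\simeq \omega_X \otimes (\det N^*_{\Vertical})^{*}.
\]
All three of $\omega_{\Vertical}$, $\omega_X$ and $\varphi^*\omega_Y$ are line bundles on the smooth variety $X$, and $\mathcal O_X(\Ram(\varphi))$ is one as well; since $\codim(X-X^{\circ})\ge 2$ and $\codim \sing(\Vertical)\ge 2$, it suffices to establish the isomorphism in codimension one on $X^{\circ}$. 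Thus the statement is equivalent to the identity $\det N^*_{\Vertical}\simeq \varphi^*\omega_Y\otimes \mathcal O_X(\Ram(\varphi))$.

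\emph{The determinant section.} By construction $N^*_{\Vertical}$ is the saturation in $\Omega^1_X$ of the image of the codifferential $(\varphi^{\circ})^*\Omega^1_{Y^{\circ}} \to \Omega^1_{X^{\circ}}$. Assumption \ref{A:rational}(\ref{IA:diff}) ensures that $d\varphi^{\circ}$ has generic rank $m$, so this map factors through an injection $\varphi^*\Omega^1_Y \hookrightarrow N^*_{\Vertical}$ of reflexive sheaves of rank $m$ that is an isomorphism at the generic point. Taking top exterior powers produces a nonzero global section $s$ of $\det N^*_{\Vertical}\otimes (\varphi^*\omega_Y)^{*}$, whence $\det N^*_{\Vertical}\simeq \varphi^*\omega_Y\otimes\mathcal O_X(\mathrm{div}(s))$. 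It remains to identify $\mathrm{div}(s)$ with $\Ram(\varphi)$ on $X^{\circ}$.

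\emph{Local computation along prime divisors.} The divisor $\mathrm{div}(s)$ is supported on the locus where $\varphi^*\Omega^1_Y\to N^*_{\Vertical}$ drops rank, so I would compute $\ord_E s$ at the generic point of each prime divisor $E\subset X^{\circ}$. By Item (\ref{IA:equi}), either $E$ dominates $Y^{\circ}$ or $\varphi(E)$ lies in a hypersurface $H$. If $E$ dominates $Y^{\circ}$, then Item (\ref{IA:diff}) gives that the generic rank of $d(\varphi|_E)$ is $m$, so $d\varphi$ already has full rank $m$ along $E$, the codifferential is an isomorphism onto $N^*_{\Vertical}$ there, and $\ord_E s = 0$; this matches the fact that such $E$ do not appear in $\Ram(\varphi)$. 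If $\varphi(E)\subset H$, I would choose local coordinates $x_1,\dots,x_n$ on $X$ and $y_1,\dots,y_m$ on $Y$ with $E=\{x_1=0\}$ and $H=\{y_1=0\}$, write $\varphi^* y_1 = u\,x_1^{e}$ with $u$ a unit and $e=\ord_E \varphi^*H$, and factor $\varphi^* dy_1 = x_1^{e-1}\gamma$ with $\gamma = e\,u\,dx_1 + x_1\,du$. Using Item (\ref{IA:diff}) in the form that $d(\varphi|_E)$ has generic rank $m-1$ — equivalently, that $\varphi^* dy_2|_E,\dots,\varphi^* dy_m|_E$ are linearly independent — one checks that $\gamma\wedge\varphi^* dy_2\wedge\cdots\wedge\varphi^* dy_m$ generates $\det N^*_{\Vertical}$ along $E$, giving $\ord_E s = e-1$, precisely the coefficient of $E$ in $\Ram(\varphi)=\sum_H\bigl((\varphi^{\circ})^*H-((\varphi^{\circ})^*H)_{\mathrm{red}}\bigr)$.

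Summing over all $E$ yields $\mathrm{div}(s)=\Ram(\varphi)$ on $X^{\circ}$, and reflexivity together with $\codim(X-X^{\circ})\ge 2$ extends the isomorphism to all of $X$. The main obstacle is exactly this codimension-one computation in positive characteristic: in contrast with the characteristic-zero situation, Item (\ref{IA:diff}) is a genuine separability hypothesis, and it is what makes both steps above work — the vanishing $\ord_E s=0$ for divisors dominating $Y$ and the identification of the ramified contribution. I expect the delicate point to be the behaviour of the factor $\gamma$ along $E$: whether its leading term $e\,u\,dx_1$ survives is governed by the divisibility of $e$ by $p$, and controlling this is the heart of the argument.
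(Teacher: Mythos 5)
Your proposal is correct where it is complete, and it takes a genuinely different route from the paper. The paper works entirely on the tangent side: after shrinking $X^{\circ}$ and $Y^{\circ}$ it exhibits the exact sequence
\[
0 \longrightarrow \restr{T_{\Vertical}}{X^{\circ}} \longrightarrow T_{X^{\circ}}(-\log R) \longrightarrow (\varphi^{\circ})^*\bigl(T_{Y^{\circ}}(-\log B)\bigr) \longrightarrow 0 ,
\]
with $B=\Branch(\varphi^{\circ})$ and $R=((\varphi^{\circ})^*B)_{\mathrm{red}}$, and simply takes determinants, the identity $\Ram(\varphi^{\circ})=(\varphi^{\circ})^*B-R$ then giving the formula in one line. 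You instead dualize via \eqref{E:adjunction for distributions} and compute the degeneracy divisor of the codifferential $\varphi^*\Omega^1_Y\to N^*_{\Vertical}$ prime divisor by prime divisor. The two computations are dual to each other; what the logarithmic packaging buys the paper is that all codimension-one bookkeeping is absorbed into one identity, while your version makes explicit exactly where Assumption \ref{A:rational}, Item (\ref{IA:diff}), enters (the vanishing $\ord_E s=0$ for $E$ dominating $Y^{\circ}$, and the independence of $\varphi^*dy_2|_E,\dots,\varphi^*dy_m|_E$ along a ramified $E$). Those two verifications are correct as you state them.

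The gap is precisely the one you flag in your last sentence, and it is genuine: if $p$ divides $e=\ord_E\varphi^*H$ then $\gamma=e\,u\,dx_1+x_1\,du=x_1\,du$, so $\varphi^*dy_1=x_1^{e}\,du$ and $\ord_E s\ge e$ rather than $e-1$. No refinement of the analysis of $\gamma$ can rescue the identity $\mathrm{div}(s)=\Ram(\varphi)$ in that case, because the statement itself requires tameness: already for $\varphi:\mathbb P^1_{\field}\to\mathbb P^1_{\field}$, $t\mapsto t^p(1+t)$, one computes $\deg\Ram(\varphi)=2p-1$ while the divisor of the codifferential (the different) has degree $2p$. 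So you must either add the hypothesis that no component of any $\varphi^*H$ appears with multiplicity divisible by $p$, or note that this is implicit in how Assumption \ref{A:rational} is used (every map to which the paper applies the proposition is tame). It is worth pointing out that the paper's own proof has the same blind spot: surjectivity in codimension one of $T_{X^{\circ}}(-\log R)\to(\varphi^{\circ})^*(T_{Y^{\circ}}(-\log B))$ along a component $E$ of $R$ amounts to the non-vanishing in $\field$ of the coefficient $e$ of $dx_1/x_1$ in $\varphi^*(dy_1/y_1)=du/u+e\,dx_1/x_1$, so it too silently assumes $p\nmid e$. In the tame case your argument is complete and correct.
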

\begin{proof}
    Further restricting $X^{\circ}$ and $Y^{\circ}$ (keeping the assumption $\codim X- X^{\circ} \ge 2$), we can, and will,  assume that all
    irreducible components of $\Ram(\varphi^\circ)$ and $\Branch(\varphi^\circ)$   are smooth, pairwise disjoint and that the rank
    of the differential of $\varphi^{\circ}$ is equal to $m-1=\dim Y-1$ on  the support of $\Ram(\varphi^{\circ})$ and equal to
    $m$ everywhere else.

    Set $B= \Branch(\varphi^{\circ})$ and $R = ((\varphi^{\circ})^*(B))_{red}$.
    Notice that $\Ram(\varphi^{\circ}) = (\varphi^{\circ})^*B - R$. As usual, we will write $T_{X^\circ}(-\log R)$ for
    the subsheaf of $T_{X^\circ}$ formed by vector fields tangent to $R$ and similarly for $T_{Y^\circ}(-\log B)$.

    Let $\Vertical$ be the foliation defined by $\varphi$. Assumption \ref{A:rational}, Item (\ref{IA:diff}), (or rather the stronger version of it made on the first paragraph of this proof)  implies that the tangent sheaf of
    $\restr{\Vertical}{X^{\circ}}$ fits into the exact sequence
    \[
        0 \longrightarrow  \restr{T_{\Vertical}}{X^0} \longrightarrow T_{X^{\circ}}(-\log R)
        \longrightarrow (\varphi^{\circ})^* (T_{Y^{\circ}}(- \log B)) \to 0 \,
    \]
    where the morphism $T_{X^{\circ}}(-\log R) \longrightarrow (\varphi^{\circ})^* (T_{Y^{\circ}}(- \log B))$  is induced the differential
    $d\varphi^{\circ}$ of $\varphi^{\circ}$. It suffices to take the determinant of this exact sequence to conclude.
\end{proof}

\subsection{Pull-backs of foliations under dominant rational maps}\label{SS:pb0}

Before continuing the discussion we introduce two notations:
\begin{enumerate}
    \item If $D$ is an arbitrary divisor on smooth variety carrying a foliation $\F$, we will write $D = D_{\F} + D_{\F^{\perp}}$ for
    the unique decomposition of $D$ as sum of divisors where all the irreducible components of the support of $D_{\F}$ are $\F$-invariant and
    all irreducible components of the support of $D_{\F^{\perp}}$ are not $\F$-invariant.
    \item We will write $T_{\F}(-\log D)$ for the subsheaf of $T_{\F}$ formed by vector fields tangent to $\F$ and $D$. This definition implies that     $T_{\F}(-\log D)= T_{\F}(-\log D_{\F^{\perp}})$ and $T_{\F}(-\log D_{\F})= T_{\F}$. Note that $\det T_{\F}(-\log D) = \det T_{\F} \otimes \mathcal O_X(-D_{\F^{\perp}})$.
\end{enumerate}

\begin{prop}\label{P:pb dominant}
    Let $\varphi :X \dashrightarrow Y$ be a rational map satisfying Assumption \ref{A:rational}, and let $\G$ be a foliation on $Y$.
    If $\F$ is the pull-back of $\G$ under $\varphi$ then
    \begin{eqnarray*}
        \det( N^*_{\F} ) & = & \varphi^* \det(N^*_{\G}) \otimes \mathcal O_{X}\left( \Ram(\varphi)_{\F} \right),  \text{ and } \\
        \omega_{\F}  &= &\varphi^* \omega_{\G} \otimes \omega_{\Vertical} \otimes \mathcal O_{X}\left( \Ram(\varphi)_{\F^{\perp}}\right).
    \end{eqnarray*}
\end{prop}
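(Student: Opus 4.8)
The plan is to establish the second identity (for $\omega_{\F}$) by a logarithmic exact sequence of tangent sheaves that parallels the proof of Proposition \ref{P:deg vs rel can}, and then to deduce the first identity (for $\det N^*_{\F}$) formally from it by adjunction. As in that proof, since $X$ is smooth and $\codim (X - X^{\circ}) \ge 2$, every identity of line bundles may be verified on $X^{\circ}$; I further shrink $X^{\circ}$ and $Y^{\circ}$ so that all components of $\Ram(\varphi^{\circ})$ and $\Branch(\varphi^{\circ})$ are smooth and pairwise disjoint. I write $B = \Branch(\varphi^{\circ})$ and $R = ((\varphi^{\circ})^* B)_{\mathrm{red}}$, so that $\Ram(\varphi) = (\varphi^{\circ})^* B - R$.

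First I would intersect the exact sequence from the proof of Proposition \ref{P:deg vs rel can} with the two foliations. Restricting the surjection $T_{X^{\circ}}(-\log R) \to \varphi^*(T_{Y^{\circ}}(-\log B))$ of that proof to the subsheaf $\varphi^*(T_{\G}(-\log B))$, its preimage is exactly $T_{\F}(-\log R)$: a vector field tangent to $R$ whose differential lands in $\varphi^* T_{\G}$ is, by the definition of the pull-back foliation, tangent to $\F$. This yields the short exact sequence
\[
0 \longrightarrow T_{\Vertical} \longrightarrow T_{\F}(-\log R) \longrightarrow \varphi^*\bigl(T_{\G}(-\log B)\bigr) \longrightarrow 0 .
\]
Taking determinants and using $\det T_{\F}(-\log R) = \omega_{\F}^* \otimes \mathcal O_X(-R_{\F^{\perp}})$, $\det T_{\G}(-\log B) = \omega_{\G}^* \otimes \mathcal O_Y(-B_{\G^{\perp}})$, and $\det T_{\Vertical} = \omega_{\Vertical}^*$, and then dualizing, I obtain
\[
\omega_{\F} = \varphi^* \omega_{\G} \otimes \omega_{\Vertical} \otimes \mathcal O_X\bigl(\varphi^* B_{\G^{\perp}} - R_{\F^{\perp}}\bigr) .
\]
Comparing with the desired formula and using $\Ram(\varphi)_{\F^{\perp}} = (\varphi^* B)_{\F^{\perp}} - R_{\F^{\perp}}$, the proof reduces to the divisorial statement $\varphi^* B_{\G^{\perp}} = (\varphi^* B)_{\F^{\perp}}$.

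The core remaining point is therefore the following invariance correspondence: a component $H$ of $\varphi^* B$ lying over a branch component $B_0$ is $\F$-invariant if and only if $B_0$ is $\G$-invariant. One implication is clean and characteristic-free: if $B_0 = \{b = 0\}$ is $\G$-invariant then for $v \in T_{\F}$ one computes $v(\varphi^* b) = db(d\varphi(v))$, and since $d\varphi(v) \in \varphi^* T_{\G}$ while $w(b) \in (b)$ for every $w \in T_{\G}$, one gets $v(\varphi^* b) \in (\varphi^* b)$; hence $\varphi^* B_0$, and in particular $H$, is $\F$-invariant. This gives $\varphi^* B_{\G} \subseteq (\varphi^* B)_{\F}$, equivalently $(\varphi^* B)_{\F^{\perp}} \subseteq \varphi^* B_{\G^{\perp}}$. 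I expect the reverse implication, that non-invariance of $B_0$ forces non-invariance of $H$, to be the main obstacle: one must produce a rational vector field tangent to $\F$, regular and transverse to $H$, lifting a vector field of $\G$ transverse to $B_0$. Because $d\varphi$ drops rank along ramification components, such a lift acquires a pole whose order is governed by the ramification index, and controlling it (including the rescaling by a suitable power of a local equation of $H$) requires the full strength of Assumption \ref{A:rational}(\ref{IA:diff}). This is exactly the positive-characteristic phenomenon distinguishing tame from wild ramification, and the local computation near a generic point of $H$ is where the delicate bookkeeping lives.

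Finally, granting the $\omega_{\F}$ identity, I would deduce the conormal identity purely formally. The adjunction formula \eqref{E:adjunction for distributions} gives $\det N^*_{\F} = \omega_X \otimes \omega_{\F}^*$ and $\det N^*_{\G} = \omega_Y \otimes \omega_{\G}^*$. Substituting the $\omega_{\F}$ formula, replacing $\omega_{\Vertical}^*$ by $\omega_X^* \otimes \varphi^* \omega_Y \otimes \mathcal O_X(\Ram(\varphi))$ via Proposition \ref{P:deg vs rel can}, and using $\Ram(\varphi) = \Ram(\varphi)_{\F} + \Ram(\varphi)_{\F^{\perp}}$, the canonical bundles of $X$ cancel and one is left with
\[
\det N^*_{\F} = \varphi^* \bigl(\omega_Y \otimes \omega_{\G}^*\bigr) \otimes \mathcal O_X\bigl(\Ram(\varphi)_{\F}\bigr) = \varphi^* \det N^*_{\G} \otimes \mathcal O_X\bigl(\Ram(\varphi)_{\F}\bigr),
\]
as claimed. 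Thus both identities follow once the invariance correspondence is in hand, and everything else is the determinant bookkeeping already carried out for the vertical foliation in Proposition \ref{P:deg vs rel can}.
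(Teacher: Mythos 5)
Your route is the paper's route: the same logarithmic exact sequence $0 \to \restr{T_{\Vertical}}{X^{\circ}} \to \restr{T_{\F}}{X^{\circ}}(-\log R) \to \varphi^*(\restr{T_{\G}}{Y^{\circ}}(-\log B))$, surjective in codimension one, the same determinant bookkeeping, and the same deduction of the conormal formula from the canonical one via adjunction and Proposition \ref{P:deg vs rel can}. All of that bookkeeping is correct, and you have isolated exactly the point on which the statement hinges, namely the divisorial identity $\varphi^* B_{\G^{\perp}} = (\varphi^* B)_{\F^{\perp}}$, i.e.\ that a component $H$ of $\varphi^* B$ is $\F$-invariant if and only if its image $B_0$ is $\G$-invariant. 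You prove the easy implication but explicitly leave the converse open, and that is the one genuine gap in the proposal. (To be fair, the paper's proof is no more explicit on this point: it is absorbed into the assertion that Assumption \ref{A:rational}, Item (\ref{IA:diff}), yields the exact sequence with the stated logarithmic structures.)

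The converse is easier than the pole-control argument you anticipate. Do not try to lift a vector field of $\G$ transverse to $B_0$ to a regular section of $T_{\F}$ transverse to $H$; as you observe, $d\varphi$ degenerates along the ramification and such a lift need not exist. Argue with the normal direction instead. Since $T_{\F}$ is by definition the kernel of $T_X \to \varphi^*N_{\G}$, the hypersurface $H$ fails to be $\F$-invariant as soon as the composition $T_H \to \restr{T_X}{H} \to \varphi^* N_{\G}$ is surjective at the generic point of $H$, for then $T_H + T_{\F} = T_X$ there and $T_{\F}\not\subset T_X(-\log H)$. Because $\varphi(H) \subset B_0$, this composition factors as $T_H \to \varphi^* T_{B_0} \to \varphi^* (T_Y/T_{\G})$ restricted to $H$. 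The first arrow is generically surjective since Assumption \ref{A:rational}(\ref{IA:diff}) gives $d(\restr{\varphi^{\circ}}{H})$ generic rank $m-1$, which is the rank of $T_{B_0}$; the second is the pullback of $T_{B_0} \to N_{\G}$, which is generically surjective along $B_0$ precisely because $B_0$ is not $\G$-invariant (for a hypersurface, $T_{\G}\not\subset T_{B_0}$ forces $T_{\G}+T_{B_0}=T_Y$ generically). No pole estimates are needed; the tame/wild dichotomy you worry about is exactly what the rank hypothesis in Assumption \ref{A:rational}(\ref{IA:diff}) is there to exclude. With this inserted, your proof is complete and coincides with the paper's.
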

\begin{proof}
    The restriction of the  tangent sheaf of $\F$  to $X^{\circ}$ is defined by the
    exact sequence
    \[
        0 \longrightarrow \restr{T_{\F}}{X^{\circ}} \longrightarrow T_{X^{\circ}} \longrightarrow \varphi^*\left( \restr{N_{\G}}{Y^\circ}\right)
    \]
    where the rightmost arrow is given by the composition of the differential of $\varphi^{\circ}$ with the natural projection
    $T_{Y^0} \to \restr{N_{\G}}{Y^{\circ}}$.

    As in the proof of Proposition \ref{P:deg vs rel can}, set $B= \Branch(\varphi^{\circ})$ and $R = ((\varphi^{\circ})^*(B))_{red}$. Assumption \ref{A:rational}, Item (\ref{IA:diff}), gives an exact sequence
    \[
        0 \longrightarrow \restr{T_{\Vertical}}{X^0} \longrightarrow \restr{T_{\F}}{X^{\circ}} (-\log R_{\F^{\perp}}) \longrightarrow (\varphi^{\circ})^*\left(\restr{T_{\G}}{Y^{\circ}}(- \log B_{\G^{\perp}})\right)
    \]
    with the rightmost morphism surjective in codimension one. Taking determinants we obtain that
    \[
        \omega_{\F}  = \varphi^* \omega_{\G}  \otimes \omega_{\Vertical} \otimes \mathcal O_{X}\left( \Ram(\varphi)_{\F^{\perp}} \right)\, ,
    \]
    as claimed. The formula for the conormal bundle of $\F$ follows from Equation (\ref{E:adjunction for distributions})
    (adjunction for foliations) combined with Proposition \ref{P:deg vs rel can}.
\end{proof}

\subsection{Degeneracy divisor of the pull-back under a dominant rational map}\label{SS:pull-back of degeneracy divisor}
Throughout this subsection, $\field$ is an algebraically closed field of characteristic $p>0$. We will determine a formula comparing the degeneracy divisor of the $p$-curvature of a $p$-dense codimension one foliation $\G$ on a smooth projective variety with the degeneracy divisor of its pull-back under a dominant rational map  $\varphi :X \dashrightarrow Y$   satisfying Assumption \ref{A:rational}.

\begin{prop}\label{P:pdegeneracy of pb}
    Let $\varphi :X \dashrightarrow Y$ be a rational map, and let $\G$ be a $p$-dense codimension one foliation on $Y$.
    If $\varphi$ satisfies Assumption \ref{A:rational} and $\F = \varphi^* \G$ then $\pdegeneracy{\F}$ is linearly equivalent to
    \[
         \varphi^* \pdegeneracy{\G} - \Ram(\varphi)_{\F} + p \left( \Ram(\varphi)_{{\F^{\perp}}} - \Ram(\varphi)_{{\pkernel{\F}}^{\perp}}  \right)  \, .
    \]
\end{prop}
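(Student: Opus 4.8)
The plan is to reduce everything to the behaviour of canonical bundles under pull-back. By Proposition \ref{P:degree degeneracy} applied to the codimension one foliation $\F$,
\[
    \mathcal O_X(\pdegeneracy{\F}) = (\omega_{\F} \otimes \omega_{\pkernel{\F}}^{*})^{\otimes p} \otimes \det N_{\F} \, ,
\]
so I would substitute the two identities $\omega_{\F} = \varphi^* \omega_{\G} \otimes \omega_{\Vertical} \otimes \mathcal O_X(\Ram(\varphi)_{\F^{\perp}})$ and $\det N_{\F} = \varphi^* \det N_{\G} \otimes \mathcal O_X(-\Ram(\varphi)_{\F})$ provided by Proposition \ref{P:pb dominant} (the second obtained by dualizing the conormal formula). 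Before doing so one must check that $\F = \varphi^*\G$ is $p$-dense, so that $\pkernel{\F}$ and $\pdegeneracy{\F}$ are defined: since $T_{\Vertical} \subset T_{\F}$, every rational first integral of $\F$ is constant along the fibres of $\varphi$ and hence of the form $\varphi^* g$ with $g$ a rational first integral of $\G$; the $p$-denseness of $\G$ gives $dg = 0$, whence $d(\varphi^* g) = 0$.

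The heart of the matter --- and the step I expect to be the main obstacle --- is the identification, in codimension one, of the kernel of the $p$-curvature of the pull-back as $\pkernel{\F} = \varphi^* \pkernel{\G}$. To prove this I would work at a general point, where Assumption \ref{A:rational} makes $\varphi$ a submersion and $N_{\F} \simeq \varphi^* N_{\G}$: choosing horizontal lifts $\widetilde{w}_1, \ldots, \widetilde{w}_{m-1}$ of generators of $T_{\G}$ together with vertical coordinate fields $\partial_{z_1}, \ldots, \partial_{z_{n-m}}$ as a local frame of $T_{\F}$, one has $\partial_{z_j}^{p} = 0$, while each $\widetilde{w}_i$ is $\varphi$-related to $w_i$, so $\widetilde{w}_i^{p}$ is $\varphi$-related to $w_i^{p}$ and therefore $\widetilde{w}_i^{p} \equiv \pcurvature{\G}(w_i) \bmod T_{\F}$ under the identification $N_{\F} \simeq \varphi^* N_{\G}$. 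Since $\pcurvature{\F}$ is $\mathcal O_X$-linear it is enough to evaluate it on this frame, and its kernel consists of the vertical directions together with the horizontal lifts of $T_{\pkernel{\G}}$; this is exactly $\Frob^* T_{\varphi^*\pkernel{\G}}$, so $\pkernel{\F} = \varphi^*\pkernel{\G}$, a distribution whose vertical foliation is again $\Vertical$ because $\pkernel{\G} \subset \G$.

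With this identification in hand, the determinant computation carried out in the proof of Proposition \ref{P:pb dominant} uses only the differential $d\varphi$ and the exact sequence it induces, never the involutiveness of the foliations, so it applies verbatim to the distribution $\pkernel{\G}$ and yields
\[
    \omega_{\pkernel{\F}} = \varphi^* \omega_{\pkernel{\G}} \otimes \omega_{\Vertical} \otimes \mathcal O_X(\Ram(\varphi)_{\pkernel{\F}^{\perp}}) \, ,
\]
the ramification correction now taken with respect to $\pkernel{\F}$. Finally I would assemble the pieces: in $\omega_{\F} \otimes \omega_{\pkernel{\F}}^{*}$ the two copies of $\omega_{\Vertical}$ cancel, leaving $\varphi^*(\omega_{\G} \otimes \omega_{\pkernel{\G}}^{*}) \otimes \mathcal O_X(\Ram(\varphi)_{\F^{\perp}} - \Ram(\varphi)_{\pkernel{\F}^{\perp}})$; raising to the $p$-th power, tensoring with $\det N_{\F}$, and recognizing $\varphi^*\bigl[(\omega_{\G} \otimes \omega_{\pkernel{\G}}^{*})^{\otimes p} \otimes \det N_{\G}\bigr] = \varphi^* \mathcal O_Y(\pdegeneracy{\G})$ via Proposition \ref{P:degree degeneracy} for $\G$, one reads off the asserted linear equivalence. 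Throughout, all identities are equalities of reflexive sheaves valid outside a set of codimension at least two, which suffices for the equality of divisor classes.
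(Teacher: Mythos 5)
Your proposal is correct and follows essentially the same route as the paper: it combines Proposition \ref{P:degree degeneracy} with the pull-back formulas of Proposition \ref{P:pb dominant} (extended to the distribution $\pkernel{\G}$) together with the identification $\pkernel{\F}=\varphi^*\pkernel{\G}$, which the paper merely asserts and which you rightly single out and justify by a local frame of vertical fields and horizontal lifts; your final signs agree with the statement (the last display in the paper's own proof has the two ramification terms inside $p(\cdots)$ inadvertently swapped). The only blemish is in your preliminary $p$-denseness check: in characteristic $p$ a rational function annihilated by $T_{\Vertical}$ need not be of the form $\varphi^*g$ (think of $p$-th powers of fibre coordinates), though the desired conclusion $df=0$ still follows by expanding such an $f$ over $\varphi^*\field(Y)$ and applying the $p$-denseness of $\G$ coefficientwise.
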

\begin{proof}
    The proof builds on a combination of Proposition \ref{P:degree degeneracy} with Proposition \ref{P:pb dominant}.

    First observe that $\pkernel{\F}$,  the kernel of $p$-curvature of $\F = \varphi^* \G$, coincides with $\varphi^* \pkernel{\G}$.
    Therefore, Proposition \ref{P:pb dominant} implies that
    \begin{eqnarray*}
        N_{\F} &=& \varphi^* N_{\G} \otimes \mathcal O_X\left( - \Ram(\varphi)_{\F}  \right) \\
        \omega_{\F} &=&  \varphi^* \omega_{\G } \otimes \omega_{\Vertical} \otimes \mathcal O_{X}\left(  \Ram(\varphi)_{\F^{\perp}}   \right)  \\
        \omega_{\pkernel{\F}} &=& \varphi^* \omega_{\pkernel{\G} } \otimes \omega_{\Vertical}
        \otimes \mathcal O_{X}\left( \Ram(\varphi)_{\pkernel{\F}^{\perp}} \right) \, .
    \end{eqnarray*}
    Hence, we deduce from Proposition \ref{P:degree degeneracy} that $\mathcal O_X(\pdegeneracy{\F})$ is isomorphic to
    \[
         \varphi^* \mathcal O_Y(\pdegeneracy{\G}) \otimes
        \mathcal O_X \left(   - \Ram(\varphi)_{\F} + p\left(  \Ram(\varphi)_{{\pkernel{\F}}^{\perp}} - \Ram(\varphi)_{{\F^{\perp}}}\right) \right) \,
    \]
    in the Picard group of $X$.
\end{proof}

\begin{cor}\label{C:pdeg preciso}
    Assumptions as in Proposition \ref{P:pdegeneracy of pb}. Assume also that $X$ has no non-zero effective divisor linearly equivalent to zero.
    If $\Ram(\varphi)_{{\pkernel{\F}}^{\perp}} =\Ram(\varphi)_{{\F^{\perp}}}=0$
    and all the coefficients of $\pdegeneracy{\F}$ are strictly smaller than the characteristic of $\field$ then
    \[
        \pdegeneracy{\F} = \varphi^* \pdegeneracy{\G} - \Ram(\varphi)_{\F} \, .
    \]
\end{cor}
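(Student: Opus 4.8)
The plan is to upgrade the \emph{linear} equivalence supplied by Proposition \ref{P:pdegeneracy of pb} to an honest equality of divisors, using the two numerical hypotheses together with the rigidity of effective divisors on $X$. Set $D' = \varphi^* \pdegeneracy{\G} - \Ram(\varphi)_{\F}$. Since $\Ram(\varphi)_{\F^{\perp}} = \Ram(\varphi)_{\pkernel{\F}^{\perp}} = 0$, the correction term $p\big(\Ram(\varphi)_{\F^{\perp}} - \Ram(\varphi)_{\pkernel{\F}^{\perp}}\big)$ appearing in Proposition \ref{P:pdegeneracy of pb} vanishes, so $\pdegeneracy{\F} \sim D'$. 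It then suffices to establish: (i) $D'$ is effective; (ii) $\ord_H \pdegeneracy{\F} \equiv \ord_H D' \pmod p$ for every prime divisor $H$; and (iii) that these together force $D' - \pdegeneracy{\F} \ge 0$, whence $D' - \pdegeneracy{\F} = 0$ because it is linearly equivalent to zero and $X$ carries no non-zero effective divisor linearly equivalent to zero.

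For (i) and (ii) I would analyze each prime divisor $H$ through its image. Because $\F = \varphi^*\G$ contains the foliation $\Vertical$ by the fibers of $\varphi$, no horizontal hypersurface is $\F$-invariant, and Assumption \ref{A:rational}(\ref{IA:equi}) rules out hypersurfaces contracted to codimension $\ge 2$; hence every $H$ with $\ord_H \pdegeneracy{\F} > 0$ (these are $\F$-invariant, as their coefficients are $<p$ and thus $\not\equiv 0 \bmod p$, see Proposition \ref{P:hinvariant}) dominates a prime divisor $B = \varphi(H)$, and $H$ is $\F$-invariant precisely when $B$ is $\G$-invariant. If $H$ is $\F$-invariant then $B$ is $\G$-invariant, so $m_B := \ord_B \pdegeneracy{\G} \ge 1$ by Proposition \ref{P:hinvariant}; writing $e = \ord_H \varphi^* B$ one gets $\ord_H D' = m_B e - (e-1) = e(m_B-1)+1 \ge 1$. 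If $H$ is not $\F$-invariant then, since $\Ram(\varphi)_{\F^{\perp}}=0$ forces all ramification to be $\F$-invariant, $H$ is unramified ($e=1$) and $B$ is not $\G$-invariant, whence $p\mid m_B$ and $\ord_H D' = m_B \ge 0$; this proves (i). For (ii), the non-$\F$-invariant case is already settled, as there $\ord_H\pdegeneracy{\F}\equiv 0\equiv \ord_H D' \pmod p$. For the $\F$-invariant case I would apply Proposition \ref{P:closed + divisor} to a closed rational $1$-form $\eta_\G$ defining $\G$ and to its pull-back $\eta = \varphi^*\eta_\G$, which is closed and defines $\F$, reducing the congruence to computing $\ord_H\big((\varphi^*\eta_\G)_\infty - (\varphi^*\eta_\G)_0\big)$.

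Granting (i) and (ii), assertion (iii) is immediate: for each prime $H$ the integer $\ord_H \pdegeneracy{\F}$ lies in $\{0,\dots,p-1\}$ and is congruent modulo $p$ to the non-negative integer $\ord_H D'$, so it equals the residue of $\ord_H D'$ in $\{0,\dots,p-1\}$, which is $\le \ord_H D'$. Therefore $\pdegeneracy{\F} \le D'$, the difference $D' - \pdegeneracy{\F}$ is effective and linearly equivalent to zero, and the hypothesis on $X$ forces $D' - \pdegeneracy{\F} = 0$, i.e.\ $\pdegeneracy{\F} = \varphi^*\pdegeneracy{\G} - \Ram(\varphi)_{\F}$.

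I expect the main obstacle to be the congruence (ii) along the $\F$-invariant \emph{ramified} components, which is exactly where the local computation deferred just before the statement enters — the advantage here being that one needs it only modulo $p$. Via Proposition \ref{P:closed + divisor} the claim becomes $\ord_H\big((\varphi^*\eta_\G)_\infty - (\varphi^*\eta_\G)_0\big) \equiv e(n_B-1)+1 \pmod p$, with $n_B = \ord_B\big((\eta_\G)_\infty-(\eta_\G)_0\big) \equiv m_B$. The genuine subtlety is that in characteristic $p$ the order of a pulled-back closed $1$-form along a ramified divisor is \emph{not} governed by the ramification index in the naive way; the precise corrections are measured by $\Ram(\varphi)_{\F^{\perp}}$ and $\Ram(\varphi)_{\pkernel{\F}^{\perp}}$, and it is the hypothesis that both vanish that I would use to suppress these corrections and extract the congruence cleanly, thereby avoiding the full-strength local analysis needed for the exact identity in the general case.
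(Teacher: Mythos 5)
Your proposal is correct and follows essentially the same route as the paper's proof: the linear equivalence from Proposition \ref{P:pdegeneracy of pb}, the mod-$p$ congruence obtained by applying Proposition \ref{P:closed + divisor} to a closed $1$-form defining $\G$ and to its pull-back, effectivity of $\varphi^*\pdegeneracy{\G}-\Ram(\varphi)_{\F}$ via Proposition \ref{P:hinvariant}, and the final rigidity argument using that the coefficients of $\pdegeneracy{\F}$ are smaller than $p$ and that $X$ has no non-zero effective divisor linearly equivalent to zero. Your component-by-component verification of effectivity and your identification of the deferred local computation of $(\varphi^*\eta)_\infty-(\varphi^*\eta)_0$ along invariant ramification as the only remaining step match the paper's (terser) treatment.
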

\begin{proof}
    According to Proposition \ref{P:old}, $\G$ is defined by a closed rational $1$-form $\eta$.
    Since  $\varphi^* \eta$ is a closed $1$-form defining $\F$, Proposition \ref{P:closed + divisor} implies that
    \begin{align*}
        \pdegeneracy{\G} &= (\eta)_{\infty} - (\eta)_0 \mod p \, , \\
        \pdegeneracy{\F} &= (\varphi^* \eta)_{\infty} - (\varphi^* \eta)_{0} \mod p  .
    \end{align*}
    A local computation  gives that
    $(\varphi^*\eta)_{\infty} - (\varphi^*\eta)_{0}  = \varphi^*\left( (\eta)_{\infty} - (\eta)_{0} \right) - \Ram(\varphi)_{\F}$
    in accordance with Proposition \ref{P:pb dominant}.
    Consequently, we can write
    \[
        \pdegeneracy{\F} = \varphi^* \pdegeneracy{\G} - \Ram(\varphi)_{\F}  \mod p \, .
    \]
    Our assumption $\Ram(\varphi)_{{\pkernel{\F}}^{\perp}} =\Ram(\varphi)_{{\F^{\perp}}}=0$
    combined with Proposition \ref{P:pdegeneracy of pb} imply that $\pdegeneracy{\F}$ and $\varphi^* \pdegeneracy{\G} - \Ram(\varphi)_{\F}$ are
    linearly equivalent.
    Since $\G$-invariant hypersurfaces are contained $\pdegeneracy{\G}$ according to Proposition \ref{P:hinvariant}, the divisor $\varphi^* \pdegeneracy{\G} - \Ram(\varphi)_{\F}$ is effective. Therefore  $\pdegeneracy{\F}$ and $\varphi^* \pdegeneracy{\G} - \Ram(\varphi)_{\F}$ are effective divisors,  with the same reduction modulo $p$, and the coefficients of $\pdegeneracy{\F}$ are smaller than $p$, it
    follows our assumptions on $X$ that they must be same  divisor.
\end{proof}

\begin{ex}\label{E:behavior ell}
    Let $n\ge 2$ and $\ell >0$ be integers such that $p$ does not divide $\ell$.
    The table below, where $H= \{ x_n=0\}$,  describes the behaviour of the degeneracy divisor of the $p$-curvature of the pull-back $\F = \varphi^* \G$ of a foliation  $\G$ on $\mathbb A^n_{\field}$ under the tame morphism
    \begin{align*}
        \varphi : \mathbb A^n_{\field} \to \mathbb A^n_{\field} \\
        (x_1, \ldots, x_{n-1} ,x_n) & \mapsto (x_1, \ldots, x_{n-1},  x_n^{\ell}) \, .
    \end{align*}
    \begin{tikzpicture}
    \matrix (mat) [table] 
    {
        $H$ is $\F$-invariant & $H$ is $\pkernel{\F}$-invariant     & Degeneracy divisor     \\
        No  & No     & $\varphi^* \pdegeneracy{\G}$       \\
        No  & Yes     & $\varphi^*\pdegeneracy{\G} + p(\ell-1)H$     \\
        Yes & Yes     & $\varphi^* \pdegeneracy{\G} - (\ell-1) H$     \\
    };
    \end{tikzpicture}
\end{ex}

\subsection{Non-invariant hypersurfaces and differents}\label{SS:different}
Let $\F$ be a distribution on a smooth algebraic variety $X$ and let $H \subset X$ be a
smooth hypersurface. Let $\delta$ be  the natural composition
\[
    \restr{T_{\F}}{H} \to \restr{T_X}{H} \to N_H \, .
\]
If $H$ is not $\F$-invariant then the image of $\delta$ is equal to $N_H \otimes \mathcal I$ for
some ideal sheaf $\mathcal I \subset \mathcal O_H$. The effective divisor defined by the codimension one
part of $\mathcal I$ will be called the different of $\F$ and $H$ and will be denoted by $\different{\F}{H}$.
Notice that the kernel of $\delta$ is the tangent sheaf of the distribution $\mathcal H = \restr{\F}{H}$.

Note that when $X$ is a surface and $H$ is a curve, $\different{\F}{H}$ is nothing but the tangency
divisor of $\F$ and $H$ as defined by Brunella \cite[Chapter 3]{MR3328860}. The terminology adopted here is in accordance with \cite[Section 3]{MR4036447} and is justified by the analogy with the homonymous concept from birational geometry (see, for instance, \cite[Section 4.1]{MR3057950}) suggested by the following observation.

\begin{lemma}\label{L:different}
    Let $\F$ be a distribution of arbitrary dimension on a smooth algebraic variety $X$ and let $H \subset X$ be a smooth hypersurface.
    If $H$ is not $\F$-invariant and $\mathcal H= \restr{\F}{H}$ is the restriction of $\F$ to $H$ then
    \begin{eqnarray*}
        \restr{(\omega_{\F} \otimes \mathcal O_X(H))}{H} &\simeq & \omega_{\mathcal H} \otimes \mathcal O_H(\different{\F}{H}) \, , \\
        \restr{(\det N_{\F})}{H} & \simeq & \det N_{\mathcal H} \otimes \mathcal O_H(\different{\F}{H}) \, ,
    \end{eqnarray*}
    where we adopt the convention that when $\mathcal H$ is the foliation/distribution by points then $\omega_{\mathcal H} = \mathcal O_H$ and $N_{\mathcal H} = T_H$.
\end{lemma}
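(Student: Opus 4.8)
The plan is to prove the first isomorphism directly by taking determinants of the tangent sequence cut out by the map $\delta$, and then to deduce the second one from it by adjunction. Since $H$ is smooth, all six sheaves appearing in the statement are invertible: $\omega_{\mathcal H}$ and $\det N_{\mathcal H}$ are determinants of reflexive sheaves on the smooth variety $H$, hence line bundles, while $\restr{(\omega_{\F}\otimes\mathcal O_X(H))}{H}$ and $\restr{(\det N_{\F})}{H}$ are restrictions of line bundles on the smooth variety $X$. Because $\Pic(H)\to\Pic(V)$ is injective for every open $V\subset H$ with $\codim_H(H\setminus V)\ge 2$, it suffices to produce the isomorphisms over such a $V$.

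First I would fix the open set. Let $r=\dim\F$. The reflexive sheaf $T_{\F}$ is locally free away from a closed subset of $X$ of codimension at least three, so its trace on $H$ is locally free away from a closed subset of codimension at least two in $H$. Removing in addition $\sing(\mathcal H)$ (codimension $\ge 2$ in $H$) and the support of the non-divisorial part of the ideal $\mathcal I$ with $\im(\delta)=N_H\otimes\mathcal I$ (codimension $\ge 2$ by the very definition of $\different{\F}{H}$), I obtain an open $V\subset H$ with $\codim_H(H\setminus V)\ge 2$ on which $\restr{T_{\F}}{H}$, $T_{\mathcal H}=\ker\delta$ and $N_H$ are all locally free and the image of $\delta$ equals $N_H(-\different{\F}{H})$. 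The crucial point here, and the step I expect to be the only delicate one, is precisely this codimension count: one must \emph{not} use the non-singular locus $X\setminus\sing(\F)$, whose complement may meet $H$ in codimension one, but rather the strictly smaller locus where the sheaf $T_{\F}$ itself fails to be locally free, which has codimension at least three in $X$.

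On $V$ the defining properties of $\mathcal H$ and of the different give a short exact sequence of locally free sheaves
\[
    0 \longrightarrow \restr{T_{\mathcal H}}{V} \longrightarrow \restr{T_{\F}}{V} \xrightarrow{\ \delta\ } \restr{N_H(-\different{\F}{H})}{V} \longrightarrow 0 ,
\]
the left term being $\ker\delta$ and the right term $\im\delta$. Taking determinants and using $\det\restr{T_{\F}}{V}=\restr{(\det T_{\F})}{V}=\restr{\omega_{\F}^{*}}{V}$, $\det\restr{T_{\mathcal H}}{V}=\restr{\omega_{\mathcal H}^{*}}{V}$ and $N_H=\restr{\mathcal O_X(H)}{H}$, I obtain $\restr{\omega_{\F}^{*}}{V}\simeq\restr{\omega_{\mathcal H}^{*}}{V}\otimes\restr{\mathcal O_X(H)}{V}\otimes\mathcal O_V(-\different{\F}{H})$; dualizing and rearranging yields the first isomorphism over $V$, which then extends over $H$ by the injectivity of $\Pic(H)\to\Pic(V)$. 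The case $r=1$, where $\mathcal H$ is the distribution by points, is covered by reading $\omega_{\mathcal H}=\mathcal O_H$ and $T_{\mathcal H}=0$ in the sequence above.

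Finally I would deduce the second isomorphism from the first with no further geometry. Applying the adjunction formula (\ref{E:adjunction for distributions}) to $\F$ on $X$ and to $\mathcal H$ on $H$ gives $\det N_{\F}=\omega_{\F}\otimes\omega_X^{*}$ and $\det N_{\mathcal H}=\omega_{\mathcal H}\otimes\omega_H^{*}$. Restricting the first identity to $H$, inserting the ordinary adjunction $\omega_H=\restr{(\omega_X\otimes\mathcal O_X(H))}{H}$, and substituting the first isomorphism of the lemma for $\restr{(\omega_{\F}\otimes\mathcal O_X(H))}{H}$ yields $\restr{(\det N_{\F})}{H}\simeq\omega_{\mathcal H}\otimes\mathcal O_H(\different{\F}{H})\otimes\omega_H^{*}=\det N_{\mathcal H}\otimes\mathcal O_H(\different{\F}{H})$, as desired; the stated convention $N_{\mathcal H}=T_H$ when $\mathcal H$ is by points is exactly what keeps this bookkeeping consistent for $r=1$.
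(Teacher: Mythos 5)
Your proof is correct and follows essentially the same route as the paper's (very terse) argument: take determinants of the exact sequence $0 \to T_{\mathcal H} \to \restr{T_{\F}}{H} \xrightarrow{\delta} N_H(-\different{\F}{H}) \to 0$ on a suitable open set to get the first isomorphism, then deduce the second by adjunction for distributions combined with ordinary adjunction for $H \subset X$. The extra care you take in choosing $V$ using the locus where $T_{\F}$ fails to be locally free (codimension $\ge 3$ in $X$) rather than $\sing(\F)$ is a worthwhile detail that the paper leaves implicit.
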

\begin{proof}
    The first formula follows from the fact that the kernel of $\delta$ equals $T_{\mathcal H}$. The second formula is deduced from the first
    by adjunction.
\end{proof}

\begin{lemma}\label{L:differentbis}
    Let $\F$ be a codimension $q$ distribution on a smooth algebraic variety $X$ defined by $\omega \in H^0(X,\Omega^q_X \otimes \det N_{\F})$. Let $\G \subset \F$ be a subdistribution of codimension $r$ in $\F$ defined by $\eta \in H^0(X, \Omega^r_{\F} \otimes \det N_{\G} \otimes \det N^*_{\F})$. If $H\subset X$ is a smooth hypersurface not invariant by $\F$ and $\G$, $i:H \to X$ is the natural inclusion, and $\mathcal H$ is the restriction of $\F$ to $H$ then
    \begin{enumerate}
        \item\label{I:different 1} the zero divisor of $i^* \omega \in H^0(H, \Omega^q_H \otimes \restr{\det N_{\F}}{H})$ is equal to $\different{\F}{H}$; and
        \item\label{I:different 2} the zero divisor of $i^* \eta \in H^0(H, \Omega^r_{\mathcal H} \otimes \restr{(\det N_{\G} \otimes \det N^*_{\F})}{H})$ is equal to
        the difference of differents $\different{\G}{H} - \different{\F}{H}$.
    \end{enumerate}
\end{lemma}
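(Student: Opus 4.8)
The plan is to prove both statements by a local computation carried out away from $\sing(\F)$, $\sing(\G)$ and a codimension-two locus of $H$, where all the sheaves involved are locally free; this reduces each claimed identity of effective divisors to a comparison of multiplicities along the generic point of every irreducible component of the differents. Throughout I work in local coordinates $x_1, \ldots, x_n$ on $X$ with $H = \{x_n = 0\}$ and trivialize the relevant line bundles, so that $\omega$ becomes a genuine decomposable $q$-form $\omega = \alpha_1 \wedge \cdots \wedge \alpha_q$ with $\alpha_1, \ldots, \alpha_q$ a local frame of $N^*_{\F}$. Writing $\alpha_j = \sum_{i=1}^n a_{ji}\, dx_i$, let $A = (a_{ji})$ be the resulting $q \times n$ matrix, which has a nonvanishing maximal minor away from $\sing(\F)$.

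\textbf{Item (\ref{I:different 1}).} I would first record two explicit descriptions. On one hand, restricting to $H$ annihilates $dx_n$, so $i^*\omega = \sum_{n \notin I} \det(a_{j,i_k})|_{x_n=0}\, dx_{i_1} \wedge \cdots \wedge dx_{i_q}$, whence the zero divisor of $i^*\omega$ is the codimension-one part of the ideal on $H$ generated by the $q\times q$ minors of $A$ built from columns in $\{1,\ldots,n-1\}$, restricted to $x_n=0$. On the other hand, with $N_H$ generated by $\partial/\partial x_n \bmod TH$, the map $\delta$ sends $v \in T_{\F}$ to $v(x_n)|_H = i_v(dx_n)|_H$, so the image of $\delta$ is the ideal $\{ dx_n(v)|_H : v \in \ker A\}$. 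A short linear-algebra step (Cramer's rule: since $A$ has a unit maximal minor, the image of $dx_n$ on $\ker A$ is generated, up to a unit, by the $(q+1)\times(q+1)$ minors of the matrix obtained by adjoining the row $e_n$ to $A$) identifies this ideal with the previous one: expanding such a minor along the adjoined row $e_n$ forces column $n$ to be selected and returns precisely the $q\times q$ minors of $A$ using columns in $\{1,\ldots,n-1\}$. Hence the codimension-one part of the image of $\delta$, which is $\different{\F}{H}$ by definition, coincides as an effective divisor with the zero divisor of $i^*\omega$.

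\textbf{Item (\ref{I:different 2}).} I would reduce to item (\ref{I:different 1}) applied to $\G$, regarded as a codimension-$(q+r)$ distribution on $X$. Lifting $\eta$ to an absolute form $\hat\eta \in \Omega^r_X \otimes \det N_{\G} \otimes \det N^*_{\F}$ and setting $\omega_{\G} = \omega \wedge \hat\eta$ produces a well-defined section of $\Omega^{q+r}_X \otimes \det N_{\G}$ defining $\G$, well-defined because two lifts differ by a term in $\Omega^{r-1}_X \wedge N^*_{\F}$, which is annihilated by $\omega$. Item (\ref{I:different 1}) for $\G$ gives that the zero divisor of $i^*\omega_{\G}$ equals $\different{\G}{H}$. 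It then remains to establish the multiplicativity $(i^*\omega_{\G})_0 = (i^*\omega)_0 + (i^*\eta)_0$: on $H$ minus a codimension-two set I write $i^*\omega = f_1 \omega'$ and $i^*\eta = f_2 \eta'$, where $\omega'$, $\eta'$ are saturated forms defining $\mathcal H$ and $\restr{\G}{H}$, and $(f_1) = \different{\F}{H}$, $(f_2) = (i^*\eta)_0$. Since $H$ is invariant by neither $\F$ nor $\G$, the generic transversality of $\restr{\G}{H} \subset \mathcal H$ guarantees that $\omega' \wedge \eta'$ is again saturated, so $i^*\omega_{\G} = f_1 f_2\,(\omega'\wedge\eta')$ has zero divisor $(f_1)+(f_2)$. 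Combining the three identities yields $(i^*\eta)_0 = \different{\G}{H} - \different{\F}{H}$.

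\textbf{Main obstacle.} The routine parts are the bookkeeping of the line-bundle twists and the verification that the pullbacks of the defining forms genuinely define $\mathcal H$ and $\restr{\G}{H}$; these follow from the defining exact sequences of the distributions together with $H$ being non-invariant. The genuine difficulty is matching multiplicities rather than supports: that $(i^*\omega)_0$ equals $\different{\F}{H}$ as divisors, and not merely up to linear equivalence (which is all the determinant relation of Lemma \ref{L:different} would yield), rests on the Cramer/minor identification of the image ideal of $\delta$ with the coefficient ideal of $i^*\omega$; and in item (\ref{I:different 2}) it rests on the fact that the wedge of two saturated forms of complementary relative dimension is again saturated, so that the zero divisors add exactly.
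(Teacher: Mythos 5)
Your proposal is correct and follows essentially the same route as the paper: Item (\ref{I:different 1}) by identifying the image ideal of $\delta$ with the coefficient ideal of $i^*\omega$ (you make explicit, via minors and Cramer's rule, what the paper leaves as ``a reinterpretation of the proof of Lemma \ref{L:different}''), and Item (\ref{I:different 2}) by wedging $\omega$ with a lift of $\eta$ to obtain a saturated $(q+r)$-form defining $\G$ and applying Item (\ref{I:different 1}) to it. The additivity of zero divisors that you justify through saturation of $\omega'\wedge\eta'$ is exactly the step the paper compresses into ``it follows.''
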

\begin{proof}
    Item (\ref{I:different 1}) is a reinterpretation of the proof of  Lemma \ref{L:different}.
    To verify Item (\ref{I:different 2}), first observe that we can define unambiguously the product $\beta = \omega \wedge \eta \in H^0(X, \Omega^{q+r}_X \otimes \det N_{\G})$ by considering an arbitrary rational lift $\hat \eta$ of $\eta$ to $\Omega^r_X \otimes \det N_{\G} \otimes \det N^*_{\F})$ and wedging it with $\omega$. The resulting twisted $(q+r)$-form does not depend on the lift and is a regular twisted $(q+r)$-form without codimension one zeros defining the distribution $\G$ . According to Item (\ref{I:different 1}), the zero divisor of $i^* \beta$ is equal to $\different{\G}{H}$. It follows that the zero divisor of $i^* \eta$ is equal to the difference $\different{\G}{H} - \different{\F}{H}$ as claimed.
\end{proof}

\subsection{Differents and non-invariant subvarieties of higher codimension}
A variant of the discussion about differents carried out in Subsection \ref{SS:different} can be made for the restriction
of a distribution $\F$  to a smooth subvariety $Y$ of codimension greater than one. One extra difficulty comes from the fact that the natural
morphism
\[
    \delta : \restr{T_{\F}}{Y} \to N_Y
\]
can have any rank between $0$ and $\codim Y$. When it has rank zero, the subvariety $Y$ is $\F$-invariant. If the rank of $\delta$ is equal to $q=\codim Y$ then we will say that $Y$ is strongly transverse to $\F$. In this case, the image of $\wedge^q \delta$ is of the form $\det N_Y \otimes \mathcal I$ for some ideal sheaf $\mathcal I \subset \mathcal O_Y$, and we define the different $\different{\F}{Y}$ as the effective divisor on $Y$ defined by the codimension one part of $\mathcal I$.

The definition implies that
\begin{eqnarray*}
            \restr{(\omega_{\F} \otimes \det N_Y )}{Y} &\simeq & \omega_{\mathcal Y} \otimes \mathcal O_Y(\different{\F}{Y}) \,  \\
            \restr{(\det N_{\F})}{Y} & \simeq & \det N_{\mathcal Y} \otimes \mathcal O_Y(\different{\F}{Y}) \, ,
\end{eqnarray*}
for any smooth subvariety $Y \subset X$ strongly transverse to $\F$. Likewise, we also have a natural analogue of Lemma \ref{L:differentbis}.

\begin{lemma}\label{L:differentbis2}
  Let $\F$ be a codimension $q$ distribution on a smooth algebraic variety $X$ defined by $\omega \in H^0(X,\Omega^q_X \otimes \det N_{\F})$. Let $\G \subset \F$ be a subdistribution of codimension $r$ in $\F$ defined by $\eta \in H^0(X, \Omega^r_{\F} \otimes \det N_{\G} \otimes \det N^*_{\F})$. If $Y\subset X$ is a smooth subvariety strongly transverse to $\F$ and $\G$, $i:Y \to X$ is the natural inclusion, and $\mathcal Y$ is the restriction of $\F$ to $Y$ then
  \begin{enumerate}
        \item\label{I:differentbis 1} the zero divisor of $i^* \omega \in H^0(Y, \Omega^q_Y \otimes \restr{\det N_{\F}}{Y})$ is equal to $\different{\F}{Y}$; and
        \item\label{I:differentbis 2} the zero divisor of $i^* \eta \in H^0(Y, \Omega^r_{\mathcal Y} \otimes \restr{(\det N_{\G} \otimes \det N^*_{\F})}{Y})$ is equal to the difference of differents $\different{\G}{Y} - \different{\F}{Y}$.
  \end{enumerate}
\end{lemma}
\begin{proof}
    Analogous to the proof of Lemma   \ref{L:differentbis}.
\end{proof}

\subsection{The degeneracy divisor of the $p$-curvature for the restriction to strongly transverse subvarieties}\label{SS:restriction}
We are now ready to state and prove a result that offers a comparison between the degeneracy divisor of the $p$-curvature of a foliation and its restriction to a smooth subvariety.

\begin{prop}\label{P:different}
    Let $\F$ be a codimension one foliation on a smooth algebraic variety $X$ defined over a field $\field$ of characteristic $p>0$.
    If $Y\subset X$ is a smooth  subvariety strongly transverse to  by $\pkernel{\F}$ (consequently, strongly transverse to $\F$) and $\mathcal Y$ is the
    restriction of $\F$ to $Y$ then $\pdegeneracy{\mathcal Y}$, the degeneracy divisor of the $p$-curvature of  $\mathcal Y$, is equal to
    \[
         \restr{(\pdegeneracy{\F})}{Y} + p \cdot \big( \different{\pkernel{\F}}{Y} - \different{\F}{Y} \big) - \different{\F}{Y}   \, .
    \]
\end{prop}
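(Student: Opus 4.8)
The plan is to obtain the formula by applying Proposition~\ref{P:degree degeneracy} twice, once to $\F$ on $X$ and once to $\mathcal Y$ on $Y$, and to compare the two resulting expressions through the restriction formulas for the relevant determinant line bundles. The crucial geometric input, which I isolate as the main step, is the identity $\pkernel{\mathcal Y} = \restr{\pkernel{\F}}{Y}$ together with the fact that $\mathcal Y$ is $p$-dense, so that $\pdegeneracy{\mathcal Y}$ is defined in the first place.

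Granting these, the computation runs as follows. Write $a = \different{\F}{Y}$ and $b = \different{\pkernel{\F}}{Y}$. Since $Y$ is strongly transverse to both $\F$ and $\pkernel{\F}$, the restriction formulas for strongly transverse subvarieties stated just before Lemma~\ref{L:differentbis2} give
\begin{align*}
\omega_{\mathcal Y} &\simeq \restr{\omega_{\F}}{Y} \otimes \det N_Y \otimes \mathcal O_Y(-a), \\
\det N_{\mathcal Y} &\simeq \restr{\det N_{\F}}{Y} \otimes \mathcal O_Y(-a), \\
\omega_{\pkernel{\mathcal Y}} &\simeq \restr{\omega_{\pkernel{\F}}}{Y} \otimes \det N_Y \otimes \mathcal O_Y(-b),
\end{align*}
where the last line uses $\pkernel{\mathcal Y} = \restr{\pkernel{\F}}{Y}$ and applies the same formula to the distribution $\pkernel{\F}$. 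In the product $\omega_{\mathcal Y} \otimes \omega_{\pkernel{\mathcal Y}}^{*}$ the two copies of $\det N_Y$ cancel, leaving $\restr{(\omega_{\F} \otimes \omega_{\pkernel{\F}}^{*})}{Y} \otimes \mathcal O_Y(b-a)$. Substituting into Proposition~\ref{P:degree degeneracy} for $\mathcal Y$, namely $\mathcal O_Y(\pdegeneracy{\mathcal Y}) = (\omega_{\mathcal Y} \otimes \omega_{\pkernel{\mathcal Y}}^{*})^{\otimes p} \otimes \det N_{\mathcal Y}$, and recognizing $(\omega_{\F} \otimes \omega_{\pkernel{\F}}^{*})^{\otimes p} \otimes \det N_{\F} = \mathcal O_X(\pdegeneracy{\F})$ by Proposition~\ref{P:degree degeneracy} for $\F$, I arrive at
\[
\mathcal O_Y(\pdegeneracy{\mathcal Y}) \simeq \mathcal O_Y\!\left( \restr{\pdegeneracy{\F}}{Y} + p\,(b-a) - a \right),
\]
which is the asserted identity.

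The main obstacle is the geometric step $\pkernel{\mathcal Y} = \restr{\pkernel{\F}}{Y}$ together with the $p$-denseness of $\mathcal Y$. I would argue locally at a general point of $Y$: choose commuting generators $v_1, \dots, v_r$ of $T_{\F}$ as in Lemma~\ref{L:Frobenius Integrability}. A vector field tangent to $Y$ has its $p$-th power again tangent to $Y$, since a derivation preserving the ideal of $Y$ has all its iterates preserving that ideal; hence a local section $w$ of $T_{\mathcal Y} = T_{\F} \cap T_Y$ lies in $T_{\pkernel{\mathcal Y}}$ exactly when $w^{p} \in T_{\mathcal Y}$, i.e.\ when $w^{p} \in T_{\F}$, i.e.\ when $w \in T_{\pkernel{\F}}$. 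This yields $T_{\pkernel{\mathcal Y}} = T_{\pkernel{\F}} \cap T_Y = T_{\restr{\pkernel{\F}}{Y}}$ away from a subset of codimension at least two. Moreover, strong transversality of $Y$ to $\pkernel{\F}$ forces $\dim \pkernel{\mathcal Y} = \dim \pkernel{\F} - \codim Y = \dim \mathcal Y - 1 < \dim \mathcal Y$, so $\mathcal Y$ is not $p$-closed; being codimension one, it is therefore $p$-dense and $\pdegeneracy{\mathcal Y}$ is well defined. It is precisely here that the hypothesis of transversality to $\pkernel{\F}$, and not merely to $\F$, is indispensable.

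Finally, the computation above yields the identity in $\Pic(Y)$; to promote it to an equality of \emph{divisors} I would track the defining sections rather than their classes. The divisors $a$ and $b-a$ both arise as honest zero divisors of restricted sections via Lemma~\ref{L:differentbis2} (applied with $\G = \pkernel{\F}$ and the section of $\Omega^1_{\F} \otimes \det N_{\pkernel{\F}} \otimes N^*_{\F}$ constructed in Subsection~\ref{SS:global expression}), while $\restr{\pdegeneracy{\F}}{Y}$ is the genuine restriction of $\pdegeneracy{\F}$. Since all coefficients are thereby pinned down on each irreducible hypersurface of $Y$ meeting the locus where the local model holds, the linear equivalence is in fact an equality of effective divisors, completing the argument.
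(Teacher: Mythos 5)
Your identification of the key geometric inputs is sound: the identity $\pkernel{\mathcal Y}=\restr{\pkernel{\F}}{Y}$, the $p$-density of $\mathcal Y$ forced by strong transversality to $\pkernel{\F}$, and the role of Lemma \ref{L:differentbis2} applied to the section $\eta$ of $\Omega^1_{\F}\otimes\det N_{\pkernel{\F}}\otimes N^*_{\F}$ from Subsection \ref{SS:global expression} are all present in, or implicit in, the paper's argument. Your route is different, though: you first derive the statement as an isomorphism in $\Pic(Y)$ by applying Proposition \ref{P:degree degeneracy} on $X$ and on $Y$ and cancelling the restriction formulas, whereas the paper never passes through $\Pic$ at all.

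The gap is in your last paragraph. The proposition asserts an equality of divisors, and linear equivalence of two specific effective combinations does not become an equality just because each term ``arises as an honest zero divisor of a restricted section''; that sentence is not an argument. What is actually needed is the relation between the defining sections themselves, and this relation is the entire content of the paper's proof: writing $i^*\omega=\delta\cdot\alpha$ with $(\delta)_0=\different{\F}{Y}$ and $\alpha$ defining $\mathcal Y$, and restricting the global identity $\omega(v^p)=f\cdot(\eta(v))^p$ of Subsection \ref{SS:global expression} to $Y$, one gets
\[
\alpha(v^p)=\frac{i^*f}{\delta}\left(i^*\eta(v)\right)^p
\]
for $v\in T_{\mathcal Y}$, and since $\pdegeneracy{\mathcal Y}$ is by definition the largest effective divisor bounded above by $(\alpha(v^p))_0$ for all such $v$, one reads off $\pdegeneracy{\mathcal Y}=\restr{\pdegeneracy{\F}}{Y}-\different{\F}{Y}+p\,(i^*\eta)_0$, and then Item (\ref{I:differentbis 2}) of Lemma \ref{L:differentbis2} identifies $(i^*\eta)_0$ with $\different{\pkernel{\F}}{Y}-\different{\F}{Y}$. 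Your write-up names all of these objects but never states the displayed identity linking the section computing $\pdegeneracy{\mathcal Y}$ to $i^*f$, $\delta$, and $i^*\eta$; without it the step from $\Pic(Y)$ to $\Div(Y)$ does not go through. Once that identity is inserted, your Picard-group computation becomes a redundant (though reassuring) consistency check.
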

\begin{proof}
    Let $\omega \in H^0(X, \Omega^1_X\otimes N_{\F})$ be a twisted $1$-form defining $\F$. According to Lemma \ref{L:differentbis}
    the zero divisor $i^* \omega$ is $\different{\F}{Y}$. We can thus write $i^* \omega = \delta \cdot \alpha$ where $\delta$ belongs to $H^0(Y,\mathcal O_Y(\different{\F}{Y})$ and $\alpha \in H^0(Y,\Omega^1_Y \otimes N_{\mathcal Y})$ is a twisted $1$-form defining $\mathcal Y= \restr{\F}{Y}$.

    If $f \in H^0(X,\mathcal O_X(\pdegeneracy{\F}))$ defines the degeneracy divisor of the $p$-curvature of $\F$ and $\eta \in H^0(X, \Omega^1_{\F} \otimes \det N_{\pkernel{\F}} \otimes N^*_{\F})$ is the element exhibited in Subsection \ref{SS:global expression} that defines $\pkernel{\F}$ then, using Lemma \ref{L:Cartier formula}, we can write
    \[
        \alpha (v^p ) = \frac{i^* \omega(v^p)}{\delta}  = \frac{i^*f}{\delta} \left( i^* \eta (v) \right)^p
    \]
    for every $v \in T_{\mathcal Y}$. Since the degeneracy divisor of the $p$-curvature of $\mathcal Y$ is the largest effective divisor $\pdegeneracy{\mathcal Y}$ such that $\pdegeneracy{\mathcal Y} \le (\alpha(v^p))_0$ for every $v \in T_{\mathcal Y}$ we deduce  that
    \[
        \pdegeneracy{\mathcal Y} = i^* (f)_0 - (\delta)_0 + p \cdot (i^* \eta)_0  = \restr{\pdegeneracy{\F}}{Y} - \different{\F}{Y} + p \cdot (i^*\eta)_0 \, .
    \]
    The result follows from Item (\ref{I:differentbis 2}) of Lemma \ref{L:differentbis2} .
\end{proof}

\subsection{The degeneracy divisor of the $p$-curvature on $\mathbb P^2_{\field}$ and $\mathbb P^1_{\field} \times \mathbb P^1_{\field}$}\label{SS:p-divisor}
In this subsection, we review a result on the structure of the degeneracy divisor of the $p$-curvature of sufficiently general foliations on $\mathbb P^2_{\field}$ and on Hirzebruch surfaces, and sketch its proof, obtained by the first author in \cite{Wodson}, see also \cite{WodsonTese}. We refer the reader to these works for details.

\begin{thm}\label{T:Wodson}
    Let $\field$ be an algebraically closed field of characteristic $p >0$. Let $X$ be equal to $\mathbb P^2_{\field}$ or a Hirzebruch surface. If $\F$ is a general foliation on $X$ with canonical bundle $\omega_{\F}$ then at least one of the following assertions holds true.
    \begin{enumerate}
        \item\label{I:not psef} The line-bundle $\omega_{\F}$ is not pseudo-effective and $\F$ is a $p$-closed foliation.
        \item\label{I:not nef} The surface is a Hirzebruch surface different from $\mathbb P^1_{\field} \times \mathbb P^1_{\field}$, the line-bundle $\omega_{\F}$ is pseudo-effective but not nef, the foliation $\F$ is $p$-dense but the unique curve of negative self-intersection appears as with multiplicity $p$ in the degeneracy divisor of the $p$-curvature of $\F$.
        \item\label{I:NF=0 mod p} The image of  $\omega_{\F}$ under the group morphism $ \Pic(X) \to \Pic(X)\otimes_{\mathbb Z} \mathbb F_p$ is zero.
        \item\label{I:reduced} The degeneracy divisor of the $p$-curvature of  $\F$ is reduced.
    \end{enumerate}
\end{thm}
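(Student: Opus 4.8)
The plan is to read off all four alternatives from the position, inside $\Pic(X)$, of the line bundle $\omega_{\F}$, which depends only on the numerical type of $\F$. First I would record the shape of the degeneracy divisor on a surface. If $\F$ is $p$-dense of codimension one on $X$, then $\pcurvature{\F}\colon \Frob^{*}T_{\F}\to N_{\F}$ is a nonzero morphism of line bundles, hence injective; therefore $\ker\pcurvature{\F}=0$, the distribution $\pkernel{\F}$ is the foliation by points, $\omega_{\pkernel{\F}}=\mathcal O_{X}$, and Proposition \ref{P:degree degeneracy} specializes to
\[
    \mathcal O_{X}(\pdegeneracy{\F})=\omega_{\F}^{\otimes p}\otimes N_{\F}=\omega_{\F}^{\otimes(p+1)}\otimes\omega_{X}^{-1}.
\]
In particular $\pdegeneracy{\F}$ is the zero divisor of the section $\pcurvature{\F}\in H^{0}(X,\omega_{\F}^{\otimes p}\otimes N_{\F})$, and the assignment $\F\mapsto\pdegeneracy{\F}$ is algebraic on the (irreducible) space of foliations of the given type, landing in $\mathbb P H^{0}(X,\omega_{\F}^{\otimes p}\otimes N_{\F})$.

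Next I would dispose of the easy alternatives. If $\omega_{\F}$ is not pseudo-effective, then on $\mathbb P^{2}_{\field}$ and on the Hirzebruch surfaces the class $\omega_{\F}^{\otimes(p+1)}\otimes\omega_{X}^{-1}$ displayed above fails to be pseudo-effective except for finitely many small numerical types; whenever it is not pseudo-effective no $p$-dense foliation of that type can exist, so every such $\F$ is $p$-closed, giving alternative (\ref{I:not psef}). The residual small types are precisely the ones with an algebraically integrable general member (pencils of lines on $\mathbb P^{2}_{\field}$, rational fibrations on Hirzebruch surfaces), which admit a rational first integral and are $p$-closed by the corollary to Theorem \ref{T:pintegra}. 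Alternative (\ref{I:NF=0 mod p}) is simply the defining condition on the numerical type and requires no argument; so from now on I may assume $\omega_{\F}$ is pseudo-effective and its image in $\Pic(X)\otimes_{\mathbb Z}\mathbb F_{p}$ is nonzero.

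For the Hirzebruch surfaces different from $\mathbb P^{1}_{\field}\times\mathbb P^{1}_{\field}$ I would treat the pseudo-effective but non-nef range, which is detected by the unique curve $C$ of negative self-intersection: here $\omega_{\F}\cdot C<0$ forces $C$ into the base locus, hence into $\pdegeneracy{\F}$. For a general $\F$ of such a type the foliation is $p$-dense and $C$ is not $\F$-invariant, so Proposition \ref{P:hinvariant} shows $p\mid\ord_{C}\pdegeneracy{\F}$; a local computation along $C$ (for instance via the different of Proposition \ref{P:different} restricted to $C$, together with the intersection number $\omega_{\F}\cdot C$) then pins the multiplicity to exactly $p$, yielding alternative (\ref{I:not nef}). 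Since $\mathbb P^{1}_{\field}\times\mathbb P^{1}_{\field}$ carries no negative curve, for it and for $\mathbb P^{2}_{\field}$ the pseudo-effective cone coincides with the nef cone in the relevant range, so the only surviving possibility is $\omega_{\F}$ nef with nonzero reduction modulo $p$, which must land in alternative (\ref{I:reduced}).

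The heart of the matter is therefore alternative (\ref{I:reduced}): for $X=\mathbb P^{2}_{\field}$ or $\mathbb P^{1}_{\field}\times\mathbb P^{1}_{\field}$ with $\omega_{\F}$ nef and $\omega_{\F}\not\equiv 0$ modulo $p$, the divisor $\pdegeneracy{\F}$ is reduced for general $\F$. Since the locus of sections of $\omega_{\F}^{\otimes p}\otimes N_{\F}$ with reduced zero divisor is open, and $\F\mapsto\pdegeneracy{\F}$ is algebraic on an irreducible base, it suffices to exhibit a single $p$-dense foliation of the given type whose degeneracy divisor is reduced. The main obstacle is exactly this construction: the naive candidates, logarithmic foliations, do not work, because Proposition \ref{P:closed + divisor} forces their degeneracy divisor to carry, besides the reduced union of invariant lines, an extra component of multiplicity $p$ (accounting for the gap between $\deg\pdegeneracy{\F}$ and the number of invariant lines), so they are never reduced once $\omega_{\F}\neq\mathcal O_{X}$; moreover classical Bertini fails in characteristic $p$, so openness alone does not close the argument. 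I would instead reduce the dimension by restricting to a general line or fibre $Y$ and invoke Proposition \ref{P:different} to compare $\pdegeneracy{\mathcal Y}$ with $\restr{\pdegeneracy{\F}}{Y}$, using the nef hypothesis to exclude fixed components of multiplicity divisible by $p$ and the condition $\omega_{\F}\not\equiv 0$ modulo $p$ to exclude a global multiplicity-$p$ component, and seed the genericity argument with an explicit Jouanolou-type example whose degeneracy divisor is known to be reduced (indeed irreducible in low degree). Verifying reducedness of this seed in every relevant numerical type, and propagating it by semicontinuity in a manner immune to the characteristic-$p$ pathologies of Bertini, is where the real work lies.
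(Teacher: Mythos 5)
Your reduction of the theorem to exhibiting, in each relevant numerical type, a single $p$-dense foliation with reduced degeneracy divisor is exactly the paper's strategy, and your preliminary observations are sound: on a surface $\pcurvature{\F}$ is a section of the line bundle $\omega_{\F}^{\otimes p}\otimes N_{\F}$, its zero divisor is $\pdegeneracy{\F}$, reducedness is open in families, and alternatives (\ref{I:not psef}) and (\ref{I:NF=0 mod p}) are disposed of essentially as you say. Your remark that logarithmic foliations cannot serve as seeds (their degeneracy divisor is forced by Proposition \ref{P:closed + divisor} to carry a residual part of multiplicity $p$ of degree $\deg(\F)-1$) is also correct and is a useful sanity check.

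The genuine gap is the construction of the seed examples, which you explicitly defer as ``where the real work lies'' --- but that \emph{is} the proof. The paper does not restrict to curves or invoke Jouanolou-type foliations; it builds the examples in two steps. First, explicit computations in degrees $1$ and $2$ on $\mathbb P^2_{\field}$: a general linear vector field $\alpha x\partial_x+\beta y\partial_y$ with $\alpha/\beta\notin\mathbb F_p$ gives a reduced degeneracy divisor (three lines) in degree $1$, and a form $(x\,dy-y\,dx)+\omega_2$ is analyzed by blowing up the origin to show that in degree $2$ the degeneracy divisor is the union of four invariant lines and a reduced irreducible invariant curve of degree $p$. Second, and crucially, all higher degrees are reached by pulling back a degree-two example that leaves $\{xyz=0\}$ invariant under the toric power maps $\varphi_{\ell}(x:y:z)=(x^{\ell}:y^{\ell}:z^{\ell})$, $p\nmid\ell$: since $\varphi_{\ell}$ is \'etale off the invariant boundary, Proposition \ref{P:pdegeneracy of pb} shows the pullback again has reduced degeneracy divisor, and its canonical bundle is $\mathcal O_{\mathbb P^2_{\field}}(\ell)$, so every degree $\ge 3$ is realized; the Hirzebruch cases are then obtained by further birational modifications and ramified covers. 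Your proposed alternative (restriction to a general line plus Proposition \ref{P:different}) does not obviously produce examples in every numerical type, and without the ramified-cover bootstrap the argument does not close. A smaller issue: in the non-nef Hirzebruch case your appeal to Proposition \ref{P:hinvariant} only gives $p\mid\ord_C\pdegeneracy{\F}$ once you know $C$ is not $\F$-invariant for general $\F$, and the step pinning the multiplicity to exactly $p$ is asserted rather than argued; the paper likewise defers this to the first author's work, so this is a lesser concern than the missing construction.
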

\begin{proof}[Sketch of the proof]
    Let us first consider the case of $X = \mathbb P^2_{\field}$.  If $\F$ is a foliation of $\mathbb P^2_{\field}$ then the degree
    of $\F$, $\deg(\F)$, is defined the number of tangencies between of $\F$  a non-invariant line $\ell$, counted with multiplicities. By definition, $\deg(\F) \ge 0$  and a simple computation shows that $N_{\F} = \mathcal O_{\mathbb P^2}(\deg(\F) + 2 )$ and $\omega_{\F} = \mathcal O_{\mathbb P^2_{\field}}(\deg(\F) - 1))$.

    If $\deg(\F)=0$ then $\F$ is always $p$-closed. Indeed, if $\F$ is $p$-dense then the degeneracy divisor of the $p$-curvature would be
    a section of $N_{\F} \otimes \omega_{\F}^{\otimes p} = \mathcal O_{\mathbb P^2_{\field}}(2 - p)$ containing the singular set of $\F$.
    A clear absurdity for $p>2$. In the case $p=2$, we also reach a contradiction since $\sing(\F)$ is non-empty because $c_2(\Omega^1_{\mathbb P^2_{\field}}(2)) = 1$.

    If $\deg(\F)=1$ then $\F$ is defined by a regular vector field $v$ (since $\omega_{\F} = \mathcal O_{\mathbb P^2_{\field}}$).
    In suitable affine coordinates, where $0$ is a singularity of $v$ and the line at infinity is invariant,  a general $v$ takes the form
    \[
        v = \alpha x \frac{\partial}{\partial x}  +  \beta y \frac{\partial}{\partial y}  \, .
    \]
    If $\alpha/\beta \notin \mathbb F_p$ then $\F$ is $p$-dense with $\pdegeneracy{\F}$ equal to the sum of the lines $\{x=0\}$, $\{y=0\}$, and the line at infinity.

    If $\deg(\F)=2$ then we consider the foliation on $\mathbb P^2_{\field}$ defined, in affine coordinates, by
    \[
        \omega = (xdy -y dx) + \omega_2
    \]
    where $\omega_2$ is a general  homogeneous $1$-form with polynomial coefficients of degree two. The foliation defined by
    $\omega$ leaves the line at infinity invariant and three lines through $0 \in \mathbb A^2_{\field}$ defined by the vanishing of
    $\omega(R) = \omega_2(R)$, where $R= x \frac{\partial}{\partial x}  +   y \frac{\partial}{\partial y}$ is the radial (or Euler) vector field.
    It follows that the degeneracy divisor is supported on the union of four lines above and a curve $C$ of degree at most $p$, since the degeneracy divisor of the $p$-curvature of $\F$ has degree $p+4$. Blowing up the origin, one sees that the strict transform of $C$ must intersect the strict transform of a line through $0$ in exactly one point, and this implies, after some work,  that  for $\omega_2$ generic, $C$ is an irreducible and reduced curve of degree $p$ invariant by $\F$.

    For $\deg(\F)>2$, we start with a foliation of $\G$ on $\mathbb P^2_{\field}$ of degree two, with reduced $\pdegeneracy{\G}$ and
    leaving invariant three lines in general position, say $\{xyz=0\}$. We consider the endomorphism for every positive integer $\ell\ge 2$
    not divisible by $p$ one considers the endomorphism of $\mathbb P^2_{\field}$ defined by $\varphi_{\ell}(x:y:z)= (x^{\ell}: y^{\ell}:z^{\ell})$. The pull-back foliation $\F_{\ell} = \varphi_{\ell}^* \G$ is a foliation with $\omega_{\F_{\ell}} = \mathcal O_{\mathbb P^2_{\field}}(\ell)$, hence of degree $\ell+1$. Moreover, since $\varphi_{\ell}$ is étale on the complement of  the invariant divisor $\{ xyz=0\}$, Proposition \ref{P:pdegeneracy of pb} implies that $\pdegeneracy{\F_{\ell}}$ is a reduced divisor. This shows that when Assertions (\ref{I:not psef}), (\ref{I:not nef}) and (\ref{I:NF=0 mod p}) are not valid, then Assertion (\ref{I:reduced}) holds true.

    To prove the result on Hirzebruch surfaces, one construct examples of $p$-dense foliations with $p$-curvature having reduced degeneracy divisor  by applying a series of birational transformations and ramifications as above starting from examples on $\mathbb P^2_{\field}$. We refer to \cite{Wodson} for details.
\end{proof}

\begin{remark}\label{R:later use}
    We point out, for later use, that the proofs presented in \cite{Wodson} show that whenever there exists a foliation on $\mathbb P^2_{\field}$ or on a Hirzebruch surface with reduced degeneracy divisor for its $p$-curvature, there exists a foliation with the same property and same normal bundle that leaves invariant the boundary of $X$, seen as a toric surface.
\end{remark}

\section{Families of foliations}\label{S:deforming}
This section studies the behavior of the kernel of the $p$-curvature and the degeneracy divisor of the $p$-curvature under
deformations.

\subsection{Families of foliations}
Let $T$ be an irreducible algebraic variety. For us, a family of codimension $q$ foliations parametrized by $T$ consists of an algebraic variety $\mathscr X$, a smooth and projective morphism $\pi : \mathscr X \to T$, a line bundle $N_{\mathscr F}$ on $\mathscr X$, and a foliation $\mathscr F$ on $\mathscr X$ of codimension $q + \dim T$ defined by a relative $q$-form $\omega \in H^0(\mathscr X, \Omega^q_{\mathscr X/T} \otimes N_{\mathscr F})$ with codimension two singular set over any point $t \in T$ such that the kernel of
\begin{align*}
    T_{\mathscr X/T} & \longrightarrow \Omega^{q-1}_{\mathscr X/T} \otimes N_{\mathscr F} \\
    v & \mapsto i_v \omega
\end{align*}
is an involutive subsheaf of $T_{\mathscr X}$ of rank $\dim \mathscr X - \dim T - q$.

When $\mathscr X = X \times T$, we will say that $\mathscr F$ is a family of foliations on $X$.

\begin{remark}
    There are other possibilities for defining a family of foliations parameterized by $T$. One can, for instance, impose that a family of foliations is defined by an involutive subsheaf of $T_{\mathscr X/T}$ which is flat over $T$. This would lead to a more stringent concept for a family of foliations, which many authors have already studied. Our definition above aims to study the irreducible components of the space of foliations on projective spaces as considered by Cerveau, Lins Neto, and others, see \cite{MR1394970, MR4354288} and references therein.
\end{remark}

\subsection{Behavior of the kernel of the $p$-curvature under deformations}
If $X$ is a polarized smooth and connected projective variety of dimension $n$, with polarization given by an ample divisor $H$, and $\mathcal E$ is a reflexive sheaf on $X$, we set the degree of $\mathcal E$ with respect to the polarization $H$ equal to $\det(\mathcal E) \cdot H^{n-1}$.

Recall from the statement of Lemma \ref{L:kernel pcurvature} that $\pkernel{\F}$ is the distribution with tangent sheaf determined by the kernel of the $p$-curvature of a $p$-dense foliation $\F$, \ie,
\[
    \Frob^* T_{\pkernel{\F}} = \ker \psi_{\F} \, .
\]
Naturally, $\omega_{\pkernel{\F}}= \det (T_{\pkernel{\F}})^*$  is the canonical bundle of $\pkernel{\F}$.

\begin{lemma}\label{L:upperbound}
    Let $\mathscr F$ be a family of codimension one foliations on a smooth projective variety $X$ parameterized by an
    algebraic variety $T$. If $0 \in T$ is a closed point such that $\mathscr F_0$ is $p$-dense then there exists an open neighborhood
    $U \subset T$ of $0$ such that for every closed point $t \in U$, the foliation $\mathscr F_t$ is $p$-dense and the inequality
    \[
         \deg_H \omega_{\pkernel{\mathscr F_t}} \le \deg_H \omega_{\pkernel{\mathscr F_0}}
    \]
    holds true for any polarization $H$.
\end{lemma}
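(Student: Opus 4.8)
The plan is to first show that $p$-denseness is an open condition in the family, and then to convert the asserted inequality, via the degree formula of Proposition \ref{P:degree degeneracy}, into a semicontinuity statement for the degeneracy divisor of the $p$-curvature. For a codimension one foliation the $p$-closure is either the foliation itself or the foliation with one leaf, so $\mathscr F_t$ is $p$-dense exactly when its $p$-curvature $\pcurvature{\mathscr F_t}$ is not identically zero. The fibrewise $p$-curvatures assemble into a morphism of coherent sheaves $\Psi\colon \Frob^*T_{\mathscr F/T}\to N_{\mathscr F}$ on $\mathscr X = X\times T$ whose restriction to $X\times\{t\}$ is $\pcurvature{\mathscr F_t}$; the locus where $\Psi$ vanishes identically on a fibre is closed, so there is an open $U\ni 0$ on which every $\mathscr F_t$ is $p$-dense and on which $\pkernel{\mathscr F_t}$ and $\pdegeneracy{\mathscr F_t}$ are defined.

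For the reduction, Proposition \ref{P:degree degeneracy} gives $\mathcal O_X(\pdegeneracy{\mathscr F_t}) \simeq (\omega_{\mathscr F_t}\otimes\omega_{\pkernel{\mathscr F_t}}^{*})^{\otimes p}\otimes\det N_{\mathscr F_t}$, whence, taking degrees with respect to $H$,
\[
    p\,\deg_H\omega_{\pkernel{\mathscr F_t}} = p\,\deg_H\omega_{\mathscr F_t} + \deg_H\det N_{\mathscr F_t} - \deg_H\pdegeneracy{\mathscr F_t}.
\]
Because $N_{\mathscr F}$ is a fixed line bundle on $X\times T$ and $\omega_{\mathscr F_t}\simeq\omega_X\otimes N_{\mathscr F_t}$ by adjunction, both $\deg_H\omega_{\mathscr F_t}$ and $\deg_H\det N_{\mathscr F_t}$ are locally constant on the connected set $U$. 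The claimed inequality is therefore equivalent to $\deg_H\pdegeneracy{\mathscr F_t}\ge\deg_H\pdegeneracy{\mathscr F_0}$ for all $t$ near $0$; equivalently, to the upper semicontinuity at $0$ of the function $t\mapsto\deg_H\omega_{\pkernel{\mathscr F_t}}$.

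To establish this I would cut down with a general complete intersection curve $C=H_1\cap\cdots\cap H_{n-1}$, fixed across the family and chosen transverse to the relevant singular loci, so that $\deg_H D = D\cdot C$ for every divisor $D$ and all the sheaves involved restrict to vector bundles on $C$. On $C$ the degeneracy divisor is read off from the cokernel of the restriction of $\Psi$ to $C\times U$, and one compares the central fibre with the nearby fibres using flatness and semicontinuity of this cokernel, together with the realization of $\ker\pcurvature{\mathscr F_t}$ as $\Frob^*T_{\pkernel{\mathscr F_t}}$ furnished by Lemmas \ref{L:kernel pcurvature} and \ref{L:lift}.

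The step I expect to be the main obstacle is precisely this semicontinuity, and within it the need to isolate the divisorial (codimension one) part of the image ideal of $\Psi$ from its higher codimension part. The full cokernel of a family of sheaf morphisms behaves semicontinuously under specialization, but $\pdegeneracy{\mathscr F_t}$ records only its codimension one component, so one must verify that neither the restriction to $C$ nor the passage to the central fibre creates or absorbs codimension one degeneracy in the wrong direction, and that all of this is compatible away from the codimension two sets $\sing(\mathscr F_t)\cup\sing(\pkernel{\mathscr F_t})$. Carrying out this bookkeeping carefully, and in particular pinning down the correct direction of the inequality, is the crux of the argument.
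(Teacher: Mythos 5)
Your framework is the same as the paper's: the relative $p$-curvature morphism $\pcurvature{\mathscr F/T}\colon \Frob^*T_{\mathscr F/T}\to N_{\mathscr F}$, the openness of the locus of $p$-dense fibres, and the translation of the asserted inequality through Proposition \ref{P:degree degeneracy} into the statement $\deg_H\pdegeneracy{\mathscr F_t}\ge\deg_H\pdegeneracy{\mathscr F_0}$ all match. But the step you yourself single out as the crux is left unproved, and it is the entire content of the lemma, so there is a genuine gap. For what it is worth, the paper's device for isolating the codimension one part --- which makes your complete intersection curve unnecessary --- is to set $\mathcal C=\operatorname{supp}\coker\pcurvature{\mathscr F/T}$ and apply semicontinuity to the Hilbert function $h_{\mathcal C}(m,t)=\chi(X,\mathcal O_{\mathcal C_t}(m))$ for $m\gg1$: since each $\mathscr F_t$ is $p$-dense, $\mathcal C_t\subsetneq X$ and the leading coefficient of $h_{\mathcal C}(m,t)$ in $m$ is a fixed positive multiple of $\deg_H\pdegeneracy{\mathscr F_t}$, so the higher-codimension components of $\mathcal C_t$ are invisible to the leading term; this also sidesteps your (legitimate) concern that a single curve cannot be chosen general for all fibres at once.

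The direction, however, is where your plan runs aground, and you are right to be suspicious of it. Because $\coker$ commutes with base change, the fibrewise Hilbert polynomial of $\coker\pcurvature{\mathscr F/T}$ can only increase under specialization; comparing leading coefficients therefore gives $\deg_H\pdegeneracy{\mathscr F_0}\ge\deg_H\pdegeneracy{\mathscr F_t}$ for $t$ near $0$, which via Proposition \ref{P:degree degeneracy} reads $\deg_H\omega_{\pkernel{\mathscr F_t}}\ge\deg_H\omega_{\pkernel{\mathscr F_0}}$ --- the reverse of the displayed inequality, and also what the closing sentence of the paper's own proof literally asserts, namely upper semicontinuity of $t\mapsto\deg\det(\ker\pcurvature{\mathscr F_t})=-p\,\deg_H\omega_{\pkernel{\mathscr F_t}}$. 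The displayed $\le$ is not a consequence of cokernel semicontinuity at all: it is exactly what the separate argument in the proof of Lemma \ref{L:constant pkernel} produces (a global section $v$ of $T_{\mathscr F}\otimes\mathcal L$, the divisor of $\omega(v^p)$, of degree constant in $t$ and equal to $\pdegeneracy{\mathscr F_t}+pR_t$, together with the hypothesis that $\pdegeneracy{\mathscr F_0}$ is free from $p$-th powers). So completing your argument honestly yields the $\ge$ inequality; to obtain the $\le$ you would have to import that vector-field argument, extra hypothesis included. The sign you could not pin down is not bookkeeping --- it is the point where this lemma and Lemma \ref{L:constant pkernel} exchange roles, and you should reconcile your conclusion with the stated inequality before going further.
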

\begin{proof}
    Consider the relative $p$-curvature morphism
    \begin{align*}
        \pcurvature{\mathscr F/T} : \Frob^* T_{\mathscr F/ T} & \longrightarrow N_{\mathscr F} \\
        v&\mapsto \omega(v^p) \, ,
    \end{align*}
    where $\Frob : \mathscr X \to \mathscr X$ is the absolute Frobenius.

    Let $\mathcal C$ be the support of $\coker \pcurvature{\mathscr F/T}$ and fix an embedding of $X$ in some $\mathbb P^N$. According to \cite[Chapter III, Theorem 9.9]{hartshorne1977},  for every $m \gg 1$ the Hilbert function $h_{\mathcal C}(m,t)=  \chi(X, \mathcal O_{\mathcal C_t}(m))$ is lower semi-continuous, where
    $\mathcal O_{\mathcal C_t}(m) = \mathcal O_{\mathcal C_t} \otimes \mathcal O_{\mathbb P^N}(m)$ as usual.

    Let $Z$ be the subset of $T$ such that for every $z \in Z$, corresponding to the $p$-closed foliations in the family. For every $t$ in the complement $U= T-Z$, the subscheme $\mathcal C_t$ of $X$ is different from $X$ and has divisorial part equal to $\pdegeneracy{\mathscr F_t}$.
    There exists a positive constant $c$ such that  $c \cdot \deg(\pdegeneracy{\mathscr F_t})$ is the leading coefficient of the Hilbert function $h_{\mathcal C}(m,t)$.

    Since $\det( \ker \pcurvature{\mathscr F_t} )^*$, the Frobenius pull-back of the canonical bundle of the kernel of the $p$-curvature of $\mathscr F_t$, is isomorphic to
    $\det( \Frob^* T_{\mathscr X_t})^* \otimes \det( N_{\mathscr F_t} \otimes \mathcal I_{{\mathcal C}_t} )$, we deduce from
    the lower semi-continuity of $h_{\mathscr C}$, the upper-semicontinuity of the  function $t \mapsto \deg(\det( \ker \pcurvature{\mathscr F_t} ))$  on $U$ as wanted.
\end{proof}

If one further assumes that the degeneracy divisor of a $p$-dense codimension one foliation is free from $p$-th powers then
one gets the reverse inequality for the degree of the kernel of the $p$-curvature.

\begin{lemma}\label{L:constant pkernel}
    Let $\mathscr F$ be a family of codimension one foliations on a smooth projective variety $X$ parameterized by an
    algebraic variety $T$. If $0 \in T$ is a closed point such that $\mathscr F_0$ is $p$-dense and the degeneracy divisor of
    its $p$-curvature is free from $p$-th powers then there exists a non-empty open subset
    $V \subset T$ such that for every closed point $t \in U$, the foliation $\mathscr F_t$ is $p$-dense and the equality
    \[
         \deg_H \omega_{\pkernel{\mathscr F_t}} = \deg_H \omega_{\pkernel{\mathscr F_0}}
    \]
    holds true for any polarization $H$.
\end{lemma}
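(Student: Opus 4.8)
The plan is to transform the statement into one about degeneracy divisors, take the easy inequality from Lemma~\ref{L:upperbound}, and then produce the opposite inequality by tracking the $\mathscr F$-invariant components of $\pdegeneracy{\mathscr F_t}$ across the family by means of a relative closed $1$-form.

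First I would reduce to a statement about $\deg_H\pdegeneracy{\mathscr F_t}$. Since $\mathscr X = X\times T$ and $T$ is irreducible, the bundles $\det N_{\mathscr F_t}$ and $\omega_{\mathscr F_t}=\omega_X\otimes\det N_{\mathscr F_t}$ (by \eqref{E:adjunction for distributions}) are restrictions to the fibres of fixed line bundles on $X\times T$, so their $H$-degrees do not depend on $t$. Proposition~\ref{P:degree degeneracy} then gives
\[
    p\,\deg_H\omega_{\pkernel{\mathscr F_t}} = \deg_H\!\big(\omega_{\mathscr F_t}^{\otimes p}\otimes\det N_{\mathscr F_t}\big)-\deg_H\pdegeneracy{\mathscr F_t},
\]
so $t\mapsto\deg_H\omega_{\pkernel{\mathscr F_t}}$ equals a constant minus $\tfrac1p\deg_H\pdegeneracy{\mathscr F_t}$. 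Hence the asserted equality is equivalent to $\deg_H\pdegeneracy{\mathscr F_t}=\deg_H\pdegeneracy{\mathscr F_0}$, and Lemma~\ref{L:upperbound} already supplies $\deg_H\pdegeneracy{\mathscr F_t}\ge\deg_H\pdegeneracy{\mathscr F_0}$ on a neighbourhood of $0$. What remains is the reverse inequality on a non-empty open $V$.

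To obtain it I would pass to the closed $1$-form description. Shrinking $T$ to a non-empty open $V$ on which every $\mathscr F_t$ is $p$-dense (an open condition satisfied at $0$), Proposition~\ref{P:old} furnishes a relative closed rational $1$-form $\eta$ defining $\mathscr F$ over $X\times V$, restricting to a closed form $\eta_t$ defining $\mathscr F_t$ on each fibre. By Proposition~\ref{P:closed + divisor} the multiplicities of $\pdegeneracy{\mathscr F_t}$ coincide, modulo $p$, with the orders of $(\eta_t)_\infty-(\eta_t)_0$; as $(\eta)_\infty-(\eta)_0$ is the divisor of a rational section of a fixed relative line bundle on $X\times V$, its fibrewise $H$-degree is constant and its mod-$p$ reduction is locally constant. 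Consequently the freeness of $\pdegeneracy{\mathscr F_0}$ from $p$-th powers propagates to nearby $t$, so after shrinking $V$ we may assume all multiplicities of $\pdegeneracy{\mathscr F_t}$ are prime to $p$; by Proposition~\ref{P:hinvariant} the divisor $\pdegeneracy{\mathscr F_t}$ is then supported on the $\mathscr F_t$-invariant hypersurfaces, which deform flatly over $V$ with constant degrees and multiplicities. Hence $\deg_H\pdegeneracy{\mathscr F_t}$ is constant on $V$, say equal to $d$.

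It remains to compare $d$ with $\deg_H\pdegeneracy{\mathscr F_0}$, and this is the step I expect to be the main obstacle. The invariant hypersurfaces carrying $\pdegeneracy{\mathscr F_t}$ specialize, as $t\to 0$, to hypersurfaces invariant by $\mathscr F_0$, hence contained in the support of $\pdegeneracy{\mathscr F_0}$ by Proposition~\ref{P:hinvariant}; matching their prime-to-$p$ multiplicities through Proposition~\ref{P:closed + divisor} gives $d\le\deg_H\pdegeneracy{\mathscr F_0}$, the reverse inequality sought. The delicate point is that the generic fibre of the family lives over the non-perfect field $\field(T)$, where the $p$-curvature can acquire $p$-divisible ($p$-th power) components which are invisible to the mod-$p$ comparison and which could make the limiting degree jump under specialization to the perfect residue field at $0$; the hypothesis that $\pdegeneracy{\mathscr F_0}$ is free from $p$-th powers is exactly what rules out such components at the special fibre and pins $d$ to $\deg_H\pdegeneracy{\mathscr F_0}$. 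Combining this with Lemma~\ref{L:upperbound} yields $\deg_H\pdegeneracy{\mathscr F_t}=\deg_H\pdegeneracy{\mathscr F_0}$ on $V$, and therefore $\deg_H\omega_{\pkernel{\mathscr F_t}}=\deg_H\omega_{\pkernel{\mathscr F_0}}$.
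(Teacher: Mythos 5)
Your reduction of the statement to the constancy of $\deg_H \pdegeneracy{\mathscr F_t}$ via Proposition \ref{P:degree degeneracy}, and the use of Lemma \ref{L:upperbound} to dispose of one of the two inequalities, is exactly how the paper's proof is organized. The divergence, and the gap, is in the remaining inequality. The paper's mechanism is to choose $v \in H^0(\mathscr X, T_{\mathscr F}\otimes\mathcal L)$ with $v_0$ generically transverse to $\pkernel{\mathscr F_0}$ and to work with the single effective divisor $\mathscr D = (\omega(v^p))_0$ on the total space, which has constant fibrewise $H$-degree and satisfies $\mathscr D_t = \pdegeneracy{\mathscr F_t} + pR_t$ with $R_t$ effective. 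The $p$-divisible discrepancy $R_t$ is then controlled uniformly in $t$ by the maximal effective divisor $\mathscr R$ on $\mathscr X$ with $p\mathscr R \le \mathscr D$: semicontinuity gives $\mathscr R_t = R_t$ on a non-empty open $V$, the hypothesis that $\pdegeneracy{\mathscr F_0}$ is free from $p$-th powers gives precisely $\mathscr R_0 \le R_0$, and the constancy of $\deg_H\mathscr D_t$ and $\deg_H\mathscr R_t$ closes the computation. Nothing in your proposal plays the role of $\mathscr R$; that is, nothing bounds the $p$-divisible part of $\mathscr D_t$ at generic $t$ against its value at $0$.

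Concretely, three of your steps are unsupported. First, ``the freeness of $\pdegeneracy{\mathscr F_0}$ from $p$-th powers propagates to nearby $t$'': your only input is Proposition \ref{P:closed + divisor}, which determines the coefficients of $\pdegeneracy{\mathscr F_t}$ only modulo $p$, and a congruence mod $p$ cannot distinguish a coefficient $m$ from $m+p$, hence cannot exclude $p$-th power components at nearby $t$. (Note also that ``free from $p$-th powers'' means all coefficients are $<p$, not that they are prime to $p$; only the former makes the paper's inequality $\mathscr R_0 \le R_0$ work.) Second, the claim that the invariant hypersurfaces supporting $\pdegeneracy{\mathscr F_t}$ ``deform flatly over $V$ with constant degrees and multiplicities'' is asserted rather than proved, and it is essentially equivalent to the constancy of $\deg_H\pdegeneracy{\mathscr F_t}$ you are trying to establish; invariant hypersurfaces of a family of foliations do not form a flat family in general, and Proposition \ref{P:hinvariant} plays no role in the paper's argument. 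Third, the final comparison ``matching their prime-to-$p$ multiplicities \dots gives $d \le \deg_H\pdegeneracy{\mathscr F_0}$'' is again only a mod-$p$ statement and cannot bound an integer-valued degree; you correctly flag this as the main obstacle, but then merely assert that the hypothesis at the special fibre ``pins'' the degree, without any mechanism by which a condition at $t=0$ controls the actual (not mod-$p$) multiplicities at generic $t$. Supplying that mechanism --- via the globally defined $\mathscr D$ and $\mathscr R$ --- is the whole content of the paper's proof.
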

\begin{proof}
    Thanks to Lemma \ref{L:upperbound}, it suffices to show the existence of an open neighborhood $U$ of $ 0 \in T$ such that
    \[
         \deg_H \omega_{\pkernel{\mathscr F_t}} \ge \deg_H \omega_{\pkernel{\mathscr F_0}}
    \]
    for any $t \in U$. For that, let $\mathcal L$ be a very-ample line bundle on $\mathscr X = X \times T$ such that $T_{\mathscr F} \otimes \mathcal L$ is generated by global sections. Choose $v \in H^0(\mathscr X, T_{\mathscr F} \otimes \mathcal L)$ such that $v_0$, the restriction of $v$ to $X \times \{0 \}$, is generically transverse to $\pkernel{\mathscr F_0}$.
    If $\omega \in H^0(\mathscr X, \Omega^1_{\mathscr X/T} \otimes N_{\mathscr F})$ is a twisted $1$-form defining the family of foliations
    $\mathscr F$ then the section $\sigma = \omega( v^p) \in H^0(\mathscr X, N_{\mathscr F} \otimes \mathcal L^{\otimes p})$ defines a divisor $\mathscr D$ on $\mathscr X$. The choice of $v$ guarantees that  the support of $\mathscr D$ does not contain $X_0$.
    Consequently, there exists an open neighborhood of $0 \in T$ such that the support of $\mathscr D$ does not contain $\mathscr X_t$.
    For any $t \in U$, $\mathscr{D}_t := \restr{\mathscr{D}}{\mathscr X_t}$  is, by construction, equal to $\pdivisor{\mathscr F_t} + p R_t$ for some effective divisor $R_t$.  Let  $\mathscr R \in \Div(\mathscr X)$ be the maximal effective divisor such that   $p \mathscr R \le \mathscr D$. Semi-continuity implies that $\mathscr R_t := \restr{\mathscr R}{ \mathscr X_t}$ is equal to $R_t$ for every $t$ in an non-empty open subset $V$ of $T$ (not necessarily containing $0$).

    Since we are assuming that  $\pdivisor{\mathscr F_0} = \mathscr D_0 - p R_0$ is free from $p$-th powers, we have that $\mathscr R_0 \le R_0$. Using that the degrees $\deg_H \mathscr D_t$ and $\deg_H \mathscr R_t$ do not depend on $t$, we obtain that
    \begin{align*}
        \deg_H \pdivisor{\mathscr F_0} & \le \deg_H \left( \pdivisor{\mathscr F_0}  + p (R_0 -  \mathscr R_0) \right)       \\
        & = \deg_H \left( \mathscr D_0 \right)  - \deg_H\left( p \mathscr R_0 \right)   \\
        & = \deg_H \left( \mathscr D_t \right)  - \deg_H\left( p \mathscr R_t \right)  = \deg_H \pdivisor{\mathscr F_t}
    \end{align*}
    for every $t \in V$. Proposition \ref{P:degree degeneracy} implies the result.
\end{proof}

\section{Integrability of the Cartier transform}

So far, we have studied foliations in positive characteristic without reference to foliations in characteristic zero. In this section, we show that foliations in positive characteristic obtained through reduction of foliations in characteristic zero are far from being arbitrary as the Cartier transform of them are integrable/involutive, a property that does not hold for arbitrary foliations.

\subsection{Lifts of foliations modulo $p^2$}
Let $\field$ be an algebraically closed field of characteristic $p>0$,  let $\Witt{2}{\field}$ be the ring of Witt vectors of length $2$ with
values in $\field$, and set $S= \Spec(\Witt{2}{\field})$. A lift of a smooth algebraic variety $X$ module $p^2$ is a flat smooth $S$-scheme $X'$ such that $X = X' \otimes \field$. Likewise a lift modulo $p^2$ of a foliation on $X$ consists of a lift $X'$ of $X$ modulo $p^2$ and subsheaf
$T_{\F'} \subset T_{X'/S}$, flat over $S$, such that
\begin{enumerate}
    \item the sheaf $T_{\F'}$ is a lift of $T_{\F}$, \ie, $T_{\F} = T_{\F'} \otimes \field$; and
    \item the sheaf $T_{\F'}$ is involutive, \ie the morphism
    \[
        \bigwedge^2 T_{\F'} \to \frac{T_{X'/S}}{T_{\F'}}
    \]
    induced by Lie brackets is identically zero.
\end{enumerate}

As already noted by Miyaoka in \cite[Example 4.2]{MR927960} not every foliation admits a lift modulo $p^2$. As it will be essential in what
follows, we present below a variant of Miyaoka's example.

\begin{ex}\label{Ex:Miyaoka}
    Let $p$ be an odd prime, let $\field$ be the algebraic closure of $\mathbb F_p$, and let $\omega$ be the $1$-form $x^{p-1} dx + z^p y^{p-1}dy$ on $\mathbb A^3_{\mathbb Z}$. The reduction modulo $p$ of $\omega$ defines a foliation $\F$ on $\mathbb A^3_{\field}$
    that does not admit a lift modulo $p^2$.
\end{ex}
\begin{proof}
    First observe that
    \[
        \omega \wedge d \omega = p(xyz)^{p-1} dx \wedge dy \wedge dz
    \]
    and, therefore, the reduction modulo $p$ of $\omega$ defines an involutive subsheaf of $T_{\mathbb A^3_{\field}}$. Let $\F$ be the corresponding foliation on $\mathbb A^3_{\field}$. If $\F$ admits a lift modulo $p^2$ and we denote by $\overline \omega$
    the reduction of $\omega$ modulo $p^2$ then such lift is defined by the $1$-form $\overline \omega + p \eta$ for a suitable  $1$-form
    $\eta \in \Omega^1_{\mathbb A^3_{\field}}$ which satisfies
    \begin{equation}\label{E:integrabibility p2}
        (\overline{\omega} + p \eta) \wedge d (\overline{\omega} + p \eta) = 0 \quad \text{in} \quad \Omega^1_{\mathbb A^3_{\Witt{2}{\field}}}.
    \end{equation}
    Expanding the lefthand-side  of Equation (\ref{E:integrabibility p2}), we get
    \[
        p\left(z^py^{p-1}dy\wedge d\eta  +(xyz)^{p-1}dx\wedge dy \wedge dz +x^{p-1}dx\wedge d\eta\right) = 0
    \]
    Since $d \eta$ has no terms that are scalar multiples of $(xy)^{p-1}$, this expression is always non-zero. It follows that
    $\F$ does not admit a lift modulo $p^2$.
\end{proof}

\begin{remark}
    Observe that the arguments presented in Example \ref{Ex:Miyaoka}  not only show that $\F$ does not admit a lift modulo $p^2$ but also
    show that the restriction of $\F$ to any open subset $U \subset \mathbb A^3_{\field}$ containing the origin does not admit a lift modulo $p^2$.
\end{remark}

\subsection{Integrability of the Cartier transform of liftable foliations}
Our next result is an amplification of Miyaoka's Example \ref{Ex:Miyaoka}.

\begin{thm}\label{T:lift}
    Let $\F$ be a codimension one foliation on a $X$ smooth projective variety defined over a field
    $\field$ of characteristic $p>2$. If $\F$ if $p$-dense and lifts to a foliation over the
    ring of Witt vectors $\Witt{2}{\field}$ then the Cartier transform of $\mathcal F$ is a foliation.
\end{thm}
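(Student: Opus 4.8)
The plan is to show that the Cartier transform $\CartierTransform{\F}$ is involutive by working locally and exploiting the lift modulo $p^2$ to control the failure of integrability. Since $\F$ is $p$-dense of codimension one, Proposition \ref{P:closed} guarantees that, on a sufficiently small open set $U \subset X - \sing(\F)$, the foliation is defined by a closed rational $1$-form $\omega$, and the Cartier transform is then the distribution defined by the single $1$-form $\Cartier(\omega)$. The core of the argument should be the identity for the exterior derivative of a Cartier transform: Cartier's theory gives $d \Cartier(\omega) = \Cartier(d\omega)$, but $\omega$ is closed so $d\omega = 0$ and hence $d\Cartier(\omega)=0$ as well. This immediately shows $\Cartier(\omega)$ is closed, which for a codimension one $1$-form forces integrability ($\Cartier(\omega) \wedge d\Cartier(\omega) = 0$ trivially). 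So the naive argument seems to prove involutiveness with no hypothesis on liftability at all --- which cannot be right, since Example \ref{Ex:Miyaoka} shows liftability is genuinely needed.

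The resolution, and the real content, is that $\omega$ is only a \emph{rational} closed $1$-form, not a regular one; its poles lie along the degeneracy divisor $\pdegeneracy{\F}$, and $\Cartier(\omega)$ is likewise only rational. The distribution $\CartierTransform{\F}$ is cut out by the regularized form, and the failure of involutiveness is a tensor supported along $\pdegeneracy{\F}$ and $\sing(\F)$. Therefore the first step I would take is to set up, on $U$, the twisted section of $\Omega^1_{\F}$ constructed in Subsection \ref{SS:global expression}, and identify the O'Neill tensor obstructing involutiveness of $\CartierTransform{\F}$ as a global section of a specific line bundle (built from $N_{\CartierTransform{\F}}$, $\omega_{\CartierTransform{\F}}$, and correction terms along $\pdegeneracy{\F}$). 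The goal is to reduce the integrability of $\CartierTransform{\F}$ to the vanishing of this global section.

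The heart of the proof is then to show this obstruction section vanishes, and this is where the lift modulo $p^2$ enters. The plan is to express the local non-integrability of $\Cartier(\omega)$ in terms of the same kind of quantity that appears in Equation (\ref{E:integrabibility p2}): given a lift $X'$ of $X$ and a lifted defining form $\overline\omega + p\eta$ satisfying $(\overline\omega + p\eta)\wedge d(\overline\omega + p\eta) = 0$ over $\Witt{2}{\field}$, the integrability condition modulo $p^2$ produces a $1$-form whose reduction modulo $p$ governs exactly the secondary obstruction that the Cartier operator picks up. Concretely, because $\omega$ is closed modulo $p$, we may write $d\overline\omega = p\,\theta$ for some form $\theta$, and the lifted integrability equation relates $\theta$ (equivalently the failure of the naive Cartier computation) to $d\eta$ in a way that forces the obstruction to be of the form $\eta \wedge \omega$ modulo the conormal, i.e. to lie in $N^*_{\F}\wedge \Omega^1$ and thus not contribute to the O'Neill tensor of the Cartier transform. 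I expect the main obstacle to be precisely this bookkeeping: carefully matching Cartier's formula (Lemma \ref{L:Cartier formula}) against the Witt-vector integrability identity, keeping track of which terms are divisible by $p$ and which survive to the reduction, and verifying that the surviving term is tangent to $\F$ rather than transverse. The hypothesis $p>2$ should be needed to invert the factor coming from $(L_v)^{p-1}$ and from binomial coefficients in Jacobson's and Deligne's formulas (Equations (\ref{E:Jacobson}) and (\ref{E:Deligne})), and I would check at the end that the global section identified in the second step is forced to vanish everywhere once it vanishes generically, using that $X$ is projective and the relevant sheaf has no nonzero sections supported in codimension $\ge 2$.
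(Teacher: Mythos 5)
There is a genuine gap, and it sits at the very first step of your analysis. The identity $d\,\Cartier(\omega) = \Cartier(d\omega)$ is false: the Cartier operator is defined only on closed forms and maps them \emph{onto} all of $\Omega^{\bullet}_X$, so the Cartier transform of a closed $1$-form has no reason to be closed. The paper's own example immediately after the definition of the Cartier transform makes this plain: $\omega = y^{p-1}dy + z^p x^{p-1}dx$ is a \emph{regular} closed $1$-form on $\mathbb A^3_{\field}$ whose Cartier transform $dy + z\,dx$ is a contact form, non-integrable everywhere. No poles are involved. Consequently your proposed ``resolution'' --- that the failure of involutiveness is a tensor supported along $\pdegeneracy{\F}$ and $\sing(\F)$, to be killed as a global section of a line bundle --- is a dead end: in the bad examples the O'Neill tensor of $\CartierTransform{\F}$ is generically nonzero, not concentrated on a divisor. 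Your instinct that the integrability equation of the lift modulo $p^2$ must be played against the Cartier transform is the right one, but the bookkeeping you sketch (writing $d\overline\omega = p\theta$ and pushing the obstruction into $N^*_{\F}\wedge\Omega^1_X$) does not engage the actual mechanism.

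The paper argues by contrapositive and entirely locally. If $\theta = \Cartier(\omega)$ is not integrable, pick a point $a$ where $\theta$ is regular and $\theta\wedge d\theta(a)\neq 0$; one normalizes $\theta \equiv x\,dy + z\,dz + w^{p-1}dw \bmod \mathfrak m^2\Omega^1_{X,a}$ and uses $p>2$ only to discard the term $w^{p-1}dw$ (since then $w^{p-1}\in\mathfrak m^2$) --- not, as you suggest, to invert binomial coefficients from Jacobson's or Deligne's formulas. Inverting the Cartier operator on this normal form gives $\omega \equiv x^p y^{p-1}dy + z^{p-1}dz + df \bmod \mathfrak m^2\Omega^1_{X,a}$, which is exactly the shape of Example \ref{Ex:Miyaoka}. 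Then for any lift $\omega' = \overline\omega + p\eta$ the expansion of $\omega'\wedge d\omega'$ contains the term $p\,(xyz)^{p-1}dx\wedge dy\wedge dz$, which no choice of $\eta$ or of the lift $f'$ can cancel; hence no lift of $\F$ modulo $p^2$ exists, contradicting the hypothesis. The case $\dim X>3$ is reduced to $\dim X=3$ by cutting with general hyperplane sections. To repair your proposal you would need to replace the false commutation of $d$ with $\Cartier$ by this local normal-form computation, which is where all the content of the theorem lives.
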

\begin{proof}
    We will prove the contra-positive, \ie if the Cartier transform of a $p$-dense codimension one foliation $\F$ is not a foliation then $\F$ does not lift modulo $p^2$.

    Proposition \ref{P:closed} guarantees the existence of a closed rational $1$-form  $\omega$  defining $\F$.
    The Cartier transform of $\F$ is defined by the rational $1$-form
    $\theta = \Cartier(\omega)$. By assumption, the $3$-form $\theta \wedge d \theta$ does not vanish identically. In particular,
    $\dim X \ge 3$.

    Assume first that $\dim X=3$. Let $a \in X$ be a closed point such that $\theta$ is regular (no poles) at $x$ and
    $\theta \wedge d\theta (x) \neq 0$. Denote by $\mathfrak m \subset \mathcal O_{X,a}$ the corresponding maximal ideal.
    Let $x,y \in \mathfrak m$ be such that $d\theta = dx\wedge dy \mod \mathfrak m \Omega^2_{X,a}$.  We can {\it integrate} the $2$-form
    $d\theta$ modulo $\mathfrak m^2$ and  write
    \[
        \theta = x dy + z dz  +  w^{p-1} dw \mod \mathfrak m^2 \Omega^1_{X,a}
    \]
    for some $z,w  \in \mathfrak m$. If $p>2$ then we can assume that $w$ is equal to zero  as $w^{p-1} \in \mathfrak m^2$. The non-vanishing of
    $\theta\wedge d \theta$ at $a$ implies that $x,y,z$ generate $\mathfrak m$.

    Since $\theta = \Cartier(\omega)$, we can write
    \[
        \omega = x^p y^{p-1} dy + z^{p-1} dz + df \mod \mathfrak m^2 \Omega^1_{X,a}
    \]
    for some $f \in \mathfrak m$.

    Let  $X'$ be a lift of $X$, let $a'$ be a lift of $a$, and let $\omega'$ be a $1$-form in $\Omega^1_{X',a'}$  that defines a lift $\F'$ of $\F$ to $X'$ and with reduction modulo $p$ equal to $\omega$. We can write
    \[
        \omega' = (x^py^{p-1}dy + z^{p-1}dz + df') + p \eta   \mod \left< p^2, \mathfrak m^{2p} \Omega^1_{X',a'}  \right> \, ,
    \]
    for any lift $f'$ of $f$ and a certain $\eta \in \Omega^1_{X,a}$. Expanding $\omega' \wedge d\omega'$ we get
    \begin{align*}
        & p \left( ( x^p y^{p-1}  dy + z^{p-1} dz + df') \wedge d\eta + (xyz)^{p-1} dx \wedge dy \wedge dz \right) + \\
       &+ p \left( (xy)^{p-1} df'\wedge dx \wedge dy  \right)
    \end{align*}
    modulo  $\left< p^2, \mathfrak m^{3p-2} \Omega^1_{X',a'}  \right>$. As in Example \ref{Ex:Miyaoka}, for no matter which choice of $f'$ and $\eta$, the term  $(xyz)^{p-1} dx \wedge dy \wedge dz$ will not be cancelled. This shows that no lift of $\omega$ defines a foliation when $\dim X=3$.

    When $\dim X>3$, we reduce to the previous case by restricting to the intersection of $\dim X - 3$ sufficiently general hyperplane sections.
\end{proof}

As pointed out by the referee, thanks to Chow's Lemma  \cite[\href{https://stacks.math.columbia.edu/tag/0200}{Tag 0200}]{stacks-project},  the result above also holds for proper smooth varieties.

\section{Lifting the kernel of the $p$-curvature}\label{S:lift}
This section defines integral models for holomorphic foliations and discusses the lifting, back to characteristic zero, of properties of the reduction to positive characteristic of a holomorphic foliation.

\subsection{Reduction modulo $p$ of holomorphic foliations on projective manifolds}
Let $\F$ be a singular holomorphic foliation defined on a complex projective manifold $X$. The variety $X$ and the subsheaf $T_\F \subset T_X$ can be viewed as objects defined over a finitely generated $\mathbb Z$-algebra $R \subset \mathbb C$. More precisely, there exists a projective scheme $\mathscr X$ flat over $\Spec(R)$ and subsheaf $T_{\mathscr F} \subset T_{\mathscr X/R}$ flat over $\Spec(R)$ such that $X$ is isomorphic to the base change $\mathscr X \otimes_{R} \mathbb C$ and $\mathcal F$ is isomorphic to $\mathscr F \otimes_R \mathbb C$.

The pair $(\mathscr X, \mathscr F)$ is called an integral model for $(X,\F)$ and is not uniquely determined by $(X,\mathcal F)$. Roughly speaking, it depends on the choice of generators for the ideal of $X$, on a finite presentation of $T_{\F}$,  as well as on the choice of an open subset of the affine scheme determined by the coefficients used to define the ideal and the presentation.

If $\mathfrak p \subset R$ is a maximal ideal, then the residue field $\field(\mathfrak p) = R/\mathfrak p$ is a finite field of characteristic $p>0$ with algebraic closure $\field = \overline{\field(\mathfrak p)} \simeq \overline{\mathbb F_p}$. The reduction of $\F$ modulo $\mathfrak p$ is the foliation $\F_{\mathfrak p}$ of $X_{\mathfrak p} = \mathscr X \otimes_R \field$ determined by $T_{\F_{\mathfrak p}} = T_{\mathscr F} \otimes_R \field$.

\begin{ex}\label{L:logvaries}
    Let  $\F$ be the codimension one foliation on $\mathbb A^3_{\mathbb C}$ defined by the $1$-form
    \[
        \omega = \sqrt{-1} \cdot \frac{dx}{x} +  \frac{dy}{y} + \frac{dz}{z} \, .
    \]
    We can take  the $\mathbb Z$-algebra $R =\mathbb Z[\sqrt{-1}] \simeq \mathbb Z[\alpha]/(\alpha^2 +1)$ as the ring of definition of $\mathcal F$. The ring $R$ has the following (maximal) prime ideals
    \begin{enumerate}
        \item\label{I:over2} only one prime ideal over $2$, namely $(1+\sqrt{-1})R=(1-\sqrt{-1})R$;
        \item\label{I:1mod4} for each rational prime $p=a^2 + b^2$ congruous to $1$ modulo $4$, two distinct prime ideals, namely $(a \pm \sqrt{-1}b) R$; and
        \item\label{I:3mod4} for each rational prime $p$ congruous to $3$ modulo $4$, the prime ideal $pR$.
    \end{enumerate}

    If we reduce $\F$ modulo $(1+\sqrt{-1})R$, we get a $p$-closed foliation over $\overline{\mathbb F}_2$ defined by $d \log (xyz)$.
    Likewise, if we reduce $\F$ modulo $(a\pm b \sqrt{-1})R$, $p=a^2+b^2$ being a prime congruous to $1$ modulo $4$, we get a $p$-closed foliation
    over $\overline{\mathbb F}_p$ defined by $d \log(xyx)$. In contrast, if we reduce $\F$ modulo $\mathfrak p = pR$, where  $p$ is a rational prime congruous to $3$ modulo $4$, we get a $p$-dense foliation with $\pkernel{\mathscr F_{\mathfrak p}}$ defined by the kernel of the $2$-form
    \[
        \omega_{\mathfrak p} \wedge \Cartier( \omega_{\mathfrak p} ) = (\alpha_{\mathfrak p} \cdot \frac{dx}{x} +  \frac{dy}{y} + \frac{dz}{z}) \wedge ( \alpha_{\mathfrak p}^{1/p}  \cdot \frac{dx}{x} +  \frac{dy}{y} + \frac{dz}{z}) \, ,
    \]
    where $\alpha_{\mathfrak p}$ stands for the class of $\alpha$ in the algebraic closure of $R/\mathfrak p \simeq \mathbb F_{p^2}$.
\end{ex}

\begin{ex}
    Let  now $\F$ be the codimension one foliation on $\mathbb A^3_{\mathbb C}$ defined by the $1$-form
    \[
        \omega = \alpha \cdot \frac{dx}{x} +  \frac{dy}{y} + \frac{dz}{z} \, ,
    \]
    with $\alpha$ equal to an arbitrary transcendental number.
    We can take the $\mathbb Z$-algebra $R=\mathbb Z[\alpha]$ as the ring of definition of $\mathcal F$.
    The maximal ideals of the ring $R$ are of the form $(p,f(\alpha)) R $ where $p$ is a rational prime
    and $f \in \mathbb Z[\alpha]$ is a monic polynomial with irreducible reduction modulo $p$.
    The reduction of $\mathcal F$ modulo $\mathfrak p = (p,f(\alpha))  R$ is $p$-closed if the degree of $f$ is one, otherwise
    $\mathscr F_{\mathfrak p}$ is $p$-dense.
\end{ex}

\subsection{A conjecture by Ekedahl, Shepherd-Barron, and Taylor}

In general, it is not easy to draw conclusions about the geometry of the holomorphic foliation $\F$ from the behavior of the
reduction modulo $\mathfrak p$ for different primes $\mathfrak p$.

\begin{conj}[Ekedahl, Shepherd-Barron, and Taylor]\label{Conj:ESBT}
    Let  $\F$ be a holomorphic foliation on a complex projective manifold $X$ with an integral model $(\mathscr X, \mathscr F)$ defined over a finitely generated $\mathbb Z$-algebra
    $R$. The foliation $\F$ is algebraically integrable (\ie all its leaves are algebraic) if, and only if, $\mathcal F_{\mathfrak p}$ is $p$-closed for every maximal prime (\ie closed point) $\mathfrak p$ in a non-empty Zariski open subset of $\Spec(R)$.
\end{conj}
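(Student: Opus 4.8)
The plan is to prove the two implications separately. The forward implication is a routine spreading‑out argument, while the converse is the substantial part and is, in full generality, at least as hard as the Grothendieck--Katz $p$-curvature conjecture; I expect that to be the main obstacle.

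\emph{Forward implication.} Suppose $\F$ is algebraically integrable. Then $\F$ admits $q := \codim \F$ rational first integrals $f_1, \dots, f_q \in \C(X)$ with $df_1 \wedge \cdots \wedge df_q \neq 0$, so that $\F$ is the foliation whose generic leaf is a fibre of the rational map $(f_1, \dots, f_q)$. These functions are defined over a finitely generated $\Z$-subalgebra of $\C$, so after enlarging $R$ and replacing $\Spec(R)$ by a dense open subset they extend to rational functions on $\mathscr X$ over $R$. The condition $df_1 \wedge \cdots \wedge df_q \neq 0$ is Zariski open, hence persists for the reductions $f_{i,\mathfrak p}$ for every $\mathfrak p$ in a dense open subset of $\Spec(R)$. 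For such $\mathfrak p$ the $f_{i,\mathfrak p}$ are rational first integrals of $\F_{\mathfrak p}$ with linearly independent differentials, and the corollary to Theorem~\ref{T:pintegra} shows that $\F_{\mathfrak p}$ is $p$-closed.

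\emph{Converse implication, a tractable case.} Here one must manufacture a rational first integral in characteristic zero out of the first integrals modulo $\mathfrak p$. The natural strategy is to encode the transverse geometry of $\F$ in a flat meromorphic connection $\nabla$ and to observe that $p$-closedness of $\F_{\mathfrak p}$ is reflected in the vanishing of the $p$-curvature of $\nabla_{\mathfrak p}$; Grothendieck--Katz would then force the monodromy of $\nabla$ to be finite, hence the leaves of $\F$ to be algebraic. This works cleanly in the transversely affine case, where $\F$ is defined by a closed logarithmic $1$-form $\omega = \sum_i \lambda_i \, df_i/f_i$ with residues $\lambda_i$ algebraic over $\Q$: by Example~\ref{E:loggeneric}, $\F_{\mathfrak p}$ is $p$-closed precisely when the projective residue vector lies in $\mathbb P^{r-1}_{\field}(\mathbb F_p)$, and requiring this for all $\mathfrak p$ in a dense open subset forces the ratios $\lambda_i/\lambda_1$ into $\mathbb F_p$ for almost all $p$, whence they are rational by a Chebotarev/specialization argument. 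This is exactly the abelian (rank one) instance of Grothendieck--Katz, and it yields algebraic integrability.

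\emph{Converse implication, the general obstacle.} For arbitrary codimension one $\F$ the idea is to invoke the classification of transverse structures of \cite{MR3842065}: either $\F$ is already algebraically integrable, or it is a pullback, or it carries a transversely affine or transversely projective structure, producing a flat connection on a bundle of rank at most two whose $p$-curvature governs the failure of $\F_{\mathfrak p}$ to be $p$-closed. The main obstacle is precisely what follows this reduction: one then needs the higher‑rank Grothendieck--Katz conjecture, which remains open. Indeed, linear differential equations give rise to Riccati foliations for which the sought conclusion is equivalent to the Grothendieck--Katz statement, so Conjecture~\ref{Conj:ESBT} contains the latter as a special case and no essentially simpler proof of the converse can be expected. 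I would therefore only aim at the cases where the relevant instance of Grothendieck--Katz is already available, namely the transversely affine case above and transversely projective foliations with virtually solvable monodromy.
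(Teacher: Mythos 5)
This statement is labelled as a conjecture in the paper and the paper offers no proof of it; on the contrary, it explicitly records that, apart from the particular cases settled by Bost, the conjecture is wide open and that it generalizes the (also open) Grothendieck--Katz $p$-curvature conjecture. So there is no proof in the paper against which to measure your attempt, and you were right not to claim one.

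What you do prove is sound. The forward implication is correct: algebraic integrability yields $q=\codim\F$ rational first integrals with $df_1\wedge\cdots\wedge df_q\neq 0$, these spread out over a dense open subset of $\Spec(R)$, the independence of the differentials is an open condition on the base, and the corollary to Theorem~\ref{T:pintegra} then gives $p$-closedness of $\F_{\mathfrak p}$. Your treatment of the logarithmic case via Example~\ref{E:loggeneric} together with Kronecker's rationality criterion (the result quoted in the proof of Lemma~\ref{L:degree one}) is exactly the abelian instance of Grothendieck--Katz and is the same mechanism the paper itself uses elsewhere. Your diagnosis of the converse in general --- that after reducing to a transverse structure one needs higher-rank Grothendieck--Katz, and that Riccati foliations show the conjecture contains Grothendieck--Katz as a special case --- agrees with the paper's own remarks. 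The one caveat worth flagging is that even granting Grothendieck--Katz, the reduction of an arbitrary codimension one foliation to the transversely affine/projective dichotomy of \cite{MR3842065} is not automatic in the generality of Conjecture~\ref{Conj:ESBT}, and for foliations of higher codimension no such structure theory is available; so your "general obstacle" paragraph should be read as a plausible strategy for codimension one rather than a reduction of the full conjecture to Grothendieck--Katz. None of this is a gap in what you actually assert, since you only claim the implication in the cases where the relevant instance of Grothendieck--Katz is known.
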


Despite the important particular cases settled by Bost in \cite{MR1863738}, Conjecture \ref{Conj:ESBT} is wide open.  It must be mentioned  that Conjecture \ref{Conj:ESBT} generalizes a conjecture by Grothendieck and Katz, which is also wide open despite recent advances \cite{MR3827814,MR4270046}.

Under the assumptions of  Conjecture \ref{Conj:ESBT}, Theorem \ref{T:pintegra} implies  the existence of
algebraic subvarieties invariant by $\mathscr F_{\mathfrak p}$ that cover $X_{\mathfrak p}$ for a Zariski open and dense set of maximal primes $\mathfrak p \subset \Spec(R)$. This is not enough to guarantee the existence of algebraic subvarieties invariant by $\mathcal F$ covering $X$ as the degrees of the subvarieties of $X_{\mathfrak p}$ invariant by $\mathscr F_{\mathfrak p}$ may be unbounded as a function of $\mathfrak p$.

\subsection{Lifting subvarieties/subdistributions of bounded degree}

If one can guarantee the existence of subvarieties/subdistributions for the reduction of a foliation modulo a Zariski dense set of maximal primes with degree uniformly bounded then the lemma below implies the existence of the same type of objects in characteristic zero respecting the same bounds.

\begin{lemma}\label{L:lifting hyp sub}
    Let $\F$ be a holomorphic foliation on a polarized projective manifold
    $(X, H)$ and let $(\mathscr X, \mathscr H, \mathscr F)$ be an integral model for $(X, H,\F)$ with ring of definition $R$.
    If there are integers $M,m$, and a Zariski dense set of maximal primes
    $\mathscr P \subset \Spec(R)$ such
    $\mathscr F_{\mathfrak p}$ has, for every $\mathfrak p \in \mathscr P$,
    \begin{enumerate}[label=(\arabic*)]
        \item\label{I:lift subvariety H} an invariant subvariety of dimension $m$ and degree at most $M$; or
        \item\label{I:lift subfoliation H} a subdistribution $\mathscr G_{\mathfrak p}$ of dimension $m$ with $\deg_{\mathscr H_{\mathfrak p}} \omega_{\mathscr G_{\mathfrak p}} \le M$
    \end{enumerate}
    then $\F$ has, respectively,
    \begin{enumerate}[label=(\roman*)]
        \item\label{I:lift subvariety C}  an invariant subvariety of dimension $m$ and degree at most $M$; or
        \item\label{I:lift subfoliation C} a subdistribution $\mathcal G$ of dimension $m$ with $\deg_{H} \omega_{\mathcal  G} \le M$ .
    \end{enumerate}
\end{lemma}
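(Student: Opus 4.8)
The plan is to run a spreading-out and specialization argument that is uniform in the two cases: I would realize the objects furnished over the primes of $\mathscr P$ as points of a single projective $R$-scheme, and then use properness to propagate their existence to the generic point of $\Spec R$, and from there to $\mathbb C$. Throughout, write $K=\mathrm{Frac}(R)$; since $R\subset\mathbb C$ the inclusion gives an embedding $K\hookrightarrow\mathbb C$, and $\Spec R$ is integral with generic point $\eta=\Spec K$. The numerical data $(m,M)$ will be recorded by the Hilbert polynomials indexing the components of the parameter scheme, so that it is automatically preserved under every base change.

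For the invariant-subvariety statement \ref{I:lift subvariety C}, I would use that subvarieties of fixed dimension $m$ and degree at most $M$ form a bounded family; hence they are parametrized by a closed subscheme $\mathscr M$ of a finite union $\coprod_P \mathrm{Hilb}^{P}(\mathscr X/R)$, projective over $R$, where $P$ ranges over the finitely many Hilbert polynomials that occur. Inside $\mathscr M$ the condition that the universal subscheme be $\mathscr F$-invariant---the vanishing of the composite $T_{\mathscr F}\big|_{\mathscr Z}\to T_{\mathscr X}\big|_{\mathscr Z}\to N_{\mathscr Z/\mathscr X}$---cuts out a further closed subscheme, over which the projection $\pi\colon\mathscr M\to\Spec R$ is still proper. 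By hypothesis $\pi(\mathscr M)\supseteq\mathscr P$, which is Zariski dense; since $\pi$ is proper its image is closed, so $\pi(\mathscr M)=\Spec R\ni\eta$. A point of $\mathscr M$ over $\eta$, spread out along an embedding $\overline K\hookrightarrow\mathbb C$ extending $K\hookrightarrow\mathbb C$, yields an $\mathscr F$-invariant closed subscheme of $X$ with the prescribed Hilbert polynomial; passing if necessary to an $m$-dimensional irreducible component of its support gives the required invariant subvariety of dimension $m$ and degree at most $M$.

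For the subdistribution statement \ref{I:lift subfoliation C}, I would observe that a subdistribution $\mathcal G\subset\F$ of dimension $m$ is the same datum as a saturated rank-$m$ subsheaf $T_{\mathcal G}\subset T_{\F}$, and that the bound $\deg_H\omega_{\mathcal G}=-\deg_H\det T_{\mathcal G}\le M$ bounds the degree of $T_{\mathcal G}$ from below. Grothendieck's boundedness for subsheaves of a fixed coherent sheaf with given rank and bounded degree then confines such subsheaves of $T_{\mathscr F}$ to finitely many Hilbert polynomials, so they are parametrized by a finite union of relative Quot schemes $\mathscr M\subset\coprod_P\Quot(T_{\mathscr F},P)$, again projective over $R$. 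The identical properness argument---$\pi$ proper, image closed and containing the dense set $\mathscr P$, hence equal to $\Spec R$---produces a rank-$m$ subsheaf $T'\subset T_{\F_{\overline K}}$ whose Hilbert polynomial forces $\deg_H\omega_{T'}\le M$. Base changing to $\mathbb C$ and replacing $T'$ by its saturation in $T_{\F}$ only decreases $\deg_H\omega$, so the resulting reflexive rank-$m$ subsheaf defines a subdistribution $\mathcal G$ with $\deg_H\omega_{\mathcal G}\le M$.

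The main obstacle I anticipate is boundedness: everything hinges on knowing that subvarieties of bounded dimension and degree, and subsheaves of $T_{\mathscr F}$ of fixed rank and bounded degree, each span only finitely many Hilbert polynomials, for it is precisely the resulting properness of $\pi$ that upgrades \emph{image dense} to \emph{image everything}. The remaining points are routine but must be checked: that $\mathscr F$-invariance and the inclusion into $T_{\mathscr F}$ are closed conditions compatible with base change, that saturation respects the degree bound, and that the numerical invariants of the integral model match under the reductions to $\field$ and the extension to $\mathbb C$.
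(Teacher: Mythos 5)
Your proposal is correct and follows essentially the same route as the paper: the paper handles case \ref{I:lift subvariety H}$\Rightarrow$\ref{I:lift subvariety C} by citing the Hilbert-scheme argument of Loray--Pereira--Touzet, and proves \ref{I:lift subfoliation H}$\Rightarrow$\ref{I:lift subfoliation C} exactly as you do, via Grothendieck's boundedness for quotients of $T_{\mathscr F}$ of bounded degree, the finiteness of the possible Hilbert polynomials, and the compatibility of the (projective) Quot scheme with base change, so that non-emptiness over a Zariski dense set of primes forces non-emptiness over the generic point and hence over $\mathbb C$. The only cosmetic difference is that you phrase the boundedness for subsheaves of bounded degree from below rather than for their torsion-free quotients of degree bounded from above, which is equivalent through adjunction.
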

\begin{proof}
    The implication  $\ref{I:lift subvariety H} \implies \ref{I:lift subvariety C}$ is the content of \cite[Proposition 7.1]{MR3842065}.
    The proof of the implication $\ref{I:lift subfoliation H} \implies \ref{I:lift subfoliation C}$  follows the same principles and is very similar to the proof of  \cite[Proposition 6.2]{MR3167130}. We proceed to give details of this later implication.

    Observe that subdistributions of ${\mathcal F}$, resp.  $\mathscr F_{\mathfrak p}$, are in one-to-one correspondence with torsion-free quotients of $T_{\F}$, resp. $T_{\mathscr F_{\mathfrak p}}$. Once the degree of the quotient is bounded from above,  \cite[Corollary 2.3]{MR1611822} guarantees that the possible Hilbert polynomials for the quotients belong to a finite set. Moreover, bounds for the degree of the quotient are equivalent, through adjunction, to bounds for the degree of the canonical bundle of the subdistribution.

    The formation of  Quot schemes commute with base change, \cite{MR1611822}. Therefore, for a fixed Hilbert polynomial $\chi$, the scheme $\Quot_{\chi}(X,T_{\F})$ is non-empty if, and only if, its reduction modulo $\mathfrak p$,  $\Quot_{\chi}(X_{\mathfrak p}, T_{\mathscr F_{\mathfrak p}})$, is non-empty for a Zariski dense set of primes $\mathfrak p$. This is sufficient to prove that $\ref{I:lift subfoliation H} \implies \ref{I:lift subfoliation C}$.
\end{proof}

To obtain a priori upper bounds for the degree of invariant subvarieties of the reduction to positive characteristic does not seem more manageable than obtaining the same upper bounds in characteristic zero. In contrast, if $\F$ is a holomorphic foliation such that the reduction modulo $\mathfrak p$ is not $p$-closed for a Zariski dense set of primes $\mathfrak p \subset \Spec(R)$ then the degree of the canonical bundle of $\pkernel{\mathscr F_{\mathfrak p}}$ is uniformly bounded as a function of $\mathfrak p$. Therefore one gets from the lemma above the existence of certain subdistributions in characteristic zero. In Section \ref{S:components char 0} we will explore this fact to obtain new results about the irreducible components of the space of holomorphic codimension one foliations on projective spaces.

\subsection{A cautionary remark}
For foliations of codimension one, Conjecture \ref{Conj:ESBT} can be rephrased as follows:
\begin{quote}
    the general leaf of a codimension one holomorphic foliation $\F$ is Zariski dense if, and only if, $\F_{\mathfrak p}$ is $p$-dense for a non-empty Zariski open set of maximal primes $\mathfrak p \in \Spec(R)$.
\end{quote}

Dropping the codimension one assumption, one obtains a strengthening of Conjecture \ref{Conj:ESBT} which turns out to be false as we now proceed to
show.

\begin{prop}\label{P:cautionary}
    Let $X$ be a complex projective manifold of dimension $2n+1$. Assume that $H^0(X,\Omega^2_X)$ is  generated by a $2$-form $\omega$ satisfying $\omega^n\neq0$, and  let $\F$ be the foliation on $X$ of dimension one defined by the kernel of $\omega$. If $(\mathscr X, \mathscr F)$ is an integral   model for $(X,\F)$ defined over a finitely generated $\mathbb Z$-algebra $R$ then $\F_{\mathfrak p}$ is not $p$-dense for any maximal prime
    $\mathfrak p$ in an open and dense subset of $\Spec(R)$.
\end{prop}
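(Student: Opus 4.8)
The plan is to produce, for a general prime $\mathfrak p$, a nonconstant rational first integral of $\F_{\mathfrak p}$; by the characterization recalled right after the definition of $p$-density (a foliation is $p$-dense if and only if every rational first integral has vanishing differential), this shows $\F_{\mathfrak p}$ is not $p$-dense. First I would spread out and pass to a dense open subset of $\Spec(R)$ over which the reduction behaves well: for generic $\mathfrak p$ the form $\omega_{\mathfrak p}$ is closed (being the reduction of the closed form $\omega$, since holomorphic $2$-forms on projective manifolds are closed), it still generates $H^0(X_{\mathfrak p},\Omega^2_{X_{\mathfrak p}})$ (the dimension $h^0(\Omega^2)$ equals its characteristic-zero value $1$ on a dense open set by semicontinuity), and $\omega_{\mathfrak p}^n\neq 0$. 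This last condition forces $\omega_{\mathfrak p}$ to have generic rank exactly $2n$ on the $(2n+1)$-dimensional $X_{\mathfrak p}$, so that its kernel is the line field $T_{\F_{\mathfrak p}}$; in particular $\F_{\mathfrak p}=\ker\omega_{\mathfrak p}$ is the reduction of $\F$.

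The key step exploits one-dimensionality of $H^0(\Omega^2_{X_{\mathfrak p}})$. Since $\omega_{\mathfrak p}$ is closed, $\Cartier(\omega_{\mathfrak p})$ is again a global regular $2$-form, hence $\Cartier(\omega_{\mathfrak p})=\lambda\,\omega_{\mathfrak p}$ for some $\lambda\in\field$. Let $v$ be a rational vector field generating $T_{\F_{\mathfrak p}}$ on a dense open set, so $i_v\omega_{\mathfrak p}=0$. Applying the general contraction formula of Lemma \ref{L:Cartier formula} to the closed $2$-form $\omega_{\mathfrak p}$ gives
\[
  i_v\Cartier(\omega_{\mathfrak p})=\Cartier\!\left(i_{v^p}\omega_{\mathfrak p}-(L_v)^{p-1}(i_v\omega_{\mathfrak p})\right)=\Cartier(i_{v^p}\omega_{\mathfrak p}),
\]
while the left-hand side equals $i_v(\lambda\omega_{\mathfrak p})=\lambda\,i_v\omega_{\mathfrak p}=0$. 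Here $i_{v^p}\omega_{\mathfrak p}$ is closed because $d(i_{v^p}\omega_{\mathfrak p})=L_{v^p}\omega_{\mathfrak p}=(L_v)^p\omega_{\mathfrak p}=0$, so the Cartier operator indeed applies. Thus $\Cartier(i_{v^p}\omega_{\mathfrak p})=0$, i.e.\ $i_{v^p}\omega_{\mathfrak p}$ is locally exact, and on a chart I may write $i_{v^p}\omega_{\mathfrak p}=dg$ for a rational function $g$. This $g$ is a first integral of $\F_{\mathfrak p}$: indeed $v(g)=i_v\,dg=i_v i_{v^p}\omega_{\mathfrak p}=\omega_{\mathfrak p}(v^p,v)=-(i_v\omega_{\mathfrak p})(v^p)=0$.

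To finish I would observe that $\F_{\mathfrak p}$ cannot be simultaneously $p$-dense and $p$-closed, since its $p$-closure would then have to be both all of $T_X$ and the rank-one sheaf $T_{\F_{\mathfrak p}}$. So if $\F_{\mathfrak p}$ were $p$-dense it would fail to be $p$-closed, and one could choose the generator $v$ above with $v^p\notin T_{\F_{\mathfrak p}}=\ker\omega_{\mathfrak p}$ on a dense open set; then $dg=i_{v^p}\omega_{\mathfrak p}\neq 0$, exhibiting a rational first integral with nonzero differential and contradicting $p$-density. Hence $\F_{\mathfrak p}$ is not $p$-dense for $\mathfrak p$ in a dense open subset of $\Spec(R)$. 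The heart of the argument is the single identity $i_v\Cartier(\omega_{\mathfrak p})=\Cartier(i_{v^p}\omega_{\mathfrak p})$ combined with the one-dimensionality of $H^0(\Omega^2)$; the main point requiring care is the spreading-out bookkeeping ensuring that closedness, the equality $h^0(\Omega^2)=1$, the nonvanishing of $\omega_{\mathfrak p}^n$, and the identification $\F_{\mathfrak p}=\ker\omega_{\mathfrak p}$ all hold over one common dense open subset of $\Spec(R)$, together with the verification that the formula of Lemma \ref{L:Cartier formula} is being invoked on a genuinely closed $2$-form.
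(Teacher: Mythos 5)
Your proposal is correct and follows essentially the same route as the paper: semicontinuity to get $h^0(\Omega^2_{X_{\mathfrak p}})=1$ on a dense open set, so that $\Cartier(\omega_{\mathfrak p})$ is proportional to $\omega_{\mathfrak p}$, then the contraction formula of Lemma \ref{L:Cartier formula} to conclude that $i_{v^p}\omega_{\mathfrak p}$ is exact, yielding either $p$-closedness or a rational first integral with nonzero differential. The only differences are cosmetic (the paper treats the two cases $df=0$ and $df\neq 0$ directly rather than by contradiction), and your extra verifications — closedness of $\omega_{\mathfrak p}$ and of $i_{v^p}\omega_{\mathfrak p}$, and the computation $v(g)=\omega_{\mathfrak p}(v^p,v)=0$ — are details the paper leaves implicit.
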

\begin{proof}
    By semi-continuity, there exists an open and dense subset $U \subset \Spec(R)$ such that $h^0(\mathscr X_{\mathfrak p}, \Omega^2_{\mathscr X_{\mathfrak p}})=1$ for
    every maximal prime $\mathfrak p \in U$. For $\mathfrak p \in U$, let $\omega_{\mathfrak p}$ be the generator of $H^0(\mathscr X_{\mathfrak p}, \Omega^2_{\mathscr X_{\mathfrak p}})$ or,
    equivalently, the reduction modulo $\mathfrak p$ of a generator of $H^0(\mathscr X, \Omega^2_{\mathscr X})$. Replacing $U$ by an open subset, we can assume that $\omega_{\mathfrak p}^n\neq 0$  for every $\mathfrak p \in U$.

    Fix an arbitray maximal prime $\mathfrak p \in U$. Since $\omega_{\mathfrak p}^n\neq 0$, the kernel  of the morphism from $T_{X_{\mathfrak p}}$ to $\Omega^1_{X_{\mathfrak p}}$ defined by contraction with $\omega_{\mathfrak p}$ is the tangent sheaf of a foliation $\F_{\mathfrak p}$ of dimension one. Let $v$ be any rational section of $T_{\F_{\mathfrak p}}$. Since $h^0(X,\Omega^2_{X_{\mathfrak p}})=1$,
    $\Cartier(\omega_{\mathfrak p})$ is a multiple of $\omega_{\mathfrak p}$. Consequently $i_v \Cartier(\omega_{\mathfrak p}) = 0$. According to Lemma \ref{L:Cartier formula},
    \[
        0 = i_v \Cartier(\omega_{\mathfrak p}) = \Cartier(i_{v^p} \omega_{\mathfrak p} - (L_v)^{p-1}(i_v\omega_{\mathfrak p})) = \Cartier(i_{v^p} \omega_{\mathfrak p}).
    \]
    The vanishing of $ \Cartier(i_{v^p} \omega)$ implies the existence of a rational function $f$ on $X_{\mathfrak p}$ such that
    $i_{v^p} \omega_{\mathfrak p} = df$, \ie $i_{v^p} \omega_{\mathfrak p}$ is exact. If $df=0$ then $\F_{\mathfrak p}$ is $p$-closed, if not then $f$ is a non-trivial rational first
    integral for $\F_{\mathfrak p}$. In both cases, $\F_{\mathfrak p}$ is not $p$-dense as claimed.
\end{proof}

If $X$ is a complex projective symplectic manifold of dimension $2n$ with symplectic form $\omega$ and $Y\subset X$ is a smooth hypersurface with
big normal bundle then the characteristic foliation of $Y$, \ie the one-dimensional foliation on $Y$ induced by the pull-back of $\omega^{n-1}$ to $Y$, has Zariski dense general leaf \cite[Theorem 1.7]{https://doi.org/10.48550/arxiv.2102.02799}. In contrast, Proposition \ref{P:cautionary} above implies that any integral model for $\F$ is not $p$-dense for almost every prime $\mathfrak p$.

\part{The space of holomorphic foliations on projective spaces}

\section{Foliations on complex projective spaces}\label{S:components char 0}

In the second part of the paper, we will focus on codimension one foliations on complex projective spaces.
We will  use the results presented in the first part in order to obtain new information about the irreducible
components of the scheme $\Fol{d}{n}{\C}$ parameterizing degree $d$ codimension one foliations on $\mathbb P^n_{\C}$.

In this section, we settle the notation, recall the definition of the space of foliations on projective spaces, and
establish the conventions concerning the reduction to positive characteristic used throughout the second part of the paper.

\subsection{Foliations on projective spaces}
Let $\field$ be an arbitrary algebraically closed field and let $\F$ be a codimension $q\ge 1$ foliation
on the projective space $\mathbb P^n_{\field}$, $n \ge q+1$. Let $\omega \in H^0(\mathbb P^n_{\field}, \Omega^q_{\mathbb P^n_{\field}}\otimes \det \NF)$ be a twisted $q$-form with coefficients in the line-bundle $\det N_{\F}$ such that $T_{\F} \subset  T_{\mathbb P^n_{\field}}$ equals to the kernel of morphism
\[
    T_{\mathbb P^n_{\field}} \longrightarrow  \Omega^{q-1}_{\mathbb P^n_{\field}}\otimes \det \NF
\]
defined by contraction with $\omega$.  The degree of $\F$ is, by definition, the
degree of the zero locus of $i^*\omega \in H^0(\mathbb P^q_{\field}, \Omega^q_{\mathbb P^q_{\field}} \otimes i^* \NF)$, where $i: \mathbb P^q_{\field} \to \mathbb P^n_{\field}$ is a general linear embedding of $\mathbb P^q_{\field}$ into $\mathbb P^n_{\field}$. In order words,
the degree of $\F$ is the degree of tangency divisor of $\F$ with general $\mathbb P^q_{\field}$ linearly embedded in $\mathbb P^n_{\field}$.
Since $\Omega^q_{\mathbb P^q_{\field}} = \mathcal O_{\mathbb P^q_{\field}}(-(q+1))$, it follows that
\[
    \det N_{\F} = \mathcal O_{\mathbb P^n_{\field}}(\deg(\F) + q + 1) \,
\]
for any codimension $q$ foliation $\F$ of degree $\deg(\F)$. Adjunction formula implies that any such foliation $\F$
has canonical sheaf satisfying
\[
    \omega_{\F} = \mathcal O_{\mathbb P^n_{\field}}(\deg(\F) - \dim(\F)).
\]

It follows from  Euler's sequence, that we can identify $H^0(\mathbb P^n_{\field}, \Omega^q_{\mathbb P^n_{\field}}(d+q+1))$ with the vector space of homogenous polynomial $q$-forms
\[
     \sum_{0\le i_1 \le \ldots \le i_q \le n}^n a_{i_1, \ldots, i_q}(x_0, \ldots, x_n) dx_{i_1}\wedge \cdots \wedge dx_{i_q}
\]
with coefficients $a_{i_1, \ldots, i_q} \in \field[x_0, \ldots, x_n]$ of degree $d+1$,  which are annihilated by the radial/Euler vector field $R = \sum_{i=0}^n x_i \frac{\partial}{\partial x_i}$. We will say that a homogeneous polynomial $q$-form on $\mathbb A^{n+1}_{\field}$ representing an element of $H^0(\mathbb P^n_{\field}, \Omega^q_{\mathbb P^n_{\field}}(d+q+1))$ is a projective $q$-form of degree $d+q+1$.

\subsection{The space of codimension one foliations on projective spaces}
The space/scheme of codimension one foliations of degree $d$ on the projective space $\mathbb P^n_{\field}$ is, by definition, the
locally closed subscheme $\Fol{d}{n}{\field}$ of $\mathbb P H^0(\mathbb P^n_{\field}, \Omega^1_{\mathbb P^n_{\field}}(d+2))$
defined by the conditions
\[
    [\omega] \in \Fol{d}{n}{\field} \text{ if, and only if, } \codim \sing(\omega) \ge 2 \text{ and }  \omega \wedge d \omega = 0\,.
\]
The space/scheme of codimension $q$ foliations of degree $d$ foliations of $\mathbb P^n_{\field}$ can be defined similarly, we
refer to \cite{MR2405162} for details. Here we will only mention that besides the integrability/Frobenius condition, one also has to add
the decomposability/Pl\"ucker conditions  to consider only $q$-forms with kernels having corank $q$.

\subsection{Conventions about the reduction to positive characteristic}
Let $\F$ be a codimension one foliation on $\mathbb P^n_{\C}$, \ie $\F$ is a codimension one holomorphic foliation
of a complex projective space of dimension $n$. Throughout this part of the paper, we will need to consider an integral model
$(\mathscr X, \mathscr F)$ for $\F$ as defined in Section \ref{S:lift}. Concretely, if $\omega$ is a projective $1$-form
defining $\F$ and  $R_0 \subset \C$ is any finitely generated $\mathbb Z$-algebra containing the coefficients of $\omega$ then
we take: $R$ equal to a finitely generated extension of $R_0$ obtained by inverting finitely many elements of $R_0$, $\mathscr X = \mathbb P^n_{R} = \Proj R[x_0, \ldots, x_n]$, and $\mathscr F$ as the foliation defined by  $\omega$. Notice that one needs to pass to the open subset
$\Spec(R) \subset \Spec(R_0)$   to obtain the flatness properties required by the definition.

We will say that a property $P$ for a foliation $\F$ holds for almost every prime if, and only if, there exists an integral model of $\F$ defined over a finitely generated $\mathbb Z$-algebra $R$ and there exists a non-empty open subset $U \subset \Spec(R)$ such that for every maximal prime $\mathfrak p \in U$, the property $P$ holds for the reduction of  $\F$ modulo $\mathfrak p$.

Likewise, we will say a property $P$ holds for a Zariski dense set of primes if, and only if, there exists an integral model of $\F$ defined over a finitely generated $\mathbb Z$-algebra $R$ and there exists a Zariski dense subset $Z \subset \Spec(R)$ formed only by maximal primes such that for every $\mathfrak p \in Z$, the property $P$ holds for the reduction modulo $\mathfrak p$ of
$\mathscr F$.

These two notational shortcuts will allow us to shorten the statements quite a bit.

\section{Subdistributions of minimal degree}
If $\F$ is a codimension one foliation on $\mathbb P^n_{\field}$ and $i$ is an integer between $1$ and $n-1$ then we define
\[
    \distmin{i}{\F} = \min_{\mathcal D \subset \F}  {\deg(\mathcal D)} \, ,
\]
where $\mathcal D$ ranges over all codimension $i$ distributions everywhere tangent to in $\F$.
By definition, $\distmin{1}{\F}  = \deg(\F)$.

If $\Sigma \subset \Fol{d}{n}{\field}$ is an irreducible component then we will set $\distmin{i}{\Sigma}$ equal to
$\distmin{i}{\F}$ where $\F$ is a generic foliation $\F \in \Sigma$. Semi-continuity implies that
\[
    \distmin{i}{\Sigma} \ge \distmin{i}{\F}
\]
for every $\F \in \Sigma$.

\subsection{Intersections and spans of pairs of  distributions}
Let $X$ be a projective manifold defined over an algebraically closed field $\field$.
Let $\mathcal D_1$ and $\mathcal D_2$ be two distributions on $X$. We define $\mathcal D_1 \cap \mathcal D_2$
as the distribution on $X$ with tangent sheaf equal to the intersection $T_{\mathcal D_1} \cap T_{\mathcal D_2} \subset T_X$.
If we consider the natural morphism
\begin{align*}
    \varphi : T_{\mathcal D_1} \oplus T_{\mathcal D_2} & \longrightarrow T_X \\
    v_1 \oplus v_2 &\mapsto v_1 + v_2
\end{align*}
then $T_{\mathcal D_1 \cap D_2}$ is isomorphic to the kernel of $\varphi$.

Likewise, we define the span of $\mathcal D_1$ and $\mathcal D_2$, denoted by $\mathcal D_1 + \mathcal D_2$,
as the distribution on $X$ with tangent sheaf equals  the saturation of the image of $\varphi$ in $T_X$.

Our definitions imply that $T_{\mathcal D_1 \cap \mathcal D_2}$ and $T_{\mathcal D_1 + \mathcal D_2}$ fit into
the exact sequence
\[
    0 \to T_{\mathcal D_1 \cap \mathcal D_2} \to T_{\mathcal D_1} \oplus T_{\mathcal D_2} \xrightarrow{\varphi} T_{\mathcal D_1 + \mathcal D_2} \, .
\]
Observe that the right-most arrow has torsion cokernel. Taking determinant, we get the existence of an effective divisor
$\correction{\mathcal D_1}{\mathcal D_2} \in \Div(X)$ with support contained in the support of the cokernel of the right-most arrow such that
\[
    \omega_{\mathcal D_1} \otimes \omega_{\mathcal D_2} = \omega_{\mathcal D_1 \cap \mathcal D_2} \otimes \omega_{\mathcal D_1 + \mathcal D_2} \otimes \mathcal O_X(\correction{\mathcal D_1}{\mathcal D_2}) \, .
\]
When $\codim \mathcal D_1\cap \mathcal D_2 = \codim \mathcal D_1 + \codim \mathcal D_2$ then the divisor $\correction{\mathcal D_1}{\mathcal D_2}$ is equal to the zero divisor of $\omega_1 \wedge \omega_2$, where $\omega_1$ and $\omega_2$ are, respectively, differential forms with coefficients in $\det N_{\mathcal D_1}$ and $\det N_{\mathcal D_2}$ defining $\mathcal D_1$ and $\mathcal D_2$.

Specializing to distributions on projective spaces, \ie  $X = \mathbb P^n_{\field}$, we obtain the identity
\begin{equation}\label{E: degree of intersections}
    \deg(\mathcal D_1 \cap \mathcal D_2) = \deg(\mathcal D_1) + \deg(\mathcal D_2) - \deg(\mathcal D_1 + \mathcal D_2)  - \deg(\correction{\mathcal D_1}{\mathcal D_2}) \, ,
\end{equation}
where we adopt the conventions that the foliation with only one leaf has degree $-1$ and that the foliation by points has degree zero. These conventions are made in order to ensure that $\omega_{\F}$ is isomorphic to $\mathcal O_{\mathbb P^n_{\field}}(\deg(\F) - \dim(\F))$.

\subsection{Bounds for the minimal degrees of subdistributions}
In this subsection, we establish bounds for the integers $\distmin{i}{\F}$. We will make repeated use of the following observation.

\begin{lemma}\label{L:equivalent definition of distmin}
    Let $\field$ be an algebraically closed field and let $\F$ be a codimension one foliation on $\mathbb P^n_{\field}$.
    Then    the vector space
    \[
        H^0(\mathbb P^n_{\field}, \Omega^{i-1}_{\F}(\distmin{i}{\F} - \deg(\F) + (i-1) ))
    \]
    is non zero for any $i$ between $1$ and $n-1$.
\end{lemma}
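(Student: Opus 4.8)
The plan is to produce the required nonzero section explicitly, directly from a subdistribution of minimal degree. Since $1 \le i \le n-1$ and $\F$ has dimension $n-1$, there exist codimension $i$ distributions tangent to $\F$ (for instance, the saturation in $T_{\F}$ of any generically chosen rank $n-i$ subsheaf), so the collection over which $\distmin{i}{\F}$ is a minimum is nonempty; as degrees are bounded below, the minimum is attained by some codimension $i$ distribution $\mathcal D \subset \F$ with $\deg(\mathcal D) = \distmin{i}{\F}$. Because $\F$ has codimension one, $\mathcal D$ is a codimension $i-1$ subdistribution of $\F$, and the whole point is that such a subdistribution is cut out by a twisted $(i-1)$-form on $\F$ of exactly the predicted twist.

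Next I would realize $\mathcal D$ as the zero locus of such a twisted form, following the dictionary used in the setup preceding Lemma \ref{L:differentbis}. Over the open set $U = \mathbb P^n_{\field} - (\sing(\F) \cup \sing(\mathcal D))$, whose complement has codimension at least two, the inclusion $T_{\mathcal D} \subset T_{\F}$ is a subbundle with locally free quotient $N_{\mathcal D/\F} := T_{\F}/T_{\mathcal D}$ of rank $i-1$. Taking the $(i-1)$-st exterior power of the quotient morphism gives a surjection $\bigwedge^{i-1} T_{\F} \to \det N_{\mathcal D/\F}$ over $U$, equivalently a nowhere-zero section of $\Omega^{i-1}_{\F} \otimes \det N_{\mathcal D/\F}$ there; by the reflexiveness of $\Omega^{i-1}_{\F}$ this extends to a nonzero global section
\[
    \eta \in H^0\big(\mathbb P^n_{\field}, \, \Omega^{i-1}_{\F} \otimes \det N_{\mathcal D/\F}\big)
\]
defining $\mathcal D$ inside $\F$. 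The normal sequences associated to $\mathcal D \subset \F \subset \mathbb P^n_{\field}$ yield $\det N_{\mathcal D} = \det N_{\mathcal D/\F} \otimes \det N_{\F}$, so $\det N_{\mathcal D/\F} = \det N_{\mathcal D} \otimes \det N^*_{\F}$, matching the twist in Lemma \ref{L:differentbis}.

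Finally I would compute the twist on $\mathbb P^n_{\field}$. For the codimension one foliation $\F$ one has $\det N^*_{\F} = \mathcal O_{\mathbb P^n_{\field}}(-\deg(\F)-2)$, while for the codimension $i$ distribution $\mathcal D$ the adjunction identity (\ref{E:adjunction for distributions}), combined with $\omega_{\mathcal D} = \mathcal O_{\mathbb P^n_{\field}}(\deg(\mathcal D) - \dim \mathcal D)$ and $\dim \mathcal D = n-i$, gives $\det N_{\mathcal D} = \mathcal O_{\mathbb P^n_{\field}}(\deg(\mathcal D)+i+1)$. Hence
\[
    \det N_{\mathcal D} \otimes \det N^*_{\F} = \mathcal O_{\mathbb P^n_{\field}}\big(\distmin{i}{\F} - \deg(\F) + (i-1)\big),
\]
so $\eta$ is a nonzero element of $H^0(\mathbb P^n_{\field}, \Omega^{i-1}_{\F}(\distmin{i}{\F} - \deg(\F) + (i-1)))$, which is therefore nonzero. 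The case $i=1$ is the degenerate instance $\mathcal D = \F$, where $\eta$ is the constant $1 \in H^0(\mathcal O_{\mathbb P^n_{\field}})$, consistent with $\distmin{1}{\F} = \deg(\F)$.

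I do not expect a serious obstacle here: the statement is essentially an unwinding of the definition of $\distmin{i}{\F}$ through the correspondence between subdistributions of $\F$ and twisted forms on $\F$. The only point demanding care is the passage from the generically-defined quotient form to a genuine nonzero global section with no spurious vanishing, which is exactly what the codimension-two bound on the singular loci together with the reflexiveness of $\Omega^{i-1}_{\F}$ secure.
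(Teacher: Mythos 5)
Your proposal is correct and follows essentially the same route as the paper: take a codimension $i$ distribution $\mathcal D\subset \F$ realizing $\distmin{i}{\F}$, observe that it is cut out inside $\F$ by a twisted $(i-1)$-form on $\F$ away from the codimension-two bad locus, extend by reflexiveness of $\Omega^{i-1}_{\F}$, and compute the twist $\det N_{\mathcal D}\otimes\det N^*_{\F}=\mathcal O_{\mathbb P^n_{\field}}(\distmin{i}{\F}-\deg(\F)+i-1)$. The only (immaterial) difference is packaging: the paper produces the section by locally factoring the defining $i$-form of $\mathcal D$ as $\omega\wedge\alpha$ and restricting $\alpha$ to $T_{\F}$, whereas you take the determinant of the quotient $T_{\F}/T_{\mathcal D}$ --- dual descriptions of the same object.
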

\begin{proof}
    Let $\omega \in H^0(\mathbb P^n_{\field}, \Omega^1_{\mathbb P^n_{\field}}(\deg(\F)  +2))$ be a twisted $1$-form defining $\F$.
    Let $\mathcal D$ be a distribution on $\mathbb P^n_{\field}$ of degree $\delta$ and codimension $q$. As such, $\mathcal D$ is
    defined by of a twisted $q$-form $\eta \in H^0(\mathbb P^n_{\field}, \Omega^q_{\mathbb P^n_{\field}}(\delta + q +1))$.
    If $\mathcal D$ is everywhere tangent to $\F$ then locally, away from  $\sing(\F)$, we can decompose $\eta$ as the wedge product of $\omega$
    with a $(q-1)$-form $\alpha$. The $(q-1)$-form is not uniquely defined since
    $\omega \wedge \alpha  = \omega \wedge (\alpha  +  \omega \wedge \beta)$ for any $(q-2)$-form $\beta$, but this is the only
    ambiguity in the definition of $\alpha$. Hence  $\restr{\alpha}{T_{\F}}$
    is unique and  we have a global non-zero section of
    $\Omega^{q-1}_{\F} ( \delta + q + 1 - (\deg(\F) + 2))$ on $\mathbb P^n_{\field} - \sing(\F)$. Since $\codim \sing(\F) \ge 2$, we obtain
    a non-zero element of $H^0( \mathbb P^n_{\field}, \Omega^{q-1}_{\F} ( \delta  - \deg(\F) + q - 1))$ as claimed.
\end{proof}

\begin{lemma}\label{L:omegaF em omegaPn}
    Let $\field$ be an algebraically closed field and let $\F$ be a codimension one foliation on $\mathbb P^n_{\field}$.
    For any $i$ between $1$ and $n-1$ we have an injective morphism
    \begin{equation}\label{E:extensao de i-formas}
        \Omega^{i}_{\F} \hookrightarrow \Omega^{i+1}_{\mathbb P^n_{\field}} ( \deg (\F) + 2) \,
    \end{equation}
    with image in $N^*_{\F} \otimes \Omega^i_{\mathbb P^n_{\field}}(\deg (\F) +2)$.
\end{lemma}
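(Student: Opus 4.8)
The plan is to build the map by wedging with a defining $1$-form and then to extend it across the singular locus using reflexivity. Write $d = \deg(\F)$ and let $\omega \in H^0(\mathbb P^n_{\field}, \Omega^1_{\mathbb P^n_{\field}}(d+2))$ be a twisted $1$-form defining $\F$, so that the inclusion $N^*_{\F} \hookrightarrow \Omega^1_{\mathbb P^n_{\field}}$ is, after the twist by $N_{\F} = \mathcal O(d+2)$, the map $\mathcal O \to \Omega^1_{\mathbb P^n_{\field}}(d+2)$ sending $1 \mapsto \omega$. Wedging with $\omega$ gives a global morphism
\[
    \Omega^i_{\mathbb P^n_{\field}} \longrightarrow \Omega^{i+1}_{\mathbb P^n_{\field}}(d+2), \qquad \beta \mapsto \omega \wedge \beta,
\]
whose image, by construction, consists of forms divisible by $\omega$, hence lies in the subsheaf $N^*_{\F} \otimes \Omega^i_{\mathbb P^n_{\field}}(d+2)$.

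First I would work over the open set $U = \mathbb P^n_{\field} \setminus \sing(\F)$, whose complement has codimension at least two. On $U$ the conormal sequence $0 \to N^*_{\F} \to \Omega^1_{\mathbb P^n_{\field}} \to \Omega^1_{\F} \to 0$ is an exact sequence of vector bundles, $\Omega^i_{\F} = \bigwedge^i \Omega^1_{\F}$ is locally free, and $\omega$ is nowhere vanishing (because $\sing(\omega) = \sing(\F)$, $\omega$ having no codimension one zeros). Taking $i$-th exterior powers of the conormal sequence, the surjection $\Omega^i_{\mathbb P^n_{\field}} \twoheadrightarrow \Omega^i_{\F}$ has kernel $N^*_{\F} \wedge \Omega^{i-1}_{\mathbb P^n_{\field}}$. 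Since $\omega \wedge \omega = 0$, wedging with $\omega$ annihilates this kernel, so the morphism descends over $U$ to a morphism $\Phi_U : \Omega^i_{\F} \to \Omega^{i+1}_{\mathbb P^n_{\field}}(d+2)$ whose image still lies in $N^*_{\F} \otimes \Omega^i_{\mathbb P^n_{\field}}(d+2)$.

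The injectivity of $\Phi_U$ is the heart of the matter, and it is a local linear-algebra statement. Choosing locally on $U$ a coframe whose first element generates $N^*_{\F}$ (i.e. is proportional to $\omega$), the exactness of the Koszul differential for a nonzero covector shows that $\Omega^{i-1}_{\mathbb P^n_{\field}} \xrightarrow{\,\omega\wedge\,} \Omega^i_{\mathbb P^n_{\field}} \xrightarrow{\,\omega\wedge\,} \Omega^{i+1}_{\mathbb P^n_{\field}}$ is exact; hence the kernel of $\omega\wedge(-)$ on $\Omega^i_{\mathbb P^n_{\field}}$ is exactly $N^*_{\F} \wedge \Omega^{i-1}_{\mathbb P^n_{\field}}$, and the induced map on the quotient $\Omega^i_{\F}$ is injective. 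This is the step I expect to be the main point to get right, although it ultimately reduces to the standard Koszul exactness.

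Finally I would extend $\Phi_U$ to all of $\mathbb P^n_{\field}$. Since $\Omega^{i+1}_{\mathbb P^n_{\field}}(d+2)$ is locally free, the sheaf $\HomSheaf(\Omega^i_{\F}, \Omega^{i+1}_{\mathbb P^n_{\field}}(d+2))$ is reflexive, so as $\codim \sing(\F) \ge 2$ the section $\Phi_U$ extends uniquely to a global morphism $\Phi : \Omega^i_{\F} \to \Omega^{i+1}_{\mathbb P^n_{\field}}(d+2)$. Injectivity persists because $\ker \Phi$ is a subsheaf of the reflexive, hence torsion-free, sheaf $\Omega^i_{\F}$ vanishing on the dense open set $U$, and is therefore zero. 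The image containment extends by the same principle: the composition of $\Phi$ with the projection onto the cokernel of $N^*_{\F} \otimes \Omega^i_{\mathbb P^n_{\field}}(d+2) \to \Omega^{i+1}_{\mathbb P^n_{\field}}(d+2)$ vanishes on $U$, hence everywhere up to the codimension two set $\sing(\F)$, which is all that is needed for the divisor-theoretic applications. This yields the desired injection with image in $N^*_{\F} \otimes \Omega^i_{\mathbb P^n_{\field}}(d+2)$.
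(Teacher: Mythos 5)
Your proof is correct and follows essentially the same route as the paper's: define the map on the complement of $\sing(\F)$ by wedging a local lift with the defining twisted $1$-form $\omega$, observe that ambiguity in the lift is killed because it is proportional to $\omega$, and extend across the codimension-two singular locus by reflexivity. The only difference is that you make explicit the injectivity via Koszul exactness for a nonvanishing covector, a point the paper leaves implicit.
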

\begin{proof}
    Let $\omega \in H^0(\mathbb P^n_{\field}, \Omega^1_{\mathbb P^n_{\field}}(\deg(\F)  +2))$ be twisted $1$-form defining $\F$.
    Since both $\Omega^{i}_{\F}$ and $\Omega^{i+1}_{\mathbb P^n_{\field}} ( \deg (\F) + 2)$ are reflexive sheaves, it suffices
    to define the  morphism on the complement of $\sing(\F)$.
    If $\eta$ is a germ of section of $\restr{\Omega^i_{\F}}{X-\sing(\F)}$ then  we can consider an arbitrary lift of $\eta$ to $\Omega^i_X$ and
    take  the wedge product with $\omega$. Since different lifts differ by $1$-forms proportional to $\omega$, this defines unambiguously
    the sought morphism.
 \end{proof}

\begin{prop}\label{P:obvious sub}
    Let $\F$ be a codimension one foliation on $\mathbb P^n_{\field}$, $n\ge 3$.
    If  the characteristic of $\field$ does not divide $\deg(\F) + 2 = \deg(N_{\mathcal F})$  then $\distmin{i+1}{\F} \le \distmin{i}{\F}$.    Otherwise, $\distmin{i+1}{\F} \le \distmin{i}{\F}+1$.
\end{prop}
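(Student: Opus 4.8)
The plan is to start from a minimal codimension $i$ subdistribution of $\F$ and to write down, explicitly, a twisted $(i+1)$-form defining a codimension $(i+1)$ subdistribution of the required degree. Fix a twisted $1$-form $\omega \in H^0(\mathbb P^n_{\field}, \Omega^1_{\mathbb P^n_{\field}}(\deg(\F)+2))$ defining $\F$, so that $i_R\omega = 0$ and $\omega \wedge d\omega = 0$, where $R$ is the radial vector field. Euler's identity gives $i_R d\omega = \deg(\NF)\,\omega$; equivalently, on $\mathbb P^n_{\field} - \sing(\F)$ one writes $d\omega = \omega \wedge \theta$ for a local $1$-form $\theta$ with $i_R\theta = -\deg(\NF)$. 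Let $\mathcal D \subset \F$ be a codimension $i$ subdistribution with $\deg(\mathcal D) = \distmin{i}{\F}$. As in the proof of Lemma \ref{L:equivalent definition of distmin}, $\mathcal D$ is defined by a locally decomposable twisted $i$-form $\eta = \omega \wedge \hat\alpha \in H^0(\mathbb P^n_{\field}, \Omega^i_{\mathbb P^n_{\field}}(\distmin{i}{\F}+i+1))$ with $i_R\eta = 0$, where $\hat\alpha$ is a local decomposable $(i-1)$-form.

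For the unconditional estimate I would simply wedge with a degree zero foliation. Since $n \ge 3$, choose a general projective $1$-form $\beta \in H^0(\mathbb P^n_{\field}, \Omega^1_{\mathbb P^n_{\field}}(2))$. Then $\eta \wedge \beta$ is annihilated by $i_R$, is locally a product of $i+1$ one-forms (hence decomposable), vanishes on $T_{\F}$, and for general $\beta$ has corank exactly $i+1$ and no codimension one zeros. A degree count shows that it defines a codimension $(i+1)$ subdistribution of $\F$ of degree $\distmin{i}{\F}+1$, so $\distmin{i+1}{\F} \le \distmin{i}{\F}+1$, which settles the ``otherwise'' case.

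For the sharp bound, assuming $p \nmid \deg(\NF)$, I would replace $\beta$ by the local $1$-form $\beta' := \deg(\NF)\,d\ell + \ell\,\theta$ attached to a general linear form $\ell$, and set $\Xi := \beta' \wedge \eta$. Three observations turn $\Xi$ into a defining form of a codimension $(i+1)$ distribution of degree $\distmin{i}{\F}$: (i) $\Xi$ is globally well defined, since the only ambiguity in $\theta$ is a multiple of $\omega$, which is annihilated upon wedging with $\eta$ because $\omega \wedge \eta = 0$; (ii) $\Xi$ is horizontal, because $i_R\beta' = \deg(\NF)\,\ell + \ell\, i_R\theta = \deg(\NF)\,\ell - \deg(\NF)\,\ell = 0$, whence $i_R\Xi = (i_R\beta')\,\eta = 0$ and $\Xi$ is a genuine section of $\Omega^{i+1}_{\mathbb P^n_{\field}}(\distmin{i}{\F}+i+2)$; and (iii) $\Xi = \omega \wedge \beta' \wedge \hat\alpha$ is locally a product of $i+1$ one-forms, hence decomposable and tangent to $\F$. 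As the coefficients of $\Xi$ have degree $\distmin{i}{\F}+1$, this is exactly the twist of a codimension $(i+1)$ distribution of degree $\distmin{i}{\F}$, yielding $\distmin{i+1}{\F} \le \distmin{i}{\F}$.

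The crux, and the place where $p \nmid \deg(\NF)$ is genuinely used, is to guarantee that $\Xi \neq 0$ with corank exactly $i+1$. The hypothesis forces $d\omega \neq 0$ (again by $i_R d\omega = \deg(\NF)\,\omega$) and, more to the point, keeps the leading term $\deg(\NF)\,d\ell \wedge \eta$ nonzero; for general $\ell$ the $1$-form $\beta'$ is then generically independent from $\omega$ and from the factors of $\hat\alpha$, so $\Xi$ has corank exactly $i+1$ on a dense open set and extends, by reflexiveness together with $\codim \sing(\F) \ge 2$, to a global defining form. When $p \mid \deg(\NF)$ this leading term disappears, $\beta'$ degenerates to $\ell\,\theta$, and $\Xi = -\,\ell\, d\omega \wedge \hat\alpha$ may vanish identically (for instance whenever $\omega$ is closed, which in characteristic $p$ can occur only when $p \mid \deg(\NF)$); this is exactly why only the weaker estimate survives in that case. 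I expect the non-degeneracy verification in (iii) --- decomposability together with the exact corank count --- to be the main technical obstacle, with everything else amounting to bookkeeping with the radial field and the degree conventions.
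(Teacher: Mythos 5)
Your proposal is correct and takes essentially the same route as the paper: your form $\beta' = \deg(N_{\F})\,d\ell + \ell\,\theta$ is exactly the (homogenized, denominator-cleared) difference between Bott's partial connection and the flat logarithmic connection attached to $\{\ell=0\}$ that the paper wedges with the defining form of $\mathcal D$, and your affine identity $\Xi = -\ell\, d\omega \wedge \hat{\alpha}$ reproduces verbatim the elementary reformulation the paper records in the remark following the proposition. The unconditional bound obtained by wedging with a general degree-zero foliation is likewise identical to the paper's argument.
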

\begin{proof}
    Let us first verify that $\distmin{i+1}{\F} \le \distmin{i}{\F}+1$.
    Let  $\mathcal D$ be a codimension $i$ distribution of degree $\distmin{i}{\F}$. To prove the claim, it suffices to
    construct a distribution of codimension $i+1$ and degree $\distmin{i}{\F} + 1$. For that, Equation (\ref{E: degree of intersections}) implies that it suffices to consider the intersection of  $\mathcal D$ with general codimension one foliation $\G$ of degree zero.

    Assume from now on assume that the characteristic of $\field$ does not divide $\deg(N_{\F}) = \deg(\F) + 2$.
    Let $H$ be a hyperplane of $\mathbb P^n$ which is not invariant by $\F$. If $h$ is a linear
    form on $\mathbb A_{\field}^{n+1}$ cutting out $H$ then the logarithmic differential $- \deg(N_{\F})  \frac{dh}{h}$ defines
    a flat logarithmic connection $\nabla$ on $N_{\F}$. Therefore there exists $\eta \in  H^0(\mathbb P^n_{\field}, \Omega^1_{\F} (\log H)) $
    such that
    \[
        \BBconnection - \nabla = \eta \, ,
    \]
    where $\BBconnection : N_{\F} \to N_{\F} \otimes \Omega^1_{\F}$ is Bott's partial connection.
    Since both $\BBconnection$ and $\nabla$ are flat $\F$-connections, $d_{\F} \eta =0$. The non-invariance of $H$ and the hypothesis
    on the characteristic of the base field imply that $\eta \neq 0$.

    Let $\mathcal D$ be a foliation of codimension $i$ and  degree $\distmin{i}{\F}$. As such, $\mathcal D$ is defined by $\alpha \in H^0(\mathbb P^n_{\field}, \Omega^{i}_{\mathbb P^n_{\field}} ( \distmin{i}{\F} + i + 1)$. If $\mathcal D$ is everywhere tangent to $\F$ then we can, unambiguously,  define  the wedge product
    \[
        \alpha \wedge \eta \in H^0(\mathbb P^n_{\field}, \Omega^{i+1}_{\mathbb P^n_{\field}}(\log H) ( \distmin{i}{\F} + i +1))) \, ,
    \]
    see  Lemma \ref{L:omegaF em omegaPn}. Clearing out the denominator (\ie multiplying $\omega\wedge \eta$ by a defining equation of $H$)
    we can interpret $\alpha\wedge \eta$ as a regular section of $\Omega^{i+1}_{\mathbb P^n_{\field}}( \distmin{i}{\F} + i +2)$.
    Moreover, if $H$ is sufficiently general then $\alpha \wedge \eta$ is nonzero and defines a codimension $i+1$ distribution contained in $\F$  and of degree at most $\distmin{i}{\F}$  as wanted.
\end{proof}

\begin{remark}
    The proof of the Proposition \ref{P:obvious sub} can be phrased in more elementary terms. Choose an affine subspace $\mathbb A^{n}_{\field} \subset \mathbb P^n_{\field}$  such that $H$ becomes the hyperplane at infinity. Since $H$ is not invariant, the restriction of $\F$ to $\mathbb A^n_{\field}$  is defined by a $1$-form $\omega$ with coefficients of degree at most $d+1=\deg(\F)+1$.
    If we expand $\omega$ as the sum of its homogeneous components $\omega_0 + \ldots + \omega_{d+1}$ then $\omega_{d+1} \neq 0$ and
    its  contraction with the Euler vector field $R$ is identically zero. Euler's formula implies that
    \[
        i_R d \omega_{d+1} = (d+2) \omega_{d+1}  = \left(\deg(N_{\F})\right) \omega_{d+1} \, .
    \]
    Thus if the characteristic of $\field$ does not divide $\deg(N_{\F})$, $d \omega$ is a non-zero two-form
    defining a codimension two foliation $\G$ tangent to $\F$. The foliation $\G$ coincides with the foliation defined by the section  $\eta \in H^0(\mathbb P^n_{\field}, \Omega^1_{\F} (\log H))$ figuring in the proof presented above. To conclude, Equation \ref{E: degree of intersections} implies that it suffices to  take a codimension $i$ distribution realizing $\distmin{i}{\F}$ and consider the intersection of it with the foliation $\G$.
\end{remark}

\begin{ex}\label{E:contact distribution}
   The assumption on the characteristic of the base field in Proposition \ref{P:obvious sub} is necessary. If $\F$ is a general foliation codimension one foliation of degree zero on $\mathbb P^n_{\field}$, $n\ge 3$, and the characteristic of $\field$ is two then $\distmin{2}{\F}=1 =  \deg(\F)+1$.
\end{ex}

\subsection{Subdistributions of foliations defined by logarithmic $1$-forms}
In this subsection, we will compute the integers $\distmin{2}{\F}$ for a codimension one holomorphic foliation defined by
a logarithmic $1$-form with simple normal crossing divisors showing. In particular, we will show that the bound provided by Proposition \ref{P:obvious sub} is sharp.

\begin{prop}\label{P:Fclosed}
    Let $D$ be a simple normal crossing divisor on $\mathbb P^n_{\C}$.
    Let $\omega \in H^0(\mathbb P^n_{\C}, \Omega^1_{\mathbb P^n_{\C}}(\log D))$ be
    a logarithmic $1$-form with polar divisor equal to $D$. If $\F$ is the foliation defined by $\omega$ and $\eta \in H^0(\mathbb P^n_{\C} , \Omega^i_{\F})$ then $d_{\F} \eta =0$. Moreover, if $j >0$ then $H^0(\mathbb P^n_{\C} , \Omega^i_{\F}(-j))=0$ for every $i$ between $1$ and $\dim \F -1 = n-2$.
\end{prop}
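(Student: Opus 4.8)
The plan is to transport the entire question about the foliated complex $(\Omega^\bullet_{\F}, d_{\F})$ into a statement about the logarithmic de Rham complex of the pair $(\mathbb P^n_{\C}, D)$, where Hodge-theoretic tools are available. The starting observation is that, since the polar divisor of $\omega$ is exactly $D$, we may write $\omega = \sum_i \lambda_i\, df_i/f_i$ with $\lambda_i \in \C^*$ and $f_i$ homogeneous equations for the components $D_i$; in particular $\omega$ is a \emph{closed} logarithmic $1$-form and each $D_i$ is $\F$-invariant. First I would upgrade Lemma \ref{L:omegaF em omegaPn}: the morphism $\eta \mapsto \hat\eta \wedge \omega$ (with $\hat\eta$ any local lift of $\eta$ to $\Omega^i_{\mathbb P^n_{\C}}$) in fact factors through the subsheaf of logarithmic forms, giving an injective $\mathcal O$-linear map
\[
    \Phi \colon \Omega^i_{\F} \hookrightarrow \Omega^{i+1}_{\mathbb P^n_{\C}}(\log D) \, .
\]
Indeed $\hat\eta \wedge \omega$ has only logarithmic poles because $\omega$ does; the construction is independent of the lift, since two lifts differ by a section of $N^*_{\F}\wedge \Omega^{i-1}_{\mathbb P^n_{\C}}$, which dies upon wedging with $\omega$; and $\Phi$ extends across $\sing(\F)$ by reflexivity. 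Being injective as a sheaf map, $\Phi$ is injective on global sections.

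For the first assertion I would check the intertwining relation $\Phi(d_{\F}\eta) = d(\Phi(\eta))$. Using $d\omega = 0$ one gets, locally on $\mathbb P^n_{\C}-\sing(\F)$, the identity $d(\hat\eta\wedge\omega) = d\hat\eta \wedge \omega$, while $d\hat\eta$ is a lift of $d_{\F}\eta$, so both sides equal $d\hat\eta \wedge \omega$. The key external input is then that, by Deligne's theory of the logarithmic de Rham complex (equivalently, the $E_1$-degeneration of the Hodge--de Rham spectral sequence of $\mathbb P^n_{\C}\setminus D$), every global section of $\Omega^{i+1}_{\mathbb P^n_{\C}}(\log D)$ is $d$-closed. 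Hence $d(\Phi(\eta)) = 0$, whence $\Phi(d_{\F}\eta)=0$, and injectivity of $\Phi$ forces $d_{\F}\eta = 0$. The boundary cases $i=0$ (global functions are constant) and $i=n-1$ (where $\Omega^n_{\F}=0$) are immediate.

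For the vanishing statement, twisting $\Phi$ by $\mathcal O(-j)$ reduces the problem to showing that $H^0(\mathbb P^n_{\C}, \Omega^{i+1}_{\mathbb P^n_{\C}}(\log D)(-j)) = 0$ for $j>0$ and $i+1 \le n-1$. Here I would invoke the weight filtration on $\Omega^{p}_{\mathbb P^n_{\C}}(\log D)$ given by iterated Poincar\'e residues, whose graded pieces are $\bigoplus_{|I|=m} \Omega^{p-m}_{D_I}$, where $D_I = \bigcap_{i\in I} D_i$ is, by the simple normal crossing hypothesis, smooth of dimension $n-m$. With $p=i+1\le n-1$, every nonempty $D_I$ occurring has $m\le p\le n-1$, hence $\dim D_I = n-m \ge 1$ and $p-m < \dim D_I$; since $\mathcal O_{D_I}(j)$ is ample, the Akizuki--Nakano vanishing theorem (via Serre duality, $H^0(D_I,\Omega^{p-m}_{D_I}(-j))^\vee \cong H^{n-m}(D_I,\Omega^{\,\dim D_I-(p-m)}_{D_I}(j))=0$) gives the vanishing of each graded summand. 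Feeding these through the long exact sequences of the filtration yields $H^0(\Omega^{i+1}_{\mathbb P^n_{\C}}(\log D)(-j)) = 0$, and therefore $H^0(\Omega^i_{\F}(-j)) = 0$.

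The main obstacle is conceptual rather than computational: one has to recognize that the logarithmic refinement of Lemma \ref{L:omegaF em omegaPn} is precisely what powers both halves, as it lands the image inside the logarithmic de Rham complex (so that Deligne's closedness applies to the first assertion) while keeping the poles under control (so that the weight filtration and Akizuki--Nakano vanishing apply to the second). The delicate points to verify carefully are that $\Phi(\eta)$ genuinely has only logarithmic poles along $D$ and that $\Phi$ is compatible with $d$ and $d_{\F}$; once these are in place, both conclusions are formal consequences of standard vanishing and degeneration results.
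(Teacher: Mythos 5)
Your proof is correct, and its first half is essentially the paper's argument: wedge $\eta$ with $\omega$ to obtain a global section of $\Omega^{i+1}_{\mathbb P^n_{\C}}(\log D)$, invoke Deligne's theorem that global logarithmic forms on a projective SNC pair are closed, and conclude $d_{\F}\eta=0$ from the injectivity of the wedge map (your verification that this map is well defined, lands in the logarithmic subsheaf rather than merely in $\Omega^{i+1}_{\mathbb P^n_{\C}}(\deg\F+2)$ as in Lemma \ref{L:omegaF em omegaPn}, and extends across $\sing(\F)$ by reflexivity is exactly what the paper uses implicitly). Where you genuinely diverge is the vanishing statement. The paper deduces it from the first assertion by an elementary device: given $0\neq\eta\in H^0(\mathbb P^n_{\C},\Omega^i_{\F}(-j))$ and arbitrary $f_1,f_2\in H^0(\mathbb P^n_{\C},\mathcal O(j))$, both $f_1\eta$ and $f_2\eta$ are $d_{\F}$-closed by the first part, hence $d_{\F}(f_1/f_2)\wedge\eta=0$, so $f_1/f_2$ is a first integral of the subdistribution of $\F$ cut out by $\eta$; for $i<\dim\F$ that subdistribution is positive-dimensional and cannot admit every such ratio as a first integral. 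You instead prove the stronger, foliation-independent vanishing $H^0(\mathbb P^n_{\C},\Omega^{i+1}_{\mathbb P^n_{\C}}(\log D)(-j))=0$ for $i+1\le n-1$ via the weight filtration and Akizuki--Nakano combined with Serre duality on the strata $D_I$ (your inequality $p-m<\dim D_I$ indeed reduces to $p<n$, so every graded piece dies, and the claim follows from the long exact sequences). Both routes are sound: yours yields a general twisted-logarithmic vanishing theorem and decouples the two halves of the proposition, at the cost of heavier Hodge-theoretic input; the paper's is shorter, self-contained once the closedness is known, and never needs the geometry of the strata.
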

\begin{proof}
    If $\eta \in H^0(\mathbb P^n_{\C}, \Omega^i_{\F})$ is non-zero then we can define the wedge product of $\eta$ with $\omega$, as in  Lemma \ref{L:omegaF em omegaPn},  and obtain a non-zero section of $H^0(\mathbb P^n_{\field}, \Omega^{i+1}_{\mathbb P^n_{\C}}(\log D))$. Since logarithmic forms with poles on simple normal crossing divisors are closed according to Deligne's result \cite[Corollary 3.2.14]{MR498551}, we deduce that $d (\omega\wedge \eta) =0$. It follows that $d_{\F}\eta =0$, showing the first claim of the statement.

    Let now $j$ be an strictly positive integer, let $i$ be a positive integer between $1$ and $n-2$, and let $\eta$ be a non-zero element of
    $H^0(\mathbb P^n_{\C}, \Omega^{i}_{\F}(-j))$. If $f_1, f_2$ are two arbitrary sections of $\mathcal O_{\mathbb P^n_{\C}}(j)$ then
    $f_1 \eta$ and $f_2 \eta$ belong to $H^0(\mathbb P^n_{\C}, \Omega^{i}_{\F})$. Therefore $d_{\F}(f_1 \eta) = d_{\F} (f_2 \eta) =0$ as shown above.
    It follows that
    \[
        d_{\F} \left(\frac{f_1}{f_2}\right) \wedge \eta
    \]
    is identically zero. If $i < \dim \F$ then $f_1/f_2$ is a first integral for
    the subdistribution of $\F$ defined by $\eta$. Since $f_1$ and $f_2$ are arbitrary sections of $\mathcal O_{\mathbb P^n_{\C}}(j)$, we obtain a contradiction proving the second claim.
\end{proof}

\begin{cor}\label{C:distmin for logs}
    Let $n\ge 3$ be a positive integer and Let $D$ be a simple normal crossing divisor on $\mathbb P^n_{\C}$ with $\ell \ge 2$ distinct irreducible components. Let $\omega \in H^0(\mathbb P^n_{\C}, \Omega^1_{\mathbb P^n_{\C}}(\log D))$ be
    a logarithmic $1$-form with polar divisor equal to $D$, and let $\F$ be the codimension one foliation defined by $\omega$.
    Then  $\distmin{2}{\F} = \deg(\F)$ when $\ell =2$, and $\distmin{2}{\F} = \deg(\F)-1$ otherwise.
\end{cor}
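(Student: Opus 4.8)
\emph{Plan.} The whole computation reduces to deciding whether the reflexive sheaf $\Omega^1_{\F}$ has a nonzero global section. First I would record the numerical dictionary. By Lemma \ref{L:equivalent definition of distmin} (with $i=2$) a codimension two distribution tangent to $\F$ of degree $\delta$ produces a nonzero element of $H^0(\mathbb P^n_{\C},\Omega^1_{\F}(\delta-\deg(\F)+1))$, while conversely Lemma \ref{L:omegaF em omegaPn} turns a nonzero section of $\Omega^1_{\F}(m)$ into a codimension two distribution tangent to $\F$ of degree at most $\deg(\F)+m-1$ (wedge the lift with $\omega$). Hence
\[
    \distmin{2}{\F}=\deg(\F)-1+m_0,\qquad m_0:=\min\{\,m\ \colon\ H^0(\mathbb P^n_{\C},\Omega^1_{\F}(m))\neq 0\,\}.
\]
Proposition \ref{P:Fclosed} (second assertion, applied with $i=1$, legitimate since $n\ge 3$) gives $H^0(\Omega^1_{\F}(-j))=0$ for $j>0$, so $m_0\ge 0$; and Proposition \ref{P:obvious sub} (we are in characteristic zero, so no divisibility obstruction) gives $\distmin{2}{\F}\le\distmin{1}{\F}=\deg(\F)$, so $m_0\le 1$. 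Therefore $\distmin{2}{\F}=\deg(\F)-1$ when $H^0(\Omega^1_{\F})\neq 0$ and $\distmin{2}{\F}=\deg(\F)$ otherwise, and the Corollary becomes the assertion that $H^0(\mathbb P^n_{\C},\Omega^1_{\F})\neq 0$ if and only if $\ell\ge 3$.

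For the case $\ell\ge 3$ I would exhibit sections directly by restricting logarithmic forms. Writing $\omega=\sum_{i}\lambda_i\,df_i/f_i$ with all $\lambda_i\neq 0$ (the polar divisor is \emph{all} of $D$), each component $D_i$ is $\F$-invariant, and this invariance is exactly what makes the restriction-to-$T_{\F}$ map pole-free: for $v\in T_{\F}$ one has $v(f_i)\in(f_i)$, so every $\omega'\in H^0(\mathbb P^n_{\C},\Omega^1_{\mathbb P^n_{\C}}(\log D))$ restricts along $D_i$ to a regular section (subtract the appropriate multiple of $\omega$ to cancel residues). Since $\Omega^1_{\F}$ is reflexive and the restriction is regular in codimension one and off $D$, we obtain a linear map $H^0(\Omega^1_{\mathbb P^n_{\C}}(\log D))\to H^0(\Omega^1_{\F})$. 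Its kernel consists of the $\omega'$ with $\omega'\wedge\omega=0$, i.e. $\C\cdot\omega$, and by Deligne's residue sequence the source has dimension $\ell-1$. Thus $\dim H^0(\Omega^1_{\F})\ge\ell-2\ge 1$, which forces $m_0=0$ and $\distmin{2}{\F}=\deg(\F)-1$.

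For $\ell=2$ the residue relation $\lambda_1\deg D_1+\lambda_2\deg D_2=0$ forces $\omega$ to be a constant multiple of $d\log\!\big(f_1^{\deg D_2}/f_2^{\deg D_1}\big)$, so $\F$ is the foliation of the rational fibration given by the primitive first integral $h=f_1^{b}/f_2^{a}$ with $\gcd(a,b)=1$; its general leaf is (an open part of) a hypersurface of $\mathbb P^n_{\C}$ of dimension $n-1\ge 2$. The plan is to show $H^0(\Omega^1_{\F})=0$ by restricting a putative section $\eta$ to a general leaf: on the smooth locus of the foliation $\Omega^1_{\F}$ is the cotangent sheaf of the leaf, and a smooth projective hypersurface $Y\subset\mathbb P^n_{\C}$ of dimension $\ge 2$ has $H^0(Y,\Omega^1_Y)=0$ (conormal and Euler sequences, together with $H^1(Y,\mathcal O_Y(-\deg Y))=0$ for $n\ge 3$); since the general leaf dominates $\mathbb P^n_{\C}$ and $\Omega^1_{\F}$ is torsion-free, $\eta|_{\text{leaf}}=0$ would give $\eta=0$. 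Cleanly this is phrased over $K=\C(t)$: the generic fibre $\{f_1^{b}-t\,f_2^{a}=0\}\subset\mathbb P^n_K$ is \emph{irreducible} (its defining polynomial is linear and primitive in $t$, with coprime coefficients $f_1^{b},f_2^{a}$) and smooth over $K$ by generic smoothness.

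\emph{Main obstacle.} The delicate point in the $\ell=2$ case is precisely that the leaves need not be smooth projective hypersurfaces: when $a,b\ge 2$ the fibre $\{f_1^{b}=c f_2^{a}\}$ is singular along the base locus $B=\{f_1=f_2=0\}\subset\sing(\F)$, which has codimension one in the fibre. So the restriction $\eta|_{\text{leaf}}$ lives only on the smooth quasi-projective leaf, and one must control its behaviour along $B$ before invoking the vanishing of global $1$-forms. I expect this to be the technical heart: either one argues that the global origin of $\eta$ forbids poles along the resolution of $B$ (reducing to $H^0(\Omega^1)=0$ for a smooth hypersurface model), or one shows equivalently that the connecting map $H^0(\Omega^1_{\F})\hookrightarrow H^1(\mathbb P^n_{\C}\setminus\sing(\F),\mathcal O)$ coming from the sequence $0\to\mathcal O\to\Omega^1_{\mathbb P^n_{\C}}(\log D)\to\Omega^1_{\F}\to 0$ vanishes. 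Everything else is bookkeeping with the dictionary of the first paragraph.
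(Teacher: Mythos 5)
Your reduction to the single question of whether $H^0(\mathbb P^n_{\C},\Omega^1_{\F})$ vanishes, via the dictionary $\distmin{2}{\F}=\deg(\F)-1+m_0$ together with Proposition \ref{P:Fclosed} and Proposition \ref{P:obvious sub}, is exactly the paper's strategy, and your treatment of the case $\ell\ge 3$ by restricting logarithmic forms to $T_{\F}$ coincides with the paper's. The problem is the case $\ell=2$, where you do not actually have a proof: you candidly flag the "main obstacle" yourself, and it is a genuine one. The general leaf $\{f_1^{b}=c f_2^{a}\}$ is singular along the base locus $B=\{f_1=f_2=0\}$, which has codimension one \emph{in the leaf}, so a putative section $\eta\in H^0(\Omega^1_{\F})$ restricts to a regular $1$-form only on the open part of the leaf away from $B$, and nothing you have said controls its poles on a resolution; the vanishing of $H^0(Y,\Omega^1_Y)$ for a smooth hypersurface $Y$ therefore does not apply. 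Your alternative formulation via the sequence $0\to\mathcal O\to\Omega^1_{\mathbb P^n_{\C}}(\log D)\to\Omega^1_{\F}\to 0$ has the same defect: the sequence is only exact off $\sing(\F)$, and since $\sing(\F)$ typically has codimension exactly two, the group $H^1(\mathbb P^n_{\C}\setminus\sing(\F),\mathcal O)$ need not vanish, so the injectivity of the connecting map into it proves nothing without further work.

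The paper closes this case by a different, purely cohomological device: a nonzero $\eta\in H^0(\mathbb P^n_{\C},\Omega^1_{\F})$ is wedged with the logarithmic form $\omega$ itself (the operation made sense of by Lemma \ref{L:omegaF em omegaPn}) to produce a nonzero element of $H^0(\mathbb P^n_{\C},\Omega^2_{\mathbb P^n_{\C}}(\log D))$, and this group vanishes when the simple normal crossing divisor $D$ has only $\ell=2$ irreducible components, by a result of Gargiulo Acea (\cite[Corollary 2.9]{MR4105938}). That single vanishing statement is the missing ingredient in your argument; with it, the contradiction is immediate and no analysis of the leaves or of $\sing(\F)$ is needed.
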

\begin{proof}
    Since $D$ is normal crossing and $(\omega)_{\infty} = D$, the $1$-form $\omega$, seen as a section of $\Omega^1_{\mathbb P^n_{\C}}(\log D)$,  has no zeros at a neighborhood of the
    support of $D$. The ampleness of $D$ implies that the zero set of $\omega$ consists of finitely many points, as otherwise positive dimensional components of the zero set of $\omega$ would necessarily intersect $D$. Therefore $N_{\F} = \mathcal O_{\mathbb P^n_{\C}}(D)$ and $\deg(\F) = \distmin{1}{\F} = \deg D -2$.

    If $\distmin{2}{\F}<\deg(\F) -1$ then $h^0(\mathbb P^n_{\C}, \Omega^1_{\F}(-j))\neq 0$ for some $j>0$.  But this contradicts Proposition \ref{P:Fclosed} and, consequently, implies  that $\distmin{2}\F \ge \deg(\F)-1$.

    If $\ell \ge 3$ then the restriction of a general logarithmic $1$-form with poles on $D$ shows that $h^0(\mathbb P^n_{\C}, \Omega^1_{\F})\neq 0$. Hence, when $\ell \ge 3$, $\distmin{2}{\F} = \deg(\F) -1$ as claimed.

    It remains to analyze the case $\ell=2$. We will first show that  $\distmin{2}{\F} \ge \deg(\F)$.
    Aiming at a contradiction, assume  $\distmin{2}{\F} = \deg(\F)-1$. Lemma \ref{L:equivalent definition of distmin} gives a non-zero section $\eta$ of $\Omega^1_{\F}$. The wedge product of $\eta$ with $\omega$, well-defined as explained by Lemma \ref{L:omegaF em omegaPn},   gives a non-zero element of  $H^0(\mathbb P^n_{\C}, \Omega^{2}_{\mathbb P^n_{\C}}(\log D))$.
    Since $D$ has only  $\ell=2$ irreducible components by assumption, the group $H^0(\mathbb P^n_{\C}, \Omega^{2}_{\mathbb P^n_{\C}}(\log D))=0$ is zero, see for instance \cite[Corollary 2.9]{MR4105938}. This gives the sought contradiction showing that $\distmin{2}{\F} \ge \deg(\F)$ when $\ell=2$. Finally, we  apply Proposition \ref{P:obvious sub} to obtain the reverse inequality $\distmin{2}{\F} \le \deg(\F)$.
\end{proof}

\section{Codimension two subdistributions of small  degree}

\subsection{Integrability and uniqueness}
The proof of Corollary \ref{C:distmin for logs} shows that, in general, for a given foliation on a projective space, we do not have uniqueness for the subdistributions realizing $\distmin{i}{\F}$.
Our next result shows that if $\distmin{2}{\F}$ is small enough then we have not only uniqueness but also integrability of the subdistribution realizing $\distmin{2}{\F}$.

\begin{prop}\label{P:integrability and uniqueness}
    Let $\F$ be a codimension one foliation on $\mathbb P^n$, $n \ge 3$, and let $\mathcal D$ be a distribution
    of codimension two contained if $\F$. If $\mathcal D$ is non-integrable then
    \[
        \deg(\mathcal D) \ge  \frac{\deg(\F)+1}{2} \, .
    \]
    Moreover, if there exists another codimension subdistribution $\mathcal D'$ of the same degree of $\mathcal D$
    but different from it then
    \[
        \deg(\mathcal D) \ge  \frac{\deg(\F)}{2} \, .
    \]
\end{prop}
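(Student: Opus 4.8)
The plan is to encode a codimension two subdistribution $\mathcal D \subset \F$ of degree $\delta=\deg(\mathcal D)$ as a section of $\Omega^1_{\F}$, and then to read off both \emph{non-integrability} and \emph{non-uniqueness} as the non-vanishing of a wedge product of such sections, which I can push into $\Omega^{\bullet}_{\mathbb P^n}$ and control via Bott vanishing. Set $d=\deg(\F)$ and $m=\delta-d+1$. Following the proof of Lemma \ref{L:equivalent definition of distmin}, a codimension two distribution $\mathcal D$ of degree $\delta$ everywhere tangent to $\F$ determines a nonzero section $\alpha\in H^0(\mathbb P^n,\Omega^1_{\F}(m))$ whose annihilator in $T_{\F}$ is $T_{\mathcal D}$, and conversely such a section defines a codimension two subdistribution. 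The translation I will use is that $\mathcal D$ is integrable if and only if $d_{\F}\alpha\wedge\alpha=0$: by Cartan's formula for $d_{\F}$ one has $d_{\F}\alpha(u,v)=-\alpha([u,v])$ for local sections $u,v$ of $T_{\F}$ annihilated by $\alpha$, so $T_{\mathcal D}$ is involutive exactly when $d_{\F}\alpha$ is divisible by $\alpha$. Although $d_{\F}$ does not act on twisted forms, the product $d_{\F}\alpha\wedge\alpha$ is a well-defined element of $H^0(\mathbb P^n,\Omega^3_{\F}(2m))$, since changing the local connection on $\mathcal O_{\mathbb P^n}(m)$ alters $d_{\F}\alpha$ by a multiple of $\alpha$, which dies upon wedging with $\alpha$ (reflexiveness then extends the section across $\sing(\F)$).

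For the first inequality I would suppose $\mathcal D$ is non-integrable, so that $d_{\F}\alpha\wedge\alpha$ is a nonzero section of $\Omega^3_{\F}(2m)$. Applying Lemma \ref{L:omegaF em omegaPn} with $i=3$ injects it into $H^0(\mathbb P^n,\Omega^4_{\mathbb P^n}(d+2+2m))$, and Bott's formula (namely $H^0(\mathbb P^n,\Omega^p_{\mathbb P^n}(k))=0$ for $k\le p$) forces $d+2+2m\ge 5$, that is $2\delta\ge d+1$, which is the claimed bound $\deg(\mathcal D)\ge(\deg(\F)+1)/2$. When $n=3$ the distribution $\mathcal D$ has dimension one and is therefore automatically integrable, so the statement is vacuous; consistently, $\Omega^4_{\mathbb P^3}=0$ makes the displayed injection impossible, confirming that no non-integrable $\mathcal D$ exists.

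For the second inequality, let $\alpha'\in H^0(\mathbb P^n,\Omega^1_{\F}(m))$ be the section attached to $\mathcal D'$. Since $\mathcal D\ne\mathcal D'$ their tangent sheaves differ, so $\alpha$ and $\alpha'$ are not proportional over the function field and $\alpha\wedge\alpha'$ is a nonzero section of $\Omega^2_{\F}(2m)$. Lemma \ref{L:omegaF em omegaPn} with $i=2$ embeds it into $H^0(\mathbb P^n,\Omega^3_{\mathbb P^n}(d+2+2m))$, and Bott's formula now yields $d+2+2m\ge 4$, that is $2\delta\ge d$, giving $\deg(\mathcal D)\ge\deg(\F)/2$.

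The only points that I expect to require genuine care are the two that are not mere bookkeeping: the well-definedness of the twisted products $d_{\F}\alpha\wedge\alpha$ and $\alpha\wedge\alpha'$ (handled by the connection argument together with the reflexiveness of the $\Omega^{\bullet}_{\F}$, so that the sections extend over $\sing(\F)$), and the faithful translation of integrability into $d_{\F}\alpha\wedge\alpha=0$. Once these are in place, both estimates are produced by the single input of Bott vanishing applied after the degree-shifting embedding of Lemma \ref{L:omegaF em omegaPn}, with the twist $m=\delta-d+1$ supplied by Lemma \ref{L:equivalent definition of distmin}.
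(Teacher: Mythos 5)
Your argument for the first inequality is essentially identical to the paper's: encode $\mathcal D$ as a section $\alpha$ of $\Omega^1_{\F}(\deg(\mathcal D)-\deg(\F)+1)$, form $\alpha\wedge d_{\F}\alpha \in H^0(\mathbb P^n,\Omega^3_{\F}(2m))$, push it into $\Omega^4_{\mathbb P^n}(\deg(\F)+2+2m)$ via Lemma \ref{L:omegaF em omegaPn}, and invoke the vanishing $h^0(\Omega^4_{\mathbb P^n}(k))=0$ for $k\le 4$; your extra remarks on the well-definedness of the twisted product and on the vacuity of the claim for $n=3$ are correct and only make explicit what the paper leaves implicit. For the second inequality you take a genuinely different route. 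The paper observes that $\mathcal D\neq\mathcal D'$ forces $\mathcal D+\mathcal D'=\F$ and then applies the degree identity (\ref{E: degree of intersections}), so that $2\deg(\mathcal D)=\deg(\mathcal D\cap\mathcal D')+\deg(\F)+\deg(\correction{\mathcal D}{\mathcal D'})\ge\deg(\F)$, using the non-negativity of the degree of the intersection distribution and of the defect divisor. You instead wedge the two defining sections, note that $\alpha\wedge\alpha'\neq 0$ in $H^0(\mathbb P^n,\Omega^2_{\F}(2m))$ because non-proportionality over the function field of two functionals on the rank-$(n-1)\ge 2$ sheaf $T_{\F}$ forces a nonzero wedge, and then run the same embedding-plus-Bott-vanishing argument with $i=2$ to get $\deg(\F)+2+2m\ge 4$. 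Both arguments are correct and yield the same bound; yours has the merit of uniformity (a single vanishing mechanism handles both claims and needs no discussion of spans, intersections, or the defect divisor), while the paper's version is slightly more informative, since it exhibits the slack in the inequality as $\deg(\mathcal D\cap\mathcal D')+\deg(\correction{\mathcal D}{\mathcal D'})$, which is exactly the degree of the zero divisor of your section $\alpha\wedge\alpha'$ plus the degree of the codimension three distribution it cuts out.
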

\begin{proof}
    Since $\mathcal D$ is contained in $\F$, the proof of Lemma \ref{L:equivalent definition of distmin} shows that $\mathcal D$
    is defined by a section $\eta$ of $\Omega^1_{\F} ( \deg (\mathcal D) - \deg (\mathcal F) +1)$ . If $\eta$ is non-integrable then
    \[
        0 \neq \theta =  \eta \wedge d_{\F} \eta \in H^0\left(\mathbb P^n, \Omega^3_{\mathcal F}\left( 2(\deg(\mathcal D)- \deg (\F) +1) \right)\right) \,.
    \]
    The $\F$-differential $3$-form $\theta$ determines a non-zero section of
    \[
        \Omega^4_{\mathbb P^n}\left( \deg(\F) + 2 + \left( 2(\deg(\mathcal D)- \deg (\F) +1) \right)\right) \, ,
    \]
    according to  Lemma \ref{L:omegaF em omegaPn}.
    Since $h^0(\Omega^4_{\mathbb P^n}(k)) = 0$ for any $k \le 4$, we deduce that
    $
        \deg(\F) + 2 + \left( 2(\deg(\mathcal D)- \deg (\F) +1) \right) \ge 5 .
    $
    Therefore, the non-integrability of $\mathcal D$ implies
    \[
        \deg(\mathcal D) \ge  \frac{\deg(\F)+1}{2}
    \]
    proving the first claim.

    To proof the second claim, let $\mathcal D'$  be another codimension two distribution
    contained in $\F$. If $\mathcal D$ and $\mathcal D'$ do not coincide then $\mathcal D + \mathcal D' = \F$. Equation (\ref{E: degree of intersections}) implies that
    \[
        \deg(\mathcal D) + \deg(\mathcal D') = \deg(\mathcal D \cap \mathcal D') + \deg(\F) + \deg(\correction{\mathcal D}{\mathcal D'}) \, .
    \]
    Hence, if $\deg(\mathcal D)= \deg(\mathcal D')$ then  $2\deg(\mathcal D) \ge \deg(\F)$   implying the result.
\end{proof}

\subsection{Comparison with the kernel of the $p$-curvature}
We start by settling the notation used throughout this subsection.

Let $\F$ be a codimension one foliation on $\mathbb P^n_{\C}$ be a codimension one foliation.
Let $\mathcal D$ be a codimension $q\ge 2$ distribution, contained in $\F$, and with degree equal to $\distmin{q}{\F}$.
Let us fix, an integral model $(\mathscr F, \mathscr D, \mathbb P^n_{R})$ for  $(\F, \mathcal D, \mathbb P^n_{\C})$ defined
over a finitely generated $\mathbb Z$-algebra $R$. We want to compare $\mathscr D_{\mathfrak p}$ with $\pkernel{\mathscr F_{\mathfrak p}}$
for maximal primes $\mathfrak p$ of $R$.

\begin{lemma}\label{L:comparing}
    If $\mathscr F_{\mathfrak p}$ is not $p$-closed  then the following assertions hold true.
    \begin{enumerate}
        \item The degree of $\pkernel{\mathscr F_{\mathfrak p}}$ is at most $\deg(\F) - 1$.
        \item If $\mathscr D_{\mathfrak p}$ is $p$-closed then $\mathscr D_{\mathfrak p}$ is contained in $\pkernel{\mathscr F_{\mathfrak p}}$.
        \item If $\mathscr D_{\mathfrak p}$ is not contained in $\pkernel{\mathscr F_{\mathfrak p}}$ and then
        \[
            \deg(\pkernel{\mathscr F_{\mathfrak p}}) \ge \deg(\mathscr F_{\mathfrak p}) - \deg(\mathscr D_{\mathfrak p}) + \distmin{q+1}{\mathscr F_{\mathfrak p}} \, .
        \]
    \end{enumerate}
    Moreover, for almost every prime of $R$, $\deg(\mathscr D_{\mathfrak p}) =\distmin{q}{\mathscr F_{\mathfrak p}}$.
\end{lemma}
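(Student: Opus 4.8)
The plan is to dispatch the three numbered assertions by the adjunction-type computations already assembled in the first part of the paper, and then to deduce the concluding equality from the lifting machinery of Section~\ref{S:lift}. Two preliminary remarks organize everything. First, since the singular set of the family has codimension two over every point of $\Spec(R)$, the numerical invariant is locally constant, so $\deg(\mathscr F_{\mathfrak p})=\deg(\F)$ and, for almost every prime, $\deg(\mathscr D_{\mathfrak p})=\deg(\mathcal D)=\distmin{q}{\F}$. Second, for a codimension one foliation the $p$-closure is a $p$-closed foliation of codimension $\le 1$ containing $\F$, hence equal to $\F$ (so $\F$ is $p$-closed) or to the foliation with one leaf (so $\F$ is $p$-dense); thus ``not $p$-closed'' means ``$p$-dense'', and $\pkernel{\mathscr F_{\mathfrak p}}$ is defined by Lemma~\ref{L:kernel pcurvature}, of codimension two in $\mathbb P^n_{\field}$ because $\pcurvature{\mathscr F_{\mathfrak p}}$ has rank one, i.e. of corank one inside $\mathscr F_{\mathfrak p}$. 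For assertion~(1) I would feed these data into Proposition~\ref{P:degree degeneracy}: on $\mathbb P^n_{\field}$ it reads
\[
  \deg \pdegeneracy{\mathscr F_{\mathfrak p}} = p\bigl(\deg(\F)-\deg(\pkernel{\mathscr F_{\mathfrak p}})-1\bigr) + \deg(\F)+2 .
\]
As $\pdegeneracy{\mathscr F_{\mathfrak p}}$ is effective, its degree is nonnegative, and since $\deg(\pkernel{\mathscr F_{\mathfrak p}})$ is an integer this forces $\deg(\pkernel{\mathscr F_{\mathfrak p}})\le \deg(\F)-1$ once $p>\deg(\F)+2$, i.e. for almost every prime, which is the regime where the lemma is applied.

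Assertion~(2) is immediate from the very definition of the kernel of the $p$-curvature: if $v$ is a local section of $T_{\mathscr D_{\mathfrak p}}$ and $\mathscr D_{\mathfrak p}$ is $p$-closed, then $v^p\in T_{\mathscr D_{\mathfrak p}}\subset T_{\mathscr F_{\mathfrak p}}$, whence $\pcurvature{\mathscr F_{\mathfrak p}}(1\otimes v)=v^p \bmod T_{\mathscr F_{\mathfrak p}}=0$ and $T_{\mathscr D_{\mathfrak p}}\subset \ker\pcurvature{\mathscr F_{\mathfrak p}}=\Frob^*T_{\pkernel{\mathscr F_{\mathfrak p}}}$, i.e. $\mathscr D_{\mathfrak p}\subset\pkernel{\mathscr F_{\mathfrak p}}$. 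For assertion~(3) the crucial point is the corank-one observation: if $\mathscr D_{\mathfrak p}$ is not contained in $\pkernel{\mathscr F_{\mathfrak p}}$, then adjoining it to the corank-one subsheaf $T_{\pkernel{\mathscr F_{\mathfrak p}}}\subset T_{\mathscr F_{\mathfrak p}}$ saturates to all of $T_{\mathscr F_{\mathfrak p}}$, so the span is $\mathscr F_{\mathfrak p}$ and the intersection $\mathscr D_{\mathfrak p}\cap\pkernel{\mathscr F_{\mathfrak p}}$ has codimension exactly $q+1$. Taking $\mathcal D_1=\mathscr D_{\mathfrak p}$, $\mathcal D_2=\pkernel{\mathscr F_{\mathfrak p}}$ in Equation~(\ref{E: degree of intersections}) and using $\deg\correction{\mathscr D_{\mathfrak p}}{\pkernel{\mathscr F_{\mathfrak p}}}\ge 0$ together with $\deg(\mathscr D_{\mathfrak p}\cap\pkernel{\mathscr F_{\mathfrak p}})\ge\distmin{q+1}{\mathscr F_{\mathfrak p}}$ yields
\[
  \deg(\pkernel{\mathscr F_{\mathfrak p}}) \ge \deg(\mathscr F_{\mathfrak p})-\deg(\mathscr D_{\mathfrak p})+\distmin{q+1}{\mathscr F_{\mathfrak p}} ,
\]
which is the asserted inequality.

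For the concluding equality I would establish the two inequalities between $\distmin{q}{\mathscr F_{\mathfrak p}}$ and $\distmin{q}{\F}$. One direction is automatic: for almost every prime $\mathscr D_{\mathfrak p}$ is a codimension $q$ distribution contained in $\mathscr F_{\mathfrak p}$ of degree $\distmin{q}{\F}$, so $\distmin{q}{\mathscr F_{\mathfrak p}}\le\deg(\mathscr D_{\mathfrak p})=\distmin{q}{\F}$. For the reverse direction I would argue by contradiction: the negation of the desired ``almost every prime'' statement is that $\distmin{q}{\mathscr F_{\mathfrak p}}<\distmin{q}{\F}$ holds on a set of primes meeting every nonempty open subset of $\Spec(R)$, hence on a Zariski dense set. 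For such primes $\mathscr F_{\mathfrak p}$ carries a codimension $q$ subdistribution of degree at most $\distmin{q}{\F}-1$, i.e. of uniformly bounded canonical degree; the finiteness in \cite[Corollary~2.3]{MR1611822} then confines the Hilbert polynomials of the associated torsion-free quotients of $T_{\mathscr F_{\mathfrak p}}$ to a finite set, so a single polynomial occurs for a dense set of primes. The base-change compatibility of $\Quot$ schemes, exactly as in the implication \ref{I:lift subfoliation H}$\Rightarrow$\ref{I:lift subfoliation C} of Lemma~\ref{L:lifting hyp sub}, produces a codimension $q$ subdistribution of $\F$ of degree at most $\distmin{q}{\F}-1$, contradicting the definition of $\distmin{q}{\F}$. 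Hence $\distmin{q}{\mathscr F_{\mathfrak p}}=\distmin{q}{\F}=\deg(\mathscr D_{\mathfrak p})$ on a nonempty open subset of $\Spec(R)$.

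I expect the main obstacle to be this final lifting step: one must verify that the ``bad'' loci in $\Spec(R)$ really amount to a dense set attached to one fixed Hilbert polynomial, and that nonemptiness of the relevant $\Quot$ scheme over a dense set of special fibres forces nonemptiness of the generic fibre. A secondary point to flag is assertion~(1), where the clean bound only emerges from effectivity of $\pdegeneracy{\mathscr F_{\mathfrak p}}$ once $p>\deg(\F)+2$, so it should be read in the almost-every-prime regime in which the lemma is subsequently used.
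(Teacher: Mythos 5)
Your proof is correct, and parts of it are actually more detailed than the paper's own (very terse) argument. The one place where you take a genuinely different route is assertion~(3): the paper restricts the $p$-curvature morphism to $\Frob^* T_{\mathscr D_{\mathfrak p}}$, observes that its kernel is $\Frob^* T_{\mathscr D_{\mathfrak p}\cap \pkernel{\mathscr F_{\mathfrak p}}}$, and compares the cokernels of $\pcurvature{\mathscr F_{\mathfrak p}}$ and of its restriction to get $\deg(\mathscr D_{\mathfrak p})-\deg(\mathscr D_{\mathfrak p}\cap\pkernel{\mathscr F_{\mathfrak p}})\ge \deg(\mathscr F_{\mathfrak p})-\deg(\pkernel{\mathscr F_{\mathfrak p}})$, whereas you deduce the same inequality from Equation~(\ref{E: degree of intersections}) applied to $\mathscr D_{\mathfrak p}$ and $\pkernel{\mathscr F_{\mathfrak p}}$, using that the span is all of $\mathscr F_{\mathfrak p}$ (corank one of the kernel of the $p$-curvature) and that $\correction{\mathscr D_{\mathfrak p}}{\pkernel{\mathscr F_{\mathfrak p}}}$ is effective. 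The two arguments encode the same information; yours stays entirely within the characteristic-free formalism of spans and intersections already set up in the paper, while the paper's stays closer to the $p$-curvature exact sequence (\ref{E:basic sequence in char p}) and so dovetails with the proof of Proposition~\ref{P:degree degeneracy}. For the final ``almost every prime'' equality the paper simply invokes semi-continuity; your two-sided argument (specialization of $\mathscr D$ for one inequality, the Quot-scheme base-change mechanism of Lemma~\ref{L:lifting hyp sub} for the other) is a legitimate unwinding of that one word. Finally, your caveat on assertion~(1) is well taken: effectivity of $\pdegeneracy{\mathscr F_{\mathfrak p}}$ only forces $\deg(\pkernel{\mathscr F_{\mathfrak p}})\le\deg(\F)-1$ once $p>\deg(\F)+2$, a hypothesis the lemma does not state but which is harmless in the almost-every-prime regime where it is applied; flagging it is the right call rather than a defect of your proof.
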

\begin{proof}
    The first assertion follows from Proposition \ref{P:degree degeneracy}, while the second assertion follows
    from the definition of $\pkernel{\mathscr F_{\mathfrak p}}$.

    Consider the restriction of the $p$-curvature morphism $\pcurvature{\F}$ to
    $\Frob^* T_{\mathscr D_{\mathfrak p}}$. The kernel coincides with $\Frob^* T_{\mathscr D_{\mathfrak p} \cap \pkernel{\mathscr F_{\mathfrak p}}}$. Comparing the cokernels of $\pcurvature{\F}$ and  of   $\restr{\pcurvature{\F}}{\Frob^* T_{\mathscr D_{\mathfrak p}}}$, we deduce that
    \[
         \deg(\mathscr D_{\mathfrak p}) - \deg(\mathscr D_{\mathfrak p} \cap \pkernel{\mathscr F_{\mathfrak p}}) \ge \deg(\mathscr F_{\mathfrak p}) - \deg(\pkernel{\mathscr F_{\mathfrak p}})  \, ,
    \]
    Since $\mathscr D_{\mathfrak p} \cap \pkernel{\mathscr F_{\mathfrak p}}$ has codimension $q+1$,  the inequality  $\deg(\mathscr D_{\mathfrak p} \cap \pkernel{\mathscr F_{\mathfrak p}}) \ge \distmin{q+1}{\mathscr F_{\mathfrak p}}$ holds by definition. The third assertion follows.

    Finally, the fact that $\deg(\mathscr D_{\mathfrak p}) =\distmin{q}{\mathscr F_{\mathfrak p}}$ for almost every prime follows from semi-continuity.
\end{proof}

\subsection{Foliations with codimension two subdistributions of degree zero.}
Foliations on $\mathbb P^n_{\C}$ of degree zero and  codimension $q$ ($0 < q < n$ arbitrary) are easy to describe: they are all defined by the fibers of a linear projection $\mathbb P^n_{\C} \dashrightarrow \mathbb P^q_{\C}$, see for instance \cite[Theorem 3.8]{MR2200857}.

\begin{prop}
    Let $\F$ be a codimension one foliation on $\mathbb P^n_{\mathbb C}$, $n \ge 3$, of degree $d \ge 1$.
    If $\distmin{2}{\F} =0 $ then $\F$ is a linear pull-back of a foliation on $\mathbb P^2_{\C}$.
\end{prop}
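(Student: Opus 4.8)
\emph{Setup and reduction to a linear projection.} The plan is to combine the integrability bound of Proposition~\ref{P:integrability and uniqueness} with the classification of degree-zero foliations recalled just above the statement. Let $\mathcal D\subset\F$ be a codimension two distribution realizing $\distmin{2}{\F}=0$. Since $\deg(\mathcal D)=0$ while $\deg(\F)=d\ge 1$, the lower bound $\deg(\mathcal D)\ge(\deg(\F)+1)/2$ that Proposition~\ref{P:integrability and uniqueness} imposes on \emph{non-integrable} codimension two subdistributions is violated; hence $\mathcal D$ must be integrable, i.e. a codimension two foliation of degree zero. By \cite[Theorem 3.8]{MR2200857}, such a foliation is given by the fibers of a linear projection $\pi\colon\mathbb P^n_{\C}\dashrightarrow\mathbb P^2_{\C}$. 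After a linear change of coordinates we may assume $\pi(x_0:\cdots:x_n)=(x_0:x_1:x_2)$, so that in the affine cone $T_{\mathcal D}$ is generated by the Euler field $R$ together with $\partial_{x_3},\dots,\partial_{x_n}$.

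\emph{Killing the extra coefficients.} Write $\F$ by a projective $1$-form $\omega=\sum_{i=0}^n A_i\,dx_i$, with $A_i$ homogeneous of degree $d+1$, $\codim\sing(\omega)\ge 2$, and $i_R\omega=\sum_i x_iA_i=0$. The inclusion $T_{\mathcal D}\subset T_{\F}$ means that $\omega$ annihilates each $\partial_{x_k}$ with $k\ge 3$, that is $A_3=\cdots=A_n=0$, so $\omega=A_0\,dx_0+A_1\,dx_1+A_2\,dx_2$. It remains to show that, up to a rational multiplier, $A_0,A_1,A_2$ depend only on $x_0,x_1,x_2$; this is precisely the assertion that $\F$ is the $\pi$-pull-back of a foliation on $\mathbb P^2_{\C}$.

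\emph{Descent via integrability.} The key point is that each constant field $v=\partial_{x_k}$, $k\ge 3$, is tangent to $\F$, whence integrability yields $\omega\wedge L_v\omega=0$: indeed $i_v\omega=0$ gives $L_v\omega=i_v\,d\omega$, and contracting the Frobenius relation $\omega\wedge d\omega=0$ with $v$ produces $\omega\wedge i_v\,d\omega=0$. Since $L_{\partial_{x_k}}\omega=\sum_{i=0}^2(\partial_{x_k}A_i)\,dx_i$, the vanishing of the $dx_i\wedge dx_j$-coefficients of $\omega\wedge L_{\partial_{x_k}}\omega$ reads $A_i\,\partial_{x_k}A_j=A_j\,\partial_{x_k}A_i$ for $i,j\in\{0,1,2\}$, i.e. $\partial_{x_k}(A_i/A_j)=0$ for every $k\ge 3$. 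Choosing a pivot $A_j\neq 0$ (which exists since $\omega\neq 0$), we may write $A_i=f_iA_j$ with $f_i\in\C(x_0,x_1,x_2)$ and $f_j=1$. Clearing a common denominator $q\in\C[x_0,x_1,x_2]$ gives $\omega=(A_j/q)\,\pi^*\eta$, where $\eta=\sum_{i=0}^2 B_i\,dx_i$ is a projective $1$-form on $\mathbb P^2_{\C}$. As $\omega$ and $\pi^*\eta$ differ only by the rational factor $A_j/q$, they define the same foliation, so $\F=\pi^*\G$ with $\G$ the foliation on $\mathbb P^2_{\C}$ cut out by $\eta$ (after removing any common factor of the $B_i$).

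\emph{Main obstacle.} The delicate steps are, first, upgrading $\mathcal D$ from a distribution to a foliation, which is handled cleanly by the degree bound of Proposition~\ref{P:integrability and uniqueness}; and second, the descent in the final paragraph, where one must verify that the fiberwise constancy of the ratios genuinely yields a pull-back form and that replacing the rational expression $\sum f_i\,dx_i$ by the honest projective $1$-form $\eta$ respects the Euler relation and recovers $\F$ exactly rather than a foliation differing along the indeterminacy locus of $\pi$. This last point is controlled by the normalization $\codim\sing(\omega)\ge 2$, which prevents the multiplier $A_j/q$ from altering the foliation.
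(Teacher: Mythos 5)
Your proposal is correct and follows essentially the same route as the paper: the degree bound of Proposition~\ref{P:integrability and uniqueness} upgrades the degree-zero codimension two subdistribution to a subfoliation, which by the classification of degree-zero foliations is a linear projection $\pi$. The only difference is that where the paper concludes by citing \cite[Lemma 3.1]{MR2324555} to obtain $\F=\pi^*\mathcal H$, you prove that descent step by hand (vanishing of $A_3,\dots,A_n$ and constancy of the ratios $A_i/A_j$ along the fibers), which is a valid, self-contained substitute for the citation.
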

\begin{proof}
    According to Proposition \ref{P:integrability and uniqueness} there exists a  subfoliation $\G \subset \F$
    realizing $\distmin{2}{\F}=0$. If $\pi  : \mathbb P^n_{\C} \dashrightarrow \mathbb P^2_{\C}$ is the linear projection
    defining $\G$ then \cite[Lemma 3.1]{MR2324555} implies that $\F = \pi^* \mathcal H$ for a foliation $\mathcal H$ of degree $d$ on
    $\mathbb P^2_{\C}$.
\end{proof}

Let $\Lin{d}{n}{\field} \subset \Fol{d}{n}{\field}$ be the reduced subscheme whose closed points correspond to  foliations on $\mathbb P^n_{\field}$  that are pull-backs of degree $d$ foliations on $\mathbb P^2_{\field}$ under a linear projection $\pi : \mathbb P^n_{\field} \dashrightarrow \mathbb P^2_{\field}$. It is well-known that $\Lin{d}{n}{\mathbb C}$ is an irreducible component of $\Fol{d}{n}{\mathbb C}$ (considered with its reduced structure)
for every $d\ge 0$ and every $n \ge 3$, see for instance \cite[Subsection 5.1]{MR2405162}.

\begin{cor}
    Let $\Sigma \subset \Fol{d}{n}{\C}$ be an irreducible component. If $\distmin{2}{\Sigma}=0$ then
    $\Sigma = \Lin{d}{n}{\C}$.
\end{cor}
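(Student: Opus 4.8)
The plan is to reduce to a general member of $\Sigma$, apply the preceding proposition, and then upgrade the conclusion from the general point to all of $\Sigma$ using that $\Lin{d}{n}{\C}$ is itself an irreducible component. First I would combine the definition of $\distmin{2}{\Sigma}$ with the semi-continuity statement recorded above: the hypothesis $\distmin{2}{\Sigma}=0$ provides a dense open subset $U \subseteq \Sigma$ such that every foliation $\F \in U$ satisfies $\distmin{2}{\F}=0$, and each such $\F$ has degree exactly $d$ because $U \subseteq \Sigma \subseteq \Fol{d}{n}{\C}$.

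In the main case $d \ge 1$, I would apply the preceding proposition to each $\F \in U$ to conclude that $\F$ is a linear pull-back of a foliation on $\mathbb P^2_{\C}$, so that $U \subseteq \Lin{d}{n}{\C}$. Since $\Lin{d}{n}{\C}$ is a reduced closed subscheme of $\Fol{d}{n}{\C}$ and $U$ is dense in $\Sigma$, taking closures gives $\Sigma = \overline{U} \subseteq \Lin{d}{n}{\C}$. Both $\Sigma$ and $\Lin{d}{n}{\C}$ are irreducible components of $\Fol{d}{n}{\C}$, hence maximal irreducible closed subsets, so this inclusion forces $\Sigma = \Lin{d}{n}{\C}$.

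For $d=0$ the proposition does not apply, and I would instead argue directly: every codimension one foliation of degree zero on $\mathbb P^n_{\C}$ is defined by a linear projection $\mathbb P^n_{\C} \dashrightarrow \mathbb P^1_{\C}$, which factors as a linear projection $\rho : \mathbb P^n_{\C} \dashrightarrow \mathbb P^2_{\C}$ followed by a projection $\mathbb P^2_{\C} \dashrightarrow \mathbb P^1_{\C}$, and is therefore the $\rho$-pull-back of a degree zero foliation on $\mathbb P^2_{\C}$; hence $\Fol{0}{n}{\C} = \Lin{0}{n}{\C}$, which being irreducible gives $\Sigma = \Lin{0}{n}{\C}$. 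The only point deserving care, rather than any genuine obstacle, is this passage from the general member to all of $\Sigma$: it rests on the quoted fact that $\Lin{d}{n}{\C}$ is an irreducible component, so that it is closed, irreducible, and maximal, which is exactly what converts the inclusion $\Sigma \subseteq \Lin{d}{n}{\C}$ into an equality.
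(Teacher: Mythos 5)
Your argument is correct and is exactly the intended deduction: the paper leaves this corollary without proof, relying on the preceding proposition, the semi-continuity remark for $\distmin{2}{\cdot}$, and the quoted fact that $\Lin{d}{n}{\C}$ is an irreducible component. Your extra care with the $d=0$ case (where the proposition's hypothesis $d\ge 1$ fails) is a correct and welcome addition, handled via the classification of degree-zero foliations already cited in the paper.
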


\subsection{Foliations with codimension two subdistributions of degree one}\label{SS:cod 2 deg 1}
If $\F$ is a degree one foliation of codimension $q$ on $\mathbb P^n_{\C}$ then  \cite[Theorem 6.2]{MR3066408} gives the following precise description
of $\F$.
\begin{enumerate}
    \item The foliation $\F$ is defined by a dominant rational map $\mathbb P^n_{\C} \dashrightarrow \mathbb P_{\C}(1^q,2)$ with irreducible general fiber determined by $q$ linear forms and one quadratic form; or
    \item The foliation $\F$ is  the linear pullback of a foliation of induced by a global holomorphic vector field on $\mathbb P^{q+1}_{\C}$ and has tangent sheaf isomorphic
    to $\mathcal O_{\mathbb P^n_{\C}}(-1)^{\oplus n-q-1} \oplus \mathcal O_{\mathbb P^n_{\C}}$.
\end{enumerate}

Building on this description, we can prove the following characterization of algebraically integrable foliations of degree one and arbitrary codimension.

\begin{lemma}\label{L:degree one}
    Let $\F$ be a foliation of degree one and codimension $q$ on $\mathbb P^n_{\C}$. The foliation $\F$ is algebraically integrable if, and only if,  $\F$ is $p$-closed for almost every prime.
\end{lemma}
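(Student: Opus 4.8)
The plan is to prove the two implications separately, handling the forward direction in full generality and reducing the reverse direction to the classification of degree one foliations recalled just before the statement.

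For the implication ``algebraically integrable $\Rightarrow$ $p$-closed for almost every prime,'' I would argue as follows. If $\F$ is algebraically integrable then, by the corollary following Theorem \ref{T:pintegra}, there exist $q$ rational first integrals $f_1, \ldots, f_q \in \C(\mathbb P^n)$ with $df_1 \wedge \cdots \wedge df_q \neq 0$. Choosing an integral model over a finitely generated $\mathbb Z$-algebra $R$ whose fraction field contains the coefficients of the $f_i$, both the forms $df_i$ and the non-vanishing of their wedge product spread out over a non-empty open subset $U \subset \Spec R$. For every maximal prime $\mathfrak p \in U$ the reductions $\bar f_i$ remain rational first integrals of $\F_{\mathfrak p}$ with $d\bar f_1 \wedge \cdots \wedge d\bar f_q \neq 0$, so the same corollary shows $\F_{\mathfrak p}$ is $p$-closed. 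This is exactly the easy half of Conjecture \ref{Conj:ESBT} and uses nothing about the degree.

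For the reverse implication I would prove the contrapositive: a non-algebraically-integrable degree one foliation fails to be $p$-closed on a Zariski dense set of primes. By \cite[Theorem 6.2]{MR3066408}, $\F$ is either (1) defined by a dominant rational map to $\mathbb P(1^q,2)$, or (2) a linear pull-back of the foliation $\mathcal G$ induced by a global vector field $v$ on $\mathbb P^{q+1}$. In case (1) the foliation is defined by a rational map, hence algebraically integrable, and there is nothing to prove. In case (2), since a linear projection is dominant, rational first integrals of $\mathcal G$ and of $\F$ correspond to one another; by the corollary following Theorem \ref{T:pintegra} this implies that $\F$ is algebraically integrable (resp. $p$-closed at a given prime) if and only if $\mathcal G$ is, so I may replace $\F$ by $\mathcal G$. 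Writing $v$ as the linear vector field attached to a matrix $A$, the relation $v^{\ell}(x_k) = (A^{\ell} x)_k$ shows that $v^p$ is again linear, attached to $A^p$. Diagonalizing $A$ over the algebraic closure, one checks that $\mathcal G$ is algebraically integrable exactly when the class $\bar A$ of $A$ in $\mathfrak{pgl}_{q+2}$ is either nilpotent or semisimple with all eigenvalue differences rationally proportional (equivalently, the Zariski closure of $\{e^{t\bar A}\}$ is one-dimensional), while $\mathcal G_{\mathfrak p}$ is $p$-closed if and only if the tangency condition $A^p x \in \langle x, Ax\rangle$ holds generically, a condition I analyze by cases below.

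The heart of the argument, and the main obstacle, is to pass from this behaviour modulo $\mathfrak p$ back to rationality. When $\bar A$ has a non-trivial nilpotent part together with a non-scalar semisimple part, a direct computation shows that the tangency condition $A^p x \in \langle x, Ax\rangle$ fails identically for all large $p$, so $\mathcal G_{\mathfrak p}$ is $p$-dense for all but finitely many primes. The genuinely arithmetic case is when $\bar A$ is semisimple with some eigenvalue-difference ratio $\beta \notin \mathbb Q$: here $p$-closedness forces the reduction $\bar\beta$ to lie in the prime field $\mathbb F_p$. I would then invoke the following number-theoretic lemma: if $\beta$ is an element of a finitely generated $\mathbb Z$-algebra $R \subset \C$ whose reduction lies in the prime field for every maximal prime in some non-empty open $U \subset \Spec R$, then $\beta \in \mathbb Q$. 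For $\beta$ algebraic this follows from the Frobenius--Chebotarev density theorem together with Jordan's theorem that a transitive permutation group of degree $\ge 2$ contains a fixed-point-free element, giving a positive density of primes for which the minimal polynomial of $\beta$ has no root in $\mathbb F_p$; for $\beta$ transcendental it follows from the density of closed points of residue degree $\ge 2$ in the fibres of $\Spec \mathbb Z[\beta] \to \Spec \mathbb Z$. In both cases the primes with $\bar\beta \notin \mathbb F_p$ are Zariski dense and meet $U$, contradicting $p$-closedness on $U$. Combining the cases shows that a non-integrable degree one foliation is never $p$-closed for almost every prime, which completes the contrapositive and hence the proof.
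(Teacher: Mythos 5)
Your proposal is correct and follows essentially the same route as the paper: reduce via the classification of degree one foliations to a linear vector field on $\mathbb P^{q+1}_{\C}$, use the Jordan decomposition (so that $v^p \equiv v_S^p$ for $p$ large) to translate $p$-closedness into the eigenvalue-ratio condition, and conclude with a Kronecker-type rationality criterion. The only differences are cosmetic: the paper cites Kronecker's theorem where you re-derive it via Chebotarev and Jordan's theorem, and your explicit handling of the transcendental eigenvalue-ratio case and of the easy forward implication supplies details the paper leaves implicit.
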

\begin{proof}
    Let $\F$ be a foliation of degree one. If the fibers of a rational map $\mathbb P^n_{\C} \dashrightarrow \mathbb P_{\C}(1^q,2)$ defines $\F$ then there is nothing to prove. Assume from now on that this is not the case. The classification of degree one foliations recalled above implies that $\F$ is the linear pull-back of foliation on $\mathbb P^{q+1}_{\C}$ defined by a global vector field  $v \in H^0(\mathbb P^{q+1}_{\C}, T_{\mathbb P^{q+1}_{\C}})$. The foliation $\F$ is  algebraically integrable if, and only if, the foliation defined by $v$ is algebraically integrable. Represent $v$ by a degree one homogeneous vector field on $\mathbb A^{q+2}_{\C}$ with divergent zero, and let $v = v_S + v_N$ be its Jordan decomposition into semi-simple and nilpotent parts. Explicit integration of $v$ implies that the algebraicity of its orbits is equivalent to
    \begin{enumerate}
        \item the nilpotent part $v_N$ is  zero and the quotient of any  two eigenvalues  of the semi-simple part of $v$ is a rational number; or
        \item the semi-simple part $v_S$ is zero.
    \end{enumerate}
    In the first case, the vector field is tangent to an algebraic action of the multiplicative group $\mathbb C^*$, while in the second case the vector field is tangent
    to an algebraic action of the additive group $\mathbb C$ given by the exponential of $v=v_N$.

    Let $v_R$ be an integral model for $v$ defined over a finitely generated $\mathbb Z$-algebra $R$ contained in $\C$. We can assume that both
    $v_N$ and $v_S$ have integral models over $R$.
    Let $\mathfrak p \subset R$ be a maximal ideal, and set $v_\mathfrak p $ equal to the reduction modulo $\mathfrak p$ of $v_R$.
    Likewise, set $v_{N,\mathfrak p}$ and $v_{S,\mathfrak p}$ as the reduction modulo $\mathfrak p$ of the nilpotent and semi-simple parts
    of $v$.

    Since $[v_S,v_N]=0$, and the same holds for the reduction modulo $\mathfrak p$, Formula (\ref{E:Jacobson}) implies that
    \[
        (v_{S,\mathfrak p} + v_{N,\mathfrak p})^p = v_{S,\mathfrak p}^p + v_{N,\mathfrak p}^p \, .
    \]
    Moreover, if the characteristic of the residue field $R/\mathfrak p$ is sufficiently large (\ie greater than $q+2$)  then $v_{N,\mathfrak p}^p=0$.  Thus, assuming that $p> q+2$, it follows that $v_{\mathfrak p}$ is $p$-closed if, and only, if
    \begin{enumerate}
        \item the nilpotent part $v_{N,\mathfrak p}$ is  zero and the quotient of any  eigenvalues  of the semi-simple part of $v$ belongs to $\mathbb F_p$; or
        \item the semi-simple part $v_{S, \mathfrak p}$ is zero.
    \end{enumerate}
     Combining the above observations with a classical result by Kroenecker (\cite[Theorem 2.2]{MR1975179}) that asserts that an algebraic number is rational if, and only if, its reduction modulo $p$ is in $\mathbb F_p$ for almost every prime $p$, we obtain  that $\F$ is algebraically integrable if, and only if, the reduction of $v_R$ modulo $\mathfrak p$ is $p$-closed for an open set of maximal primes $\mathfrak p$ of $\Spec(R)$, as wanted.
\end{proof}

\begin{prop}\label{P:lembrando deg 3}
    Let $\F$ be a codimension one foliation on $\mathbb P^n_{\mathbb C}$, $n \ge 3$.
    If $\deg(\F) \ge 3$ and $\distmin{2}{\F}= 1$ then one of the following assertions hold true.
    \begin{enumerate}
        \item\label{I:closed rational} The foliation $\F$ is defined by a closed rational $1$-form without codimension one zeros.
        \item\label{I:cod 2 alg int} The foliation $\F$ contains a codimension two algebraically integrable subfoliation $\G$ of degree one.
    \end{enumerate}
\end{prop}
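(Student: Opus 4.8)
The plan is to extract from the hypothesis $\distmin{2}{\F}=1$ a single well-behaved codimension two subfoliation, and then to test its reductions modulo $p$ against the kernel of the $p$-curvature. Since $\distmin{2}{\F}=1$ there is a codimension two distribution $\mathcal D\subset\F$ with $\deg\mathcal D=1$. As $\deg\F\ge 3$, Proposition \ref{P:integrability and uniqueness} rules out non-integrability (it would force $\deg\mathcal D\ge(\deg\F+1)/2\ge 2$) and also rules out a second distinct codimension two subdistribution of the same degree (it would force $\deg\mathcal D\ge\deg\F/2\ge 3/2$). Hence $\mathcal D$ is a foliation and is the unique codimension two subfoliation of $\F$ of degree one; call it $\G$. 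Since assertion \ref{I:cod 2 alg int} is exactly the statement that $\G$ is algebraically integrable, it remains to show that if $\G$ is \emph{not} algebraically integrable then \ref{I:closed rational} holds, and this is the whole content.

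So assume $\G$ is not algebraically integrable. Because $\deg\G=1$, Lemma \ref{L:degree one} shows that $\G_{\mathfrak p}$ fails to be $p$-closed for a Zariski dense set of primes $\mathfrak p$, and I will work over this set (shrinking it so that, for almost every prime, $\deg\mathscr D_{\mathfrak p}=\distmin{2}{\mathscr F_{\mathfrak p}}=1$, so that $\F$ lifts modulo $p^2$, and so that $p>2$). Since $\F$ has codimension one, its reduction $\mathscr F_{\mathfrak p}$ is either $p$-closed or $p$-dense, and these two possibilities organize the argument; I claim both lead to \ref{I:closed rational}.

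On the primes where $\mathscr F_{\mathfrak p}$ is $p$-dense, $\pkernel{\mathscr F_{\mathfrak p}}$ is a codimension two distribution, and Theorem \ref{T:lift} applies (here $\F$ lifts modulo $p^2$ and $p>2$), so $\CartierTransform{\mathscr F_{\mathfrak p}}$ is a foliation and $\pkernel{\mathscr F_{\mathfrak p}}=\mathscr F_{\mathfrak p}\cap\CartierTransform{\mathscr F_{\mathfrak p}}$ is itself a codimension two foliation. Comparing it with $\G_{\mathfrak p}$ through Lemma \ref{L:comparing}, with $\deg\mathscr D_{\mathfrak p}=1$, constrains $\deg\pkernel{\mathscr F_{\mathfrak p}}$; combined with the degree formula of Proposition \ref{P:degree degeneracy} this pins the degree down exactly. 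As $\mathscr F_{\mathfrak p}$ is not $p$-closed on a Zariski dense set, $\deg\omega_{\pkernel{\mathscr F_{\mathfrak p}}}$ is uniformly bounded, so Lemma \ref{L:lifting hyp sub} lifts $\pkernel{\mathscr F_{\mathfrak p}}$ to a codimension two subfoliation of $\F$ over $\C$; the transverse pair consisting of $\F$ and the (lifted) Cartier transform then supplies the flat transverse structure, that is, a closed rational $1$-form defining $\F$, from which one removes any codimension one zeros to match \ref{I:closed rational}.

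On the primes where $\mathscr F_{\mathfrak p}$ is $p$-closed, monotonicity of the $p$-closure gives $\pclosure{\G_{\mathfrak p}}\subseteq\mathscr F_{\mathfrak p}$, and since $\G_{\mathfrak p}$ is not $p$-closed while $\mathscr F_{\mathfrak p}$ has codimension one, this forces $\pclosure{\G_{\mathfrak p}}=\mathscr F_{\mathfrak p}$. Using the classification of degree one codimension two foliations recalled in Subsection \ref{SS:cod 2 deg 1}, here $\G$ is the linear pull-back of the foliation of a homogeneous vector field $v$ on $\mathbb P^3_{\C}$, and $\pclosure{\G_{\mathfrak p}}$ is controlled by the $\mathbb F_p$-span of the eigenvalues of $v$ and its Jordan structure; the demand that this $p$-closure be the reduction of one fixed foliation for a Zariski dense set of primes forces, via a Kronecker-type argument exactly as in the proof of Lemma \ref{L:degree one}, genuine $\mathbb Q$-relations among the eigenvalues, so that $\F$ is the pull-back of a foliation on $\mathbb P^3_{\C}$ defined by a closed logarithmic (or, when a nilpotent part is present, more general closed rational) $1$-form, which is again \ref{I:closed rational}. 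I expect the main obstacle to be precisely this reconciliation in both branches: transporting the purely characteristic $p$ statements about $\pkernel{\mathscr F_{\mathfrak p}}$ and $\pclosure{\G_{\mathfrak p}}$ back to an honest closed rational $1$-form over $\C$, uniformly in $\mathfrak p$, where the Diophantine input is the delicate point.
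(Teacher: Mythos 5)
Your first paragraph is sound and matches the paper's set-up: Proposition \ref{P:integrability and uniqueness} together with $\deg(\F)\ge 3$ gives a unique integrable codimension two subfoliation $\G$ of degree one, and Lemma \ref{L:degree one} reduces assertion (2) to the case where $\G_{\mathfrak p}$ fails to be $p$-closed on a Zariski dense set of primes. The gap is in how you then produce the closed rational $1$-form of assertion (1). In your $p$-dense branch the decisive sentence --- ``the transverse pair consisting of $\F$ and the (lifted) Cartier transform then supplies the flat transverse structure, that is, a closed rational $1$-form defining $\F$'' --- is not an argument. Lemma \ref{L:lifting hyp sub} lifts subdistributions of $\F$, i.e.\ torsion-free quotients of $T_{\F}$ of bounded degree; it cannot lift the Cartier transform, which is a codimension one distribution transverse to $\mathscr F_{\mathfrak p}$ and not a subobject of $T_{\mathscr F_{\mathfrak p}}$. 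Even granting a lift of $\pkernel{\mathscr F_{\mathfrak p}}$, by your own uniqueness statement that lift is just $\G$ again, which carries no new information. What the paper actually lifts is the polynomial integrating factor: from the classification one gets a nonzero $v\in H^0(\mathbb P^n_{\field},T_{\mathscr G_{\mathfrak p}})$ with $\omega_{\mathfrak p}(v^p)\neq 0$; Proposition \ref{P:old} makes $\omega_{\mathfrak p}/\omega_{\mathfrak p}(v^p)$ closed, and $\omega_{\mathfrak p}(v^p)$ cuts out the $\mathscr F_{\mathfrak p}$-invariant (Proposition \ref{P:hinvariant}) degeneracy divisor of degree $d+2$ independent of $p$; it is this invariant hypersurface that Lemma \ref{L:lifting hyp sub} transports to characteristic zero. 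You also never exclude the possibility $\pkernel{\mathscr F_{\mathfrak p}}=\G_{\mathfrak p}$ before ``pinning the degree down'' via Lemma \ref{L:comparing}.

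Your $p$-closed branch is handled the wrong way round: it is vacuous, not an independent source of assertion (1). If $\mathscr F_{\mathfrak p}$ is $p$-closed while $\G_{\mathfrak p}$ is not, pick a global $v$ tangent to $\G_{\mathfrak p}$ with $v^p$ tangent to $\mathscr F_{\mathfrak p}$ but not to $\G_{\mathfrak p}$; then $T_{\mathscr G_{\mathfrak p}}\oplus\mathcal O_{\mathbb P^n_{\field}}\to T_{\mathscr F_{\mathfrak p}}$, $w\oplus f\mapsto w+fv^p$, is generically surjective, forcing $\deg(\mathscr F_{\mathfrak p})\le 2$ and contradicting $\deg(\F)\ge 3$. (The same computation disposes of the subcase $\pkernel{\mathscr F_{\mathfrak p}}=\G_{\mathfrak p}$ left open in your other branch.) This is the one elementary degree computation your proposal is missing, and without it your proposed Kronecker/eigenvalue analysis is both unnecessary and incomplete: you do not explain how $\mathbb Q$-relations among eigenvalues of a vector field on $\mathbb P^3_{\C}$ would yield a closed rational $1$-form without codimension one zeros defining $\F$.
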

\begin{proof}
    Let $\G \subset \F$ be a codimension two subfoliation of degree one. There is no loss of generality in assuming that $\G$ admits a model $\mathscr G$ over $\Spec(R)$.  Let $\mathfrak p$ be an arbitrary maximal prime of $R$ and let $\mathscr F_{\mathfrak p}$ be the reduction of $\mathscr F_{\mathfrak p}$ of $\mathscr F$ modulo $\mathfrak p$. Reduce $\mathscr G$ modulo $\mathfrak p$ to obtain a subfoliation $\mathscr G_{\mathfrak p}$ of degree one.

    If $\mathscr G_{\mathfrak p}$ is $p$-closed for almost every maximal prime $\mathfrak p$ then Lemma \ref{L:degree one} implies that $\G$ is algebraically integrable as claimed in Item (\ref{I:cod 2 alg int}).

    If $\mathscr G_{\mathfrak p}$ is not $p$-closed for a Zariski dense set of primes then according to the classification of degree one foliations recalled at the beginning of this Subsection we have that $T_{\mathcal G}$ is generated by global sections. We distinguish  two possibilities. Either $\mathscr G_{\mathfrak p}$ coincides with $\pkernel{\mathscr F_{\mathfrak p}}$ or not. Assume first that $\mathscr G_{\mathfrak p}$ coincides with $\pkernel{\mathscr F_{\mathfrak p}}$. In this case, there exists a vector field $v \in H^0(\mathbb P^{n}_{\field} , T_{\mathscr G_{\mathfrak p}})$ such that
    \[
        v^p \in  H^0(\mathbb P^{n}_{\field} , T_{\mathscr F_{\mathfrak p}}) -  H^0(\mathbb P^{n}_{\field} , T_{\mathscr G_{\mathfrak p}}) \, .
    \]
    It follows that we have a morphism
    \begin{align*}
        T_{\mathscr G_{\mathfrak p}} \oplus \mathcal O_{\mathbb P^{n}_{\field}} &\longrightarrow  T_{\mathscr F_{\mathfrak p}} \\
        w \oplus f & \mapsto w + f v^p
    \end{align*}
    generically surjective. Therefore $\deg(\mathscr F_{\mathfrak p}) \le 2$ contrary to our assumptions.

    It remains to treat the case where $\mathscr G_{\mathfrak p}$ does not coincide with $\pkernel{\mathscr F_{\mathfrak p}}$. In this case, there exists a vector field $v \in H^0(\mathbb P^{n}_{\field} , T_{\mathscr G_{\mathfrak p}})$ such that
    \[
        v^p \notin  H^0(\mathbb P^{n}_{\field} , T_{\mathscr F_{\mathfrak p}})  \, .
    \]
    Notice that $v$ must have zero set of codimension at least two, as otherwise $v^p$ would be proportional to $v$. If $\omega_{\mathfrak p} \in H^0(\mathbb P^n_{\field}, \Omega^1_{\mathbb P^n_{\field}} \otimes N_{\mathscr F_{\mathfrak p}})$ is the twisted $1$-form defining $\mathscr F_{\mathfrak p}$ then Proposition \ref{P:old} implies that the rational $1$-form $\omega_{\mathfrak p}/\omega_{\mathfrak p}(v^p)$ is closed. The divisor defined by the vanishing of $\omega_{\mathfrak p}(v^p)$ is  of degree $d+2$ (independent of $p$) and coincides with $\pdegeneracy{\mathscr F_{\mathfrak p}}$. Proposition \ref{P:hinvariant} implies that $\pdegeneracy{\mathscr F_{\mathfrak p}}$ is $\mathscr F_{\mathfrak p}$-invariant if $p> d+2$. We can thus apply Lemma \ref{L:lifting hyp sub} to lift $\omega_{\mathfrak p}(v^p)$ to characteristic zero and deduce that we are in the situation described by Item (\ref{I:closed rational}).
\end{proof}

\begin{lemma}\label{L:closed is closed}
    The set  in $\Fol{d}{n}{\mathbb C}$ corresponding to foliations defined by a closed rational $1$-form without codimension one zeros is closed.
\end{lemma}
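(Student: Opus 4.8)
The plan is to reduce the statement to a proper-projection argument after first extracting a purely algebraic characterization of the locus in question. Write $V = H^0(\mathbb P^n_{\C}, \Omega^1_{\mathbb P^n_{\C}}(d+2))$ and $W = H^0(\mathbb P^n_{\C}, \mathcal O_{\mathbb P^n_{\C}}(d+2))$, so that $\Fol{d}{n}{\C}$ is a locally closed subscheme of $\mathbb P V$ and a point $[\omega] \in \Fol{d}{n}{\C}$ is represented by a homogeneous polynomial $1$-form with $i_R \omega = 0$ and $\codim \sing(\omega) \ge 2$. I claim that $[\omega]$ belongs to the locus $C$ under consideration if and only if there exists a nonzero $f \in W$ with $f\, d\omega = df \wedge \omega$. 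The whole proof rests on this equivalence.

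The first step is to establish the characterization. For the easy direction, given such an $f$ the rational $1$-form $\eta = \omega/f$ is homogeneous of degree zero and satisfies $i_R \eta = (i_R\omega)/f = 0$, hence descends to $\mathbb P^n_{\C}$; it defines $\F$, it is closed precisely because $f\,d\omega = df\wedge \omega$, and it has empty zero divisor since $\omega$ has no codimension one zeros while dividing by $f$ only introduces poles. The substantial direction is the converse: if $\F$ is defined by a closed rational $1$-form $\eta$ without codimension one zeros, I must produce a genuine section $f$ rather than a rational one. Since $\eta$ and $\omega$ define the same foliation they are proportional, $\eta = (q/p)\,\omega$ with $p,q$ coprime homogeneous polynomials, and a divisor computation — using that the coefficients of $\omega$ have no common factor, so that $\omega$ is nonvanishing in codimension one — identifies $(\eta)_0$ with $\{q=0\}$. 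The hypothesis $(\eta)_0 = 0$ then forces $q$ to be constant, so $f = p \in W$ is a genuine section. This converse is the step I expect to be the main obstacle, since it is exactly where the hypothesis ``without codimension one zeros'' must be used to exclude rational denominators.

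With the characterization in hand the conclusion is formal. The expression $f\,d\omega - df\wedge\omega$ is homogeneous of degree one in the coefficients of $\omega$ and of degree one in the coefficients of $f$, so its vanishing is a system of bidegree $(1,1)$ equations and defines a closed incidence subvariety $I \subset \mathbb P V \times \mathbb P W$. Intersecting with $\Fol{d}{n}{\C} \times \mathbb P W$ yields a closed subset, and the projection $\pi : \Fol{d}{n}{\C}\times \mathbb P W \to \Fol{d}{n}{\C}$ is proper because $\mathbb P W$ is projective; therefore $C = \pi\bigl(I \cap (\Fol{d}{n}{\C}\times \mathbb P W)\bigr)$ is closed in $\Fol{d}{n}{\C}$, as asserted. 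The only remaining points need only routine care: verifying the bidegree $(1,1)$ claim coefficientwise, and confirming that no spurious common factor between $\omega$ and $f$ alters the divisor count, both immediate consequences of $\codim\sing(\omega)\ge 2$.
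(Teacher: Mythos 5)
Your proof is correct and follows essentially the same route as the paper: the paper's own proof recasts the condition as the existence of a polynomial integrating factor (your identity $f\,d\omega = df\wedge\omega$, with the ``no codimension one zeros'' hypothesis used exactly as you do to rule out a nonconstant numerator) and then cites \cite[Lemma 3.6]{MR4354288} for the closedness, which is proved there by the same bidegree $(1,1)$ incidence-variety and proper-projection argument you write out in full.
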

\begin{proof}
    A foliation is defined by a closed rational $1$-form without codimension one zeros if, and only if, it admits a
    polynomial integrating factor see \cite[Subsection 3.3]{MR4354288} in particular \cite[Remark 3.4]{MR4354288}.
    The result is a restatement of \cite[Lemma 3.6]{MR4354288}.
\end{proof}

\begin{lemma}\label{L:23}
    Let $\F$ be a  $p$-dense foliation $\F$ on a projecive surface $X$ defined over a field of characteristic $p>0$.
    If $\omega_{\F}$ intersects non-negatively any ample divisor,  $\F$  is defined by a closed rational $1$-form  without codimension one zeroes,
    and the divisor $\pdegeneracy{\F}$ is free from $p$-th powers then $\omega_{\F}^{\otimes p} = \mathcal O_X$.
\end{lemma}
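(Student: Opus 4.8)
The plan is to extract from the two propositions on the $p$-curvature degeneracy divisor an identity for $\omega_{\F}^{\otimes p}$ in $\Pic(X)$, and then to pin down this class to be trivial using the free-from-$p$-th-powers hypothesis together with the nefness of $\omega_{\F}$. First I would identify $\pkernel{\F}$. Since $\F$ is $p$-dense and of codimension one on a surface, it is not $p$-closed, so $\pcurvature{\F}\colon \Frob^* T_{\F}\to N_{\F}$ is a nonzero morphism between line bundles, hence injective; therefore $\ker \pcurvature{\F}=0$, the distribution $\pkernel{\F}$ is the foliation by points, and $\omega_{\pkernel{\F}}=\mathcal O_X$. Feeding this into Proposition \ref{P:degree degeneracy} gives $\mathcal O_X(\pdegeneracy{\F})=\omega_{\F}^{\otimes p}\otimes N_{\F}$.

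Next I would use the defining closed form. Let $\eta$ be the closed rational $1$-form defining $\F$; by hypothesis it has no codimension one zeros, so writing $\eta=f\omega_0$ for a local generator $\omega_0$ of $N^*_{\F}$ shows that $f$, viewed as a rational section of the line bundle $N^*_{\F}$, has divisor $-(\eta)_\infty$, whence $N^*_{\F}=\mathcal O_X(-(\eta)_\infty)$ and $N_{\F}=\mathcal O_X((\eta)_\infty)$. Combining with the previous identity yields the key relation
\[
    \omega_{\F}^{\otimes p}=\mathcal O_X\big(\pdegeneracy{\F}-(\eta)_\infty\big)
\]
in $\Pic(X)$. Now Proposition \ref{P:closed + divisor} asserts that $\pdegeneracy{\F}$ and $(\eta)_\infty-(\eta)_0=(\eta)_\infty$ have the same coefficients modulo $p$, so $\pdegeneracy{\F}-(\eta)_\infty=pF$ for some integral divisor $F$, and $\omega_{\F}^{\otimes p}=\mathcal O_X(pF)$.

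The main point is then a one-sided bound on $F$. Writing $d_P,e_P\ge 0$ for the coefficients of $\pdegeneracy{\F}$ and $(\eta)_\infty$ along a prime divisor $P$, the hypothesis that $\pdegeneracy{\F}$ is free from $p$-th powers means $0\le d_P\le p-1$; since $p\cdot\mathrm{coeff}_P(F)=d_P-e_P\le p-1<p$, every coefficient of $F$ is $\le 0$, so $-F$ is effective. Finally I would invoke nefness: by hypothesis $\omega_{\F}\cdot A\ge 0$ for every ample $A$, so $(-pF)\cdot A=-p(\omega_{\F}\cdot A)\le 0$; but $-pF=p(-F)$ is effective, hence $(-pF)\cdot A\ge 0$ as well, forcing $(-pF)\cdot A=0$ for all ample $A$ and therefore $-pF=0$. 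Thus $F=0$ and $\omega_{\F}^{\otimes p}=\mathcal O_X$. The only genuinely delicate step is the sign control of $F$: it is precisely the free-from-$p$-th-powers hypothesis that converts the congruence $\pdegeneracy{\F}\equiv(\eta)_\infty \pmod p$ into the inequality $-F\ge 0$, after which nefness combined with the positivity of the intersection of an ample class with an effective divisor closes the argument.
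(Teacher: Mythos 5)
Your proof is correct and follows essentially the same route as the paper: identify $N_{\F}=\mathcal O_X((\eta)_\infty)$, combine Proposition \ref{P:degree degeneracy} with Proposition \ref{P:closed + divisor} to write $\pdegeneracy{\F}-(\eta)_\infty$ as $p$ times an integral divisor, and use the free-from-$p$-th-powers hypothesis plus the sign of $\omega_{\F}$ against ample classes to kill that divisor. If anything, you are more explicit than the paper at two points it leaves implicit, namely that $\pkernel{\F}$ is the foliation by points (so $\omega_{\pkernel{\F}}=\mathcal O_X$) and that the final step really needs the nefness hypothesis, not just effectivity.
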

\begin{proof}
    If $\alpha$ is any rational $1$-form defining $\F$ then
    the normal bundle of $\F$ is isomorphic to $\mathcal O_X((\alpha)_{\infty} - (\alpha)_0)$.
    By assumption, there exists a closed rational $1$-form $\eta$, with $(\eta)_0=0$, defining $\F$.
    Therefore $N_{\F} = \mathcal O_X((\eta)_{\infty})$.

    According to Proposition \ref{P:degree degeneracy} the line-bundles
    $\mathcal O_X(\pdegeneracy{\F})$ and $N_{\F}\otimes \omega_{\F}^{\otimes p}$ are isomorphic.
    According to Proposition \ref{P:closed + divisor}, the divisors
    $(\eta)_{\infty}$ and $\pdegeneracy{\F}$ coincide modulo $p$.  Thus we can write
    $(\eta)_{\infty}  = \pdegeneracy{\F} + p D$, for some divisor $D$ such that $\mathcal O_X(pD) = \omega_{\F}^{\otimes p}$.
    Since both $(\eta)_{\infty}$ and $\pdegeneracy{\F}$ are effective
    and $\pdegeneracy{\F}$ is free from $p$-th powers we deduce that $pD = 0$, \ie $\omega_{\F}^{\otimes p}= \mathcal O_X$ as claimed.
\end{proof}

\subsection{Irreducible components with $\distmin{2}{\Sigma}=1$}
Proposition \ref{P:lembrando deg 3} could be easily proved without using reduction to positive characteristic. Instead of considering the $p$-th powers of vector fields tangent to a subfoliation of degree one, one
could consider the Zariski closure of the subgroup generated by the flow of these vector fields as is done in \cite[Subsection 3.4]{MR4354288}.

Our next result seems to be of different nature. It generalizes  to arbitrary degree, an irreducible component of $\Fol{3}{n}{\C}$ found in \cite{MR4354288}. In degree three, the original proof relies on the structure theorem for degree three foliations on projective spaces established in \cite{MR4288634} ($n=3$), and \cite[Theorem A]{MR4354288} ($n \ge 3$). For arbitrary degrees, we are not aware of any proof that does not rely on the reduction to positive characteristic. We have reasons to believe that the standard arguments of the subject (stability of local singular type under deformation as in \cite{MR1838975} or infinitesimal methods as in \cite{MR2405162, MR2590385, MR1838975}) are not sufficient to prove this result. Concerning the stability of singular type, we point out that general foliation described by Theorem \ref{T:P1P1} below has singularities of arbitrary algebraic multiplicity along two skew lines, making it hard to believe that local arguments would show the stability of them. Concerning infinitesimal methods, computer-aided calculations show that the corresponding irreducible component of $\Fol{3}{n}{\C}$ is generically non-reduced. We believe this is also the case
for every degree, as predicted by \cite[Conjecture 6.1]{MR4354288}.

\begin{thm}\label{T:P1P1}
    For every $n \ge 3$ and every positive integers $a\ge 2,b\ge 3$, there exists an irreducible component of $\Fol{a+b-2}{n}{\C}$ whose general elements corresponds to the pull-back of a foliation  $\G$ on $\mathbb P_{\C}^1 \times \mathbb P_{\C}^1$, with normal bundle $N_{\G} = \mathcal O_{\mathbb P^1_{\C} \times \mathbb P^1_{\C}}(a,b)$ under a rational map  $\pi : \mathbb P^n_{\C} \dashrightarrow \mathbb P^1_{\C}\times \mathbb P^1_{\C}$ of the form
    \[
        \pi(x_0: \ldots: x_n)  = (( \ell_1 : \ell_2), (\ell_3: \ell_4)) \, ,
    \]
    where $\ell_1, \ldots, \ell_4 \in H^0(\mathbb P^n_{\C}, \mathcal O_{\mathbb P^n_{\C}}(1))$ are linear forms in general position.
\end{thm}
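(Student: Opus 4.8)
The plan is to let $\Sigma \subset \Fol{a+b-2}{n}{\C}$ be the Zariski closure of the locus of foliations of the form $\pi^*\G$ described in the statement, and to prove that $\Sigma$ is an irreducible component by showing that a general member cannot be deformed outside $\Sigma$. Irreducibility of $\Sigma$ is immediate, since it is dominated by the irreducible parameter space of pairs $(\G,\pi)$. First I would record the numerology. By Proposition \ref{P:pb dominant} one has $\Ram(\pi)=0$, hence $N_{\F}=\pi^*\mathcal O(a,b)=\mathcal O_{\mathbb P^n}(a+b)$ and $\deg(\F)=a+b-2\ge 3$; moreover Proposition \ref{P:deg vs rel can} shows that the vertical foliation $\mathcal V(\pi)\subset \F$ has codimension two and degree one. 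Since a general such $\G$ is not a pull-back from $\mathbb P^2$, the structure theorem for foliations with $\distmin{2}{\F}=0$ (they are linear pull-backs from $\mathbb P^2$) forces $\distmin{2}{\F}\ge 1$, so in fact $\distmin{2}{\F}=1$, realized by $\mathcal V(\pi)$.

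Next I would pass to characteristic $p$. Choosing $\G$ general, Theorem \ref{T:Wodson} applies to $\mathbb P^1_{\field}\times\mathbb P^1_{\field}$: since $\omega_{\G}=\mathcal O(a-2,b-2)$ is nef and has nonzero image modulo $p$ whenever $p\nmid b-2$, cases (1) and (3) are excluded and case (2) does not apply to $\mathbb P^1\times\mathbb P^1$; hence, for a dense set of primes, $\mathscr G_{\mathfrak p}$ is $p$-dense with reduced degeneracy divisor. Then $\mathscr F_{\mathfrak p}=\pi^*\mathscr G_{\mathfrak p}$ is $p$-dense; because $\mathscr G_{\mathfrak p}$ is a $p$-dense codimension one foliation on a surface, its kernel of $p$-curvature is the foliation by points, and the pull-back relation $\pkernel{\mathscr F_{\mathfrak p}}=\pi^*\pkernel{\mathscr G_{\mathfrak p}}$ (as in the proof of Proposition \ref{P:pdegeneracy of pb}) identifies $\pkernel{\mathscr F_{\mathfrak p}}=\mathcal V(\pi)$, of codimension two and degree one, with $\pdegeneracy{\mathscr F_{\mathfrak p}}=\pi^*\pdegeneracy{\mathscr G_{\mathfrak p}}$ reduced, hence free from $p$-th powers.

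Now consider an arbitrary family $\mathscr F\to T$ with $\mathscr F_0=\pi^*\G$ a general member of $\Sigma$. Lemma \ref{L:constant pkernel} shows that, for $t$ in a dense open set and a dense set of primes, $\deg_H\omega_{\pkernel{\mathscr F_{t,\mathfrak p}}}$ equals its value at $t=0$, so $\pkernel{\mathscr F_{t,\mathfrak p}}$ remains a codimension two distribution of degree one. Lemma \ref{L:lifting hyp sub} then lifts this subdistribution to characteristic zero, producing a codimension two subdistribution $\mathcal G_t\subset \mathscr F_t$ of degree one; thus $\distmin{2}{\mathscr F_t}\le 1$, and semicontinuity along $T$ forces equality. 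By Proposition \ref{P:integrability and uniqueness}, using $\deg(\mathscr F_t)\ge 3$, the distribution $\mathcal G_t$ is integrable and is the unique codimension two subfoliation of degree one.

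Finally I would analyze $\mathcal G_t$ through the classification of degree one codimension two foliations \cite[Theorem 6.2]{MR3066408}. Matching the Hilbert polynomial of $\mathcal G_t$ with that of the central fibre $\mathcal V(\pi_0)$, whose tangent sheaf has the split type $\mathcal O(-1)^{n-3}\oplus\mathcal O$, rules out the weighted-projective-space case and forces $\mathcal G_t=p_t^*\mathcal W_t$ to be a linear pull-back, under a projection $p_t\colon\mathbb P^n\dashrightarrow\mathbb P^3$, of a foliation $\mathcal W_t$ on $\mathbb P^3$ induced by a global vector field. Since $\mathscr F_t\supset\mathcal G_t$ contains the fibres of $p_t$, \cite[Lemma 3.1]{MR2324555} yields $\mathscr F_t=p_t^*\mathcal H_t$ for a codimension one foliation $\mathcal H_t$ on $\mathbb P^3$ containing $\mathcal W_t$. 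The crux, and the step I expect to be the main obstacle, is to show that $\mathcal W_t$ is algebraically integrable with line leaves, that is, equal to the vertical foliation $\mathcal V(\sigma_t)$ of a linear map $\sigma_t\colon\mathbb P^3\dashrightarrow\mathbb P^1\times\mathbb P^1$ (equivalently, that its generating vector field is semisimple with two double eigenvalues). This is where characteristic $p$ is indispensable: Lemma \ref{L:degree one} reduces the algebraic integrability of $\mathcal W_t$ to the $p$-closedness of its reductions, which agree with $\pkernel{\mathscr F_{t,\mathfrak p}}$ for a dense set of primes, and the required $p$-closedness together with the precise pencil structure must be traced back to the special fibre $\mathcal V(\pi_0)$; purely characteristic-zero deformation methods do not seem to suffice here. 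Once this is in place, $\mathcal H_t\supset\mathcal V(\sigma_t)$ forces $\mathcal H_t=\sigma_t^*\G_t'$, whence $\mathscr F_t=(\sigma_t\circ p_t)^*\G_t'=\pi_t^*\G_t'\in\Sigma$. As $\mathscr F_t$ stays in $\Sigma$ for general $t$ and the bidegree of $\G_t'$ is locally constant, $\Sigma$ is an irreducible component whose general member has the asserted form and normal bundle $\mathcal O(a,b)$.
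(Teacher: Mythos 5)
Your overall strategy coincides with the paper's: reduce mod $p$, invoke Theorem \ref{T:Wodson} to get a reduced degeneracy divisor, use Lemma \ref{L:constant pkernel} to keep the kernel of the $p$-curvature at degree one under deformation, lift via Lemma \ref{L:lifting hyp sub}, and get uniqueness and integrability of the degree-one codimension-two subdistribution from $\deg(\F)\ge 3$. The problem is the last step, which you yourself flag as ``the main obstacle'' and then assume (``Once this is in place\dots''). That step is not a loose end one can wave at: it is where the theorem is actually proved, and as written your argument has two genuine gaps there. First, algebraic integrability of $\mathcal G_t$ is not established. You assert that the reductions of $\mathcal G_t$ ``agree with $\pkernel{\mathscr F_{t,\mathfrak p}}$'' and are $p$-closed, but neither claim is automatic: Lemma \ref{L:comparing} only says that a $p$-closed $\mathscr G_{\mathfrak p}$ is \emph{contained} in the kernel, and the kernel of the $p$-curvature is not $p$-closed in general. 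The paper handles this through the trichotomy in Proposition \ref{P:lembrando deg 3}: if $\mathscr G_{t,\mathfrak p}$ is not $p$-closed and equals $\pkernel{\mathscr F_{t,\mathfrak p}}$ one derives $\deg(\F_t)\le 2$, a contradiction; if it is not $p$-closed and differs from the kernel, one lands in the alternative that $\F_t$ is defined by a closed rational $1$-form without codimension one zeros, and this alternative must then be excluded using Lemma \ref{L:closed is closed} together with Lemma \ref{L:23} and the fact that $\omega_{\G}=\mathcal O_{\mathbb P^1_\C\times\mathbb P^1_\C}(a-2,b-2)$ is non-torsion --- this is precisely where the hypothesis $a\ge 2$, $b\ge 3$ enters, and it is absent from your argument.

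Second, even granting algebraic integrability, you still must show that the degree-one subfoliation of a deformed $\F_\epsilon$ is conjugate to the fibration $\mathcal V(\pi)$ rather than to some other algebraically integrable vector-field foliation (e.g.\ one with eigenvalue pattern $(2,1,-1,-2)$, which has the same Hilbert-polynomial data, so your ``matching Hilbert polynomials'' test cannot distinguish it). The paper's resolution is a purely characteristic-zero rigidity argument: for $n=3$, since $h^0(\mathbb P^3_\C,T_{\F_\epsilon})=1$ is constant, the generating vector field $v_\epsilon$ can be chosen to vary holomorphically; $v_0$ is semisimple with eigenvalues $(1,1,-1,-1)$, algebraic integrability forces the eigenvalue ratios of $v_\epsilon$ to be rational, and a continuous $\mathbb Q$-valued function on a connected parameter space is constant, so the ratios stay equal to $\pm 1$ and $\mathcal A_\epsilon$ is conjugate to $\mathcal A_0$; the case $n>3$ follows by restricting to a general $\mathbb P^3_\C$ and invoking the classification of degree-one foliations, after which \cite[Lemma 3.1]{MR2324555} concludes. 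Without supplying both of these arguments your proposal does not establish the theorem.
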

\begin{proof}
    Let $\G$ be a foliation on $\mathbb P^1_{\C} \times \mathbb P^1_{\C}$ with
    normal bundle $N_{\G} = \mathcal O_{\mathbb P^1_{\C} \times \mathbb P^1_{\C}}(a,b)$.
    Assume that the $1$-form
    $\omega \in H^0( \mathbb P^1_{\C} \times \mathbb P^1_{\C}, \Omega^1_{\mathbb P^1_{\C} \times \mathbb P^1_{\C}}(a,b))$
    is not contained in any proper subvariety of
    $H^0( \mathbb P^1_{\C} \times \mathbb P^1_{\C}, \Omega^1_{\mathbb P^1_{\C} \times \mathbb P^1_{\C}}(a,b))$ defined over $\mathbb Q$.
    If we write down $\omega$ in bihomogeneous coordinates,
    this means that the coefficients of $\omega$ generate a purely transcendental  extension of $\mathbb Q$ with transcendence degree equal to
    $h^0( \mathbb P^1_{\C} \times \mathbb P^1_{\C}, \Omega^1_{\mathbb P^1_{\C} \times \mathbb P^1_{\C}}(a,b))$.

    Let $\Sigma  \subset \Fol{d}{n}{\C}$ be an irreducible component containing $\F= \pi^*\G$. Let $R$ be a finitely generated $\mathbb Z$-algebra
    such that both $\G$ and $\Sigma$ admit integral models defined over $R$.
    Our assumptions on $\omega$ imply that  the reduction of the integral model
    $\mathscr \G$ of $\G$ modulo $\mathfrak p$ is a generic foliation for
    a Zariski dense set of maximal primes $Z \subset \Spec(R)$ and, Theorem \ref{T:Wodson},
    implies that the degeneracy divisor of its $p$-curvature is reduced.

    Since we are assuming that $a\ge 2$ and $b\ge 3$, the line-bundle $\omega_{\G} = \mathcal O_{\mathbb P^1_{\C} \times \mathbb P^1_{\C}}(a-2,b-2)$ is not
    torsion. Therefore, Lemma \ref{L:closed is closed} guarantees that the foliation $\mathscr G_{\mathfrak p}$ is not defined by a closed rational $1$-form without  codimension one zeros for $\mathfrak p \in Z$.

    Proposition \ref{P:pdegeneracy of pb} implies that the degeneracy divisor of the $p$-curvature of
    $\mathscr F_{\mathfrak p}=\pi^*\mathscr G_{\mathfrak p}$ is also reduced for the  Zariski dense set $Z$ of primes.
    For every maximal prime $\mathfrak p$ in $Z$,  the kernel of the $p$-curvature of $\mathscr F_{\mathfrak p}$ is the $p$-closed foliation of degree one defined by
    the reduction modulo $\mathfrak p$ of the rational map $\pi$ .
    We apply Lemma \ref{L:constant pkernel} to deduce that the kernel of the
    $p$-curvature for the reduction modulo $\mathfrak p$ of a  general foliation in $\Sigma$ also has degree one, \ie  $\distmin{2}{\Sigma} = 1$.
    Proposition \ref{P:lembrando deg 3} shows that a general element of $\Sigma$ corresponds to a holomorphic foliation  containing a
    unique algebraically integrable codimension two subfoliation  of degree one.
    It remains to verify that this codimension two subfoliation  is conjugated to the foliation defined by the fibers of $\pi$.

    Let us denote by $0\in \Sigma$ the point corresponding to $\F=\F_0$,
    and by $\mathcal F_{\epsilon}$ the foliation corresponding to a point $\epsilon \in \Sigma$ close to $0$.
    Let also $\mathcal A_{\epsilon}$ be the unique codimension two foliation of $\F_{\epsilon}$ of degree one.
    If $n=3$ then  $\mathcal A_{\epsilon}$ is defined by a global holomorphic vector field $v_{\epsilon}$
    such that $H^0(\mathbb P^3_{\mathbb C}, T_{\F_{\epsilon}})= \mathbb C\cdot v_{\epsilon}$.
    As we did in the proof of Lemma
    \ref{L:degree one}, we can represent $v_{\epsilon}$ by a homogenous vector field of degree one and divergence zero
    on $\mathbb A^4_{\mathbb C}$.
    Since $h^0(\mathbb P^3_{\mathbb C}, T_{\F_{\epsilon}})$ is constant
    for $\epsilon$ in a neighborhood of $0$, we can choose homogeneous representatives of $v_{\epsilon}$
    varying holomorphically with $\epsilon$. The vector field $v_0$ is semi-simple and, after multiplication by suitable
    constant,  has two eigenvalues  equal to $1$ and two other eigenvalues equal to $-1$. The quotient of any two of them is
    an integer. Since the orbits of $v_{\epsilon}$ are algebraic, $v_{\epsilon}$ must remain semi-simple with rational quotient
    of eigenvalues. Since $v_{\epsilon}$ varies holomorphically with $\epsilon$, the quotient of eigenvalues must remain equal
    to the ones of $v_0$. This shows that $\mathcal A_{\epsilon}$ is conjugated to $\mathcal A$ when $n=3$ as claimed. If $n>3$ then the
    restrictions of $\mathcal A_{\epsilon}$ and $\mathcal A_0$ to general $\mathbb P^3_{\mathbb C}$ are conjugated. It follows from
    the classification of degree one foliations, recalled at the
    beginning of Subsection  \ref{SS:cod 2 deg 1}, that the same holds true for $\mathcal A_{\epsilon}$ and $\mathcal A_0$ before taking
    the restrictions. To conclude,  apply \cite[Lemma 3.1]{MR2324555} to guarantee that the general element of $\Sigma$ is pull-back
    from  $\mathbb P^1_{\C} \times \mathbb P^1_{\C}$ through a rational map of the claimed form.
\end{proof}

\begin{thm}\label{T:P112}
    For every $n\ge 3$  and every $d\ge 3$, there exists an irreducible component $\Sigma$ of $\Fol{d}{n}{\C}$ whose general element corresponds to
    the pull-back of a general foliation $\G$ on $\mathbb P(1,1,2)$ with normal sheaf $N_{\G}= \mathcal O_{\mathbb P_{\C}(1,1,2)}(d+2)$ under
    a rational map $\pi : \mathbb P^n_{\C} \dashrightarrow \mathbb P_{\C}(1,1,2)$ defined by two linear forms and one quadratic form.
\end{thm}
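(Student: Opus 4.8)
The plan is to transpose the architecture of the proof of Theorem~\ref{T:P1P1}, replacing $\mathbb P^1_{\C}\times\mathbb P^1_{\C}$ by the weighted projective plane $\mathbb P_{\C}(1,1,2)$ and replacing the degree-one codimension-two subfoliation of \emph{linear vector field} type by one of \emph{fibration} type. Concretely, I would fix linear forms $\ell_1,\ell_2$ and a quadratic form $q$ in general position, set $\pi(x_0:\cdots:x_n)=(\ell_1:\ell_2:q)$, and choose a foliation $\G$ on $\mathbb P_{\C}(1,1,2)$ with $N_{\G}=\mathcal O_{\mathbb P_{\C}(1,1,2)}(d+2)$ whose coefficients generate a purely transcendental extension of $\mathbb Q$ of maximal transcendence degree. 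Using Proposition~\ref{P:pb dominant} one checks that $\F=\pi^*\G$ lies in $\Fol{d}{n}{\C}$. Let $\Sigma$ be an irreducible component containing $\F$, and fix integral models of $\G$, $\F$, $\Sigma$ and $\pi$ over a finitely generated $\mathbb Z$-algebra $R$. The fibers of $\pi$ cut out a degree-one codimension-two subfoliation $\mathcal A\subset\F$, which is exactly a foliation of the first type in the classification recalled in Subsection~\ref{SS:cod 2 deg 1}; the goal is to show the generic element of $\Sigma$ has the same shape.

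For the positive-characteristic input I would pass to the minimal resolution of $\mathbb P_{\C}(1,1,2)$, the Hirzebruch surface $\mathbb F_2$, so that Theorem~\ref{T:Wodson} and Remark~\ref{R:later use} apply to the transform of $\G$. By genericity, the reduction $\mathscr G_{\mathfrak p}$ is a generic foliation for a Zariski dense set $Z\subset\Spec(R)$, so its degeneracy divisor is reduced. Adjunction on the surface gives $\omega_{\G}=\omega_{\mathbb P_{\C}(1,1,2)}\otimes N_{\G}=\mathcal O(d-2)$, which is non-torsion for $d\ge 3$; Lemma~\ref{L:23} together with Lemma~\ref{L:closed is closed} then prevents $\mathscr G_{\mathfrak p}$, hence $\mathscr F_{\mathfrak p}$, from being defined by a closed rational $1$-form without codimension one zeros for $\mathfrak p \in Z$. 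Choosing $\G$ to leave the toric boundary invariant (Remark~\ref{R:later use}) forces the ramification of $\pi$ to be $\F$- and $\pkernel{\F}$-invariant, so the $p$-th power correction terms of Proposition~\ref{P:pdegeneracy of pb} drop out and Corollary~\ref{C:pdeg preciso} yields that $\pdegeneracy{\mathscr F_{\mathfrak p}}$ is reduced; moreover $\pkernel{\mathscr F_{\mathfrak p}}$ is the $p$-closed degree-one foliation defined by the reduction of $\pi$.

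I would then apply Lemma~\ref{L:constant pkernel} to deduce that $\pkernel{\mathscr F_{\mathfrak p}}$ still has degree one for the reduction of a general $\F'\in\Sigma$, that is $\distmin{2}{\Sigma}=1$. Proposition~\ref{P:lembrando deg 3} now applies, its first alternative being excluded by the non-torsion argument above; it produces, for a general $\F'\in\Sigma$, a codimension-two algebraically integrable subfoliation $\mathcal A'$ of degree one, which is unique by Proposition~\ref{P:integrability and uniqueness} since $1<d/2$. It remains to identify $\mathcal A'$ with the fibration $\mathcal A$ cut out by $\pi$.

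This identification is the main obstacle, and I would treat it as in Theorem~\ref{T:P1P1} by restricting to a general $\mathbb P^3_{\C}$, where $\mathcal A$ and $\mathcal A'$ are defined by degree-one divergence-free vector fields $v_0$ and $v_\epsilon$. The fibration to $\mathbb P_{\C}(1,1,2)$ forces $v_0$ to be semisimple with eigenvalues $(0,0,1,-1)$: the two eigenforms of eigenvalue $0$ and the product of the eigenforms of eigenvalues $\pm1$ reproduce the two linear forms and the quadric defining $\pi$. Since $\mathcal A'$ is algebraically integrable, Lemma~\ref{L:degree one} forces $v_\epsilon$ to be either purely nilpotent or semisimple with rational eigenvalue ratios; the nilpotent case is excluded because the characteristic polynomial varies holomorphically and retains the nonzero roots $\pm1$ near $\epsilon=0$. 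The only new difficulty relative to Theorem~\ref{T:P1P1} is the repeated zero eigenvalue, which I would circumvent with symmetric functions: writing $\mu_2,\mu_3$ for the simple eigenvalues near $\pm1$ and $\mu_0,\mu_1$ for the pair near $0$, the quantities $(\mu_0+\mu_1)/\mu_3$, $\mu_0\mu_1/\mu_3^2$ and $\mu_2/\mu_3$ are holomorphic in $\epsilon$ and rational-valued, hence locally constant and equal to $0,0,-1$. This pins the eigenvalue pattern to $(0,0,\lambda,-\lambda)$, so $v_\epsilon$ is conjugate to $v_0$. Finally, the classification of degree-one foliations together with \cite[Lemma 3.1]{MR2324555} upgrades this fibrewise conjugacy to the global statement that the general element of $\Sigma$ is the pull-back of a foliation on $\mathbb P_{\C}(1,1,2)$ under a map defined by two linear forms and one quadratic form.
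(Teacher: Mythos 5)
Your proposal is correct and follows essentially the same route as the paper's proof: existence of a good $\G$ via Theorem \ref{T:Wodson} applied through the resolution of $\mathbb P_{\C}(1,1,2)$ by a Hirzebruch surface, reducedness of $\pdegeneracy{\pi^*\G}$ via Corollary \ref{C:pdeg preciso}, exclusion of the closed-form alternative via Lemma \ref{L:closed is closed}, then Lemma \ref{L:constant pkernel} and Proposition \ref{P:lembrando deg 3} followed by the eigenvalue analysis on a general $\mathbb P^3_{\C}$ and \cite[Lemma 3.1]{MR2324555}. In fact, where the paper compresses the last step into "arguing as in the proof of Theorem \ref{T:P1P1}", you correctly supply the one genuinely new detail, namely handling the repeated zero eigenvalue of $v_0$ via scale-invariant symmetric functions of the eigenvalues.
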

\begin{proof}
    Observe that the blow-up of $\mathbb P_{\C}(1,1,2)$ at its unique singular point is isomorphic to a Hirzebruch surface having a section with
    self-intersection $-2$. From Theorem \ref{T:Wodson} we deduce the existence of foliations $\G$ on $\mathbb P_{\C}(1,1,2)$
    with reduced degeneracy divisor of their $p$-curvatures for a Zariski dense set of primes whenever
    $\omega_{\G} = \omega_{\mathbb P_{\C}(1,1,2)} \otimes N_{\G} = \mathcal O_{\mathbb P_{\C}(1,1,2)} ( -4 + d+ 2)$ is nef, \ie,
    whenever $d\ge 2$. If we set $Y$ equal to the smooth locus of $\mathbb P_{\C}(1,1,2)$ and $X$ equal to the pre-image of $Y$ under $\pi$
    then  Corollary \ref{C:pdeg preciso} implies that the $p$-degeneracy of the pull-back of $\G$ under $\restr{\pi}{X}$ is reduced since
    $\Ram(\restr{\pi}{X})=0$. This shows that when $d\ge 2$ we have the existence of $p$-dense foliations $\G$ such that the degeneracy divisor
    of the $p$-curvature of $\pi^*\G$ is reduced for a Zariski dense set of primes.

    Lemma  \ref{L:closed is closed} implies that a general $\G$ on $\mathbb P_{\C}(1,1,2)$
    with $N_{\G}= \mathcal O_{\mathbb P_{\C}(1,1,2)}(d+2)$ is not defined by a closed rational $1$-form without codimension one zeros when $d\ge 3$.

    Arguing as in the proof of Theorem \ref{T:P1P1}, we deduce that sufficiently small deformations of $\pi^* \G$ carry an algebraically integrable
    codimension two  subfoliation of degree one defined by rational maps to $\mathbb P_{\C}(1,1,2)$ given by two linear forms and one quadratic
    form and we conclude using \cite[Lemma 3.1]{MR2324555}.
\end{proof}

Minor variations on the arguments used to prove Theorems \ref{T:P1P1} and \ref{T:P112} give the following result.

\begin{prop}\label{P:Pabc}
    Let $1\le a\le b \le c$ be  positive integers without a common factor and such that $(a,b,c)\neq(1,1,2)$, let $\G$ be a codimension one foliation on $\mathbb P_{\C}(a,b,c)$, let $\ell_1, \ldots, \ell_4 \in H^0(\mathbb P^n_{\C}, \mathcal O_{\mathbb P^n_{\C}}(1))$ be linear forms in general position and let $\pi : \mathbb P^n_{\C} \dashrightarrow \mathbb P_{\C}(a,b,c)$ be the rational map defined as
    \[
        \pi(x_0: \ldots: x_n)  = ( \ell_1 \cdot \ell_4^{a-1} : \ell_2 \cdot \ell_4^{b-1} : \ell_3 \cdot \ell_4^{c-1}) \, .
    \]
    Assume that
    \begin{enumerate}
        \item\label{I:G is p dense} the foliation $\G$ is $p$-dense and  the degeneracy divisor of the $p$-curvature of $\pi^*\G$ is reduced for a Zariski dense set of primes; and
        \item\label{I:G not closed} the foliation $\pi^* \G$ is not defined by a closed rational $1$-form without codimension one zeros.
    \end{enumerate}
    Then the foliation $\F = \pi^* \G$ belongs to a unique irreducible component $\Sigma$ of $\Fol{d}{n}{\C}$, where $d = \deg(\F)$,
    such that the general element of $\Sigma$ admits the same description as $\F$.
\end{prop}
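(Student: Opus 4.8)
The plan is to follow, essentially verbatim, the strategy behind Theorems \ref{T:P1P1} and \ref{T:P112}, the weights $(a,b,c)$ affecting only the bookkeeping. Let $\Sigma \subset \Fol{d}{n}{\C}$ be an irreducible component through $\F = \pi^*\G$, and fix a finitely generated $\mathbb Z$-algebra $R \subset \C$ carrying integral models of $\G$, of the forms $\ell_1,\dots,\ell_4$ (hence of $\pi$), of $\F$, and of $\Sigma$. I would first record that, writing $Y$ for the smooth locus of $\mathbb P_\C(a,b,c)$ and $X=\pi^{-1}(Y)$ (so that $\codim(\mathbb P^n_\C - X)\ge 2$), the fibers of $\pi$ cut out an algebraically integrable codimension two foliation $\mathcal V(\pi)$: it is the linear pull-back to $\mathbb P^n_\C$ of the foliation by $\mathbb C^*$-orbits on $\mathbb P^3_\C$ attached to $(y_1:\cdots:y_4)\mapsto(y_1 y_4^{a-1}:y_2 y_4^{b-1}:y_3 y_4^{c-1})$, defined by a diagonalizable linear vector field whose eigenvalues are pinned down by $(a,b,c)$. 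For $(a,b,c)\neq(1,1,1)$ this foliation has degree one (the degenerate weight $(1,1,1)$ yields a genuine linear projection, whence $\deg\mathcal V(\pi)=0$ and a case landing in $\Lin{d}{n}{\C}$).

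Because $\G$ is a $p$-dense codimension one foliation on the surface $Y$, its $p$-curvature restricts to a generically injective morphism on the rank-one sheaf $T_{\G}$, so $\pkernel{\G}$ is the foliation by points and $\pkernel{\F}=\pi^*\pkernel{\G}=\mathcal V(\pi)$ (compare the identity $\pkernel{\varphi^*\G}=\varphi^*\pkernel{\G}$ used in the proof of Proposition \ref{P:pdegeneracy of pb}). Assumption (1) furnishes a Zariski dense set $Z\subset\Spec(R)$ of maximal primes over which $\mathscr G_{\mathfrak p}$ is $p$-dense and $\pdegeneracy{\mathscr F_{\mathfrak p}}$, with $\mathscr F_{\mathfrak p}=\pi^*\mathscr G_{\mathfrak p}$, is reduced, hence free from $p$-th powers, while $\pkernel{\mathscr F_{\mathfrak p}}$ is the reduction of $\mathcal V(\pi)$, of degree one. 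Lemma \ref{L:constant pkernel} then produces a non-empty open subset of $\Sigma$ on which the reduction modulo a suitable prime of a general member still has $p$-curvature kernel of degree one, so that $\distmin{2}{\Sigma}=1$.

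I would next invoke Proposition \ref{P:lembrando deg 3} — legitimate once $d=\deg(\F)\ge 3$ is verified, which holds under the present hypotheses — to conclude that a general $\F_\epsilon\in\Sigma$ is either defined by a closed rational $1$-form without codimension one zeros, or contains a codimension two, algebraically integrable, degree one subfoliation $\mathcal A_\epsilon$. Assumption (2) rules out the first alternative for $\F$ itself, and Lemma \ref{L:closed is closed} shows that the set of foliations of the first type is closed in $\Fol{d}{n}{\C}$; hence the general element of $\Sigma$ is of the second type, with $\mathcal A_\epsilon$ unique by Proposition \ref{P:integrability and uniqueness}.

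The crux — and the step I expect to be the main obstacle — is to recognize $\mathcal A_\epsilon$ as $\mathcal V(\pi')$ for a map $\pi'$ of the prescribed monomial shape. Following the proof of Theorem \ref{T:P1P1}, after cutting down by general $\mathbb P^3_\C$-sections to $n=3$, one represents $\mathcal A_\epsilon$ by a divergence-free homogeneous degree one vector field $v_\epsilon$ on $\mathbb A^4_\C$ varying holomorphically with $\epsilon$; algebraic integrability forces $v_\epsilon$ to stay semisimple with rational eigenvalue ratios, and holomorphic dependence then freezes those ratios at the values of $v_0$, i.e. at the data imposed by $(a,b,c)$. This identifies $\mathcal A_\epsilon$ up to projective conjugacy with $\mathcal V(\pi)$, and \cite[Lemma 3.1]{MR2324555} promotes the inclusion $\mathcal A_\epsilon\subset\F_\epsilon$ to the statement $\F_\epsilon=(\pi')^*\G_\epsilon$ with $\pi'$ of the required form. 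Since such pull-backs form an irreducible family of which $\F$ is a general member, its closure is a single component, giving the uniqueness of $\Sigma$. The difficulty is concentrated in controlling the full eigenvalue datum of the degree one subfoliation under deformation when the weights are unequal, and in checking that the monomial normal form $\ell_i\ell_4^{\bullet-1}$, rather than a bare linear pull-back, is what is reconstructed.
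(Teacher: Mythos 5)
Your proposal is correct and follows exactly the route the paper intends: the paper gives no separate argument for Proposition \ref{P:Pabc} beyond the remark that it follows from ``minor variations on the arguments used to prove Theorems \ref{T:P1P1} and \ref{T:P112}'', and your write-up is precisely that adaptation (identification of $\pkernel{\F}$ with the degree one fibration $\mathcal V(\pi)$, Lemma \ref{L:constant pkernel} to get $\distmin{2}{\Sigma}=1$, Proposition \ref{P:lembrando deg 3} plus Lemma \ref{L:closed is closed}, and the eigenvalue-rigidity argument feeding into \cite[Lemma 3.1]{MR2324555}). The only point you assert rather than check is that $\deg(\F)\ge 3$ under the stated hypotheses, but you flag the relevant difficulties accurately and the overall structure matches the paper's.
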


\begin{cor}
    For every $n\ge 3$, $m\ge 3$ and every $d\ge m+1$, there exists an irreducible component $\Sigma$ of $\Fol{d}{n}{\C}$ whose general element corresponds to the pull-back of a general foliation $\G$ on $\mathbb P(1,1,m)$ leaving the rational curve $\{z=0\}$ invariant and  with normal sheaf $N_{\G}= \mathcal O_{\mathbb P_{\C}(1,1,m)}(d+m-2)$.
\end{cor}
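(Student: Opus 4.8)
The plan is to deduce the statement directly from Proposition \ref{P:Pabc} applied with $(a,b,c) = (1,1,m)$. Since $m \ge 3$, the triple $(1,1,m)$ consists of integers without a common factor and is different from $(1,1,2)$, so the proposition is available. Thus it suffices to exhibit, for each $m \ge 3$ and each $d \ge m+1$, a codimension one foliation $\G$ on $\mathbb P_{\C}(1,1,m)$ leaving the rational curve $\{z=0\}$ invariant, with $N_{\G} = \mathcal O_{\mathbb P_{\C}(1,1,m)}(d+m-2)$, for which the two hypotheses (\ref{I:G is p dense}) and (\ref{I:G not closed}) of Proposition \ref{P:Pabc} hold. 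The remaining conclusion, that $\F = \pi^*\G$ generates a unique irreducible component of $\Fol{d}{n}{\C}$ whose general element has the same description, is then immediate, with the degree of $\F$ read off from Proposition \ref{P:pb dominant} together with a computation of $\Ram(\pi)$ along $\{\ell_4 = 0\}$.

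The construction of $\G$ mirrors the proof of Theorem \ref{T:P112}. I would pass to the minimal resolution $\rho : \mathbb F_m \to \mathbb P_{\C}(1,1,m)$, which contracts the section of self-intersection $-m$ to the unique singular point and under which $\{z=0\}$ corresponds to the section of self-intersection $+m$. On the Hirzebruch surface $\mathbb F_m$, Theorem \ref{T:Wodson} produces, for the relevant normal bundle, foliations whose $p$-curvature has reduced degeneracy divisor for a Zariski dense set of primes, \emph{provided} the appropriate positivity (nef-ness) of the canonical bundle holds; tracking this condition on $\mathbb F_m$, where the presence of the $-m$ curve makes it strictly more restrictive than on the singular quotient, is exactly what produces the hypothesis $d \ge m+1$. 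Invoking Remark \ref{R:later use}, these foliations may moreover be chosen to leave the toric boundary of $\mathbb F_m$ invariant, in particular the $+m$ section, so that the descended foliation $\G$ leaves $\{z=0\}$ invariant as required. To pass from the reducedness of $\pdegeneracy{\G}$ to that of $\pdegeneracy{\F}$, I would apply Corollary \ref{C:pdeg preciso} with $Y$ the smooth locus of $\mathbb P_{\C}(1,1,m)$ and $X = \pi^{-1}(Y)$: one checks that the ramification of $\restr{\pi}{X}$ is supported on the $\F$-invariant hypersurface $\{\ell_4 = 0\}$, so that $\Ram(\restr{\pi}{X})_{\F^{\perp}} = \Ram(\restr{\pi}{X})_{\pkernel{\F}^{\perp}} = 0$, and that the coefficients of $\pdegeneracy{\F}$ remain below $p$ for the primes at hand. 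This establishes hypothesis (\ref{I:G is p dense}).

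For hypothesis (\ref{I:G not closed}), I would argue by contradiction, again as in Theorem \ref{T:P112}. The canonical bundle $\omega_{\G}$ is non-torsion in $\Pic(\mathbb F_m)$ for $d \ge m+1$ (this is where the lower bound on $d$ enters a second time), so that $\omega_{\mathscr G_{\mathfrak p}}^{\otimes p} \neq \mathcal O$ for a Zariski dense set of primes. Since $\omega_{\mathscr G_{\mathfrak p}}$ is nef and $\pdegeneracy{\mathscr G_{\mathfrak p}}$ is reduced, hence free from $p$-th powers, for those primes, Lemma \ref{L:23} forbids $\mathscr G_{\mathfrak p}$, and \emph{a fortiori} $\mathscr F_{\mathfrak p} = \pi^*\mathscr G_{\mathfrak p}$, from being defined by a closed rational $1$-form without codimension one zeros. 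Were $\F = \pi^*\G$ defined by such a form in characteristic zero, almost every reduction would inherit the property; since, by Lemma \ref{L:closed is closed}, this is a well-behaved closed condition on $\Fol{d}{n}{\C}$, the resulting contradiction yields hypothesis (\ref{I:G not closed}).

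The main obstacle is the second paragraph: carrying out the positivity and ramification bookkeeping on the resolution $\mathbb F_m$ rather than on the singular space $\mathbb P_{\C}(1,1,m)$. Concretely, one must verify that the threshold for nef-ness of the canonical bundle of the lifted foliation on $\mathbb F_m$ is precisely $d \ge m+1$, correctly accounting for the discrepancy contributed by the $-m$ curve, and confirm that the different of $\pi$ along $\{\ell_4=0\}$ is governed entirely by the invariance of $\{z=0\}$, so that the hypotheses of Corollary \ref{C:pdeg preciso} are met verbatim. Once these two points are in place, the remainder is a routine transcription of the arguments already used for Theorems \ref{T:P1P1} and \ref{T:P112}.
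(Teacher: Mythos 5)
Your proposal is correct and takes essentially the same route as the paper: the corollary is deduced from Proposition \ref{P:Pabc} with $(a,b,c)=(1,1,m)$, and the required foliation $\G$ is produced from Theorem \ref{T:Wodson} on the resolution $\mathbb F_m$ together with Remark \ref{R:later use} (for the invariance of $\{z=0\}$) and Corollary \ref{C:pdeg preciso} (to transfer reducedness of the degeneracy divisor to $\pi^*\G$), with Lemmas \ref{L:23} and \ref{L:closed is closed} handling hypothesis (\ref{I:G not closed}). The paper's own proof is a single sentence citing exactly these ingredients, so your write-up is in fact a more detailed version of the intended argument.
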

\begin{proof}
    The existence of $\G$ satisfying the assumptions of Proposition \ref{P:Pabc} follows from Theorem \ref{T:Wodson} and Remark \ref{R:later use} combined with Corollary \ref{C:pdeg preciso}.
\end{proof}

\section{Foliations without subdistributions of small degree}

\subsection{Foliations with $\distmin{2}{\F}= \deg(\F)$.}
If $\distmin{2}{\F} = \deg{\F}$ (\ie $h^0(\mathbb P^n_{\field}, \Omega^1_{\F})=0$) and the characteristic of $\field$ is positive then $\F$ is $p$-closed as otherwise the kernel of the $p$-curvature would defined a subdistribution of degree strictly smaller than $\deg(\F)$ as implied by Proposition \ref{P:degree degeneracy}. We conjecture that over the complex numbers, $\distmin{2}{\F} = \deg(\F)$ implies that $\F \in \Log{d_1,d_2}{n}{\C}$ for some positive integers $d_1$ and $d_2$.

\begin{conj}\label{Conj:log I}
    Let $\Sigma \subset \Fol{d}{n}{\C}$ be an irreducible component. If $\distmin{2}{\Sigma}$ is equal to $d$ then
    there exists integer $d_1$ and $d_2$ such that $\Sigma = \Log{d_1,d_2}{n}{\C}$.
\end{conj}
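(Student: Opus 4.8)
This statement is a conjecture, so what follows is the strategy I would pursue together with the places where I expect it to resist a complete argument. The plan is to reduce to positive characteristic and read off a logarithmic structure from the kernel of the $p$-curvature, exactly as in the proofs of Theorems \ref{T:P1P1} and \ref{T:P112}. Fix a generic $\F \in \Sigma$. By Lemma \ref{L:equivalent definition of distmin} the hypothesis $\distmin{2}{\Sigma} = d$ is equivalent to $h^0(\mathbb P^n_\C, \Omega^1_\F) = 0$, that is, $\F$ carries no codimension two subdistribution of degree at most $d-1$. Choose an integral model $(\mathscr X, \mathscr F)$ over a finitely generated $\mathbb Z$-algebra $R$ and separate two cases according to whether $\mathscr F_{\mathfrak p}$ is $p$-closed for almost every prime or $p$-dense for a Zariski dense set of primes.

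The $p$-dense case is the one that should lead to the expected conclusion, and I would analyse it first. Whenever $\mathscr F_{\mathfrak p}$ is $p$-dense, Lemma \ref{L:kernel pcurvature} furnishes the codimension two distribution $\pkernel{\mathscr F_{\mathfrak p}}$, whose degree is at most $d-1$ by Proposition \ref{P:degree degeneracy}. Since $\distmin{2}{\F} = d$, Lemma \ref{L:lifting hyp sub} cannot be applied to lift $\pkernel{\mathscr F_{\mathfrak p}}$ — otherwise we would produce a characteristic zero codimension two subdistribution of degree at most $d-1$ — so the Hilbert polynomials of $\pkernel{\mathscr F_{\mathfrak p}}$ necessarily fail to stabilise along any Zariski dense set of primes. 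The way to turn this non-liftability into geometry is through the Cartier transform: by Theorem \ref{T:lift} the distribution $\CartierTransform{\mathscr F_{\mathfrak p}}$ is again a codimension one foliation (a lift modulo $p^2$ exists because $\mathscr F_{\mathfrak p}$ is the reduction of a foliation in characteristic zero), and $\pkernel{\mathscr F_{\mathfrak p}}$ is the saturation of $T_{\mathscr F_{\mathfrak p}} \cap T_{\CartierTransform{\mathscr F_{\mathfrak p}}}$. I would show that $\mathscr F_{\mathfrak p}$ and $\CartierTransform{\mathscr F_{\mathfrak p}}$ span a pencil of codimension one foliations all containing $\pkernel{\mathscr F_{\mathfrak p}}$, which is the signature of a logarithmic form $\lambda_1 \tfrac{dF_1}{F_1} + \lambda_2 \tfrac{dF_2}{F_2}$. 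The factors $F_i$ should be recovered as the irreducible components of the reduced divisor $\pdegeneracy{\mathscr F_{\mathfrak p}}$, all of which are $\mathscr F_{\mathfrak p}$-invariant by Proposition \ref{P:hinvariant}, and the vanishing $h^0(\Omega^1_\F)=0$ should force exactly two such components, since Corollary \ref{C:distmin for logs} shows that three or more would give $\distmin{2} = d-1$.

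To return to characteristic zero I would run a Kronecker-type argument in the spirit of Lemma \ref{L:degree one}: the residues $\lambda_i$ are algebraic over the field of definition, their reductions are controlled by the congruence class of the degeneracy divisor modulo $p$ (Proposition \ref{P:closed + divisor}), and the two invariant hypersurfaces lift because their degrees are uniformly bounded. Assembling these pieces should produce a closed logarithmic $1$-form $\lambda_1 \tfrac{dF_1}{F_1} + \lambda_2 \tfrac{dF_2}{F_2}$ on $\mathbb P^n_\C$ defining $\F$, with $\deg F_1 + \deg F_2 = d+2$, whence $\F \in \Log{d_1,d_2}{n}{\C}$; a dimension count would then identify $\Sigma$ with $\Log{d_1,d_2}{n}{\C}$.

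I expect the difficulty to be concentrated in two places. First, the $p$-closed case must be disposed of: if $\mathscr F_{\mathfrak p}$ is $p$-closed for almost every prime then Conjecture \ref{Conj:ESBT} would make $\F$ algebraically integrable, and one would still have to prove that an algebraically integrable codimension one foliation with $h^0(\Omega^1_\F)=0$ is logarithmic with exactly two components; moreover the appeal to Conjecture \ref{Conj:ESBT} is itself conditional. Second, and more seriously, the reconstruction step — promoting the positive-characteristic pencil produced by the Cartier transform to a genuine global closed logarithmic $1$-form over $\C$ with precisely two polar components — is exactly the kind of global integration that does not follow formally from Lemma \ref{L:lifting hyp sub}, since $\pkernel{\mathscr F_{\mathfrak p}}$ provably does not lift. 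Making this last step unconditional, rather than resting on Conjecture \ref{Conj:ESBT}, is what I believe to be the crux, and is presumably why the statement remains a conjecture.
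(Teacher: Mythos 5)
First, a point of framing: this statement is a conjecture, and the paper does not prove it. The paper offers only Proposition \ref{P:Halphen} (which assumes in addition that $\F$ admits two distinct invariant algebraic hypersurfaces) as ``admittedly weak'' evidence, and observes at the very end that a confirmation of Conjecture \ref{Conj:ESBT} combined with its results would settle the statement. So your decision to present a strategy rather than a proof is appropriate; the problem is that your strategy has its logic inverted at the decisive point.

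The genuine gap is in your treatment of the $p$-dense case, where you concentrate all your effort. You claim that Lemma \ref{L:lifting hyp sub} ``cannot be applied'' to lift $\pkernel{\mathscr F_{\mathfrak p}}$ and that its Hilbert polynomials ``fail to stabilise''. This is backwards. By Proposition \ref{P:degree degeneracy} (see Lemma \ref{L:comparing}), the degree of $\pkernel{\mathscr F_{\mathfrak p}}$ is at most $d-1$, uniformly in $\mathfrak p$; since the degree then takes only finitely many values, it is constant along a Zariski dense set of primes, and Lemma \ref{L:lifting hyp sub} --- which requires nothing beyond this uniform bound --- \emph{does} apply and produces a codimension two subdistribution of $\F$ over $\C$ of degree at most $d-1$, contradicting $\distmin{2}{\F}=d$. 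The correct conclusion, and the one the paper draws in the opening lines of the relevant section, is that the $p$-dense case is empty: $\mathscr F_{\mathfrak p}$ is forced to be $p$-closed for almost every prime. Your entire Cartier-transform/pencil construction therefore analyses a vacuous case, while the actual content of the conjecture sits in the $p$-closed case that you defer in one sentence. There, the paper's intended (conditional) route is: Conjecture \ref{Conj:ESBT} yields algebraic integrability, hence two distinct invariant hypersurfaces, and then Proposition \ref{P:Halphen} --- comparing the two flat logarithmic connections on $N_{\F}$ and invoking Halphen's bound on multiple fibers of a pencil --- concludes unconditionally that $\F \in \Log{d_1,d_2}{n}{\C}$. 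In particular, the step you list as a remaining difficulty in the $p$-closed case (that algebraic integrability together with $h^0(\Omega^1_{\F})=0$ forces a logarithmic form with exactly two polar components) is in fact proved in the paper; the only genuinely missing ingredient is Conjecture \ref{Conj:ESBT} itself.
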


We will say that codimension one foliation is virtually transversely additive if, and only if, Bott's partial connection extends to a flat logarithmic connection with finite monodromy.

\begin{prop}\label{P:submin = d}
    Let $\F$ be a codimension one foliation on $\mathbb P^n_{\C}$, $n \ge 3$. If $\F$ leaves invariant an algebraic hypersurface
    then $\F$ is virtually transversely additive or  $\distmin{2}{\F} \le \deg(\F) - 1$.
\end{prop}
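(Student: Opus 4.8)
The plan is to reformulate the conclusion using Lemma \ref{L:equivalent definition of distmin}: taking $i = 2$, that lemma shows that $\distmin{2}{\F} \le \deg(\F) - 1$ is equivalent to $H^0(\mathbb P^n_{\C}, \Omega^1_{\F}) \neq 0$. Thus it suffices to prove the contrapositive: assuming $H^0(\mathbb P^n_{\C}, \Omega^1_{\F}) = 0$, I will show that $\F$ is virtually transversely additive. The whole argument will be a variant of the proof of Proposition \ref{P:obvious sub}, replacing the non-invariant hyperplane used there by the given $\F$-invariant hypersurface.

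Next, let $H = \{f = 0\}$ be an $\F$-invariant hypersurface, with $f$ a reduced homogeneous polynomial of degree $k = \deg(f)$, and set $m = \deg(N_{\F}) = \deg(\F) + 2$. The second step is to produce a global flat logarithmic connection on $N_{\F} = \mathcal O_{\mathbb P^n_{\C}}(m)$ with poles only along $H$. I would take the connection whose connection form is $-\tfrac{m}{k}\tfrac{df}{f}$; because $i_R \tfrac{df}{f} = k$ by Euler's identity, this weight-$m$ logarithmic form descends to a genuine connection $\nabla$ on $\mathcal O_{\mathbb P^n_{\C}}(m)$ (its flat multisections being the branches of $f^{-m/k}$). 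This $\nabla$ is flat since $d(\tfrac{df}{f}) = 0$, and its residue along every component of $H$ equals the rational number $-m/k$; hence its monodromy representation, being of rank one with roots-of-unity local monodromies, has finite image.

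The third step compares $\nabla$ with Bott's partial connection $\BBconnection$ on $N_{\F}$. Restricting $\nabla$ to $T_{\F}$, the polar term contributes $-\tfrac{m}{k}\tfrac{df}{f}(v) = -\tfrac{m}{k}\tfrac{v(f)}{f}$ for $v \in T_{\F}$; since $H$ is $\F$-invariant we have $f \mid v(f)$, so $\restr{\nabla}{T_{\F}}$ is a regular $\F$-partial connection on $N_{\F}$. The difference $\eta = \BBconnection - \restr{\nabla}{T_{\F}}$ is $\mathcal O$-linear, hence a section of $\Omega^1_{\F} \otimes \End(N_{\F}) = \Omega^1_{\F}$ over $\mathbb P^n_{\C} - \sing(\F)$; reflexivity of $\Omega^1_{\F}$ together with $\codim \sing(\F) \ge 2$ extend it to an element $\eta \in H^0(\mathbb P^n_{\C}, \Omega^1_{\F})$. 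Under the standing assumption $H^0(\mathbb P^n_{\C}, \Omega^1_{\F}) = 0$, this forces $\eta = 0$, that is $\BBconnection = \restr{\nabla}{T_{\F}}$. Thus Bott's partial connection is the restriction of the flat logarithmic connection $\nabla$ with finite monodromy, which is exactly the definition of $\F$ being virtually transversely additive.

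The main obstacle I anticipate is the second step: verifying cleanly that a single-component logarithmic form $\tfrac{df}{f}$ genuinely defines a connection on the nontrivial line bundle $N_{\F}$ (a single $\tfrac{df}{f}$ is not a logarithmic $1$-form on $\mathbb P^n_{\C}$, the obstruction being precisely the Euler relation $\sum \lambda_i k_i = -c_1$), and that its residue is thereby forced to be the rational number $-m/k$. Once the existence, flatness, and rationality of the residue of $\nabla$ are secured, the dichotomy "$\eta \neq 0$ versus $\eta = 0$" matches the two alternatives in the statement verbatim, and the remaining verifications (regularity of $\restr{\nabla}{T_{\F}}$ coming from invariance of $H$, and the extension of $\eta$ across $\sing(\F)$) are routine, mirroring Proposition \ref{P:obvious sub}.
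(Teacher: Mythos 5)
Your proposal is correct and follows essentially the same route as the paper: both construct the flat logarithmic connection on $N_{\F}$ with residue divisor $-\tfrac{\deg N_{\F}}{\deg H}H$ (hence rational residues and finite monodromy), use the $\F$-invariance of $H$ to see that its restriction to $T_{\F}$ is a holomorphic partial connection, and read the dichotomy off from whether its difference with Bott's connection, a section of $\Omega^1_{\F}$, vanishes or not. The only cosmetic difference is that you phrase the argument as a contrapositive via Lemma \ref{L:equivalent definition of distmin}, while the paper states the dichotomy directly.
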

\begin{proof}
    The proof is analogous to the proof of Proposition \ref{P:obvious sub}. Let $H$ be an invariant hypersurface and consider the
    flat logarithmic connection $\nabla$ on $\mathcal O_X(H)$ with residue divisor  $- H$ and trivial monodromy. It induces a flat logarithmic
    connection on $N_{\F}$ with residue divisor $-\frac{\deg(N_{\F})}{\deg H} H$, which we still denote by $\nabla$.
    The $\F$-invariance of $H$ implies that the restriction of $\nabla$ to $T_{\F}$ is a holomorphic partial connection on $N_{\F}$.
    If it coincides with Bott's connection then $\F$ is virtually transversely aditive. If instead it does not coincide then the
    difference of $\nabla$ and $\BBconnection$ is a non-zero section of $\Omega^1_{\F}$. It follows that $\distmin{2}{\F} \le \deg(\F) - 1$.
\end{proof}

Our next result provides some evidence, admittedly weak,  toward  Conjecture \ref{Conj:log I}.

\begin{prop}\label{P:Halphen}
    Let $\F$ be a codimension one foliation on $\mathbb P^n_{\C}$, $n \ge 3$. If $\F$ admits two distinct invariant algebraic hypersurfaces
    and $\distmin{2}{\F} = \deg{\F}$ then there exists integers $d_1, d_2$ such that $\F \in \Log{d_1,d_2}{n}{\C}$.
\end{prop}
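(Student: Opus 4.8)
The plan is to leverage Proposition \ref{P:submin = d} together with the hypothesis that $\F$ admits \emph{two} distinct invariant hypersurfaces. Let me denote these hypersurfaces by $H_1$ and $H_2$, with defining equations $f_1, f_2 \in H^0(\mathbb P^n_{\C}, \mathcal O(d_1))$ and $H^0(\mathbb P^n_{\C}, \mathcal O(d_2))$ respectively. First I would apply Proposition \ref{P:submin = d} to each invariant hypersurface separately. Since $\distmin{2}{\F} = \deg(\F)$ by hypothesis, the alternative $\distmin{2}{\F} \le \deg(\F) - 1$ is excluded, so Proposition \ref{P:submin = d} forces $\F$ to be virtually transversely additive with respect to each of $H_1$ and $H_2$; that is, Bott's partial connection $\BBconnection$ on $N_{\F}$ extends to a flat logarithmic connection with finite monodromy, realized by a logarithmic residue supported on the invariant hypersurface.

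The key step is then to combine the two extensions. For each $i$, the construction in Proposition \ref{P:submin = d} produces a flat logarithmic connection $\nabla_i$ on $N_{\F}$ with residue divisor $-\tfrac{\deg(N_{\F})}{\deg H_i} H_i$ agreeing with $\BBconnection$ along $T_{\F}$. Because $\distmin{2}{\F} = \deg(\F)$ means $H^0(\mathbb P^n_{\C}, \Omega^1_{\F}) = 0$, the partial connection extending $\BBconnection$ is \emph{unique}: the difference of any two holomorphic partial connections on $N_{\F}$ restricting to $\BBconnection$ is a section of $\Omega^1_{\F}$, which must vanish. Next I would exploit this uniqueness to show that the logarithmic $1$-form
\[
    \omega = d_2 \frac{df_1}{f_1} - d_1 \frac{df_2}{f_2}
\]
(suitably normalized so that it has no poles other than along $H_1 \cup H_2$ and trivial residue at infinity) defines $\F$. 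Indeed, both residue data $\nabla_1$ and $\nabla_2$ coincide with the same partial connection $\BBconnection$, which I expect forces the residues to be commensurable and hence realizable by a single closed logarithmic form with poles exactly on $H_1 + H_2$; this is precisely an element of $\Log{d_1}{n}{\C}$ in the notation of the paper's $\SLog$/$\Log$ macros, exhibiting $\F \in \Log{d_1,d_2}{n}{\C}$.

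The main obstacle will be bridging the gap between \emph{virtual} transverse additivity (finite monodromy, so the connection is only \emph{pulled back} from a genuine logarithmic form after a finite cover) and the assertion that $\F$ itself is defined by a \emph{global} closed logarithmic $1$-form on $\mathbb P^n_{\C}$ with exactly two polar components. The finiteness of the monodromy of each $\nabla_i$ needs to be promoted to an integrality/commensurability statement on the residues, which is where I expect the hypothesis of two distinct invariant hypersurfaces to be essential: having two independent residue constraints pins down the transverse structure rigidly enough that the monodromy representation, being finite and abelian, must factor through a cyclic group realized by the ratio $f_1^{d_2}/f_2^{d_1}$. One subtlety to handle carefully is the behavior at infinity and along $\sing(\F)$, so I would work throughout on $\mathbb P^n_{\C} - \sing(\F)$ and invoke reflexiveness of $\Omega^1_{\F}$ (as in Lemma \ref{L:omegaF em omegaPn}) to extend the constructed form over the singular locus, which has codimension at least two.
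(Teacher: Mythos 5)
Your opening moves coincide with the paper's: the two invariant hypersurfaces produce two flat logarithmic connections on $N_{\F}$ whose restrictions to $T_{\F}$ are holomorphic partial connections, and the vanishing $h^0(\mathbb P^n_{\C},\Omega^1_{\F})=0$ (equivalent to $\distmin{2}{\F}=\deg(\F)$) forces their difference --- which is, up to a nonzero constant, exactly your closed logarithmic $1$-form $d_2\,\tfrac{df_1}{f_1}-d_1\,\tfrac{df_2}{f_2}$ --- to vanish on $T_{\F}$ and hence to define $\F$. Incidentally, the detour through virtual transverse additivity and finite monodromy that you flag as ``the main obstacle'' is unnecessary: one never needs to extend Bott's connection to a flat connection on all of $\mathbb P^n_{\C}$ or to descend from a finite cover, because taking the difference of the two honest logarithmic connections already produces a global closed rational $1$-form.

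The genuine gap is in the last step. From ``$\F$ is defined by the closed logarithmic form $\omega=d_2\,\tfrac{df_1}{f_1}-d_1\,\tfrac{df_2}{f_2}$'' you cannot conclude $\F\in\Log{d_1,d_2}{n}{\C}$ with $d_i=\deg H_i$: membership in that component requires $\deg(\F)=d_1+d_2-2$, i.e.\ that $\omega$, after clearing denominators, has no codimension one zeros, and also that the polar components be irreducible. Neither is automatic: closed logarithmic $1$-forms defining a foliation can acquire divisorial zeros (this is exactly what separates the special components $\SLog{2,5}{n}{\C}$ and $\SLog{3,4}{n}{\C}$ mentioned later in the paper from the genuine logarithmic components), and the given invariant hypersurfaces need not be irreducible or of the right degrees. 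The paper closes this gap differently: since the residues of $\omega$ are rational, $\F$ is algebraically integrable; one then takes a primitive rational first integral $f:\mathbb P^n_{\C}\dashrightarrow\mathbb P^1_{\C}$ and invokes Halphen's theorem (at most two multiple fibers) together with the vanishing of the number of fibers with non-irreducible support to write $f$ as a quotient of powers of two \emph{irreducible} polynomials $P,Q$; only then does the logarithmic differential of $f$ exhibit $\F\in\Log{\deg P,\deg Q}{n}{\C}$, where $\deg P$ and $\deg Q$ are in general unrelated to your $d_1,d_2$. Your proposal needs this (or an equivalent) argument to be complete.
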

\begin{proof}
    Let $F$ and $G$ be two distinct hypersurfaces invariant by $\F$. The proof of Proposition \ref{P:submin = d} implies the existence
    of two distinct flat logarithmic connections on the normal bundle of $\F$. Their difference is a non-zero closed logarithmic $1$-form $\eta$ such that $\restr{\eta}{T_{\F}}$ is a holomorphic section of $\Omega^1_{\F}$. Since $\distmin{2}{\F}=\deg(\F)$, we have that $h^0(\mathbb P^n_{\field}, \Omega^1_{\F})=0$. It follows that $\eta$ is a closed logarithmic $1$-form defining $\F$. Since the residues
    $\eta$ are rational, it follows that $\F$ is algebraically integrable.

    Let $f: \mathbb P^n_{\C} \dashrightarrow \mathbb P^1_{\C}$ be a rational first integral for $\F$ with irreducible general fiber. A classical theorem by Halphen, see \cite[Theorem 3.3]{MR3066408}, says that any rational map with irreducible general fiber has, at most, two multiple fibers. Moreover, if $r(\F)$ is the number of fibers of $f$ with non-irreducible support then the proof \cite[Theorem 1.2]{MR3066408} implies that $r(\F) =0$. Hence $f$ can be written as the quotient of powers of two irreducible polynomials. Looking at its logarithmic differential, it becomes clear that $\F \in \Log{d_1,d_2}{n}{\C}$ for some $d_1$ and $d_2$ such that $\deg(\F)= d_1 + d_2 -2$.
\end{proof}

\subsection{Foliations with $\distmin{2}{\F}= \deg(\F) - 1$}
Let $\F$ be a logarithmic foliation on $\mathbb P^n_{\field}$  defined by  $\omega \in H^0(\mathbb P^n_{\field}, \Omega^1_{\mathbb P^n_{\field}}(\log D))$ where $D = \sum_{i=1}^R H_i$ is a simple normal crossing divisor and $H_i$ is a hypersurface of degree $d_i$.
Assume that the zero set of $\omega$ has codimension at least two and that the polar divisor of $\omega$ is equal to $D$. Under these assumptions $\F$ is a foliation of degree $d = \sum_{i=1}^r d_i - 2 $ belonging to $\Log{d_1, \ldots, d_r}{n}{\field}$.
The $\F$-invariance of the support of $D$ implies that the restrictions of sections of $\Omega^1_{\mathbb P^n_{\field}}(\log D)$ to $T_{\F}$ have no poles and, therefore, defines a morphism
\[
    H^0(\mathbb P^n_{\field}, \Omega^1_{\mathbb P^n_{\field}}(\log D)) \longrightarrow H^0(\mathbb P^n_{\field}, \Omega^1_{\F})
\]
with kernel generated by $\omega$. Consequently, $h^0(\mathbb P^n_{\field}, \Omega^1_{\F}) \ge r-2$ and, if $r \ge 3$, we obtain that $\distmin{2}{\F} \le \deg(\F)-1$.

\begin{thm}\label{T:characterize log components}
    Let $\F$ be a codimension one foliation on $\mathbb P^n_{\mathbb C}$, $n \ge 3$. Assume that $\F$ is $p$-dense for a Zariski dense set of primes. If $\distmin{2}{\F}= \deg(\F) -1$ then $\F$ belongs to one of the logarithmic components $\Log{d_1,d_2, \ldots, d_r}{n}{\C}$, $r \ge 3$.
\end{thm}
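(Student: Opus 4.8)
The plan is to use reduction modulo $p$ to force the kernel of the $p$-curvature to attain its maximal possible degree $\deg(\F)-1$, to read off from this an $\F$-invariant divisor of degree independent of $p$, and then to lift this divisor to characteristic zero and recognise $\F$ as logarithmic via Deligne's closedness theorem. First I would unwind the hypothesis: by Lemma~\ref{L:equivalent definition of distmin} the equality $\distmin{2}{\F}=\deg(\F)-1$ yields a non-zero section $\eta\in H^0(\mathbb P^n_{\C},\Omega^1_{\F})$ and hence a codimension-two distribution $\mathcal D\subset\F$ of degree $\deg(\F)-1$. Fix an integral model of $(\F,\mathcal D,\eta)$ over a finitely generated $\mathbb Z$-algebra $R\subset\C$. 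For almost every prime $\mathfrak p$ the reduction $\eta_{\mathfrak p}$ remains non-zero and defines a codimension-two subdistribution of degree $\deg(\F)-1$, so $\distmin{2}{\mathscr F_{\mathfrak p}}\le\deg(\F)-1$. Were $\distmin{2}{\mathscr F_{\mathfrak p}}\le\deg(\F)-2$ on a Zariski dense set of primes, the subdistribution-lifting statement of Lemma~\ref{L:lifting hyp sub} would produce a codimension-two distribution inside $\F$ of degree $\le\deg(\F)-2$, contradicting $\distmin{2}{\F}=\deg(\F)-1$; hence $\distmin{2}{\mathscr F_{\mathfrak p}}=\deg(\F)-1$ for almost every prime. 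Intersecting with the Zariski dense set where $\mathscr F_{\mathfrak p}$ is $p$-dense, and combining the inequality $\deg\pkernel{\mathscr F_{\mathfrak p}}\ge\distmin{2}{\mathscr F_{\mathfrak p}}$ (since $\pkernel{\mathscr F_{\mathfrak p}}$ is a codimension-two subdistribution) with the upper bound $\deg\pkernel{\mathscr F_{\mathfrak p}}\le\deg(\F)-1$ from the first assertion of Lemma~\ref{L:comparing}, I obtain $\deg\pkernel{\mathscr F_{\mathfrak p}}=\deg(\F)-1$ for a Zariski dense set $Z$ of primes.

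Substituting $\deg\pkernel{\mathscr F_{\mathfrak p}}=\deg(\F)-1$ into Proposition~\ref{P:degree degeneracy} makes the $p$-power contribution vanish and gives $\deg\pdegeneracy{\mathscr F_{\mathfrak p}}=\deg(\det N_{\F})=\deg(\F)+2$, a bound independent of $p$. Discarding from $Z$ the finitely many primes with $p\le\deg(\F)+2$, every coefficient of $\pdegeneracy{\mathscr F_{\mathfrak p}}$ is then a positive integer smaller than $p$, hence non-zero modulo $p$, so Proposition~\ref{P:hinvariant} shows that every irreducible component of $\pdegeneracy{\mathscr F_{\mathfrak p}}$ is $\mathscr F_{\mathfrak p}$-invariant. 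Thus the support of $\pdegeneracy{\mathscr F_{\mathfrak p}}$ is an $\mathscr F_{\mathfrak p}$-invariant subscheme of degree at most $\deg(\F)+2$; by the invariant-subvariety-lifting statement of Lemma~\ref{L:lifting hyp sub} its components lift, for $\mathfrak p$ in a Zariski dense subset of $Z$, to $\F$-invariant hypersurfaces $H_1,\dots,H_r\subset\mathbb P^n_{\C}$ with $\sum_i\deg H_i\le\deg(\F)+2$.

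The crux is to promote this inequality to the equality $\sum_i\deg H_i=\deg(\F)+2=\deg(\det N_{\F})$, that is, to show that $\pdegeneracy{\mathscr F_{\mathfrak p}}$ is reduced, so that $N_{\F}=\mathcal O_{\mathbb P^n_{\C}}(H_1+\cdots+H_r)$. By Proposition~\ref{P:old} together with the construction of Subsection~\ref{SS:global expression}, the foliation $\mathscr F_{\mathfrak p}$ is cut out by a closed rational $1$-form with no codimension-one zeros and with polar divisor exactly $\pdegeneracy{\mathscr F_{\mathfrak p}}$; reducedness of $\pdegeneracy{\mathscr F_{\mathfrak p}}$ is therefore equivalent to this closed form being logarithmic. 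To exclude a pole of order $m\ge 2$ along a component $H$, I would carry out a local analysis of the Cartier operator along $H$—exploiting that $\F$, being the reduction of a complex foliation, lifts modulo $p^2$, so that Theorem~\ref{T:lift} makes the Cartier transform $\CartierTransform{\mathscr F_{\mathfrak p}}$ involutive—and show that a genuine higher-order pole of the closed form, equivalently of $\Cartier(\omega/f)$, would yield a codimension-two subdistribution of $\mathscr F_{\mathfrak p}$ of degree strictly smaller than $\deg(\F)-1$, contradicting the equality $\distmin{2}{\mathscr F_{\mathfrak p}}=\deg(\F)-1$ established above. I expect this local study of the closed defining form to be the main difficulty of the whole argument.

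Granting reducedness, $D=H_1+\cdots+H_r$ is an $\F$-invariant divisor with $N_{\F}=\mathcal O_{\mathbb P^n_{\C}}(D)$, and for the generic lifted configuration a simple normal crossing divisor. Dividing the twisted $1$-form defining $\F$ by a global equation of $D$ produces a section $\omega_{\log}\in H^0(\mathbb P^n_{\C},\Omega^1_{\mathbb P^n_{\C}}(\log D))$ with polar divisor $D$, the invariance of $D$ forcing the poles to be simple. By Deligne's closedness of logarithmic forms with normal crossing poles, invoked exactly as in Proposition~\ref{P:Fclosed}, the form $\omega_{\log}$ is closed, so $\F$ is a logarithmic foliation and $\F\in\Log{d_1,\dots,d_r}{n}{\C}$ with $d_i=\deg H_i$ and $\deg(\F)=\sum_i d_i-2$. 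Finally, knowing $\F$ to be logarithmic with $r$ components and recalling $\distmin{2}{\F}=\deg(\F)-1$, Corollary~\ref{C:distmin for logs} rules out $r=2$ (which would give $\distmin{2}{\F}=\deg(\F)$), whence $r\ge 3$, completing the proof.
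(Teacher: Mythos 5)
Your first two paragraphs track the paper's own proof: semi-continuity together with the Quot-scheme argument of Lemma \ref{L:lifting hyp sub} pins down $\distmin{2}{\mathscr F_{\mathfrak p}}=\deg(\F)-1$ for almost every prime, the kernel of the $p$-curvature is then a codimension two subdistribution forced to realize this minimum, and Proposition \ref{P:degree degeneracy} gives $\deg \pdegeneracy{\mathscr F_{\mathfrak p}}=\deg(\F)+2=\deg(\det N_{\F})$, independently of $p$. Up to that point the argument is sound and is exactly the paper's.

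The gap is the step you yourself flag as ``the main difficulty'': you try to prove that $\pdegeneracy{\mathscr F_{\mathfrak p}}$ is \emph{reduced} and that the lifted invariant divisor is simple normal crossing, so as to exhibit $\F$ itself as a Deligne-closed logarithmic form. This is the wrong target. The conclusion ``$\F$ belongs to $\Log{d_1,\ldots,d_r}{n}{\C}$'' is membership in an irreducible component, i.e.\ in the Zariski closure of the set of genuinely logarithmic foliations; $\F$ need not be defined by a $1$-form with first-order poles on an SNC divisor. Foliations given by closed rational $1$-forms such as $\sum_i \lambda_i\, dF_i/F_i + d(G/F_1)$ with generic irrational residues are $p$-dense for a dense set of primes, lie in a logarithmic component, and have a closed defining form with a double pole along $\{F_1=0\}$ --- correspondingly a non-reduced degeneracy divisor. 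No local analysis of the Cartier operator can rule these out, so the reducedness you need cannot be established, and your subsequent appeal to Deligne's theorem and to Corollary \ref{C:distmin for logs} (which requires SNC poles) does not get off the ground. The paper sidesteps this entirely: it lifts only the weaker statement that $\mathscr F_{\mathfrak p}$ is defined by a closed rational $1$-form with polar divisor $\pdegeneracy{\mathscr F_{\mathfrak p}}$ and \emph{without zero divisor} (equivalently, admits a polynomial integrating factor of degree $\deg N_{\F}$), transports this to characteristic zero via Lemma \ref{L:lifting hyp sub} and Lemma \ref{L:closed is closed}, and then invokes \cite[Proposition 3.7]{MR4354288}, which classifies foliations on $\mathbb P^n_{\C}$ defined by closed rational $1$-forms without codimension one zeros as members of logarithmic components; the bound $r\ge 3$ comes from the $p$-density hypothesis, since $r\le 2$ forces rational residues and hence algebraic integrability. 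To repair your proof you should replace the reducedness/SNC step by this lifting of the integrating-factor property and the citation of that classification.
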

\begin{proof}
    Let $(\mathscr X, \mathscr F)$     be an integral model for $\F$ defined over a finitely generated $\mathbb Z$-algebra $R$ conatined in $\mathbb C$, \ie $\mathscr X = \mathbb P^n_{R}$ and
    $\mathscr F \otimes_{\mathbb Z} \mathbb C = \F$.
    Let $\mathfrak p \in \Spec(R)$ be a maximal prime  such that the reduction $\mathscr{F}_{\mathfrak p}$  of $\mathscr{F}$
    is not $p$-closed. By semi-continuity, we may assume that $\distmin{2}{\mathscr F_p} = \deg(\F) - 1$. Therefore, the kernel of
    $p$-curvature of $\mathscr F_{\mathfrak p}$ is a subdistribution (indeed subfoliation according Theorem \ref{T:lift}) realizing  $\distmin{2}{\mathscr F_{\mathfrak p}}$. Consequently,
    the divisor $\pdegeneracy{\mathscr F_{\mathfrak p}}$ has degree $\deg(N_{\F})$ according to Proposition \ref{P:degree degeneracy}, and
    $\mathscr F_{\mathfrak p}$ is defined by a closed rational $1$-form with polar divisor equal $\pdegeneracy{\mathscr F_{\mathfrak p}}$
    and without zero divisors. We apply Lemma \ref{L:lifting hyp sub} to deduce that the same holds true for $\F$. To conclude we apply \cite[Proposition 3.7]{MR4354288}.
\end{proof}

\begin{conj}\label{Conj:log II}
    Let $\Sigma \subset \Fol{d}{n}{\C}$ be an irreducible component. If $\distmin{2}{\Sigma} = \deg(\F) -1$ then either
    every foliation $\F \in \Sigma$ is algebraically integrable or there exist integers $d_1, \ldots, d_r$ such that $\Sigma = \Log{d_1, \ldots, d_r}{n}{\C}$.
\end{conj}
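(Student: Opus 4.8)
The plan is to descend to positive characteristic, pin down the degree of the kernel of the $p$-curvature, lift the resulting closed-form structure back to $\C$, and invoke the known characterization of logarithmic components. Write $d = \deg(\F)$ and fix an integral model $(\mathbb P^n_R, \mathscr F)$ over a finitely generated $\Z$-algebra $R \subset \C$. Since $\F$ is $p$-dense for a Zariski dense set of primes, I work over a Zariski dense set $Z \subset \Spec(R)$ of maximal primes $\mathfrak p$ with $\mathscr F_{\mathfrak p}$ $p$-dense, discarding the finitely many primes with $p \le \max(2,d+2)$ so that the residual characteristic is large. Because $\F$ is the reduction of a foliation defined over $\C$, it lifts modulo $p^2$ (indeed all the way to characteristic zero), so Theorem \ref{T:lift} applies and the Cartier transform $\CartierTransform{\mathscr F_{\mathfrak p}}$ is a foliation; consequently $\pkernel{\mathscr F_{\mathfrak p}}$, being the saturation of $T_{\mathscr F_{\mathfrak p}} \cap T_{\CartierTransform{\mathscr F_{\mathfrak p}}}$, is a codimension two \emph{subfoliation} of $\mathscr F_{\mathfrak p}$.

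The next step is the degree bookkeeping. Evaluating the identity of Proposition \ref{P:degree degeneracy} on $\mathbb P^n$ (using $\omega_{\F} = \mathcal O(d-n+1)$, $\omega_{\pkernel{\mathscr F_{\mathfrak p}}} = \mathcal O(\deg \pkernel{\mathscr F_{\mathfrak p}} - n + 2)$ and $N_{\mathscr F_{\mathfrak p}} = \mathcal O(d+2)$) gives
\[
    \deg \pdegeneracy{\mathscr F_{\mathfrak p}} = p\big(d - 1 - \deg \pkernel{\mathscr F_{\mathfrak p}}\big) + (d+2).
\]
Effectivity of $\pdegeneracy{\mathscr F_{\mathfrak p}}$ together with $p > d+2$ forces $\deg \pkernel{\mathscr F_{\mathfrak p}} \le d-1$. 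For the reverse inequality I transfer the hypothesis $\distmin{2}{\F} = d-1$ to the reduction: by Lemma \ref{L:equivalent definition of distmin} this is equivalent to $h^0(\mathbb P^n_{\C}, \Omega^1_{\F}(-1)) = 0$ (and $h^0(\Omega^1_{\F}) \neq 0$). Since the value over $\C$ agrees with the value at the generic point of $\Spec(R)$, upper semicontinuity of $\mathfrak p \mapsto h^0(\Omega^1_{\mathscr F_{\mathfrak p}}(-1))$ shows that the locus where it becomes positive is a proper closed subset; hence $h^0(\Omega^1_{\mathscr F_{\mathfrak p}}(-1)) = 0$, i.e. $\distmin{2}{\mathscr F_{\mathfrak p}} \ge d-1$, on a dense open subset that I intersect with $Z$. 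As $\pkernel{\mathscr F_{\mathfrak p}}$ is a codimension two subdistribution, $d-1 \le \distmin{2}{\mathscr F_{\mathfrak p}} \le \deg \pkernel{\mathscr F_{\mathfrak p}} \le d-1$, so $\deg \pkernel{\mathscr F_{\mathfrak p}} = d-1$ and therefore $\deg \pdegeneracy{\mathscr F_{\mathfrak p}} = d+2 = \deg N_{\mathscr F_{\mathfrak p}}$.

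Now I extract the closed-form structure. As $\mathscr F_{\mathfrak p}$ is $p$-dense, Proposition \ref{P:old} provides a closed rational $1$-form $\eta$ defining it, with $N_{\mathscr F_{\mathfrak p}} = \mathcal O((\eta)_{\infty} - (\eta)_0)$ so that $\deg((\eta)_{\infty} - (\eta)_0) = d+2$; any other choice differs by multiplication by a $p$-th power $g^p$. Proposition \ref{P:closed + divisor} gives $(\eta)_{\infty} - (\eta)_0 \equiv \pdegeneracy{\mathscr F_{\mathfrak p}} \pmod p$, and since both divisors have degree $d+2$, their difference is $p$ times a degree-zero divisor on $\mathbb P^n$, which is principal because $\Pic(\mathbb P^n) = \Z$. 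Absorbing this principal divisor into the ambiguity $\eta \mapsto g^p\eta$ normalizes $\eta$ so that $(\eta)_0 = 0$ and $(\eta)_{\infty} = \pdegeneracy{\mathscr F_{\mathfrak p}}$. Hence $\mathscr F_{\mathfrak p}$ is defined by a closed rational $1$-form without zeros whose polar divisor is the $\mathscr F_{\mathfrak p}$-invariant divisor $\pdegeneracy{\mathscr F_{\mathfrak p}}$ (invariant by Proposition \ref{P:hinvariant}, as $p > d+2$) of degree $d+2$, a bound \emph{independent of $\mathfrak p$}. This uniform bound is exactly what lets Lemma \ref{L:lifting hyp sub} lift the structure back to characteristic zero, yielding that $\F$ itself is defined by a closed rational $1$-form without codimension one zeros.

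To conclude, I apply the characterization of such foliations: by \cite[Proposition 3.7]{MR4354288} a codimension one foliation on $\mathbb P^n_{\C}$ defined by a closed rational $1$-form without codimension one zeros lies in a logarithmic component $\Log{d_1, \ldots, d_r}{n}{\C}$, and the constraint $\distmin{2}{\F} = d-1$ (rather than $d$) excludes $r \le 2$, since by Corollary \ref{C:distmin for logs} a logarithmic foliation with only two polar components satisfies $\distmin{2}{\F} = d$. The main obstacle is the passage through positive characteristic in the second and third steps: one must control $\deg \pkernel{\mathscr F_{\mathfrak p}}$ from both sides simultaneously — the upper bound from effectivity of the degeneracy divisor, the lower bound from the semicontinuous transfer of $\distmin{2}{\F} = d-1$ — so as to force $\pdegeneracy{\mathscr F_{\mathfrak p}}$ to have the $\mathfrak p$-independent degree $d+2$ that makes the lifting lemma applicable. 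Once the cited lifting and classification results are available, the remainder is formal.
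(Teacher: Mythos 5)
The statement you are proving is stated in the paper as a \emph{conjecture} (Conjecture \ref{Conj:log II}); the paper does not claim a proof of it. What it proves is Theorem \ref{T:characterize log components}, which is the same assertion \emph{under the additional hypothesis} that $\F$ is $p$-dense for a Zariski dense set of primes. Your argument imports that hypothesis without justification: the sentence ``Since $\F$ is $p$-dense for a Zariski dense set of primes, I work over a Zariski dense set $Z \subset \Spec(R)$\dots'' is not a consequence of $\distmin{2}{\Sigma} = \deg(\F)-1$; it is an extra assumption. For an arbitrary $\F \in \Sigma$ there is a genuine dichotomy: either $\mathscr F_{\mathfrak p}$ is $p$-dense for a Zariski dense set of maximal primes, or $\mathscr F_{\mathfrak p}$ is $p$-closed for almost every prime. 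Your proof only treats the first branch. In the second branch the desired conclusion is that $\F$ is algebraically integrable, and deducing algebraic integrability from generic $p$-closedness is precisely Conjecture \ref{Conj:ESBT} of Ekedahl, Shepherd-Barron, and Taylor, which is wide open: Theorem \ref{T:pintegra} only yields invariant subvarieties of the reductions whose degrees may be unbounded as a function of $\mathfrak p$, so Lemma \ref{L:lifting hyp sub} does not apply. The paper records exactly this reduction when it remarks that a confirmation of Conjecture \ref{Conj:ESBT}, combined with Proposition \ref{P:submin = d} and Theorem \ref{T:characterize log components}, would confirm Conjecture \ref{Conj:log II}.

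Within the branch you do treat, your argument is essentially the paper's proof of Theorem \ref{T:characterize log components}, with the degree bookkeeping for $\pkernel{\mathscr F_{\mathfrak p}}$ and the normalization $(\eta)_0 = 0$ carried out correctly and in more detail than the paper gives. Two smaller points: the semicontinuity transfer should be arranged for the \emph{generic} element of $\Sigma$, since $\distmin{2}{\Sigma}$ is defined via the generic member and individual members may have smaller $\distmin{2}$; and to pass from ``the general $\F \in \Sigma$ lies in some $\Log{d_1,\dots,d_r}{n}{\C}$'' to ``$\Sigma = \Log{d_1,\dots,d_r}{n}{\C}$'' you should note that the tuple $(d_1,\dots,d_r)$ is constant on a dense subset of the irreducible set $\Sigma$ and that the logarithmic loci are themselves irreducible components, so maximality forces equality. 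These are repairable; the unhandled $p$-closed branch is the real gap, and it cannot be closed with the tools available in the paper.
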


\begin{remark}
    For any $d \in \{2,3\}$ and any $n \ge 3$, there are examples of irreducible components $\Sigma$ of $\Fol{d}{n}{\C}$
    with $\distmin{2}{\Sigma} = d -1$  with $\Sigma$ not of the form $\Log{d_1, \ldots, d_r}{n}{\C}$. For $d=2$, there is exactly
    one irreducible component with this property, the so called exceptional component. For $d=3$, there are at least two irreducible components
    with this property, the special logarithmic components $\SLog{2,5}{n}{\C}$ and $\SLog{3,4}{n}{\C}$ which parameterize algebraically integrable foliations defined by logarithmic $1$-forms with codimension one zeros. All the known examples
    with this property satisfy Conjecture \ref{Conj:log II}.
\end{remark}

A confirmation of Conjecture \ref{Conj:ESBT} (by  Ekedahl, Shepherd-Barron, and Taylor)  combined with Proposition \ref{P:submin = d} and  Theorem \ref{T:characterize log components} would confirm both Conjectures \ref{Conj:log I} and \ref{Conj:log II}.

\subsection*{Competing interests} The authors declare none.

\providecommand{\bysame}{\leavevmode\hbox to3em{\hrulefill}\thinspace}
\providecommand{\MR}{\relax\ifhmode\unskip\space\fi MR }
\providecommand{\MRhref}[2]{%
  \href{http://www.ams.org/mathscinet-getitem?mr=#1}{#2}
}
\providecommand{\href}[2]{#2}

\end{document}